\documentclass{amsart}
\usepackage{graphicx}
\usepackage{amsfonts}
\usepackage{amsthm} 
\usepackage{amsmath}
\usepackage{amssymb}
\usepackage{amscd}
\usepackage{mathrsfs}
\usepackage{epstopdf}
\usepackage{tikz-cd}
\usepackage{xcolor}
\usepackage{hyperref}
\usepackage{tikz}
\usepackage{tikz-cd} 
\usetikzlibrary{cd}
\usepackage{framed}
\usepackage[T1]{fontenc}
\usepackage{bm}
\usepackage{verbatim}

\newtheorem{theorem}{Theorem}[section]
\newtheorem{letterthm}{Theorem}

\newtheorem{lemma}[theorem]{Lemma}
\newtheorem{proposition}[theorem]{Proposition}
\newtheorem{definition}[theorem]{Definition}
\newtheorem{cor}[theorem]{Corollary}

\newtheorem{remark}[theorem]{Remark}

\DeclareFontFamily{U}{wncy}{}
    \DeclareFontShape{U}{wncy}{m}{n}{<->wncyr10}{}
    \DeclareSymbolFont{mcy}{U}{wncy}{m}{n}
    \DeclareMathSymbol{\Sh}{\mathord}{mcy}{"58} 

\title[Automorphic symbols and {\it L}-values of GL(2)]{Automorphic symbols and automorphic {\it L}-values of GL(2) over imaginary quadratic fields}
\author{Jaesung Kwon}
\email{js.kwon@unist.ac.kr}
\address{Ulsan National Institute of Science and Technology, 50, UNIST-gil, Eonyang-eup, Ulju-gun, Ulsan, Republic of Korea, 44919}
                         
\date{\today}

\begin{document}

\begin{abstract}
We study non-vanishing modulo primes of critical values of $L$-functions
over imaginary quadratic fields twisted by Coates--Wiles characters.  
For a parallel weight two Hecke eigenform over an imaginary quadratic field, 
we prove that a positive proportion of Coates--Wiles characters have nonzero
integral $L$-values modulo each prime in a positive-density set.  
The argument constructs automorphic symbols in parabolic homology with integral coefficients
and expresses the critical values through their pairings with parabolic cohomology classes. 
A vertical family of additive averages of these pairings recovers Fourier coefficients 
and forces the module generated by symbols to have full rank.  
We transfer this full-rank property to non-vanishing modulo primes.
This extends the homological strategy of Kim--Sun~\cite{KimSun:2017} 
from classical modular curves to arithmetic orbifolds, 
while addressing the unit obstructions specific to the imaginary quadratic setting.
\end{abstract}

\subjclass[2010]{11F67 11F12}

\maketitle
\numberwithin{equation}{section}
\setcounter{tocdepth}{1}
\tableofcontents

\section{Introduction}
\subsection{Historical Background}
Critical values of $L$-functions are expected to encode arithmetic information.
For modular forms, Shimura~\cite{Shimura:1976} proved that, after division by suitable periods,
critical $L$-values become algebraic and satisfy a Galois equivariance property.  
Deligne~\cite{Deligne:1979} subsequently formulated 
a general conjectural framework for critical values of motives: 
at a critical point, the period-normalized value of a motivic $L$-function 
should belong to its coefficient field and satisfy the corresponding Galois equivariance property.
We refer to these two properties together as the \emph{rationality} of critical $L$-values.
We call a period-normalized critical value an \emph{algebraic $L$-value}.
For cohomological automorphic forms on $\mathrm{GL}_2$ over number fields,
Hida~\cite{Hida:1994} constructed periods from their cohomological rational structures
and proved rationality results for algebraic $L$-values.

The arithmetic meaning of such algebraic $L$-values
is made more precise by the conjectures of Bloch--Kato~\cite{BlochKato:1990}. 
They relate the leading term of a motivic $L$-function at the critical point, 
normalized by Tamagawa numbers and regulators,
to the size of the Tate--Shafarevich group of the motive. 
The Birch--Swinnerton-Dyer conjecture~\cite{BSD:1965} provides a prototypical example: 
for an elliptic curve, it relates the leading term at \(s=1\) to its regulator, 
Tamagawa numbers, torsion subgroup, and Tate--Shafarevich group.

To study algebraic $L$-values modulo a prime, algebraicity alone is not enough:
one must also control their integrality at that prime.
For a fixed prime, Vatsal~\cite{Vatsal:1999} constructed canonical periods,
determined up to units in the local coefficient ring, such that the corresponding
algebraic $L$-values are locally integral and congruences between them are well defined.

At a critical point, complex non-vanishing means that the order of vanishing is zero, 
so the leading coefficient is the critical value itself.  
Non-vanishing modulo a prime of an integral algebraic $L$-value is
stronger: it implies complex non-vanishing and, through a conjectural
leading-term formula, places additional prime-dependent restrictions
on the associated arithmetic invariants.

Complex non-vanishing in $p$-adic families of characters has been studied extensively.
Rohrlich~\cite{Rohrlich:1984} proved that, for an elliptic curve, the critical
values of its twists by characters in a cyclotomic $\mathbb{Z}_p$-tower are nonzero
for all but finitely many characters. 
The role of character twists in determining modular forms was
developed by Luo--Ramakrishnan~\cite{LuoRamakrishnan:1997}, who showed that 
a modular form is determined by the critical values of its $L$-function twisted by
cyclotomic characters. Their theorem extracts more
information from these values than complex non-vanishing alone.

Non-vanishing mod $p$ has also been studied in $\ell$-adic families of characters.
Here \(\ell\) plays the role of \(p\) in the character towers considered above, 
while \(p\) now denotes the residual characteristic.
Washington~\cite{Washington:1978} proved the non-$p$-divisibility of Dirichlet $L$-values and related it
to the ray class groups in cyclotomic $\mathbb Z_\ell$-extensions.  
In anticyclotomic families, Hida~\cite{Hida:2004} and Vatsal~\cite{Vatsal:2003} proved non-vanishing mod $p$ 
for Hecke and modular $L$-values, respectively.
More recently, Kim and Sun~\cite{KimSun:2017} studied cyclotomic twists of modular $L$-values 
through the homological behaviour of modular symbols.

\subsection{The setting and main result}
Let $F$ be an imaginary quadratic field.
Fix an odd rational prime $\ell$ which splits in $F$, and a prime $\mathfrak l$ of $F$ 
lying above $\ell$. Coates--Wiles~\cite{CoatesWiles:1977} consider a tower of extensions
consisting of the fields obtained by adjoining $\mathfrak {fl}^n$-torsion points 
of a CM elliptic curve whose CM Grössencharacter has conductor $\mathfrak f$. 
These fields form the ray class field tower of conductors $\mathfrak f\mathfrak l^n$.
In this paper, we consider characters of $\mathrm{Cl}_F(\mathfrak l^\infty)$,
which is the inverse limit of the ray class groups of $\mathfrak l$-power conductors. 

Let $f$ be a normalized Hecke eigenform of parallel weight $(2,2)$ and level $\mathfrak N$.  
For a finite-order Hecke character $\xi$, write
\[
 L(s,f\otimes\xi)
 =\sum_{\mathfrak a\subset\mathcal O_F}
   \frac{a_f(\mathfrak a)\xi(\mathfrak a)}{N(\mathfrak a)^s},
\]
where the sum is over nonzero integral ideals.
Here, we use the convention that $\xi(\mathfrak a)=0$ when $\mathfrak a$ is not coprime to the conductor. 
We study its critical value $L(1,f\otimes\xi)$.
Let $\Omega_f$ be Hida's period, $G(\xi)$ the Gauss sum, and
$\Gamma_F(1)$ the archimedean factor defined in \S\ref{int:rep:sec}.
The algebraic $L$-value considered here is
\[
 L_f(\xi)
 :=\frac{\Gamma_F(1)G(\xi)L(1,f\otimes\xi)}{\Omega_f}\in\overline{\mathbb Q}.
\]
To construct an integral $L$-value, we introduce compactly supported lifts of parabolic cohomology classes.
Such lifts are not unique, so we introduce the boundary killing operator:
Choose an element $q\in 1+\mathfrak N$ such that $q\mathcal O_F$ is a prime ideal. 
Put $\mathcal E_q=T_{q\mathcal O_F}-N(q\mathcal O_F)-1$, which is an operator
appearing in the Eisenstein-ideal of Mazur~\cite{Mazur:1977} for the classical modular curve case.
This operator kills the boundary cohomology group (Lemma~\ref{hecke:action:boundary:cohom}).
Let $\varepsilon_q$ be its eigenvalue on $f$.
The construction in \S\ref{subsec:parabolic:eigenspace} and \S\ref{subsec:integral:Lvalue} 
shows that $\varepsilon_qL_f(\xi)$ is integral.

To state the main result, let us give some notation.
Denote by $X_n$ the set of primitive $\ell$-power order characters of $\mathrm{Cl}_F(\mathfrak l^\infty)$
of conductor $\mathfrak l^n$. Assume that $\ell>3$ and it does not divide the class number of $F$.
Let $\Delta$ be the prime-to-$\ell$-part of $\mathrm{Cl}_F(\mathfrak l^\infty)$, which is finite.
Let $E_F:=\mathcal O_F^\times$ be the unit group.
For $\varepsilon$ in $E_F$ and a character $\phi\in X_n$, 
let $\phi^\varepsilon\in X_n$ be the Galois conjugate of $\phi$
obtained by letting $\varepsilon$ act on the values of $\phi$
(see \S\ref{subsec:main:result}).

We consider residual primes $\mathfrak P$ away from $\varepsilon_q\#E_F$,
so that the mod $\mathfrak P$ reduction of $L_f(\xi)$ makes sense since $\varepsilon_qL_f(\xi)$ is integral.

\begin{letterthm}[Theorem~\ref{thm:mod-p-nonvanishing}]\label{thm:main:introduction}
Assume that \(\mathfrak N=\mathfrak l^e\) with $e>0$. 
Then there exist a number field $L$, a positive-density set $\mathcal P$ of finite
primes of $L$, and an arithmetic progression $\mathfrak V\subset\mathbb Z_{>0}$
such that the following statement is true:
Let $\mathfrak p\in\mathcal P$ and $\mathfrak P$ a prime ideal of $\overline{\mathbb Z}$ lying above $\mathfrak p$. 
Then, for any $\phi\in\bigcup_{n\in\mathfrak V}X_n$, 
there exist $\varepsilon\in\mathcal O_F^\times$ and $\psi\in\widehat\Delta$ such that
\[
 L_f(\phi^\varepsilon\psi)\not\equiv0\pmod{\mathfrak P}.
\] 
\end{letterthm}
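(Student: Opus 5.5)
We follow the homological strategy of Kim--Sun, adapted to the Bianchi orbifold $Y_1(\mathfrak N)$.

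\textbf{Step 1: symbol formula.} We will express each integral $L$-value as a pairing. For each $n$ and each $a\in\mathcal O_F/\mathfrak l^n$, the geodesic from $a/\varpi^n$ to $\infty$ in hyperbolic $3$-space (here $\varpi$ generates $\mathfrak l^{h}$, using $\ell\nmid h_F$), pushed to $Y_1(\mathfrak N)$, gives an automorphic symbol $\{a/\varpi^n\}$. It lies in parabolic homology with $\mathbb Z$-coefficients; the factor $\varepsilon_q$ is what lifts the compactly supported cohomology class. The Birch-type formula from the integral $L$-value construction then reads
\[
\varepsilon_q L_f(\phi\psi)=\sum_{a}(\phi\psi)^{-1}(a)\,\langle \{a/\varpi^n\},\omega_f\rangle,
\]
where $\omega_f$ is the integral parabolic class of $f$. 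Units $u\in E_F$ act on the variable $a$ by $a\mapsto ua$. Since $\phi$ is trivial on the image of units only up to torsion, this action is exactly what produces the conjugates $\phi^\varepsilon$. We handle it by grouping the sum by $E_F$-orbits, inverting $\#E_F$ where needed.

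\textbf{Step 2: full rank from Fourier coefficients.} Let $M_n$ be the $\mathcal O$-span of the values $\langle\{a/\varpi^n\},\omega_f\rangle$, viewed as functions of $a$. Form the additive averages $\sum_{a\equiv b \bmod \mathfrak l^{m}}\langle\{a/\varpi^n\},\omega_f\rangle$ along a vertical family ($n\to\infty$ with $n-m$ fixed). Unfolding via the Fourier expansion of $f$ in Bessel functions, these averages recover $a_f(\mathfrak a)$ for ideals in a fixed residue pattern. The archimedean Bessel factor becomes a constant after the $\Gamma_F(1)$ normalization. Since $f$ is a nonzero eigenform, finitely many Fourier coefficients are linearly independent. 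Choosing $L$ to contain the Hecke field, the relevant roots of unity and the averaging coefficients, the following holds for all $n$ in an arithmetic progression $\mathfrak V$, where the residue structure of $\mathcal O_F/\mathfrak l^n$ relative to $E_F$ and $\Delta$ stabilizes. For every $\mathfrak p$ outside a set of density zero, in particular those not dividing a fixed nonzero determinant of Fourier coefficients, the reduction of $M_n$ modulo $\mathfrak p$ is nonzero. We then take $\mathcal P$ to be the primes of $L$ that split completely in a suitable extension and avoid this finite bad set; this set has positive density by Chebotarev.

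\textbf{Step 3: transfer to characters.} Suppose, for a fixed $\phi\in X_n$ with $n\in\mathfrak V$, that $L_f(\phi^\varepsilon\psi)\equiv0$ modulo $\mathfrak P$ for all $\varepsilon\in E_F$ and $\psi\in\widehat\Delta$. By orthogonality over $\widehat\Delta$, and over the Galois orbit generated by the unit action, we invert the finite Fourier transform on $\Delta\times E_F$-orbits; this uses $\mathfrak P\nmid\#E_F\cdot\#\Delta$. The conclusion is that the symbol function $a\mapsto\langle\{a/\varpi^n\},\omega_f\rangle$, twisted by the single $\ell$-power character $\phi$, sums to zero on every coset. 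We then need to propagate this along the conjugates of $\phi$ to force an additive average from Step 2 to vanish mod $\mathfrak P$. For this we use that $\ell$-power roots of unity satisfy linear relations over $\mathbb F_{\mathfrak p}$ only of the trivial type, because $\mathfrak p\nmid\ell$. That vanishing contradicts full rank.

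\textbf{Main obstacle.} Step 3 is the hardest part. Only the unit conjugates $\phi^\varepsilon$ are available, not all Galois conjugates of $\phi$, so the character sums cannot be inverted fully. I would first try to show that the averages in Step 2 can be chosen to factor through the quotient of $(\mathcal O_F/\mathfrak l^n)^\times$ by $E_F\Delta$-orbits. Then only finitely many independent linear conditions arise, uniformly in $n\in\mathfrak V$, and the conjugation by units exactly spans them.
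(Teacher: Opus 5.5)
\textbf{The main gap is in Step 3, at exactly the point you flag, and your proposed fix cannot work.} $E_F$ has order at most $6$, while the Galois orbit of $\phi\in X_n$ over $\mathbb Q(\mu_{\ell^{n_0}})$ has size about $\ell^{n-n_0}$. Unit conjugates therefore cannot ``span'' the conditions needed to invert the character sums.

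The missing idea is that the Galois conjugates come for free from the decomposition group of $\mathfrak P$. Suppose $L_f(\phi^\varepsilon\psi)\equiv0\pmod{\mathfrak P}$. Take any $\tau$ in the decomposition group of $\mathfrak P$ inside $\operatorname{Gal}(L(\mu_{\ell^n})/L(\mu_{\ell^{n_0}}))$; it fixes $\psi$ because $\mu_{\#\Delta}\subset L$. Hida's rationality $L_f(\theta\psi)^\tau=L_f(\theta^\tau\psi)$ then gives $L_f(\phi^{\varepsilon\tau}\psi)\equiv0\pmod{\mathfrak P}$.

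For this decomposition-group average to agree modulo $\mathfrak P$ with the full average over $\operatorname{Gal}(\mathbb Q(\mu_{\ell^n})/\mathbb Q(\mu_{\ell^{n_0}}))$, one needs $r_\ell(\mathfrak p)\le n_0$ (Lemma~\ref{lem:galois-average-reduction}). So $\mathcal P$ must consist of primes that do \emph{not} split far up the cyclotomic $\ell$-tower: $r_\ell(\mathfrak p)\le h$, with the cutoff $h$ fixed before $n_0$. This is the opposite of ``split completely in a suitable extension'', and it is why $\mathcal P$ has only positive density. Your substitute claim, that reductions of $\ell$-power roots of unity satisfy only trivial linear relations because $\mathfrak p\nmid\ell$, is false once the residue field contains $\mu_{\ell^n}$.

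After the Galois and $\widehat\Delta$ averages, Proposition~\ref{prop:average:lpower:l} localizes the sum to $i=1$ and $a\in E_Fg\Gamma_{n-n_0}$, weighted by $\mathbf e_F(bc_\theta/\varpi_{\mathfrak l}^{n_0})$. Only at this point do the unit conjugates enter. Since $c_{\phi^\varepsilon}\equiv\varepsilon c_\phi$, summing over $\varepsilon$ builds the $E_F$-symmetrized class $\mathcal F_{c_\phi g_0^{-1},1}\eta_{f,1}$. It descends to $Y_1^1(\mathfrak N')$ because $\#E_F$ is invertible. Via $\alpha_{u+\varepsilon v_b,1}=\alpha_{\varepsilon v_b,1}\alpha_{u,1}$, it pairs to zero mod $\mathfrak P$ with every $\Upsilon_1(g/\varpi_{\mathfrak l}^n)$, $g\in g_0\Gamma_{n_0}$.

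\textbf{Step 2 does not supply what Step 3 needs.} A complex limit as $n\to\infty$ gives no $\mathfrak p$-adic information at any fixed $n$. A ``fixed determinant of Fourier coefficients'' of $f$ has no meaning here.

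In the paper, full rank concerns the $A$-span $M_n^1(c)$ of the symbols themselves inside $H_1^!(Y_1^1(\mathfrak N'),A)$. The averages run over $a\in c\Gamma_{n_0}/\Gamma_n$ with $n_0$ fixed and $n\to\infty$, not with $n-m$ fixed, and are twisted by $\mathbf e_F(-\alpha a/\varpi_{\mathfrak l}^n)$. They converge to $\Gamma_F(1)a_{\mathcal E_qf'}(\alpha\mathfrak d_F)/N(\alpha\mathfrak d_F)$ for \emph{every} cusp form $f'$ of level $\mathfrak l^e$, not just $f$. Together with the invertibility of $\mathcal E_q$, this forces full rank over $\mathbb C$ for large $n$.

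That qualitative statement becomes uniform through the persistence $M_{n^\circ}^1(c)\subset M_n^1(c)$ for $n\in n^\circ+d^\circ\mathbb Z_{\ge0}$. This is proved from $\{\beta\pi^{-r^\circ},\infty\}_1=\{\beta\pi^{-(r^\circ+s)},\infty\}_1$ using a lower-unipotent element of $\Gamma_1^1(\mathfrak l^e)$, and it is the real source of the arithmetic progression. The Fitting ideal of the fixed quotient $H_1^!/M_{n^\circ}^1(c)$ then gives a finite bad set. Residual perfect duality turns the output of Step 3 into $\mathcal F_{c,1}\eta_{f,1}\equiv0\pmod{\mathfrak P}$ for all $c$.

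Even then, ``contradicts full rank'' is not yet a contradiction. You need the separate input that some $\mathcal F_{c,1}\eta_{f,1}$ is nonzero mod $\mathfrak P$. The paper gets this from Gauss inversion, which expresses $\sum_g\overline\xi(g)\mathcal F_{g,1}\eta_{f,1}$ as $G(\overline\xi)$ times a root of unity times $\mathcal R_1(\xi)\eta_{f,1}$. Here $G(\overline\xi)$ is a $\mathfrak P$-unit since $p\ne\ell$, and primes dividing the nonzero pairing-content ideal $\mathfrak J_f$ are excluded.
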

This theorem says that for every conductor exponent $n\in\mathfrak V$ and every $\phi\in X_n$,
at least one of the finitely many values
\[
\{L_f(\phi^\varepsilon\psi)\,:\,\varepsilon\in\mathcal O_F^\times,\,\psi\in\widehat\Delta\}
\]
is non-zero modulo $\mathfrak P$.
Since the number of unit conjugates and tame twists is bounded
independently of \(n\), this gives a positive proportion of
nonzero values in the family
\(\{\phi\psi:\phi\in X_n,\ \psi\in\widehat\Delta\}\),
uniformly for \(n\in\mathfrak V\).

\subsection{Relation to earlier work}

Namikawa~\cite{Namikawa:2012} proved an imaginary-quadratic analogue of the
non-vanishing theorem of Ash--Stevens~\cite{AshStevens:1986}.  His main theorem
treats Hecke eigenforms on $\mathrm{GL}(2)$ over an imaginary quadratic field of
class number one and of parallel weight $(2,2)$.  Under the Non-Eisenstein
hypothesis, he obtains infinitely many finite-order Hecke twists whose integral
$L$-values are nonzero mod $p$. His expression of these values uses Urban's cap-product~\cite{Urban:1995} 
and the period and the cohomology class attached to the given eigenform constructed by Hida~\cite{Hida:1994}.

The present work likewise considers Hecke eigenforms on $\mathrm{GL}(2)$ over
imaginary quadratic fields.  We work in parallel weight $(2,2)$ and $\mathfrak l$-power levels.
We allow arbitrary class number prime to $\ell$ and obtain a positive proportion 
in a specified vertical family of Coates--Wiles characters.  
We also use Hida's period and class, but construct directly the pairing 
between integral parabolic homology and cohomology of arithmetic orbifolds
and prove the duality needed for the main theorem.  
In particular, we use the boundary-killing operator $\mathcal E_q$ 
to define the automorphic symbols. This makes the symbols integral parabolic homology classes.

Kim and Sun~\cite{KimSun:2017} studied cyclotomic twists of classical modular
forms using ordinary homology and cohomology of the Baily--Borel--Satake compactification of modular
curves.  Vatsal's canonical periods and congruence formulae~\cite{Vatsal:1999} allow them 
to prove generation of the residual Hecke field, which is stronger than non-vanishing mod $p$.  
In the automorphic setting used here, Hida's period and rationality
theorem~\cite{Hida:1994} do not provide integral structures compatible with congruences.
Our argument therefore proves non-vanishing mod $p$ but does not establish residual Hecke-field generation.  
It is an automorphic analogue of the non-vanishing part of Kim--Sun's theorem and a $p\ne\ell$ counterpart of
the results of~\cite{Kwon:2023} on the $p$-adic $L$-functions.

Symbols also appear in the construction of $p$-adic $L$-functions for
cohomological automorphic forms on $\mathrm{GL}_2$ over general number fields
by Barrera Salazar and Williams~\cite{BarreraWilliams:2019}.  Their symbols are
compactly supported cohomology classes attached to eigenforms, 
and their evaluation on automorphic cycles then produces critical $L$-values. 
Their symbols are defined for modules over fields.
Our automorphic symbols, by contrast, lie in parabolic homology with coefficients in an integral ring.

On the Hecke $L$-function side, Gillard~\cite{Gillard:1985} and Schneps~\cite{Schneps:1987} 
proved the vanishing of the
$\mu$-invariant of the $p$-adic $L$-functions interpolating $L$-values
along the Coates--Wiles $\mathbb Z_p$-extension in the CM setting.

\subsection{Automorphic symbols and integral \emph{L}-values}
\label{subsec:intro:automorphic-symbols}
A central object of our paper is the \emph{automorphic symbol}.

The underlying arithmetic space is an orbifold
\[
Y_1^i(\mathfrak N)=\Gamma_1^i(\mathfrak N)\backslash\mathscr H_3.
\]
Here $\mathscr H_3$ is hyperbolic $3$-space, and
$\Gamma_1^i(\mathfrak N)$ is the arithmetic subgroup associated with
the $i$-th ideal class of $F$ at $U_1(\mathfrak N)$-level
(\S\ref{subsec:u0:u1}).

Let $A$ be a Dedekind domain and put
\[
H=H_!^1(Y_1^i(\mathfrak N),A),\qquad
M=H_1^!(Y_1^i(\mathfrak N),A).
\]
Write $(-)^\vee=\operatorname{Hom}_A(-,A)$. The parabolic pairing $\cap:M\times H\to A$
induces an isomorphism
\[
\mathcal D_\cap:H\xrightarrow{\ \sim\ }M^\vee,
\qquad
\eta\longmapsto\left(m\mapsto m\cap\eta\right)
\]
(Lemma~\ref{dualmap:isom}). Since $M$ is finitely generated over $A$,
the quotient $M/M_{\mathrm{tors}}$ is projective. We may therefore fix
an $A$-linear section
\[
s_M:M^{\vee\vee}\longrightarrow M
\]
of the natural map $M\to M^{\vee\vee}$.
For an additive twist parameter $u$, let
\[
\lambda_{u,i}:H\longrightarrow A,\qquad
\eta\longmapsto
\operatorname{Tr}_E\Delta_i^*\alpha_{u,i}^*\eta_{\mathrm c}.
\]
Here $\eta_{\mathrm c}=\mathcal E_q\eta'$ for any compactly supported
lift $\eta'$ of $\eta$. The map $\alpha_{u,i}^*$ realizes the additive
translation by $u$ at a suitable level, and $\Delta_i$ parametrizes
the geodesic from $0$ to $\infty$ in $\mathscr H_3$. The map $\operatorname{Tr}_E$
evaluates the resulting compactly supported class on the oriented
geodesic. These maps are defined in
\S\ref{subsection:integral-automorphic-symbols}.
The boundary-killing property of $\mathcal E_q$ makes $\lambda_{u,i}$
independent of the choice of $\eta'$.
We define
\[
\Upsilon_i(u)
:=s_M\left(\left(\mathcal D_\cap^\vee\right)^{-1}
\left(\lambda_{u,i}\right)\right)\in M=H_1^!(Y_1^i(\mathfrak N),A).
\]
Then we have
\[
\Upsilon_i(u)\cap\eta
=\operatorname{Tr}_E\Delta_i^*\alpha_{u,i}^*\eta_{\mathrm c}.
\]
Although the homology class can depend on $s_M$, its pairing with
every parabolic cohomology class is independent of this choice.

For a Hecke eigenform $f$,
our argument expresses the integral $L$-values $\varepsilon_q L_f(\phi)$ in terms of automorphic symbols.

\subsection{Idea of the proof of the main theorem}

We follow the strategy introduced by Kim and Sun~\cite{KimSun:2017}.
First, we study the analytic behavior of averages of additively twisted
$L$-values.  The resulting vertical limit formula identifies the limit of such
an average with a Fourier--Whittaker coefficient of the cusp form
(Proposition~\ref{prop:additive-average-vertical-limit}). 

We then translate this formula into a limit formula for pairings with
automorphic symbols (Proposition~\ref{prop:automorphic-symbol-pairing-limit}).
For $c\in O_{\mathfrak l}^\times$, $n_0>0$ and $n>2n_0$, let us set
\[
M_n^1(c)
:=\operatorname{Span}_A
\left\{
\Upsilon_1\left(\frac{a}{\varpi_{\mathfrak l}^n}\right):
a\in c+\mathfrak l^{n_0} O_{\mathfrak l}
\right\}\subset H_1^!(Y_1^i(\mathfrak{N}),A),
\]
where $\varpi_{\mathfrak l}$ is a uniformizer of $O_{\mathfrak l}$.
If $M_n^1(c)\otimes_A\mathbb C$ is a proper subspace of the complex parabolic homology for infinitely many $n$, 
then there is a non-zero parabolic cohomology class which annihilates when it pairs with the symbols
of form $\Upsilon_1(a/\varpi_{\mathfrak l}^n)$.
Then the limit formula, together with the invertibility of $\mathcal E_q$,
forces the non-zero cohomology class to vanish. This is a contradiction.
It implies that, for some $n^\circ$, the module $M_{n^\circ}^1(c)$
is a full-rank submodule of integral parabolic homology (Theorem~\ref{vertical:direction}). 
By using properties of automorphic symbols, there is an arithmetic progression $\mathfrak V$ 
such that 
\[
M_{n^\circ}^1(c)\subset M_n^1(c)
\]
for any $n\in\mathfrak V$
(Theorem~\ref{full:rank:vertical}).  Away from a finite set of primes, 
coefficient change therefore identifies the reduction of $M_n^1(c)$
with the full parabolic homology over the residual field
(Theorem~\ref{prop:vertical-full-rank-finite-field}).

Suppose now that all residual $L$-values in the relevant decomposition-group
and tame-character averages vanish.  The integral $L$-value formula and these
averages imply that a Fourier transform of the parabolic cohomology class pairs trivially
with every generator of $M_n^1(c)$
(Proposition~\ref{prop:first-averaged-congruence}). The preceding coefficient-change isomorphism 
and perfect parabolic duality then force this Fourier transform to vanish.

Finally, we show, away from
a fixed obstruction ideal, that at least one of these Fourier transforms is
nonzero (Proposition~\ref{prop:residual-nonvanishing-eta-f}).  This contradiction
proves the residual non-vanishing asserted in
Theorem~\ref{thm:mod-p-nonvanishing}.

\subsection{Obstructions over imaginary quadratic fields}
The obstruction comes from the unit group $E_F$.

The Atkin--Lehner operators $W_{\mathfrak d}^{\chi}$
(\S\ref{subsec:atkin-lehner-func-eq}) are used in the analytic argument.
The construction of $W_{\mathfrak d}^{\chi}$ requires extending a local factor of its central
character $\chi$ to a global finite-order Hecke character.  Such an extension
requires the local part of $\chi$ to be trivial on $E_F$, as
in~\eqref{eq:atkin-lehner-unit-condition}.  
On the other hand, our key argument (Theorem~\ref{vertical:direction}) pairs
symbols against all cusp forms contributing to
$H_!^1(Y_1^1(\mathfrak N),\mathbb C)$, so this condition must hold for the
central character of every such form.  
If the level has only one prime factor, then the unit restriction condition holds automatically
as the central character is a Hecke character.
We therefore impose $\mathfrak l$-power level.
Note that for classical modular curves the totally positive unit group is
trivial, and this obstruction does not occur in~\cite{KimSun:2017}.

To apply the full-rankness result, we take decomposition-group
averages of the algebraic $L$-values. We express the resulting
averages using two successive level-changing pullbacks through
an intermediate, lower prime-power level. This separates the
Fourier transform of the cohomology class at the lower level
from its pairings with automorphic symbols of higher level.

The two pullbacks naturally involve both $Y_1^i$ and its
$\mathrm{SL}_2$-level cover $Y^i$. The latter corresponds to
$U_1^1$-level and carries the action of $E_F$
(\S\ref{subsec:SL2:covering}). To compose the pullbacks, the
intermediate class on $Y^i$ must descend to $Y_1^i$.
We establish its $E_F$-invariance to justify this descent.
This compatibility is used in
Proposition~\ref{prop:first-averaged-congruence}.

\subsection{Outline of the paper}

In \S\ref{sec:automorphic-forms}, we introduce automorphic forms over imaginary quadratic fields.
In \S\ref{section:l-functions-of-cusp-forms}, we study their twisted $L$-functions, 
and in \S\ref{sec:vertical-limits}, we prove the vertical Fourier-coefficient limit.  
In \S\ref{section:conj:on:mod:symb}, we establish the duality between the parabolic homology and cohomology.  
In \S\ref{sec:arithmetic-orbifolds}, we describe the relevant arithmetic orbifolds, 
and in \S\ref{sec:integral-symbols}, we construct integral automorphic symbols representing integral $L$-values.
In \S\ref{sec:full-rank}, we prove complex and residual full-rankness of the symbol modules.  
Finally, in \S\ref{sec:mod-p-nonvanishing}, we apply these results 
and prove the main non-vanishing theorem for Coates--Wiles characters.

\subsection*{Acknowledgement}
This work was supported by Basic Science Research Program 
through the National Research Foundation of Korea (NRF) 
funded by the Ministry of Education (RS-2025-00563144).
The author would like to thank Dohyeong Kim and Hae-Sang Sun for helpful discussion and comments.

\section{Automorphic forms over imaginary quadratic fields}
\label{sec:automorphic-forms}

Recall that $F$ is an imaginary quadratic field.
We use the standard embeddings of $F_v^\times$ into
$\mathbb A_F^\times$ and of $\mathrm{GL}_2(F_v)$ into
$\mathrm{GL}_2(\mathbb A_F)$, with identity components away from $v$.
We also use the diagonal embeddings of $F^\times$ and
$\mathrm{GL}_2(F)$.

\subsection{Hecke characters}

\begin{definition}
A Hecke character $\xi$ of $F$ is a continuous homomorphism
\[
  \xi: F^\times \backslash \mathbb{A}_F^\times 
  \longrightarrow \mathbb{C}^\times.
\]
\end{definition}

For an idele $a=(a_v)_v \in \mathbb{A}_F^\times$ and a place $v$ of $F$, 
write $a_v$ for its $v$-component.
At each place $v$, fix an absolute value 
$|\cdot|_v:F_v^\times \to \mathbb{R}_{>0}^\times$, 
normalized so that the product formula holds. 
For example, $|z|_\infty=|z|^2$ for the complex place $\infty$ of $F$,
where $|\cdot|$ is the absolute value on the complex numbers.
This induces the global idelic norm
\[
  |\cdot|_{\mathbb A_F}: F^\times\backslash \mathbb{A}_F^\times 
  \longrightarrow \mathbb{R}_{>0}^\times,
\]
which is well-defined and surjective.  
Thus, noncanonically, one has a split exact sequence
\[
\begin{tikzcd}
1 \arrow{r} & F^\times\backslash \mathbb{A}_F^{\times,1} \arrow{r}
& F^\times\backslash \mathbb{A}_F^\times \arrow[r,"|\cdot|_{\mathbb A_F}"]
& \mathbb{R}_{>0}^\times \arrow{r} & 1,
\end{tikzcd}
\]
where $\mathbb{A}_F^{\times,1} := 
\{a\in \mathbb{A}_F^\times : |a|_{\mathbb A_F}=1\}$.
As $F^\times\backslash \mathbb{A}_F^{\times,1}$ is compact, 
a Hecke character $\xi$ factors uniquely as
\[
  \xi = \xi^{\mathrm{u}} \times |\cdot|_{\mathbb A_F}^{\,r},
\]
where $\xi^{\mathrm{u}}$ is unitary and $r=r(\xi)\in \mathbb{R}$.

The infinite part of $\mathbb{A}_F$ is given by 
$F\otimes_\mathbb{Q}\mathbb{R}=\mathbb{C}$.
The \emph{archimedean component} of a Hecke character $\xi$ is its restriction to 
$\mathbb{C}^\times \subset \mathbb{A}_F^\times$, 
denoted by $\xi_\infty:\mathbb{C}^\times \to \mathbb{C}^\times$.

\subsection{Ray class groups}
The kernel of $\xi$ contains $1+\widehat{\mathfrak{c}}$
for some integral ideal $\mathfrak{c}$ of $F$, 
where
\[
  1+\widehat{\mathfrak{c}} :=
    \prod_{v\mid \mathfrak{c}} (1+\mathfrak{c}O_v)
    \times \prod_{v\nmid \mathfrak{c}} O_v^\times
\]
and $O_v$ denotes the valuation ring of $F_v$.
Such $\mathfrak{c}$ is called a \emph{modulus} of $\xi$.
The largest such ideal with respect to inclusion is called the
\emph{conductor} of $\xi$.
A Hecke character of modulus $\mathfrak c$ is \emph{primitive} 
if its conductor is $\mathfrak c$.
\begin{definition}
The ray class group of modulus $\mathfrak c$ is
\begin{equation*}
  \mathrm{Cl}_F(\mathfrak c)\,:=\,F^\times\backslash \mathbb{A}_F^\times \;\Big/\;
  \left(\mathbb{C}^\times \times 
  \left(1+\widehat{\mathfrak{c}}\right)\right).
\end{equation*}
\end{definition}
If the archimedean component $\xi_\infty$ of a character $\xi$ of modulus $\mathfrak c$ is trivial, then
$\xi$ factors through $\mathrm{Cl}_F(\mathfrak c)$, which is a finite group.
Consequently, $\xi$ has finite image.

A Hecke character $\xi$ is of \emph{finite order}
if its image is a finite subgroup of $\mathbb{C}^\times$.
As we observed, if $\xi_\infty=1$ then $\xi$ is of finite order.
Conversely, if $\xi$ has finite order, then $\xi_\infty=1$,
since $\mathbb{C}^\times$ is connected whereas the image of
$\xi_\infty$ is finite.
Consequently, $\xi$ has finite order if and only if $\xi_\infty=1$.

For simplicity, set $O_{\mathfrak c}:=\prod_{v\mid\mathfrak c}O_v$.
Let $E_F$ be the unit group $\mathcal O_F^\times$. 
The diagonal embedding maps each $\varepsilon\in E_F$ to an element
of $O_{\mathfrak c}^\times$.
There is a fundamental exact sequence of ray class groups
\begin{equation}\label{eq:ray-class-local-global-exact-sequence}
\begin{tikzcd}
1 \arrow{r} &
E_F/E_F(\mathfrak c) \arrow{r} &
(O_\mathfrak{c}/\mathfrak{c}O_\mathfrak{c})^\times
\arrow{r} &
\mathrm{Cl}_F(\mathfrak c) \arrow{r} &
\mathrm{Cl}_F \arrow{r} & 1,
\end{tikzcd}
\end{equation}
where $E_F(\mathfrak c):=E_F\cap (1+\mathfrak c)$ and $\mathrm{Cl}_F$ is the class group of $F$.
Here, the map $(O_{\mathfrak c}/\mathfrak cO_{\mathfrak c})^\times\to\mathrm{Cl}_F(\mathfrak c)$
is induced by the embeddings 
$F_v^\times\to\mathbb A_F^\times$, $v\mid\mathfrak c$.
There is a canonical identification
\[
(O_\mathfrak{c}/\mathfrak{c}O_\mathfrak{c})^\times
\simeq
\prod_{v\mid\mathfrak{c}}(O_v/\mathfrak{c}O_v)^\times.
\]

\subsection{Extension of local characters to Hecke characters}
Let us choose a character
$
\rho:
O_{\mathfrak c}^\times\longrightarrow\mathbb C^\times
$
whose kernel contains $1+\mathfrak cO_{\mathfrak c}$.

\begin{proposition}\label{prop:hecke-character-local-extension}
Assume that $\ker(\rho)$ contains the image of $E_F$.
Then, there exists a finite-order Hecke character $\chi$ 
of modulus $\mathfrak c$ such that
\[
 \chi\vert_{O_{\mathfrak c}^\times}=\rho.
\]
\end{proposition}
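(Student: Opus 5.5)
The plan is to descend $\rho$ to a character of a finite group and then extend it across a finite abelian extension, using the exact sequence \eqref{eq:ray-class-local-global-exact-sequence}.

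\textbf{Step 1 (descent).} Since $1+\mathfrak cO_{\mathfrak c}\subset\ker\rho$, the character $\rho$ factors through $(O_{\mathfrak c}/\mathfrak cO_{\mathfrak c})^\times$. By hypothesis it is also trivial on the image of $E_F$. The exact sequence \eqref{eq:ray-class-local-global-exact-sequence} identifies the quotient $(O_{\mathfrak c}/\mathfrak cO_{\mathfrak c})^\times/\mathrm{im}(E_F)$ with a subgroup $K\subset\mathrm{Cl}_F(\mathfrak c)$, namely the kernel of $\mathrm{Cl}_F(\mathfrak c)\to\mathrm{Cl}_F$. So $\rho$ induces a character $\bar\rho:K\to\mathbb C^\times$.

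\textbf{Step 2 (extension).} The group $\mathrm{Cl}_F(\mathfrak c)$ is finite abelian, and $\mathbb C^\times$ is divisible, hence an injective $\mathbb Z$-module. Therefore $\bar\rho$ extends to a character $\bar\chi:\mathrm{Cl}_F(\mathfrak c)\to\mathbb C^\times$. Concretely, Pontryagin duality for finite abelian groups shows that restriction $\widehat{\mathrm{Cl}_F(\mathfrak c)}\to\widehat K$ is surjective.

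\textbf{Step 3 (inflation).} Define $\chi$ as the composite
\[
F^\times\backslash\mathbb A_F^\times\to\mathrm{Cl}_F(\mathfrak c)\xrightarrow{\bar\chi}\mathbb C^\times .
\]
This map is continuous, because its kernel contains the open subgroup $\mathbb C^\times\times(1+\widehat{\mathfrak c})$. Hence $\chi$ is a Hecke character, and it is of finite order. It has modulus $\mathfrak c$, since $1+\widehat{\mathfrak c}\subset\ker\chi$.

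\textbf{Step 4 (restriction).} Finally, check that $\chi|_{O_{\mathfrak c}^\times}=\rho$. By definition of the map in \eqref{eq:ray-class-local-global-exact-sequence}, the composite of $O_{\mathfrak c}^\times\hookrightarrow\mathbb A_F^\times$ with the projection to $\mathrm{Cl}_F(\mathfrak c)$ is exactly the map $O_{\mathfrak c}^\times\to(O_{\mathfrak c}/\mathfrak c)^\times\to K$. On $K$ the character $\bar\chi$ agrees with $\bar\rho$, which gives the claim.

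The only point needing care is Step 1: checking that the image of $E_F$ in the sequence is precisely what must be killed. That is, the kernel of $(O_{\mathfrak c}/\mathfrak c)^\times\to\mathrm{Cl}_F(\mathfrak c)$ is exactly the image of $E_F$. This is the content of the exactness already recorded in \eqref{eq:ray-class-local-global-exact-sequence}. The underlying reason is that a local unit $u$ becomes trivial in the ray class group exactly when $u\in F^\times\cdot\mathbb C^\times(1+\widehat{\mathfrak c})$. Comparing components at the finite places, the global element of $F^\times$ involved must be a unit of $\mathcal O_F$.
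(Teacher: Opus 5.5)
Your proposal is correct and matches the paper's argument. The paper's one-line proof applies $\mathrm{Hom}_{\mathbb Z}(-,\mathbb C^\times)$ to \eqref{eq:ray-class-local-global-exact-sequence} and uses the injectivity of the divisible group $\mathbb C^\times$, which is your Steps 1--2; your Steps 3--4 write out the inflation and restriction check that the paper leaves implicit.
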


\begin{proof}
Applying $\mathrm{Hom}_{\mathbb Z}(-,\mathbb{C}^\times)$
to \eqref{eq:ray-class-local-global-exact-sequence},
and using the injectivity of the divisible $\mathbb Z$-module $\mathbb C^\times$, 
we obtain a character $\chi$ satisfying the assertion.
\end{proof}

\subsection{Cuspidal automorphic forms of GL(2)}\label{cuspform}
Let us introduce some notation.
Let us denote by 
\[
\mathbf{L}_{2}
:=
\mathrm{Sym}^2(\mathbb{C}^2)
\]
the space of homogeneous polynomials of degree $2$ in the variables
\[
\mathbf{x}
=
\left(\begin{matrix} X \\ Y \end{matrix}\right).
\]
For a $\mathbb Z$-algebra $A$, let us set 
$\widehat{A}:=A\otimes_\mathbb{Z}\widehat{\mathbb{Z}}$.
Let $\mathfrak{N}$ be an integral ideal of $F$. 
Let us define an open compact subgroup of the non-archimedean part of
$\mathrm{GL}_2(\mathbb{A}_F)$:
\[
U_0(\mathfrak{N})
:=
\left\{\left(\begin{matrix} a & b \\ c & d \end{matrix}\right)
\in 
\mathrm{GL}_2(\widehat{\mathcal{O}}_F)
\;:\;
c\in\widehat{\mathfrak{N}}\right\}.
\]
Let $\chi$ be a finite order Hecke character of modulus $\mathfrak{N}$.
For each finite place $v$, let $\chi_v:=\chi|_{F_v^\times}$.
For a fractional ideal $\mathfrak b$ of $F$, put
\[
\chi_{\mathfrak b}:=\prod_{v\mid\mathfrak b}\chi_v,
\]
viewed as a character of $\prod_{v\mid\mathfrak b} F_v^\times$.

\begin{definition}\label{adel:cuspform:defn} 
A cuspidal automorphic form on $\operatorname{GL}(2)$ 
of parallel weight $(2,2)$, level $\mathfrak N$, and central character $\chi$
is a smooth function 
\[
f:\operatorname{GL}_2(\mathbb{A}_F)\rightarrow \mathbf{L}_2
\]
satisfying the following conditions for any $g\in\operatorname{GL}_2(\mathbb{A}_F)$:
\begin{itemize}
    \item[(1)] 
    For the Casimir operators $\{D_\tau\mid\tau\in I_F\}$ of 
    $\mathrm{SL}_2$,
    \[
    D_\tau f=0.
    \]
    \item[(2)]
    For all $\gamma\in\mathrm{GL}_2(F)$, 
    $z\in\mathbb A_F^\times$, $u\in U_0(\mathfrak{N})$,
    and $c\in \operatorname{SU}_2(\mathbb{C})$,
    \[
    f(\gamma z g u c,\mathbf{x})=\chi(z)\chi_{\mathfrak N}(d)f(g,c\mathbf{x}),
    \]
    where $d$ is the lower-right entry of the matrix $u$.
    \item[(3)] 
    Let $dx$ be a Haar measure on $F\backslash\mathbb{A}_F$. Then,
    \[
    \int_{F\backslash \mathbb{A}_F}f\left(\left(\begin{matrix} 1 & x \\ 0 & 1 \end{matrix}\right) g\right)dx=0.
    \]
\end{itemize}
\end{definition}
We refer to a cuspidal automorphic form simply as a cusp form.
Let us denote by $S_2(\mathfrak{N},\chi)$ 
the space of cusp forms in Definition~\ref{adel:cuspform:defn}.
\begin{remark}
The assumption that the weight is parallel and integral is necessary
for the space of cusp forms to be nonzero~\cite[p.~461]{Hida:1994}.
Under this condition, Hida's rationality results apply to the critical values considered later. 
After normalization by Hida's period and the relevant Gauss and archimedean factors, 
these values are algebraic~\cite[Theorem 8.1]{Hida:1994}.
\end{remark}

\subsection{Fourier-Whittaker expansion}\label{subsec:whittaker}
Write \(y=(y^{(\infty)},y_\infty)\) 
according to the finite and archimedean components of \(\mathbb A_F\).
Let $\mathbf{e}_F$ be the standard additive character of $F\backslash\mathbb{A}_F$ 
whose archimedean component is given by
$\mathbf{e}\left(\mathrm{Tr}_{\mathbb C/\mathbb R}(z)\right)$, where $z\in \mathbb{C}$ and $\mathbf{e}(z):=e^{2\pi iz}$.
Define the function $\mathrm{W}$ on $\mathbb C^\times$ as
\[
\mathrm{W}(y):=K_0(4\pi|y|),
\]
where $K_0$ is the modified Bessel function of the second kind of order zero.
Note that this is the archimedean component of the weight-\(2\) Whittaker function.

Let $\mathfrak d_F$ be the different ideal of $F/\mathbb Q$.
For a finite idele $a=(a_v)_{v\nmid\infty}$ and an integral ideal $\mathfrak b$ of $F$, set
\[
 a\mathfrak b
 :=\mathfrak b\prod_{v\nmid\infty}
 \mathfrak p_v^{\operatorname{ord}_v(a_v)}.
\]
For a function $h:\mathrm{GL}_2(\mathbb A_F)\longrightarrow\mathbf L_2$, 
write the $XY$-component of $h$ as $\widetilde{h}$.
Let $f\in S_2(\mathfrak{N},\chi)$ be a cusp form of weight $2$. 
By Hida \cite[Theorem 6.1]{Hida:1994}, there is a complex-valued function 
$a_f$ on the set of fractional ideals of $F$ such that 
$a_f$ vanishes outside the set of integral ideals and
\begin{equation}\label{Fourier:Whittaker:expansion}
\widetilde{f}(g)=|y|_{\mathbb A_F}\sum_{\alpha\in F^\times}a_f\left(\alpha y^{(\infty)}\mathfrak{d}_F\right)
\mathrm{W}\left(\alpha y_\infty\right)\mathbf{e}_F(\alpha x),
\end{equation}
for every
$g=\left(\begin{smallmatrix}y&x\\0&1\end{smallmatrix}\right)
\in\mathrm{GL}_2(\mathbb A_F)$.

\subsection{Gauss sums and twisted cusp forms}\label{twisted:cuspforms}
For each finite place $v$ of $F$, choose a uniformizer $\varpi_v$ of $F_v$.
We regard \(\varpi_v\) as a finite idele 
by setting all its other components equal to \(1\). 
For a fractional ideal $\mathfrak b$ of $F$, put
$\varpi_\mathfrak{b}
:=
\prod_{v\mid\mathfrak{b}}\varpi_v^{\mathrm{ord}_v(\mathfrak{b})}$.

Let $\mathfrak c$ be an integral ideal of $F$. 
Set
\[
(\mathfrak{c}^{-1}/\mathcal O_F)^\times
:=
\{[\alpha]\in\mathfrak{c}^{-1}/\mathcal O_F\;:\;
\mathrm{Ann}_{\mathcal O_F}([\alpha])=\mathfrak{c}\}.
\] 
Choose a complete set of representatives of
$(\mathfrak{c}^{-1}/\mathcal O_F)^\times$.
Let $\mathcal{R}_\mathfrak{c}$ be the image of this set 
under the diagonal map $F\to\prod_{v\mid\mathfrak c}F_v$. 
For $u\in \mathcal{R}_\mathfrak{c}$, the condition
$\mathrm{Ann}_{\mathcal O_F}([r_u])=\mathfrak{c}$ is equivalent to
$\mathrm{ord}_v(u_v)=-\mathrm{ord}_v(\mathfrak{c})$ 
for every $v\mid\mathfrak{c}$, where $r_u\in\mathfrak c^{-1}$ maps to $u$.
Consequently, multiplication by $\varpi_\mathfrak{c}$ 
induces a bijection of finite sets
\[
\mathcal{R}_\mathfrak{c}
\longrightarrow
(O_\mathfrak{c}/\mathfrak{c}O_\mathfrak{c})^\times,
\qquad
u\longmapsto \varpi_\mathfrak{c}u\pmod{\mathfrak{c}O_\mathfrak{c}}.
\]
Thus, after choosing the uniformizers and the representatives, 
we may identify $\mathcal{R}_\mathfrak{c}$ with 
$(O_\mathfrak{c}/\mathfrak{c}O_\mathfrak{c})^\times$.
This is not canonical as it depends on the choice of uniformizers.
We regard \(u\in\mathcal R_{\mathfrak c}\) as an additive adele by setting \(u_v=0\) for \(v\nmid\mathfrak c\).

Let $\xi$ be a finite-order Hecke character of modulus $\mathfrak c$. Define the \emph{Gauss sum}
\[
G(\xi):=\frac{1}{\xi(d_F)}\sum_{u\in\mathcal{R}_\mathfrak{c}}
\xi(\varpi_\mathfrak{c} u)\mathbf{e}_F\left(\frac{u}{d_F}\right),
\]
where $d_F$ is a finite idele of $F$ corresponding to $\mathfrak{d}_F$
and $\xi(\varpi_\mathfrak{c} u)$ is defined via the map
$(O_\mathfrak{c}/\mathfrak{c}O_\mathfrak{c})^\times
\to\mathrm{Cl}_F(\mathfrak{c})$.
Note that this definition does not depend on the choices of 
$d_F$ and $\mathcal{R}_\mathfrak{c}$.
For $a\in O_{\mathfrak c}^\times$, we have the following inversion formula:
\begin{equation}\label{eq:gauss:inversion}
\frac{1}{\xi(d_F)}
\sum_{u\in\mathcal R_{\mathfrak c}}
\xi(\varpi_\mathfrak{c} u)\mathbf{e}_F\left(\frac{au}{d_F}\right)
=G(\xi)\overline{\xi}(a).
\end{equation}
If \(\xi\) is primitive, then \eqref{eq:gauss:inversion} gives
\begin{equation}\label{eq:gauss:product}
G(\xi)G(\overline\xi)=\xi(-1)N(\mathfrak c).
\end{equation}
Put $t(u):=\left(\begin{smallmatrix} 1 & u \\ 0 & 1 \end{smallmatrix}\right)
\in\mathrm{GL}_2(\mathbb{A}_F)$ for $u\in\mathcal{R}_\mathfrak{c}$.
Assume that $\xi$ is primitive, and define the \emph{twisted cusp form} by
\[
f\otimes\xi(g):=\frac{\xi(\det g)}{G(\xi)}
\sum_{u\in\mathcal{R}_\mathfrak{c}}\xi(\varpi_\mathfrak{c} u)f(gt(u)),
\quad g\in\mathrm{GL}_2(\mathbb{A}_F).
\]
This definition does not depend on the choices of 
$\mathcal{R}_\mathfrak{c}$.
Note that our normalization for $f\otimes\xi$ differs from that of
Hida~\cite[Section 6]{Hida:1994}.
Then,
\[
f\otimes\xi\in S_{2}(\mathfrak{N}\cap\mathfrak{c}^2,\chi\xi^2),
\]
and $a_{f\otimes\xi}(\mathfrak{a})=a_f(\mathfrak{a})\xi(\mathfrak{a})$
for any integral ideal $\mathfrak{a}$ of $F$
by Hida \cite[Section 6]{Hida:1994}.

\section{{\it L}-functions of cusp forms}\label{section:l-functions-of-cusp-forms}
Let us set $\mathbb{C}^\times_1:=\{y\in \mathbb{C}^\times:|y|=1\}$ and
\[
E:=\mathbb{C}^\times_1 \backslash \mathbb{C}^\times.
\]
Equip $E$ with the quotient topology. 
The Whittaker function $\mathrm W$ introduced in \S\ref{subsec:whittaker},
as well as the function $y\mapsto |y|$, descends to $E$. 
Via the identification $E\xrightarrow{\;\simeq\;}\mathbb R_{>0}^\times$,
we normalize the Haar measure on $E$ by
\[
d^\times y=\frac{dr}{r},\quad r=|y|.
\]
For $\alpha\in F^\times$ and $s\in\mathbb{C}$ with $\mathrm{Re}(s)>0$, 
the Whittaker function satisfies
\begin{equation}\label{Whittaker:function:integral}
\int_E
\mathrm{W}(\alpha y)|y|^{2s}d^\times y 
=
\int_{0}^\infty K_0(4\pi|\alpha|r)r^{2s} \frac{dr}{r}
=
\frac{1}{N(\alpha \mathcal O_F)^s}\frac{\Gamma(s)^2}{4(2\pi)^{2s}}.
\end{equation}

\subsection{{\it L}-functions of cusp forms and their integral representations}\label{int:rep:sec} 
Let $f\in S_2(\mathfrak N,\chi)$, and let $a_f$ be its
Fourier--Whittaker coefficient function defined by
\eqref{Fourier:Whittaker:expansion}. 
Let $a$ be a finite idele of $F$ and $u$ a finite adele of $F$.

\begin{definition}
The additively twisted partial $L$-function is
\[
L_a(s,f;u):=\frac{1}{\# E_F}\sum_{\alpha\in F^\times}
\frac{a_f(\alpha a\mathfrak{d}_F)\mathbf e_F(\alpha au)}{N( \alpha a\mathfrak{d}_F)^s}.
\]
\end{definition}
By the bound of Blomer--Brumley \cite{BlomerBrumley:2011}, 
this series converges absolutely for $\mathrm{Re}(s)>\frac{3}{2}+\theta$,
where $\theta=\frac{7}{64}$.
If $u$ is integral, then $\mathbf e_F(\alpha au)=1$ whenever $a_f(\alpha a\mathfrak{d}_F)\neq 0$,
hence $L_a(s,f;u)$ does not depend on $u$. 
In this case, we write \(L_a(s,f)\) for \(L_a(s,f;u)\).

For a function $h:\mathrm{GL}_2(\mathbb A_F)\to\mathbf{L}_2$,
let \(h\vert_u\) denote the right translate of \(h\) by \(t(u)\), defined by
\[
h\vert_u(g,\mathbf x):=h(gt(u),\mathbf x).
\]
$h\vert_u$ is also a $\mathbf{L}_2$-valued function defined on $\mathrm{GL}_2(\mathbb A_F)$.
Set
\[
\widetilde{h_a}(y):=\widetilde h
\left(
\left(\begin{matrix} a & 0 \\ 0 & 1 \end{matrix}\right)
\left(\begin{matrix} y & 0 \\ 0 & 1 \end{matrix}\right)
\right),
\]
where $\widetilde h$ is defined in \S\ref{subsec:whittaker}.
By~\eqref{Fourier:Whittaker:expansion}, we have
\begin{equation}\label{modularform:part:expansion}
\widetilde{(f\vert_u)_a}(y)
=
|ay|_{\mathbb A_F}\sum_{\alpha\in F^\times}a_f(\alpha a\mathfrak{d}_F)
\mathbf e_F(\alpha au)\mathrm{W}(\alpha y).
\end{equation} 
This function descends to a function on $E$.
Define the gamma factor $\Gamma_F(s)$ by
\[
\Gamma_F(s):=
\frac{D_F^s\# E_F}{4(2\pi)^{2s}}\Gamma(s)^2,
\]
where $D_F:=N(\mathfrak d_F)$.
Then $L_a(s,f;u)$ has the following integral representation:

\begin{proposition}\label{integral:representation:lfunction} 
For $s\in\mathbb{C}$ with $\mathrm{Re}(s)>\frac{3}{2}+\theta$,
\[
|a|_{\mathbb A_F}^{s-1}\int_{E}\widetilde{(f\vert_u)_a}(y)|y|^{2s-2}d^\times y
=
\Gamma_F(s)L_a(s,f;u).
\]
\end{proposition}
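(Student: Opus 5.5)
The plan is to substitute the expansion \eqref{modularform:part:expansion} into the integral, interchange sum and integral, and evaluate term by term using the Mellin transform \eqref{Whittaker:function:integral}.

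First, I will rewrite the prefactor. Since $a$ is a finite idele and $y\in\mathbb C^\times$ is archimedean, $|ay|_{\mathbb A_F}=|a|_{\mathbb A_F}|y|_\infty=|a|_{\mathbb A_F}|y|^2$. By \eqref{modularform:part:expansion},
\[
|a|_{\mathbb A_F}^{s-1}\widetilde{(f\vert_u)_a}(y)|y|^{2s-2}
=|a|_{\mathbb A_F}^{s}\sum_{\alpha\in F^\times}a_f(\alpha a\mathfrak d_F)\mathbf e_F(\alpha au)\mathrm W(\alpha y)|y|^{2s}.
\]
Next, I will justify exchanging $\sum_\alpha$ with $\int_E$. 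Since $\mathrm W\ge0$, it suffices to show that $\sum_\alpha|a_f(\alpha a\mathfrak d_F)|\int_E\mathrm W(\alpha y)|y|^{2\sigma}d^\times y$ is finite, where $\sigma=\mathrm{Re}(s)$. By \eqref{Whittaker:function:integral}, this sum equals a constant times $\sum_\alpha|a_f(\alpha a\mathfrak d_F)|N(\alpha\mathcal O_F)^{-\sigma}$. Up to the factor $|a|^{-\sigma}_{\mathbb A_F}$ and the normalization of $\mathfrak d_F$, this is the absolutely convergent series defining $L_a(\sigma,f;u)$. Its convergence for $\sigma>\frac32+\theta$ is the Blomer--Brumley input cited after the definition. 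Fubini--Tonelli then applies.

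Termwise, \eqref{Whittaker:function:integral} gives $\int_E\mathrm W(\alpha y)|y|^{2s}d^\times y=N(\alpha\mathcal O_F)^{-s}\Gamma(s)^2/(4(2\pi)^{2s})$. I then need to match norms: I claim $N(\alpha a\mathfrak d_F)=N(\alpha\mathcal O_F)\,|a|_{\mathbb A_F}^{-1}D_F$. This holds because the ideal $a\mathcal O_F$ has norm $\prod_v|a_v|_v^{-1}=|a|_{\mathbb A_F}^{-1}$, using the normalized absolute values at finite places. Hence
\[
|a|_{\mathbb A_F}^{s}N(\alpha\mathcal O_F)^{-s}=D_F^{s}N(\alpha a\mathfrak d_F)^{-s}.
\]
Summing over $\alpha$ and inserting the factor $\#E_F$ from the definition of $L_a$ yields
\[
\frac{D_F^s\#E_F\Gamma(s)^2}{4(2\pi)^{2s}}L_a(s,f;u)=\Gamma_F(s)L_a(s,f;u).
\]

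The main point needing care is the bookkeeping. This includes the convention $|y|_\infty=|y|^2$, which pairs with the exponent $2s-2$ and the measure $dr/r$ on $E$, and the identity $N(a\mathcal O_F)=|a|_{\mathbb A_F}^{-1}$. Both must be checked so that the factor $|a|^{s-1}$ combines exactly to $D_F^s$. The analytic interchange itself is routine once absolute convergence is granted.
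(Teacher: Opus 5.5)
Your proposal is correct and is essentially the paper's proof: substitute the expansion \eqref{modularform:part:expansion}, integrate term by term with \eqref{Whittaker:function:integral}, and convert norms using $N(\alpha a\mathfrak d_F)=N(\alpha\mathcal O_F)|a|_{\mathbb A_F}^{-1}D_F$. The one addition is your Tonelli argument, which uses $\mathrm W\ge0$ and absolute convergence for $\mathrm{Re}(s)>\frac32+\theta$ to justify the interchange of sum and integral that the paper performs without comment.
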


\begin{proof}
Substituting \eqref{modularform:part:expansion} and integrating term by term, we obtain
\begin{align*}
|a|_{\mathbb A_F}^{s-1}\int_{E} \widetilde{(f\vert_u)_a}(y) |y|^{2s-2}d^\times y
&=
|a|_{\mathbb A_F}^s\int_{E}\sum_{\alpha\in F^\times}
a_f(\alpha a\mathfrak{d}_F)\mathbf e_F(\alpha au)| y|^{2s} 
\mathrm{W}(\alpha y)d^\times y 
\\
&=
\frac{\Gamma_F(s)}{\#E_F}\sum_{\alpha\in F^\times}
\frac{a_f(\alpha a\mathfrak{d}_F)\mathbf e_F(\alpha au)}{N(\alpha a\mathfrak{d}_F)^s}
\end{align*}
from \eqref{Whittaker:function:integral}.
Here we have used
\[
N(\alpha a\mathfrak d_F)
=
N(\alpha\mathcal O_F)|a|_{\mathbb A_F}^{-1}D_F.
\]
This proves the proposition.
\end{proof}

\subsection{Atkin--Lehner operators}
\label{subsec:atkin-lehner-func-eq}
We first define the idelic Atkin--Lehner operators.
Let $\mathfrak N$ be an integral ideal and $\mathfrak d$ a Hall divisor of it, that is,
\[
 \mathfrak d\mid\mathfrak N,
 \qquad \mathfrak d+\mathfrak N\mathfrak d^{-1}=\mathcal O_F.
\]
Using the fixed uniformizers $\varpi_v$ and the notation $\varpi_{\mathfrak b}$ 
introduced in \S\ref{twisted:cuspforms},
define $B_{\mathfrak d}\in\mathrm{GL}_2(\mathbb A_F)$ by its local components:
\[
 (B_{\mathfrak d})_v=
 \begin{cases}
 \left(\begin{matrix}
  0&-1\\
  \varpi_{v}^{\operatorname{ord}_v(\mathfrak{d})}&0
 \end{matrix}\right), 
 & v\mid\mathfrak d, \\
 \left(\begin{smallmatrix}
  1&0\\
  0&1
 \end{smallmatrix}\right), 
 & \text{otherwise}.
 \end{cases}
\]
Thus
$\det(B_{\mathfrak d})=\varpi_{\mathfrak d}.$
\begin{lemma}\label{lem:normalizer:equation}
We have
\[
 B_{\mathfrak d}^{-1}U_0(\mathfrak N)B_{\mathfrak d}
 =U_0(\mathfrak N).
\]  
\end{lemma}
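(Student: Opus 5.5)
The plan is to reduce the statement to a local computation at each finite place $v$. Both $U_0(\mathfrak N)$ and $B_{\mathfrak d}$ factor as restricted products of local components, with $U_0(\mathfrak N)=\prod_v U_{0,v}$ and $U_{0,v}=\{\left(\begin{smallmatrix}a&b\\c&d\end{smallmatrix}\right)\in\mathrm{GL}_2(O_v): c\in\mathfrak N O_v\}$. It therefore suffices to show $(B_{\mathfrak d})_v^{-1}U_{0,v}(B_{\mathfrak d})_v=U_{0,v}$ for every $v$.

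For $v\nmid\mathfrak d$ the component $(B_{\mathfrak d})_v$ is the identity, so there is nothing to prove. For $v\mid\mathfrak d$, put $m=\operatorname{ord}_v(\mathfrak d)$. The Hall divisor condition $\mathfrak d+\mathfrak N\mathfrak d^{-1}=\mathcal O_F$ gives $v\nmid\mathfrak N\mathfrak d^{-1}$, so $\operatorname{ord}_v(\mathfrak N)=m$ and $\mathfrak N O_v=\varpi_v^mO_v$.

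Write $w=\left(\begin{smallmatrix}0&-1\\ \varpi_v^m&0\end{smallmatrix}\right)$, so that $w^{-1}=\varpi_v^{-m}\left(\begin{smallmatrix}0&1\\ -\varpi_v^m&0\end{smallmatrix}\right)$. A direct multiplication gives
\[
w^{-1}\left(\begin{matrix}a&b\\c&d\end{matrix}\right)w=\left(\begin{matrix}d&-c\varpi_v^{-m}\\ -b\varpi_v^{m}&a\end{matrix}\right).
\]
Now take $c\in\varpi_v^mO_v$ and $a,b,d\in O_v$ with $ad-bc\in O_v^\times$. Then the right-hand side has entries in $O_v$, its lower-left entry lies in $\varpi_v^mO_v$, and its determinant is $ad-bc$, which is a unit. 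Hence $w^{-1}U_{0,v}w\subseteq U_{0,v}$.

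For the reverse inclusion I would use either of two arguments. The first is the same computation applied to $w\,u\,w^{-1}$, which gives $\left(\begin{smallmatrix}d&-c\varpi_v^{-m}\\-b\varpi_v^m&a\end{smallmatrix}\right)$ up to the same pattern, so $wU_{0,v}w^{-1}\subseteq U_{0,v}$. The second observes that $w^2=-\varpi_v^m I$ is central, so conjugation by $w$ is an involution on $U_{0,v}$ and is therefore surjective.

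I expect no real obstacle here. The only point needing care is using the Hall condition to identify $\operatorname{ord}_v(\mathfrak N)$ with $\operatorname{ord}_v(\mathfrak d)$ at $v\mid\mathfrak d$. Without it, the entry $-c\varpi_v^{-m}$ need not be integral when $\operatorname{ord}_v(\mathfrak N)<m$, and the lower-left entry $-b\varpi_v^m$ would fail to lie in $\mathfrak N O_v$ when $\operatorname{ord}_v(\mathfrak N)>m$.
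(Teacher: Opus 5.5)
Your proof is correct and follows the paper's argument essentially verbatim. Both reduce to the local components, use the Hall condition to get $\operatorname{ord}_v(\mathfrak N)=\operatorname{ord}_v(\mathfrak d)$, compute the same conjugation formula, and obtain the reverse inclusion from $w^2=-\varpi_v^{m}I$ being central (the paper writes this as $B=-\varpi_v^{e}B^{-1}$).
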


\begin{proof}
If $v\nmid\mathfrak d$, then
$(B_{\mathfrak d})_v$ is the identity matrix, so there is nothing to prove.
Suppose that $v\mid\mathfrak d$ and
put $e=\operatorname{ord}_v(\mathfrak{d})$.
Note that $\operatorname{ord}_v(\mathfrak{N})=e$.  
Write
\[
 B=(B_{\mathfrak d})_v
 =\begin{pmatrix}0&-1\\\varpi_v^e&0\end{pmatrix},
 \qquad
 k=\begin{pmatrix}a&b\\c&d\end{pmatrix}
 \in U_0(\mathfrak N)_v.
\]
Direct multiplication gives
\begin{equation}\label{eq:normalizer:calculation}
 B^{-1}kB
 =\begin{pmatrix}
 d&-\varpi_v^{-e}c\\
 -\varpi_v^e b&a
 \end{pmatrix}.
\end{equation}
This matrix belongs to $U_0(\mathfrak N)_v$, and hence
$B^{-1}U_0(\mathfrak N)_vB\subseteq U_0(\mathfrak N)_v$.
Since $B=-\varpi_v^e B^{-1}$,
the same calculation gives the reverse inclusion.
\end{proof}

For a Hecke character $\chi$, consider the following local restriction condition
\begin{equation}\label{eq:atkin-lehner-unit-condition}
 \chi_{\mathfrak d}(\varepsilon)=1
 \quad\text{for every}\quad\varepsilon\in E_F.
\end{equation}
Here, we use the notation $\chi_{\mathfrak b}$ introduced in \S\ref{cuspform}.
Let $\mathscr X_{\mathfrak d}$ be the set of finite-order Hecke characters $\chi$ of modulus $\mathfrak N$ 
that satisfy \eqref{eq:atkin-lehner-unit-condition}. 
Put $\mathfrak N_0=\mathfrak N\mathfrak d^{-1}$. For $\chi\in\mathscr X_{\mathfrak d}$, consider the character
\[
\rho_\chi:=\chi_{\mathfrak d}\vert_{O_{\mathfrak d}^{\times}}\times \mathbf{1}_{O_{\mathfrak N_0}^{\times}}
:O_{\mathfrak N}^{\times}\longrightarrow\mathbb C^\times.
\] 
By Proposition~\ref{prop:hecke-character-local-extension},
choose a finite-order Hecke character $\chi_1$ of modulus $\mathfrak N$
extending $\rho_\chi$.

\begin{definition}
For $\chi\in\mathscr X_{\mathfrak d}$, define the Atkin--Lehner operator
$W_{\mathfrak d}^\chi$
on functions $h:\mathrm{GL}_2(\mathbb A_F)\to\mathbf{L}_2$ by
\[
 (W_{\mathfrak d}^\chi h) (g)
 :=-\overline\chi_1(\det g)h(gB_{\mathfrak d}).
\]
\end{definition}

\begin{remark}\label{rem:atkin-lehner-unit-obstruction}
The construction requires the local restriction condition~\eqref{eq:atkin-lehner-unit-condition}.
Over a general number field, such an extension exists precisely 
when $\rho_\chi$ is trivial on the group of totally positive units $E_F^+$.
For $F=\mathbb Q$, $E_{\mathbb Q}^+=\{1\}$, so the condition is automatic.  
This explains why the classical Atkin--Lehner construction requires no analogous condition. 
\end{remark}

Put $\chi':=\overline{\chi_1}\chi_2$, where $\chi_2:=\chi\overline{\chi_1}$.
For each $\chi\in \mathscr X_{\mathfrak d}$, fix an auxiliary character $\chi_1$ as above, 
chosen to depend only on the restriction of $\chi$ to $O_{\mathfrak d}^{\times}$.
In particular, $(\overline{\chi'})_1=\chi_1$.

\begin{lemma}\label{lem:exact-divisor-atkin-lehner-properties}
The Atkin--Lehner operator
\[
 W_{\mathfrak d}^\chi:S_2(\mathfrak N,\chi)
 \longrightarrow S_2(\mathfrak N,\chi')
\]
is well-defined for $\chi\in\mathscr X_\mathfrak d$.
\end{lemma}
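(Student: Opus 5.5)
To prove the lemma, I will check one by one that $h:=W_{\mathfrak d}^\chi f$, for $f\in S_2(\mathfrak N,\chi)$, satisfies conditions (1)--(3) of Definition~\ref{adel:cuspform:defn} with central character $\chi'=\overline{\chi_1}\chi_2=\chi\overline{\chi_1}^2$. Smoothness is immediate. Condition (1) holds because $g\mapsto \overline\chi_1(\det g)$ is trivial on the archimedean component, since $\chi_1$ has finite order and hence $\chi_{1,\infty}=1$. Right translation by the finite adelic matrix $B_{\mathfrak d}$ commutes with the Casimir operators, so $D_\tau h=0$. Likewise $\overline\chi_1(\det c)=1$ for $c\in\mathrm{SU}_2(\mathbb C)$, and $c$ commutes with $B_{\mathfrak d}$, so the $\mathrm{SU}_2$-equivariance carries over directly.

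For the left and central behaviour, let $\gamma\in\mathrm{GL}_2(F)$ and $z\in\mathbb A_F^\times$. Since $\chi_1$ is a Hecke character, $\overline\chi_1(\det\gamma z g)=\overline\chi_1(z)^2\overline\chi_1(\det g)$. Also $f(\gamma z gB_{\mathfrak d})=\chi(z)f(gB_{\mathfrak d})$. Together these give the factor $\chi(z)\overline\chi_1(z)^2=\chi'(z)$. Condition (3) follows from the cuspidality of $f$. Indeed, $\det\left(\begin{smallmatrix}1&x\\0&1\end{smallmatrix}\right)=1$, so the character factor pulls out of the integral, and what remains is the constant term of $f$ along $N(\mathbb A_F)$ evaluated at $gB_{\mathfrak d}$, which vanishes.

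The main step is the right $U_0(\mathfrak N)$-equivariance. Take $u=\left(\begin{smallmatrix}a&b\\c&d\end{smallmatrix}\right)\in U_0(\mathfrak N)$ and set $u':=B_{\mathfrak d}^{-1}uB_{\mathfrak d}$. By Lemma~\ref{lem:normalizer:equation}, $u'\in U_0(\mathfrak N)$. From \eqref{eq:normalizer:calculation}, its lower-right entry $d'$ satisfies $d'_v=a_v$ for $v\mid\mathfrak d$ and $d'_v=d_v$ for $v\nmid\mathfrak d$. Hence
\[
h(gu)=-\overline\chi_1(\det g)\,\overline\chi_1(\det u)\,f(gB_{\mathfrak d}u')
=-\overline\chi_1(\det g)\,\overline\chi_1(\det u)\,\chi_{\mathfrak N}(d')\,f(gB_{\mathfrak d}).
\]
It then remains to verify the identity of local characters
\[
\overline\chi_1(\det u)\,\chi_{\mathfrak d}(a)\,\chi_{\mathfrak N_0}(d)=\chi'_{\mathfrak N}(d).
\]
Here $\det u=ad-bc$ lies in $\widehat{\mathcal O}_F^\times$, and $\chi_1$ is unramified outside $\mathfrak N$ with modulus $\mathfrak N$. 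Thus $\overline\chi_1(\det u)$ only sees $\prod_{v\mid\mathfrak N}(ad-bc)_v$ modulo $\mathfrak N$, which is $ad$ there since $c\in\widehat{\mathfrak N}$. Now $\chi_1$ restricts to $\chi_{\mathfrak d}$ on $O_{\mathfrak d}^\times$ and is trivial on $O_{\mathfrak N_0}^\times$. The left side therefore becomes $\overline\chi_{\mathfrak d}(ad)\chi_{\mathfrak d}(a)\chi_{\mathfrak N_0}(d)=\overline\chi_{\mathfrak d}(d)\chi_{\mathfrak N_0}(d)$.

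On the other side, $\chi'_{\mathfrak N}=\chi_{\mathfrak N}\overline{\chi_{1,\mathfrak N}}^2$ evaluated at $d$ gives $\chi_{\mathfrak d}(d)\overline\chi_{\mathfrak d}(d)^2\chi_{\mathfrak N_0}(d)$, which is the same expression. This bookkeeping is where I expect the care to be needed: one must keep track of which local components of $\chi_1$ are trivial, and of the role of the hypothesis $\chi\in\mathscr X_{\mathfrak d}$. That hypothesis is only used, through Proposition~\ref{prop:hecke-character-local-extension}, to guarantee that $\chi_1$ exists. Once these identities are checked, $h\in S_2(\mathfrak N,\chi')$.
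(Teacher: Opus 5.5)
Your proposal is correct and follows essentially the same route as the paper. You conjugate $u\in U_0(\mathfrak N)$ by $B_{\mathfrak d}$ using Lemma~\ref{lem:normalizer:equation} and \eqref{eq:normalizer:calculation}, read off the new lower-right entry ($a$ at $\mathfrak d$, $d$ at $\mathfrak N_0$), and check $\overline\chi_1(\det u)\chi_{\mathfrak d}(a)\chi_{\mathfrak N_0}(d)=\chi'_{\mathfrak N}(d)$; this is the paper's identity $\chi_{\mathfrak d}(a)\chi_{\mathfrak N_0}(d)=\chi_1(\det k)\chi'_{\mathfrak N}(d)$ rearranged, and you only spell out the archimedean, central and cuspidality checks in more detail than the paper does.
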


\begin{proof}
As $\chi_1$ is a Hecke character, the defining formula is left
$\mathrm{GL}_2(F)$-invariant. Right translation by $B_{\mathfrak d}$ 
and multiplication by $\overline{\chi_1}\circ\det$ preserve smoothness, 
the equivariance conditions of the archimedean place, and cuspidality.
Let $z\in\mathbb A_F^\times$,
$k=\left(\begin{smallmatrix}a&b\\c&d\end{smallmatrix}\right)\in U_0(\mathfrak N)$.
Write $B=B_{\mathfrak d}$ and $k'=B^{-1}kB$.  
Lemma~\ref{lem:normalizer:equation} and 
\eqref{eq:normalizer:calculation} give
\begin{align*}
 f(zgkB)=\chi(z)f(gBk')
 &=\chi(z)\chi_{\mathfrak d}(a)\chi_{\mathfrak N_0}(d)f(gB) \\
 &=\chi(z)\chi_1(\det k)\chi_{\mathfrak N}'(d)f(gB).
\end{align*}
Here, $a,d\in O_{\mathfrak N}^\times$.
Consequently,
\begin{align*}
 W_{\mathfrak d}^\chi f(zgk)
 &=-\overline{\chi_1}^2(z)
 \overline{\chi_1}(\det g)\overline{\chi_1}(\det k)f(zgkB) \\
 &=\chi'(z)\chi'_\mathfrak{N}(d)W_{\mathfrak d}f(g).
\end{align*}
This proves the lemma.
\end{proof}

Let us consider the finite sum of cusp form spaces:
\[
 S_{\mathfrak d}
 :=\sum_{\chi\in\mathscr X_{\mathfrak d}}
 S_2(\mathfrak N,\chi)
 \subset
 \left\{h:\mathrm{GL}_2(\mathbb A_F)\to\mathbf L_2\right\}.
\]
This sum is direct. Indeed, suppose that $\sum_\chi f_\chi=0$, $f_\chi\in S_2(\mathfrak N,\chi)$.
Then,
\[
 \sum_\chi \chi(z)f_\chi(g)=0
 \qquad
 (g\in\mathrm{GL}_2(\mathbb A_F),\ z\in\mathbb A_F^\times).
\]
For each fixed $g$, the linear independence of distinct characters 
as functions on the ray class group $\mathrm{Cl}_F(\mathfrak{N})$, 
applied to each coordinate in $\mathbf L_2$, implies that $f_\chi(g)=0$.
Thus every $f_\chi$ is zero.
Thus the operators $W_{\mathfrak d}^\chi$ assemble into an operator
\[
 W_{\mathfrak d}:S_{\mathfrak d}
 \longrightarrow S_{\mathfrak d},
 \qquad
 W_{\mathfrak d}\left(\sum_{\chi\in\mathscr X_{\mathfrak d}} f_\chi\right)
 :=\sum_\chi W_{\mathfrak d}^\chi f_\chi
\]
by Lemma~\ref{lem:exact-divisor-atkin-lehner-properties}
and the fact that $\chi'\in \mathscr X_{\mathfrak d}$ 
for $\chi\in\mathscr X_{\mathfrak d}$.

For a function $h:\mathrm{GL}_2(\mathbb A_F)\to\mathbf L_2$ 
and a finite order Hecke character $\chi$ of conductor $\mathfrak N$,
define the operator $W_{\mathfrak N}$ by
\[
 W_{\mathfrak N}h(g)
 =-\overline\chi(\det g)h(gB_{\mathfrak N}).
\]
A direct computation shows that
\[
 W_{\mathfrak N}:S_2(\mathfrak N,\chi)
 \longrightarrow S_2(\mathfrak N,\overline\chi)
\]
is a well-defined involution.  This is called the \emph{Fricke involution}; 
for further details, see \cite[Proposition~2.9]{Kwon:2025}.

\begin{lemma}\label{lem:composite-atkin-lehner-involution}
The operator $W_{\mathfrak N}W_{\mathfrak d}$ is an automorphism of the space $S_{\mathfrak d}$. 
More precisely, if $f\in S_2(\mathfrak N,\chi)$, then
\[
 W_{\mathfrak N}W_{\mathfrak d}f
 \in S_2(\mathfrak N,\overline{\chi'})
 \quad\text{and}\quad
 (W_{\mathfrak N}W_{\mathfrak d})^2f= \overline{\chi_1}(\varpi_{\mathfrak N_0}) f.
\]
\end{lemma}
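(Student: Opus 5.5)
The plan is to compute $W_{\mathfrak N}W_{\mathfrak d}f$ explicitly as a twisted right translate of $f$, then square it and reduce everything to the action of a central element. Two inputs are needed: Lemma~\ref{lem:exact-divisor-atkin-lehner-properties} for the first operator, and the well-definedness of the Fricke involution on each space $S_2(\mathfrak N,\cdot)$ for the second.

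\textbf{Step 1: the first composite.} Let $f\in S_2(\mathfrak N,\chi)$ with $\chi\in\mathscr X_{\mathfrak d}$. By Lemma~\ref{lem:exact-divisor-atkin-lehner-properties}, $W_{\mathfrak d}f\in S_2(\mathfrak N,\chi')$. Applying $W_{\mathfrak N}$ with respect to $\chi'$ then gives $W_{\mathfrak N}W_{\mathfrak d}f\in S_2(\mathfrak N,\overline{\chi'})$. Unwinding the definitions,
\[
W_{\mathfrak N}W_{\mathfrak d}f(g)=\overline{\chi'}(\det g)\,\overline{\chi_1}(\det g\,\varpi_{\mathfrak N})\,f(gB_{\mathfrak N}B_{\mathfrak d}).
\]
Since $\chi'=\overline{\chi_1}\chi_2$, this simplifies to $\overline{\chi_2}(\det g)\,\overline{\chi_1}(\varpi_{\mathfrak N})\,f(gB_{\mathfrak N}B_{\mathfrak d})$. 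This expression is visibly invertible, by right translation by the inverse matrix with the inverse character factor. Hence $W_{\mathfrak N}W_{\mathfrak d}$ is an automorphism of $S_{\mathfrak d}$, once we check that $\overline{\chi'}\in\mathscr X_{\mathfrak d}$.

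\textbf{Step 2: the auxiliary character in the second application.} For the square, we apply the same formula with $\chi$ replaced by $\overline{\chi'}=\chi_1\overline{\chi_2}$. The key observation is that $\chi_2=\chi\overline{\chi_1}$ is trivial on $O_{\mathfrak d}^\times$, because $\chi_1$ extends $\chi_{\mathfrak d}|_{O_{\mathfrak d}^\times}$. Consequently:
\begin{itemize}
\item $\overline{\chi'}|_{O_{\mathfrak d}^\times}=\chi_{\mathfrak d}|_{O_{\mathfrak d}^\times}$;
\item so $\overline{\chi'}\in\mathscr X_{\mathfrak d}$;
\item and by the chosen convention its auxiliary character is again $\chi_1$, that is, $(\overline{\chi'})_1=\chi_1$;
\item the corresponding "$\chi_2$" for $\overline{\chi'}$ is $\overline{\chi'}\,\overline{\chi_1}=\overline{\chi_2}$.
\end{itemize}

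\textbf{Step 3: the square.} Substituting into the formula from Step 1,
\[
(W_{\mathfrak N}W_{\mathfrak d})^2f(g)=\chi_2(\det g)\,\overline{\chi_2}(\det(gB_{\mathfrak N}B_{\mathfrak d}))\,\overline{\chi_1}(\varpi_{\mathfrak N})^2\,f(gB_{\mathfrak N}B_{\mathfrak d}B_{\mathfrak N}B_{\mathfrak d}).
\]
The central computation is the local one for $z:=B_{\mathfrak N}B_{\mathfrak d}B_{\mathfrak N}B_{\mathfrak d}$.
\begin{itemize}
\item At $v\mid\mathfrak d$ we have $(B_{\mathfrak N})_v=(B_{\mathfrak d})_v=B$ and $B^2=-\varpi_v^eI$. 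Hence the product is $\varpi_v^{2e}I$.
\item At $v\mid\mathfrak N_0$ the matrix $B_{\mathfrak d}$ is trivial. Hence the product is $-\varpi_v^{e}I$.
\end{itemize}
So $z$ is a scalar finite idele, and $f(gz)=\chi(z)f(g)$ by condition (2) of Definition~\ref{adel:cuspform:defn}. Also $\det(B_{\mathfrak N}B_{\mathfrak d})=\varpi_{\mathfrak N}\varpi_{\mathfrak d}$.

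It remains to collect the constant
\[
\overline{\chi_2}(\varpi_{\mathfrak N}\varpi_{\mathfrak d})\,\overline{\chi_1}(\varpi_{\mathfrak N})^2\,\chi(z),
\qquad \chi=\chi_1\chi_2 .
\]
This uses the facts that $\chi_2$ is unramified-trivial at the $\mathfrak d$-components on units and that $\chi_1$ is trivial on $O_{\mathfrak N_0}^\times$. The sign $-1$ at the $\mathfrak N_0$-components is then absorbed: $\chi_1$ kills it since $-1\in O_{\mathfrak N_0}^\times$, and for $\chi_2$ we compare with the global $-1\in F^\times$ together with the unit condition at $\mathfrak d$. The expected outcome is that the $\chi_2$-contributions cancel and the $\chi_1$-contributions leave exactly $\overline{\chi_1}(\varpi_{\mathfrak N_0})$.

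\textbf{Main obstacle.} This final bookkeeping of character values at the uniformizer ideles and at $-1$ is where I expect the difficulty. It requires careful use of which local components of $\chi_1$ and $\chi_2$ are trivial on units; I would verify it component by component, separately at $v\mid\mathfrak d$ and at $v\mid\mathfrak N_0$.
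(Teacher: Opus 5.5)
Your proposal is correct and follows essentially the paper's route. You write $W_{\mathfrak N}W_{\mathfrak d}f$ as a constant times $\overline{\chi_2}(\det g)$ times a right translate of $f$, use the convention $(\overline{\chi'})_1=\chi_1$ (so the second application has auxiliary characters $\chi_1,\overline{\chi_2}$), and reduce the square to a central idele. The one difference is cosmetic: the paper first rewrites $W_{\mathfrak d}$ with $B_{\mathfrak d}^{-1}$, absorbing the sign via $\chi_{\mathfrak d}(-1)=1$, so its central element $(B_{\mathfrak N}B_{\mathfrak d}^{-1})^2$ is trivial at $\mathfrak d$. In your version the central element is $\varpi_v^{2e}$ at $v\mid\mathfrak d$, and this cancels against $\overline{\chi_2}(\varpi_{\mathfrak N}\varpi_{\mathfrak d})$.

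The bookkeeping you flag closes exactly as you predict. The uniformizer factors leave $\overline{\chi_1}(\varpi_{\mathfrak N_0})$, and $\chi_1$ kills the $-1$'s at the places dividing $\mathfrak N_0$. The remaining $\chi_2$-value of the idele that is $-1$ at those places equals $\chi_2(-1)=1$, because $\chi_2$ is trivial on $O_{\mathfrak d}^\times$, unramified outside $\mathfrak N$, and trivial at $\infty$. Invertibility on $S_{\mathfrak d}$ then follows most cleanly from $(W_{\mathfrak N}W_{\mathfrak d})^2$ acting by this nonzero scalar on each summand.
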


\begin{proof}
Choose $f\in S_2(\mathfrak N,\chi)$ and put
\[
 C:=B_{\mathfrak N}B_{\mathfrak d}^{-1},
 \qquad
 \zeta_\chi:=\overline{\chi_1}(\varpi_{\mathfrak N})
 \chi(\varpi_{\mathfrak d}).
\]
The equality 
$(B_{\mathfrak d})_v=-(\varpi_{\mathfrak d})_v (B_{\mathfrak d}^{-1})_v$ for $v\mid\mathfrak d$ 
gives the alternative formula
\begin{equation}\label{eq:involution:rewrite}
 W_{\mathfrak d} f(g)
 =-\chi(\varpi_{\mathfrak d})
  \overline{\chi_1}(\det g)
  f(gB_{\mathfrak d}^{-1}).
\end{equation}
Then~\eqref{eq:involution:rewrite} gives
\[
 W_{\mathfrak N}W_{\mathfrak d}f(g)
 =\zeta_\chi
  \overline{\chi_2}(\det g)f(gC).
\]
This function has central character
$\overline{\chi'}=\chi_1\overline{\chi_2}$.
Applying the same formula once more, with the compatible auxiliary
characters, gives
\[
 (W_{\mathfrak N}W_{\mathfrak d})^2f
 =  \overline{\chi_1}(\varpi_{\mathfrak N_0})f.
\]
This proves the assertion.
\end{proof}

\subsection{Analytic continuation of twisted partial \emph{L}-functions}\label{subsec:analytic:conti}
Let $\mathfrak c$ be an integral ideal of $F$.
Recall that $\mathcal R_{\mathfrak c}$ is the subset of $\prod_{v\mid\mathfrak c}F_v^\times$ 
introduced in \S\ref{twisted:cuspforms}.
For every $u\in\mathcal R_{\mathfrak c}$, there is a unique $u'\in\mathcal R_{\mathfrak c}$ such that
\begin{equation}\label{eq:inversion:representative}
 (\varpi_{\mathfrak c}u')(\varpi_{\mathfrak c}u)\equiv-1\pmod{\mathfrak c}. 
\end{equation}
The map $u\mapsto u'$ is a bijection of $\mathcal R_{\mathfrak c}$. 
Indeed, $(u')'=u$ by the definition.

Let $\mathfrak N$ be an integral ideal of $F$. Put
\[
 \mathfrak d:=\mathfrak N+\mathfrak c^2,
 \qquad
 \mathfrak M:=\mathfrak N\cap\mathfrak c^2,
 \qquad
 \mathfrak N_0:=\mathfrak N\mathfrak d^{-1}.
\]
Let $f\in S_2(\mathfrak N,\chi)$ be a cusp form whose central character $\chi$ 
satisfies~\eqref{eq:atkin-lehner-unit-condition}. 
Since $\mathfrak M\subset\mathfrak N$, we also have $f\in S_2(\mathfrak M,\chi)$.
Assume further that
\[
2\operatorname{ord}_v(\mathfrak c)\ge\operatorname{ord}_v(\mathfrak N)\qquad (v\mid\mathfrak d),
\]
or equivalently $\mathfrak d$ is a Hall divisor of $\mathfrak N$.
Let $u\in\mathcal R_{\mathfrak c}$.

\begin{lemma}\label{lem:finite-inversion-matrix-identity}
Assume further that
\begin{equation}\label{eq:assumption:conductor:exponent:large}
\operatorname{ord}_v(\mathfrak c)
\geq
\operatorname{ord}_v(\mathfrak N)
\qquad (v\mid\mathfrak d).
\end{equation}
Then there exists
$A_u\in U_0(\mathfrak N)$ such that
\begin{equation}\label{eq:atkin-lehner-twist-matrix-identity}
 B_{\mathfrak M}t(u)
 =\varpi_{\mathfrak c}t(u')B_{\mathfrak N}B_{\mathfrak d}^{-1}A_u
\end{equation}
and the lower-right entry of $(A_u)_v$ is
$(\varpi_{\mathfrak c})_vu_v$ for $v\mid\mathfrak c$ and $1$ otherwise.
\end{lemma}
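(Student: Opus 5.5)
The plan is to \emph{define} $A_u$ by the required identity,
\[
 A_u:=\bigl(\varpi_{\mathfrak c}\,t(u')\,B_{\mathfrak N}B_{\mathfrak d}^{-1}\bigr)^{-1}B_{\mathfrak M}\,t(u)\in\mathrm{GL}_2(\mathbb A_F),
\]
where $\varpi_{\mathfrak c}$ is viewed as a central scalar matrix. Then \eqref{eq:atkin-lehner-twist-matrix-identity} holds tautologically. It remains to check, place by place, that $(A_u)_v\in U_0(\mathfrak N)_v$ and that its lower-right entry is as claimed. All matrices involved are trivial at the archimedean place, so only finite places matter. Put $m=\operatorname{ord}_v(\mathfrak c)$ and $n=\operatorname{ord}_v(\mathfrak N)$.

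\emph{Places $v\nmid\mathfrak c$.} Here $u_v=u'_v=0$ and $(\varpi_{\mathfrak c})_v=1$. Since $v\nmid\mathfrak c^2$, also $v\nmid\mathfrak d$, so $(B_{\mathfrak d})_v=1$. Moreover $\operatorname{ord}_v(\mathfrak M)=n$, so $(B_{\mathfrak M})_v=(B_{\mathfrak N})_v$. Hence $(A_u)_v=(B_{\mathfrak N})_v^{-1}(B_{\mathfrak M})_v=1$, whose lower-right entry is $1$ as claimed. This covers both $v\mid\mathfrak N$ and $v\nmid\mathfrak N$.

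\emph{Places $v\mid\mathfrak c$.} By \eqref{eq:assumption:conductor:exponent:large} (trivially if $n=0$) we have $m\ge n$. Hence $\operatorname{ord}_v(\mathfrak d)=\min(n,2m)=n$, so $v\nmid\mathfrak N_0$ and $(B_{\mathfrak N})_v=(B_{\mathfrak d})_v$. Also $\operatorname{ord}_v(\mathfrak M)=2m$. Thus, writing $\pi=\varpi_v$,
\[
 (A_u)_v=\pi^{-m}\,t(-u'_v)\begin{pmatrix}0&-1\\ \pi^{2m}&0\end{pmatrix}t(u_v).
\]
Set $x=\pi^m u_v$ and $x'=\pi^m u'_v$. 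These are units, since $\operatorname{ord}_v(u_v)=-m$, and by \eqref{eq:inversion:representative} they satisfy $xx'\equiv-1\pmod{\pi^m}$. A direct multiplication then gives
\[
 (A_u)_v=\begin{pmatrix}-x' & -(1+xx')\pi^{-m}\\ \pi^{m} & x\end{pmatrix}.
\]
We now check each entry. The upper-right entry is integral precisely because of the congruence $xx'\equiv-1$. The lower-left entry $\pi^m$ lies in $\mathfrak N O_v=\pi^nO_v$ because $m\ge n$. The determinant is $-xx'+(1+xx')=1$. Hence $(A_u)_v\in U_0(\mathfrak N)_v$, with lower-right entry $x=(\varpi_{\mathfrak c})_vu_v$.

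The only delicate point, and the one to state carefully, is the identification of local exponents at $v\mid\mathfrak c$. This step is where hypothesis \eqref{eq:assumption:conductor:exponent:large} is genuinely used: without $m\ge n$, the entry $\pi^m$ would fail to lie in $\mathfrak N O_v$, and $B_{\mathfrak N}B_{\mathfrak d}^{-1}$ need not be trivial at $v$. The upper-right entry's integrality is exactly why $u'$ was defined via \eqref{eq:inversion:representative}.
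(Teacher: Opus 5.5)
Your proof is correct and is essentially the paper's argument. The paper writes down exactly the matrix you compute: at $v\mid\mathfrak c$ it takes $(A_u)_v=\begin{pmatrix}-x'&-(1+xx')\pi^{-m}\\ \pi^m&x\end{pmatrix}$, the identity elsewhere, and verifies the identity by direct multiplication, whereas you define $A_u$ through the identity and then compute it; the membership checks in $U_0(\mathfrak N)$ are the same.

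One minor correction to your closing remark: at $v\mid\mathfrak c$, the triviality of $B_{\mathfrak N}B_{\mathfrak d}^{-1}$ already follows from the standing Hall-divisor hypothesis $2m\ge n$. The strengthened hypothesis $m\ge n$ is needed only to get $\pi^m\in\mathfrak N O_v$.
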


\begin{proof}
For $v\mid\mathfrak c$, put
\[
  a_{u,v}:=-(\varpi_{\mathfrak c}u'u)_v-(\varpi_{\mathfrak c})_v^{-1}\in O_v
\]
and define
\[
 (A_u)_v=
 \begin{cases}
  \left(\begin{matrix}
    -(\varpi_{\mathfrak c}u')_v&a_{u,v}
    \\
    (\varpi_{\mathfrak c})_v&(\varpi_{\mathfrak c}u)_v
   \end{matrix}\right),
   &v\mid\mathfrak c,\\
   \left(\begin{smallmatrix}1&0\\ 0&1\end{smallmatrix}\right),
   &\text{otherwise}.
 \end{cases}
\]
By~\eqref{eq:inversion:representative} and the definition of $\mathcal R_{\mathfrak c}$, 
$(A_u)_v\in\mathrm{SL}_2(O_v)$ for each finite place $v$. 
If $v\mid\mathfrak d$, then the lower-left entry $(\varpi_{\mathfrak c})_v$ of $(A_u)_v$
belongs to $\mathfrak N O_v$.
If $v\mid\mathfrak N_0$, then $(A_u)_v=\left(\begin{smallmatrix}1&0\\ 0&1\end{smallmatrix}\right)$.
Thus $A_u\in U_0(\mathfrak N)$.

At every $v\mid\mathfrak c$, direct multiplication gives
\[
 (\varpi_{\mathfrak c})_vt(u')_{v}(A_u)_v
 =\begin{pmatrix}0&-1\\(\varpi_{\mathfrak c})_v^2&(\varpi_{\mathfrak c}^2u)_v\end{pmatrix}
 =\begin{pmatrix}0&-1\\(\varpi_{\mathfrak c})_v^2&0\end{pmatrix}t(u)_v.
\]
At $v\mid\mathfrak N_0$, one has $(\varpi_{\mathfrak c})_v=1$, $t(u)_v=t(u')_v=1$,
and $(B_{\mathfrak M})_v=(B_{\mathfrak N})_v$, while
$(B_{\mathfrak d})_v=1$.
At every remaining place all the matrices
are the identity. This proves
\eqref{eq:atkin-lehner-twist-matrix-identity} at every finite place.
\end{proof}

For the remainder of this subsection, retain the hypothesis of
Lemma~\ref{lem:finite-inversion-matrix-identity}.

\begin{lemma}\label{lem:translated-atkin-lehner-identity}
Let $u'$ be as in~\eqref{eq:inversion:representative}. 
Then, for every
$g\in\mathrm{GL}_2(\mathbb A_F)$,
\[
 \widetilde{W_{\mathfrak M}^{\chi}(f\vert_u)}(g)
 =-
 \frac{
  \chi(\varpi_{\mathfrak c})
  \chi_{\mathfrak d}(\varpi_{\mathfrak c}u)}{
  \overline{\chi_1}(\varpi_{\mathfrak N})
  \chi(\varpi_{\mathfrak d})}
 \overline{\chi_1}(\det g)\,
 \widetilde{
  \left(W_{\mathfrak N}W_{\mathfrak d}f\right)
  \vert_{u'}}(g).
\]
In particular, for every
$y\in\mathbb C^\times$,
\[
 \widetilde{\bigl(W_{\mathfrak M}^{\chi}(f\vert_u)\bigr)_a}(y)
 =-
 \frac{
  \chi(\varpi_{\mathfrak c})
  \chi_{\mathfrak d}(\varpi_{\mathfrak c}u)}{
  \overline{\chi_1}(\varpi_{\mathfrak N})
  \chi(\varpi_{\mathfrak d})}
 \overline{\chi_1}(a)\,
 \widetilde{
  \left(
   \left(W_{\mathfrak N}W_{\mathfrak d}f\right)
   \vert_{u'}
  \right)_a}(y).
\]
\end{lemma}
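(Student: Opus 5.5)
The plan is to unwind the definitions on both sides and reduce everything to the matrix identity \eqref{eq:atkin-lehner-twist-matrix-identity} of Lemma~\ref{lem:finite-inversion-matrix-identity}. By definition of the Fricke-type operator at level $\mathfrak M$ (with central character $\chi$, now regarded at modulus $\mathfrak M$), we have
\[
W_{\mathfrak M}^{\chi}(f\vert_u)(g)=-\overline\chi(\det g)\,f\bigl(gB_{\mathfrak M}t(u)\bigr).
\]
Substituting $B_{\mathfrak M}t(u)=\varpi_{\mathfrak c}t(u')CA_u$ with $C=B_{\mathfrak N}B_{\mathfrak d}^{-1}$, the central element $\varpi_{\mathfrak c}$ pulls out as $\chi(\varpi_{\mathfrak c})$ by condition (2) of Definition~\ref{adel:cuspform:defn}. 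Likewise $A_u\in U_0(\mathfrak N)$ pulls out as $\chi_{\mathfrak N}(d_{A_u})$. By Lemma~\ref{lem:finite-inversion-matrix-identity}, this factor equals $\chi_{\mathfrak d}(\varpi_{\mathfrak c}u)$, since the lower-right entry is $\varpi_{\mathfrak c}u$ at $v\mid\mathfrak c$ and $1$ elsewhere, and $\chi$ is unramified at places of $\mathfrak c$ not dividing $\mathfrak d$, i.e.\ away from $\mathfrak N$. This yields
\[
W_{\mathfrak M}^{\chi}(f\vert_u)(g)=-\chi(\varpi_{\mathfrak c})\chi_{\mathfrak d}(\varpi_{\mathfrak c}u)\,\overline\chi(\det g)\,f\bigl(g t(u')C\bigr).
\]

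Next I compare with the right-hand side. From the proof of Lemma~\ref{lem:composite-atkin-lehner-involution}, $W_{\mathfrak N}W_{\mathfrak d}f(h)=\zeta_\chi\overline{\chi_2}(\det h)f(hC)$ with $\zeta_\chi=\overline{\chi_1}(\varpi_{\mathfrak N})\chi(\varpi_{\mathfrak d})$. Evaluating at $h=gt(u')$, and using $\det t(u')=1$, gives
\[
\bigl(W_{\mathfrak N}W_{\mathfrak d}f\bigr)\vert_{u'}(g)=\zeta_\chi\overline{\chi_2}(\det g)f(gt(u')C).
\]
Since $\chi=\chi_1\chi_2$, we have $\overline\chi(\det g)=\overline{\chi_1}(\det g)\overline{\chi_2}(\det g)$. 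Dividing by $\zeta_\chi$ produces exactly the stated constant $-\chi(\varpi_{\mathfrak c})\chi_{\mathfrak d}(\varpi_{\mathfrak c}u)/\zeta_\chi$ together with the factor $\overline{\chi_1}(\det g)$. Taking $XY$-components commutes with all of this, because these are scalar identities in $\mathbf L_2$, and this proves the first display.

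For the second display, I specialize to $g=\left(\begin{smallmatrix}a&0\\0&1\end{smallmatrix}\right)\left(\begin{smallmatrix}y&0\\0&1\end{smallmatrix}\right)$ with $y\in\mathbb C^\times$, so that $\det g=ay$. Here $\overline{\chi_1}(ay)=\overline{\chi_1}(a)$, since $\chi_1$ has finite order and hence trivial archimedean component.

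The main obstacle is bookkeeping rather than ideas. One must check that the determinant contributions from $C$ are fully absorbed into $\zeta_\chi$ and not counted twice. One must also confirm that $\chi_{\mathfrak N}(d)$ for $A_u$ really reduces to $\chi_{\mathfrak d}(\varpi_{\mathfrak c}u)$, i.e.\ that $A_u$ is the identity at $v\mid\mathfrak N_0$, which holds by construction. Finally, the normalizing character in $W_{\mathfrak M}^\chi$ at level $\mathfrak M$ must be taken as $\overline\chi$, consistently with the Fricke convention. I would verify the sign and the placement of $\chi(\varpi_{\mathfrak d})$ by re-deriving \eqref{eq:involution:rewrite} alongside the computation.
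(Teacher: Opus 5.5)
Your proposal is correct and is essentially the paper's proof. You use Lemma~\ref{lem:finite-inversion-matrix-identity} and the transformation law to get $f(gB_{\mathfrak M}t(u))=\chi(\varpi_{\mathfrak c})\chi_{\mathfrak d}(\varpi_{\mathfrak c}u)f(gt(u')B_{\mathfrak N}B_{\mathfrak d}^{-1})$, compare with the expression for $W_{\mathfrak N}W_{\mathfrak d}f$ from \eqref{eq:involution:rewrite}, and split $\overline\chi(\det g)=\overline{\chi_1}(\det g)\overline{\chi_2}(\det g)$; your extra checks (that $A_u$ is trivial at $v\mid\mathfrak N_0$, and that $\chi_1$ has trivial archimedean component) only make explicit what the paper leaves implicit.
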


\begin{proof}
Lemma~\ref{lem:finite-inversion-matrix-identity} and the
transformation law of $f$ give
\[
 f(gB_{\mathfrak M}t(u))
 =\chi(\varpi_{\mathfrak c})
  \chi_{\mathfrak d}(\varpi_{\mathfrak c}u)
  f(gt(u')B_{\mathfrak N}B_{\mathfrak d}^{-1}).
\]
On the other hand, \eqref{eq:involution:rewrite} gives
\[
 W_{\mathfrak N}W_{\mathfrak d}f(g)
 =\overline{\chi_1}(\varpi_{\mathfrak N})
  \chi(\varpi_{\mathfrak d})
  \overline\chi_2(\det g)
  f(gB_{\mathfrak N}B_{\mathfrak d}^{-1}).
\]
Combining these identities with the definition of
$W_{\mathfrak M}^{\chi}$ proves the assertion.
\end{proof}

\begin{lemma}
\label{functional:equation:lemma}
For a finite idele $a$ of $F$, $y\in\mathbb{C}^\times$, and $s\in\mathbb{C}$,
\[
N(\mathfrak{M})^{\frac{s}{2}}\widetilde{(f\vert_u)_a}(y^{-1})|ay^{-1}|_{\mathbb A_F}^{s-1}
=
N(\mathfrak{M})^{1-\frac{s}{2}}
\widetilde{W_\mathfrak{M}^\chi(f\vert_u)_{\varpi_\mathfrak{M}/a}}(y)|
(\varpi_\mathfrak{M}/a)y|_{\mathbb A_F}^{1-s}.
\]
\end{lemma}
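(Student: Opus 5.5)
The plan is to unwind the definition of $W_{\mathfrak M}^{\chi}$ and compare the two sides by one explicit factorisation in $\mathrm{GL}_2(\mathbb A_F)$. Here $\mathfrak M$ is a Hall divisor of itself, so $W_{\mathfrak M}^{\chi}$ is the Fricke-type operator. I expect the auxiliary character $\chi_1$ to be chosen as $\chi$ itself: it extends $\chi_{\mathfrak M}\vert_{O_{\mathfrak M}^\times}$, and the level-$\mathfrak N$ character $\chi$ has modulus dividing $\mathfrak M$. Put $b:=\varpi_{\mathfrak M}/a$. The right-hand side then involves
\[
 -\overline{\chi}(by)\,\widetilde f\Bigl(\begin{pmatrix}by&0\\0&1\end{pmatrix}B_{\mathfrak M}t(u)\Bigr),
\]
while the left-hand side involves $\widetilde f\bigl(\left(\begin{smallmatrix}ay^{-1}&0\\0&1\end{smallmatrix}\right)t(u)\bigr)$. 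The norm factors match separately, since
\[
 N(\mathfrak M)^{1-\frac s2}\,|b y|_{\mathbb A_F}^{1-s}
 =N(\mathfrak M)^{\frac s2}\,|a y^{-1}|_{\mathbb A_F}^{s-1},
\]
using $|\varpi_{\mathfrak M}|_{\mathbb A_F}=N(\mathfrak M)^{-1}$ and $|y^{-1}|_{\mathbb A_F}=|y|_{\mathbb A_F}^{-1}$. Hence it suffices to prove the pointwise identity of values of $\widetilde f$ with the sign $-1$ and the character $\overline\chi(by)$ absorbed.

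Put $w=\left(\begin{smallmatrix}0&-1\\1&0\end{smallmatrix}\right)\in\mathrm{GL}_2(F)$. I will verify, place by place, a factorisation
\[
 \begin{pmatrix}by&0\\0&1\end{pmatrix}B_{\mathfrak M}\,t(u)
 = w\, z\,\begin{pmatrix}ay^{-1}&0\\0&1\end{pmatrix} t(u)\, k\, c ,
\]
with $z$ central, $k\in U_0(\mathfrak N)$ and $c\in\mathrm{SU}_2(\mathbb C)$.
\begin{itemize}
 \item At $v\mid\mathfrak M$, put $e=\operatorname{ord}_v(\mathfrak M)$. Then $\left(\begin{smallmatrix}b&0\\0&1\end{smallmatrix}\right)(B_{\mathfrak M})_v=\left(\begin{smallmatrix}0&-b\\\varpi_v^e&0\end{smallmatrix}\right)=w\,(\varpi_v^e a_v^{-1})\left(\begin{smallmatrix}a_v&0\\0&1\end{smallmatrix}\right)$. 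This is an exact equality with trivial $k_v$, so it commutes harmlessly past $t(u)_v$.
 \item At finite $v\nmid\mathfrak M$, we have $b_v=a_v^{-1}$. Also $t(u)_v=1$, since every $v\mid\mathfrak c$ divides $\mathfrak M=\mathfrak N\cap\mathfrak c^2$. So $\left(\begin{smallmatrix}a_v^{-1}&0\\0&1\end{smallmatrix}\right)=w\,a_v^{-1}\left(\begin{smallmatrix}a_v&0\\0&1\end{smallmatrix}\right)w^{-1}$ with $w^{-1}\in\mathrm{GL}_2(O_v)$. As $v\nmid\mathfrak N$, this lies in $U_0(\mathfrak N)_v$ and contributes no $\chi_{\mathfrak N}$ factor.
 \item At $\infty$, we have $\left(\begin{smallmatrix}y&0\\0&1\end{smallmatrix}\right)=w\,y\left(\begin{smallmatrix}y^{-1}&0\\0&1\end{smallmatrix}\right)w^{-1}$. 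Here $w^{-1}\in\mathrm{SU}_2(\mathbb C)$.
\end{itemize}

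Applying the transformation law of Definition~\ref{adel:cuspform:defn}(2) gives the value $\chi(z)f\bigl(\left(\begin{smallmatrix}ay^{-1}&0\\0&1\end{smallmatrix}\right)t(u),c\mathbf x\bigr)$. The central idele is $z=\varpi_{\mathfrak M}a^{-1}y=by$, so $\chi(z)$ cancels against $\overline\chi(by)$. It remains to take $XY$-components. The archimedean $c=w^{-1}$ sends $(X,Y)$ to $(Y,-X)$ up to the convention for the action, so it multiplies the $XY$-coefficient by $-1$. This sign cancels the $-1$ in the definition of $W^\chi_{\mathfrak M}$ and yields the lemma.

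The main obstacle is bookkeeping rather than ideas. One must check that the $w^{-1}$ inserted at finite places $v\nmid\mathfrak M$ really sits to the right of $t(u)$; this uses $t(u)_v=1$ there. One must also pin down the normalisation of $\chi_1$ for $W_{\mathfrak M}^\chi$, so that no stray root-of-unity factor such as $\chi_{\mathfrak M}(\varpi_{\mathfrak M})$ survives. If the paper's convention for $\chi_1$ differs from $\chi$, I would first try to show that the discrepancy character is trivial on $\varpi_{\mathfrak M}a^{-1}y$ up to the factors already absorbed. Failing that, I would carry the extra constant explicitly.
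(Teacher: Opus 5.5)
Your proposal is correct and follows essentially the same route as the paper. The paper likewise factors $\left(\begin{smallmatrix}0&1\\-1&0\end{smallmatrix}\right)\left(\begin{smallmatrix}\varpi_{\mathfrak M}a^{-1}&0\\0&1\end{smallmatrix}\right)\left(\begin{smallmatrix}y&0\\0&1\end{smallmatrix}\right)B_{\mathfrak M}t(u)$ place by place as the central idele $\varpi_{\mathfrak M}a^{-1}y$, times $\left(\begin{smallmatrix}a&0\\0&1\end{smallmatrix}\right)\left(\begin{smallmatrix}y^{-1}&0\\0&1\end{smallmatrix}\right)t(u)$, times the Weyl element at every place $v\nmid\mathfrak M$ including $\infty$; it then concludes by the transformation law, the $\mathrm{SU}_2(\mathbb C)$ sign on the $XY$-component, and the same norm identity. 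Your reading $\chi_1=\chi$ for $W_{\mathfrak M}^{\chi}$ is also what the paper's proof uses implicitly, since with any other choice an extra constant $\chi\overline{\chi_1}(\varpi_{\mathfrak M}/a)$ would survive, as you anticipated.
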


\begin{proof}
In this proof, write
\[
B:=\left(\begin{matrix} 0 & 1 \\ -1 & 0 \end{matrix}\right)
\left(\begin{matrix} \varpi_\mathfrak{M}a^{-1} & 0 \\ 0 & 1 \end{matrix}
\right)
\left(\begin{matrix} y & 0 \\ 0 & 1 \end{matrix}\right)B_\mathfrak{M}t(u),
\qquad m_v=\mathrm{ord}_v(\mathfrak M).
\]
Here,
$\left(\begin{smallmatrix} 0 & 1 \\ -1 & 0 \end{smallmatrix}\right)
\in\mathrm{GL}_2(F)$. 
If $v\mid \mathfrak M$, then
\[
B_v=\left(\begin{matrix} 0 & 1 \\ -1 & 0 \end{matrix}\right)_v
\left(\begin{matrix} \varpi_v^{m_v}a_v^{-1} & 0 \\ 0 & 1 \end{matrix}\right)
\left(\begin{matrix} 0 & -1 \\ \varpi_v^{m_v} & 0 \end{matrix}\right)t(u_v)
=\varpi_v^{m_v}a_v^{-1}\left(\begin{matrix} a_v & 0 \\ 0 & 1 \end{matrix}\right)t(u_v).
\]
Otherwise, 
\[
B_v=(a^{-1}y)_v\left(\begin{matrix} (ay^{-1})_v & 0 \\ 0 & 1 \end{matrix}\right)
\left(\begin{matrix} 0 & 1 \\ -1 & 0 \end{matrix}\right)_v.
\]
In sum,
\[
B
=
a^{-1}\varpi_{\mathfrak{M}}y
\left(\begin{matrix} a & 0 \\ 0 & 1 \end{matrix}\right)
\left(\begin{matrix} y^{-1} & 0 \\ 0 & 1 \end{matrix}\right)t(u)
\prod_{v\nmid \mathfrak{M}}
\left(\begin{matrix} 0 & 1 \\ -1 & 0 \end{matrix}\right)_v,
\]
Using the definition of $\widetilde{W_\mathfrak{M}^\chi(f\vert_u)_{\varpi_\mathfrak{M}/a}}$ 
and the above expression for $B$, we obtain
\[
\widetilde{W_\mathfrak{M}^\chi(f\vert_u)_{\varpi_\mathfrak{M}/a}}(y)=
-\widetilde f\left(\left(\begin{matrix} a & 0 \\ 0 & 1 \end{matrix}\right)
\left(\begin{matrix} y^{-1} & 0 \\ 0 & 1 \end{matrix}\right)t(u)
\left(\begin{matrix} 0 & 1 \\ -1 & 0 \end{matrix}\right)_\infty
\right).
\]
Together with the property of $f$ and the equation 
\[
N(\mathfrak M)^{1-\frac s2}
\left|(\varpi_{\mathfrak M}/a)y\right|_{\mathbb A_F}^{1-s}
=
N(\mathfrak M)^{\frac s2}
|ay^{-1}|_{\mathbb A_F}^{s-1},
\]
we are done.
\end{proof}
For a finite idele $a$, define the completed additively twisted partial $L$-function by
\[
 \Lambda_a(s,f;u)
 :=N(\mathfrak M)^{\frac{s}{2}}\Gamma_F(s)L_a(s,f;u).
\]
Put
\[
 \epsilon_a^\chi(u):=-
 \frac{
 \chi(\varpi_{\mathfrak c})
 \chi_{\mathfrak d}(\varpi_{\mathfrak c}u)
 \overline{\chi_1}(\varpi_{\mathfrak M}/a)}
 {\overline{\chi_1}(\varpi_{\mathfrak N})
 \chi(\varpi_{\mathfrak d})}.
\]
\begin{proposition}\label{Lftn:analytic:conti} 
The function $\Lambda_a(s,f;u)$ extends to an entire function of
$s\in\mathbb C$ and satisfies
\[
 \Lambda_a(s,f;u)
 =\epsilon_a^\chi(u)
 \Lambda_{\varpi_{\mathfrak M}/a}
 \left(2-s,
 W_{\mathfrak N}W_{\mathfrak d}f;u'\right).
\]
\end{proposition}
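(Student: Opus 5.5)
The plan is the standard Hecke argument: split the Mellin integral of Proposition~\ref{integral:representation:lfunction} at $|y|=1$, and use Lemma~\ref{functional:equation:lemma} together with Lemma~\ref{lem:translated-atkin-lehner-identity} to move the piece near $0$ to the dual side. For $\mathrm{Re}(s)$ large, Proposition~\ref{integral:representation:lfunction} gives
\[
\Lambda_a(s,f;u)=N(\mathfrak M)^{\frac s2}\int_E\widetilde{(f\vert_u)_a}(y)\,|ay|_{\mathbb A_F}^{s-1}\,d^\times y ,
\]
using $|a|_{\mathbb A_F}^{s-1}|y|^{2s-2}=|ay|_{\mathbb A_F}^{s-1}$ for $y\in\mathbb C^\times$. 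Write $E=E_{\ge1}\cup E_{<1}$ according to $|y|\ge1$ or $|y|<1$. On $E_{\ge 1}$, the expansion \eqref{modularform:part:expansion} and the exponential decay of $K_0(4\pi|\alpha y|)$ show that $\widetilde{(f\vert_u)_a}$ decays rapidly, uniformly in $\alpha$. Here we use that $\alpha$ ranges over a lattice-type set, since $a_f(\alpha a\mathfrak d_F)\ne0$ forces $\alpha\in a^{-1}\mathfrak d_F^{-1}$, and that the coefficients grow polynomially. Hence that piece is entire in $s$.

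On $E_{<1}$, substitute $y\mapsto y^{-1}$, which preserves $d^\times y$. Lemma~\ref{functional:equation:lemma} then converts the integrand into
\[
N(\mathfrak M)^{1-\frac s2}\,\widetilde{W^\chi_{\mathfrak M}(f\vert_u)_{\varpi_{\mathfrak M}/a}}(y)\,\bigl|(\varpi_{\mathfrak M}/a)y\bigr|_{\mathbb A_F}^{1-s}, \qquad |y|>1 .
\]
Next apply the second identity of Lemma~\ref{lem:translated-atkin-lehner-identity} with $a$ replaced by $\varpi_{\mathfrak M}/a$. This replaces $W^\chi_{\mathfrak M}(f\vert_u)$ by the constant
\[
-\frac{\chi(\varpi_{\mathfrak c})\chi_{\mathfrak d}(\varpi_{\mathfrak c}u)\,\overline{\chi_1}(\varpi_{\mathfrak M}/a)}{\overline{\chi_1}(\varpi_{\mathfrak N})\chi(\varpi_{\mathfrak d})}=\epsilon_a^\chi(u)
\]
times $\widetilde{((W_{\mathfrak N}W_{\mathfrak d}f)\vert_{u'})_{\varpi_{\mathfrak M}/a}}(y)$. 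By Lemma~\ref{lem:composite-atkin-lehner-involution}, $g:=W_{\mathfrak N}W_{\mathfrak d}f$ is again a cusp form in $S_2(\mathfrak N,\overline{\chi'})\subset S_2(\mathfrak M,\overline{\chi'})$. It therefore has an expansion \eqref{modularform:part:expansion}, and the same decay argument makes this second piece entire as well. We obtain
\[
\Lambda_a(s,f;u)=I^{+}_a(s,f;u)+\epsilon_a^\chi(u)\,I^{+}_{\varpi_{\mathfrak M}/a}(2-s,g;u'),
\]
where
\[
I^+_b(s,h;w):=N(\mathfrak M)^{\frac s2}\int_{E_{\ge1}}\widetilde{(h\vert_w)_b}(y)\,|by|^{s-1}_{\mathbb A_F}\,d^\times y .
\]
Since $I^+$ is entire, this representation gives the analytic continuation of $\Lambda_a(s,f;u)$ to all $s$.

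To get the functional equation, apply the same decomposition to $\Lambda_{\varpi_{\mathfrak M}/a}(2-s,g;u')$. This requires the hypotheses on $\chi$ to hold for $\overline{\chi'}$. Condition \eqref{eq:atkin-lehner-unit-condition} does hold, since $\overline{\chi'}\in\mathscr X_{\mathfrak d}$. We also need $(u')'=u$, the identity $\varpi_{\mathfrak M}/(\varpi_{\mathfrak M}/a)=a$, and $(\overline{\chi'})_1=\chi_1$. The result is
\[
\Lambda_{\varpi_{\mathfrak M}/a}(2-s,g;u')=I^+_{\varpi_{\mathfrak M}/a}(2-s,g;u')+\epsilon'\,I^+_a(s,W_{\mathfrak N}W_{\mathfrak d}\,g;u),
\]
where $\epsilon'$ is the analogous root number. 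By Lemma~\ref{lem:composite-atkin-lehner-involution}, $W_{\mathfrak N}W_{\mathfrak d}\,g=\overline{\chi_1}(\varpi_{\mathfrak N_0})f$. The equation $\Lambda_a(s,f;u)=\epsilon_a^\chi(u)\Lambda_{\varpi_{\mathfrak M}/a}(2-s,g;u')$ then reduces to the root-number identity
\[
\epsilon_a^\chi(u)\,\epsilon'\,\overline{\chi_1}(\varpi_{\mathfrak N_0})=1 .
\]
A cleaner route avoids the second decomposition entirely: apply the $y\mapsto y^{-1}$ substitution on $E_{\ge1}$ as well. That converts the whole integral to the dual side directly, with no involution needed, so the $E_{<1}$ and $E_{\ge1}$ pieces jointly produce the full dual integral. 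I would take this route, as it only requires Lemmas~\ref{functional:equation:lemma} and~\ref{lem:translated-atkin-lehner-identity} on all of $E$, valid for $s$ in a region where both Dirichlet series converge.

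The main obstacle is this last point: there is no common half-plane where both $L_a(s,f;u)$ and $L_{\varpi_{\mathfrak M}/a}(2-s,g;u')$ converge absolutely. I would therefore state the identity as an equality of the split integrals, which are entire and valid for all $s$, and verify it by applying the pointwise identity of Lemmas~\ref{functional:equation:lemma} and~\ref{lem:translated-atkin-lehner-identity} to each half. Under the substitution $y\mapsto y^{-1}$, the $E_{<1}$ piece of the $f$-side equals $\epsilon_a^\chi(u)$ times the $E_{\ge1}$ piece of the $g$-side, and the $E_{\ge1}$ piece of the $f$-side equals $\epsilon_a^\chi(u)$ times the $E_{<1}$ piece of the $g$-side. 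Both halves use the same constant, so no separate root-number identity is needed.
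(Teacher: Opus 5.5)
Your argument is correct. The analytic-continuation half is the paper's proof: split at $|y|=1$, fold the $|y|\le1$ part using Lemma~\ref{functional:equation:lemma} and Lemma~\ref{lem:translated-atkin-lehner-identity}, and arrive at \eqref{eq:lfunction:analytic:continuation}.

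For the functional equation the paper takes your first route. It applies \eqref{eq:lfunction:analytic:continuation} to the dual data $(2-s,\varpi_{\mathfrak M}/a,W_{\mathfrak N}W_{\mathfrak d}f,u')$. It then combines $(W_{\mathfrak N}W_{\mathfrak d})^2=\overline{\chi_1}(\varpi_{\mathfrak N_0})$ with the root-number identity $\epsilon_a^\chi(u)\,\epsilon^{\overline{\chi'}}_{\varpi_{\mathfrak M}/a}(u')=\chi_1(\varpi_{\mathfrak N_0})$, so that multiplication by $\epsilon_a^\chi(u)$ swaps the two terms.

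The route you actually adopt is different. It uses the fact that the pointwise identity holds for every $y\in\mathbb C^\times$. For $|y|<1$, the integrand for $g=W_{\mathfrak N}W_{\mathfrak d}f$ at $y$ equals $\epsilon_a^\chi(u)^{-1}$ times the $f$-integrand at $y^{-1}$, which decays rapidly. Hence the full Mellin integral of $g$ over $E$ converges absolutely for every $s$. By Proposition~\ref{integral:representation:lfunction}, it agrees with $\Lambda_{\varpi_{\mathfrak M}/a}(2-s,g;u')$ for $\mathrm{Re}(2-s)>\frac32+\theta$, so it is the continuation. The substitution $y\mapsto y^{-1}$ on all of $E$ then gives the functional equation at once.

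Your last paragraph leaves this identification of the full $g$-integral with the continued $\Lambda$ implicit. You should state it, since it is what gives meaning to ``the $E_{<1}$ piece of the $g$-side''.

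What your route buys is economy. It needs only that $g$ is a cusp form with a Fourier--Whittaker expansion. It does not need the involution property, the compatible choice $(\overline{\chi'})_1=\chi_1$, or the root-number computation, which tacitly uses $\chi_{\mathfrak d}(-1)=1$ and \eqref{eq:inversion:representative}. In fact the root-number identity falls out as a consequence of your argument.

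The paper's route is the classical Hecke template. It yields the symmetric representation \eqref{eq:lfunction:analytic:continuation}, in which every integral runs over $|y|\ge1$ only. There convergence follows from the Fourier--Whittaker expansions alone, with no cross-relation between $f$ and $g$ needed.
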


\begin{proof}
Put
\[
 E_1:=\{y\in E:|y|\geq1\},
 \qquad
 E_2:=\{y\in E:|y|\leq1\}.
\]
For $\operatorname{Re}(s)>\frac32+\theta$,
Proposition~\ref{integral:representation:lfunction} gives
\[
 \Lambda_a(s,f;u)
 =N(\mathfrak M)^{\frac{s}{2}}
 \left(\int_{E_1}+\int_{E_2}\right)\widetilde{(f\vert_u)_a}(y)
 |ay|_{\mathbb A_F}^{s-1}\,d^\times y.
\]
In the second integral, make the measure-preserving change of variable
$y\mapsto y^{-1}$. Lemma~\ref{functional:equation:lemma}, followed by
Lemma~\ref{lem:translated-atkin-lehner-identity}, gives
\begin{align}\begin{split}\label{eq:lfunction:analytic:continuation}
 \Lambda_a(s,f;u)
 ={}&N(\mathfrak M)^{\frac{s}{2}}
 \int_{E_1}\widetilde{(f\vert_u)_a}(y)
 |ay|_{\mathbb A_F}^{s-1}\,d^\times y\\
 +\epsilon_a^\chi(u)&N(\mathfrak M)^{1-\frac{s}{2}}
 \int_{E_1}
 \widetilde{
 \left(
  \left(W_{\mathfrak N}W_{\mathfrak d}f\right)
  \vert_{u'}
 \right)_{\varpi_{\mathfrak M}/a}}(y)
 \left|(\varpi_{\mathfrak M}/a)y\right|_{\mathbb A_F}^{1-s}
 \,d^\times y.
\end{split}\end{align}
Both integrands are rapidly decreasing on $E_1$. 
Hence the integrals define an entire function of $s$ and give the analytic continuation of $\Lambda_a(s,f;u)$.

The definitions and the compatible choices give
\[
 \epsilon_a^\chi(u)\epsilon_{\varpi_{\mathfrak M}/a}^{\overline{\chi'}}(u')=\chi_1(\varpi_{\mathfrak N_0}).
\]
Apply the substitution
\[
 (s,a,f,u)\longmapsto
 \left(2-s,\varpi_{\mathfrak M}/a,
 W_{\mathfrak N}W_{\mathfrak d}f,u'\right)
\]
to~\eqref{eq:lfunction:analytic:continuation}. Since $u\mapsto u'$ is an involution and
$(W_{\mathfrak N}W_{\mathfrak d})^2=\overline{\chi_1}(\varpi_{\mathfrak N_0})$ 
by Lemma~\ref{lem:composite-atkin-lehner-involution},
multiplication by $\epsilon_a^\chi(u)$ interchanges the two terms in that formula.
This proves the functional equation.
\end{proof}

\begin{remark}\label{rem:mellin-transform-extension}
The Mellin transform on the left-hand side of
Proposition~\ref{integral:representation:lfunction} is well defined and
holomorphic for $\operatorname{Re}(s)>0$. By Proposition~\ref{Lftn:analytic:conti},
$L_a(s,f;u)$ has an analytic continuation. Since the two sides agree for
$\operatorname{Re}(s)>\frac32+\theta$, the
equality in Proposition~\ref{integral:representation:lfunction} can be extended to the
half-plane $\operatorname{Re}(s)>0$.
\end{remark}

\subsection{Approximate functional equations}

We record the approximate functional equation. Choose a smooth compactly supported function $\Psi$ on
$\mathbb R_{>0}$ and put
\[
 \Phi(t)=\int_0^\infty\Psi(x)x^t\frac{dx}{x}.
\]
Normalize $\Psi$ so that $\Phi(0)=1$.
For $j=1,2$, define
\[
 V_j(x)
 :=\frac{1}{2\pi i\,\Gamma_F(1)}
 \int_{2+i\mathbb R}
 \Phi((-1)^{j-1}t)\Gamma_F(1+t)x^{-t}\frac{dt}{t}.
\]
The functions $V_j$ are bounded and rapidly decreasing as
$x\rightarrow\infty$.

Choose $\beta\in F^\times$ such that the finite idele
$a':=\beta\varpi_{\mathfrak M}/a$
is prime to $\mathfrak c$.

\begin{proposition}\label{prop:additive-approximate-functional-equation}
If the condition~\eqref{eq:assumption:conductor:exponent:large} is satisfied, then for every $y>0$,
\begin{align*}
 L_a(1,f;u)
 ={}&\frac{1}{\#E_F}\sum_{\alpha\in F^\times}
 \frac{a_f(a\alpha\mathfrak d_F)\mathbf e_F(\alpha a u)}
 {N(a\alpha\mathfrak d_F)}
 V_1\left(\frac{N(a\alpha\mathfrak d_F)}{y}\right)\\
 &+\frac{\epsilon_a^\chi(u)}{\#E_F}
 \sum_{\alpha\in F^\times}
 \frac{a_{W_{\mathfrak N}W_{\mathfrak d}f}
 (a'\alpha\mathfrak d_F)\mathbf e_F(\alpha a'u')}
 {N(a'\alpha\mathfrak d_F)}
 V_2\left(
 \frac{N(a'\alpha\mathfrak d_F)y}{N(\mathfrak M)}
 \right).
\end{align*}
\end{proposition}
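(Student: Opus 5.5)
The plan is the standard contour-shift argument, applied to the completed function $\Lambda_a(s,f;u)=N(\mathfrak M)^{s/2}\Gamma_F(s)L_a(s,f;u)$. It will use Proposition~\ref{Lftn:analytic:conti} (entireness and the functional equation) together with the Dirichlet series expansion of $L_a$ in its region of absolute convergence. Consider
\[
 I(y):=\frac{1}{2\pi i}\int_{2+i\mathbb R}\Phi(t)\,\Lambda_a(1+t,f;u)\,\bigl(N(\mathfrak M)^{1/2}y\bigr)^{t}\,\frac{dt}{t}.
\]
On $\operatorname{Re}(t)=2$ we have $\operatorname{Re}(1+t)=3>\frac32+\theta$, so the series for $L_a(1+t,f;u)$ converges absolutely. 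Expanding it and interchanging sum and integral (justified by absolute convergence, the rapid decay of $\Phi$ along vertical lines, and Stirling's bound for $\Gamma_F$) gives
\[
 I(y)=N(\mathfrak M)^{1/2}\,\Gamma_F(1)\cdot\frac{1}{\#E_F}\sum_{\alpha}\frac{a_f(a\alpha\mathfrak d_F)\mathbf e_F(\alpha au)}{N(a\alpha\mathfrak d_F)}\,V_1\!\left(\frac{N(a\alpha\mathfrak d_F)}{y}\right).
\]
The powers of $N(\mathfrak M)$ are designed to cancel: $N(\mathfrak M)^{(1+t)/2}\cdot N(\mathfrak M)^{-t/2}$ leaves $N(\mathfrak M)^{1/2}$, and one should normalize the weight by $y^t$ accordingly.

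Next, shift the contour to $\operatorname{Re}(t)=-2$. The integrand $\Phi(t)\Lambda_a(1+t)x^t/t$ is entire except for the simple pole at $t=0$, because $\Lambda_a$ is entire and $\Phi$ is entire, $\Psi$ being compactly supported. The residue at $t=0$ is $\Phi(0)\Lambda_a(1,f;u)=N(\mathfrak M)^{1/2}\Gamma_F(1)L_a(1,f;u)$. The horizontal segments vanish in the limit because $\Phi$ decays faster than any polynomial in $|\operatorname{Im}t|$. The completed function has at most polynomial growth in vertical strips; this follows from the integral representation \eqref{eq:lfunction:analytic:continuation}, since the integrals over $E_1$ are bounded uniformly in vertical strips, and so the exponential decay of $\Gamma_F$ is not even needed. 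Hence
\[
 N(\mathfrak M)^{1/2}\Gamma_F(1)L_a(1,f;u)=I(y)-I_{-2}(y),
\]
where $I_{-2}(y)$ is the integral over $\operatorname{Re}(t)=-2$.

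To evaluate $I_{-2}(y)$, apply the functional equation $\Lambda_a(1+t,f;u)=\epsilon_a^\chi(u)\Lambda_{\varpi_{\mathfrak M}/a}(1-t,W_{\mathfrak N}W_{\mathfrak d}f;u')$ and substitute $t\mapsto -t$. This turns the contour into $\operatorname{Re}(t)=2$, turns $\Phi(t)$ into $\Phi(-t)$, which is the source of the $(-1)^{j-1}$ in $V_2$, and turns $dt/t$ into $-dt/t$ after accounting for orientation. The minus sign cancels the minus in front of $I_{-2}$. Now $\operatorname{Re}(1+t)=3$, so the Dirichlet series for $L_{\varpi_{\mathfrak M}/a}$ converges absolutely; expanding it and integrating termwise produces $V_2$ evaluated at $N(\ast\,\alpha\mathfrak d_F)\,y/N(\mathfrak M)$. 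Dividing through by $N(\mathfrak M)^{1/2}\Gamma_F(1)$ then yields the claimed formula.

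The one genuinely delicate point, and the step most likely to hide a misstatement, is replacing the idele $\varpi_{\mathfrak M}/a$ by $a'=\beta\varpi_{\mathfrak M}/a$. The sum $L_b(s,h;u')$ depends on $b$ only through the class $bF^\times$ up to a reindexing $\alpha\mapsto\alpha\beta^{-1}$. However, the additive character $\mathbf e_F(\alpha b u')$ is sensitive to this change, because $u'$ is supported at $\mathfrak c$ and $\beta$ need not be a unit there. I would check that after reindexing, $\mathbf e_F(\alpha\beta^{-1}a'u')$ agrees with the stated $\mathbf e_F(\alpha a'u')$. Failing that, I would absorb the discrepancy into a modified representative $u'$: since $a'$ is prime to $\mathfrak c$, multiplying $u'$ by a unit at $\mathfrak c$ permutes $\mathcal R_{\mathfrak c}$ only modulo $O_{\mathfrak c}$, which leaves $\mathbf e_F$ unchanged on the relevant terms. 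The same bookkeeping must be checked for $\epsilon_a^\chi(u)$.
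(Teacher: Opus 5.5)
Your proposal is correct and follows the paper's argument: the same contour shift from $\operatorname{Re}(t)=2$ to $\operatorname{Re}(t)=-2$, the functional equation of Proposition~\ref{Lftn:analytic:conti} on the shifted line followed by $t\mapsto -t$, and the reindexing $\alpha\mapsto\beta\alpha$. Working with $\Lambda_a$ instead of $\Gamma_F(1+t)L_a(1+t,f;u)/\Gamma_F(1)$ only changes the normalization; however, your displayed weight should be $N(\mathfrak M)^{-t/2}y^t$ to give $V_1(N(a\alpha\mathfrak d_F)/y)$ as you claim, and as written it gives the same identity with $y$ replaced by $N(\mathfrak M)y$.

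The step you flag as delicate is not delicate. Every factor of the summand of $L_b(s,h;u')$, including $\mathbf e_F(\alpha b u')$, depends on $(\alpha,b)$ only through the product $\alpha b$. So $\alpha\mapsto\beta\alpha$ gives $L_{\varpi_{\mathfrak M}/a}(s,h;u')=L_{a'}(s,h;u')$ exactly, with no adjustment of $u'$, and there is nothing to check for $\epsilon_a^\chi(u)$, which does not involve $\beta$.
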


\begin{proof}
Consider
\[
 \frac{1}{2\pi i}\int_{2+i\mathbb R}
 \Phi(t)\frac{\Gamma_F(1+t)}{\Gamma_F(1)}
 L_a(1+t,f;u)y^t\frac{dt}{t}.
\]
Expanding the $L$-function and integrating term by term gives the
first sum. Shift the contour to $-2+i\mathbb R$. The pole at $t=0$
has residue $L_a(1,f;u)$. Proposition~\ref{Lftn:analytic:conti}
gives
\[
 L_a(1+t,f;u)
 =\epsilon_a^\chi(u)N(\mathfrak M)^{-t}
 \frac{\Gamma_F(1-t)}{\Gamma_F(1+t)}
 L_{\varpi_{\mathfrak M}/a}\left(1-t,W_{\mathfrak N}W_{\mathfrak d}f;u'\right).
\]
Making the change of variable $\alpha\mapsto\beta\alpha$, we obtain
\[
 L_{\varpi_{\mathfrak M}/a}\left(1-t,W_{\mathfrak N}W_{\mathfrak d}f;u'\right)=
 L_{a'}\left(1-t,W_{\mathfrak N}W_{\mathfrak d}f;u'\right).
\]
Use this identity on the shifted contour and then replace $t$ by $-t$.
Expanding this gives the second sum.
\end{proof}

\section{Vertical limits of twisted {\it L}-values}
\label{sec:vertical-limits}
\label{section:vertical-limits-of-l-values}

This section proves a vertical limit formula for averages of additively twisted partial \(L\)-values.
The argument is a ramified-level variant of \cite[Section~3.3]{Kwon:2025}. 

Let $\ell$ be an odd prime that splits in $F$, and let
$\mathfrak{l}$ be a prime ideal of $F$ above $\ell$.
Set $\Gamma_m:=1+\mathfrak l^mO_{\mathfrak l}$ for $m>0$.
Let $n_0>0$ be an integer.

\subsection{Additive characters and their averages}\label{subsec:char:aver}
Let $c\in O_{\mathfrak l}^{\times}$ in this subsection.
\begin{lemma}
\label{lem:additive-character-multiplicative-coset-average}
Let $b\in O_{\mathfrak l}$. For any $n\geq n_0$,
\[
 \sum_{a\in c\Gamma_{n_0}/\Gamma_n}
 \mathbf e_F\left(\frac{ab}{\varpi_{\mathfrak l}^n}\right)
 =
 \begin{cases}
  \ell^{n-n_0}
  \mathbf e_F\left(\dfrac{bc}{\varpi_{\mathfrak l}^n}\right),
  &b\in\mathfrak l^{n-n_0}O_{\mathfrak l},\\
  0,& \text{otherwise}.
 \end{cases}
\]
\end{lemma}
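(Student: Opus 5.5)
Write each $a\in c\Gamma_{n_0}$ as $a=c(1+\varpi_{\mathfrak l}^{n_0}x)$ with $x\in O_{\mathfrak l}$. The class of $a$ in $c\Gamma_{n_0}/\Gamma_n$ is determined by $x \bmod \mathfrak l^{n-n_0}$, and conversely every residue class of $x$ modulo $\mathfrak l^{n-n_0}$ gives exactly one coset. Indeed, $c(1+\varpi^{n_0}x)\Gamma_n$ depends only on $c(1+\varpi^{n_0}x)\bmod \mathfrak l^n$, because $c$ is a unit and $a\Gamma_n=a+\mathfrak l^nO_{\mathfrak l}$ for a unit $a$. This residue is in turn determined by $\varpi^{n_0}x\bmod\mathfrak l^n$, i.e.\ by $x\bmod \mathfrak l^{n-n_0}$. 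Since $\ell$ splits in $F$, $O_{\mathfrak l}\cong\mathbb Z_\ell$ and $\#(O_{\mathfrak l}/\mathfrak l^{n-n_0})=\ell^{n-n_0}$. The summand is well defined on cosets: changing $a$ by an element of $\mathfrak l^n$ changes $ab/\varpi^n$ by an element of $O_{\mathfrak l}$. This uses that $\mathbf e_F$ is trivial on $O_{\mathfrak l}$, viewed inside $\mathbb A_F$ with zero components elsewhere; here one should note that $\mathfrak d_F$ is prime to $\mathfrak l$, because $\ell$ is unramified.

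Substituting gives
\[
\sum_{a}\mathbf e_F\left(\frac{ab}{\varpi_{\mathfrak l}^n}\right)
=\mathbf e_F\left(\frac{bc}{\varpi_{\mathfrak l}^n}\right)\sum_{x\in O_{\mathfrak l}/\mathfrak l^{n-n_0}}\mathbf e_F\left(\frac{bcx}{\varpi_{\mathfrak l}^{n-n_0}}\right).
\]
The map $x\mapsto \mathbf e_F(bcx/\varpi^{n-n_0})$ is an additive character of the finite group $O_{\mathfrak l}/\mathfrak l^{n-n_0}$. By orthogonality, the inner sum equals $\ell^{n-n_0}$ if this character is trivial, and $0$ otherwise.

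It remains to show that the character is trivial exactly when $b\in\mathfrak l^{n-n_0}O_{\mathfrak l}$. Since $c$ is a unit, we may replace $bc$ by $b$. If $b\in\mathfrak l^{n-n_0}$, the argument lies in $O_{\mathfrak l}$ and the character is trivial. Conversely, the key fact is that the local component of $\mathbf e_F$ at $\mathfrak l$ has conductor exactly $O_{\mathfrak l}$, i.e.\ it is nontrivial on $\varpi_{\mathfrak l}^{-1}O_{\mathfrak l}$. This holds because, at an unramified split place, $\mathbf e_{F,\mathfrak l}$ is the standard character $\mathbf e_{\mathbb Q_\ell}$ of $\mathbb Q_\ell$ (up to sign conventions), and that character is trivial on $\mathbb Z_\ell$ but not on $\ell^{-1}\mathbb Z_\ell$. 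So if $\operatorname{ord}(b)=k<n-n_0$, then choosing $x$ with $bx/\varpi^{n-n_0}$ of valuation $-1$ gives a nontrivial value.

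The only point needing care is this conductor computation for the local component of $\mathbf e_F$, which rests on $\mathfrak l$ being unramified and split. The rest is routine.
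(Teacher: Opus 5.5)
Your proposal is correct and follows the same route as the paper: substitute $a=c(1+\varpi_{\mathfrak l}^{n_0}x)$ with $x$ modulo $\mathfrak l^{n-n_0}$, factor out $\mathbf e_F(bc/\varpi_{\mathfrak l}^n)$, and apply orthogonality of additive characters. Your extra check, that the local component of $\mathbf e_F$ at the split (hence unramified) prime $\mathfrak l$ has conductor exactly $O_{\mathfrak l}$, is precisely the detail the paper leaves implicit when it invokes orthogonality.
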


\begin{proof}
Write
\[
 a=c(1+\varpi_{\mathfrak l}^{n_0}x),
 \qquad
 x\in O_{\mathfrak l}/\mathfrak l^{n-n_0}O_{\mathfrak l}.
\]
Then the sum can be written as
\[
 \mathbf e_F\left(\frac{bc}{\varpi_{\mathfrak l}^n}\right)
 \sum_{x\;\mathrm{mod}\;\mathfrak l^{n-n_0}}
 \mathbf e_F\left(
  \frac{bcx}{\varpi_{\mathfrak l}^{n-n_0}}
 \right).
\]
Applying the orthogonality of characters, then we are done.
\end{proof}

\begin{lemma}
\label{lem:twisted-local-kloosterman-bound}
Let $\rho$ be a finite-order character of $O_{\mathfrak l}^{\times}/\Gamma_e$ for $e>0$. 
Let $b,d\in O_{\mathfrak l}$, not both zero. Set
\[
 r:=\min\{\operatorname{ord}_{\mathfrak l}(b),
           \operatorname{ord}_{\mathfrak l}(d)\}.
\]
If $n-r\geq2\max\{n_0,e\}$, then
\[
 \frac{1}{\ell^{n-n_0}}
 \left|\sum_{a\in c\Gamma_{n_0}/\Gamma_n}
 \rho(a)\mathbf e_F\left(
 \frac{ba+da^{-1}}{\varpi_{\mathfrak l}^{n}}\right)\right|
 \leq 2\ell^{n_0-\lfloor(n-r)/2\rfloor}.
\]
\end{lemma}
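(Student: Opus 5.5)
The bound in Lemma~\ref{lem:twisted-local-kloosterman-bound} should follow from a $p$-adic stationary phase argument on $O_{\mathfrak l}$. Since $\ell$ splits in $F$, $O_{\mathfrak l}\simeq\mathbb Z_\ell$ has residue field $\mathbb F_\ell$. Hence $\#(\Gamma_j/\Gamma_{j'})=\ell^{j'-j}$, and every additive character of $O_{\mathfrak l}/\mathfrak l^{j}$ is of the form $x\mapsto\mathbf e_F(tx/\varpi_{\mathfrak l}^{j})$.

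\textbf{Step 1: remove the common valuation.} Write $b=\varpi_{\mathfrak l}^r b'$ and $d=\varpi_{\mathfrak l}^r d'$ with $b',d'\in O_{\mathfrak l}$, at least one of them a unit, and put $m:=n-r$. The summand then becomes
\[
\rho(a)\,\mathbf e_F\bigl((b'a+d'a^{-1})/\varpi_{\mathfrak l}^{m}\bigr).
\]
Because $m\ge 2\max\{n_0,e\}\ge e$, this depends only on $a$ modulo $\Gamma_m$. Since also $m\ge n_0$, the sum over $c\Gamma_{n_0}/\Gamma_n$ equals $\ell^{r}$ times the corresponding sum $S$ over $c\Gamma_{n_0}/\Gamma_m$.

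\textbf{Step 2: linearize.} Put $k:=\lceil m/2\rceil$, so $2k\ge m$ and $k\ge\lfloor m/2\rfloor\ge\max\{n_0,e\}$. Parametrize
\[
a=a_0(1+\varpi_{\mathfrak l}^{k}x),\qquad a_0\in c\Gamma_{n_0}/\Gamma_k,\quad x\in O_{\mathfrak l}/\mathfrak l^{m-k}.
\]
Then $\rho(a)=\rho(a_0)$ because $k\ge e$. Moreover $(1+\varpi_{\mathfrak l}^k x)^{-1}\equiv 1-\varpi_{\mathfrak l}^kx\pmod{\mathfrak l^{2k}}$, and $2k\ge m$, so
\[
b'a+d'a^{-1}\equiv b'a_0+d'a_0^{-1}+\varpi_{\mathfrak l}^{k}x\,(b'a_0-d'a_0^{-1})\pmod{\mathfrak l^{m}}.
\]
Orthogonality of characters, exactly as in Lemma~\ref{lem:additive-character-multiplicative-coset-average}, shows that the inner sum over $x$ equals $\ell^{m-k}$ times a root of unity when $b'a_0^2\equiv d'\pmod{\mathfrak l^{m-k}}$, and vanishes otherwise. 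Here $m-k=\lfloor m/2\rfloor$.

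\textbf{Step 3: count solutions of the stationary-phase congruence.} If exactly one of $b',d'$ is a unit, the congruence $b'a_0^2\equiv d'\pmod{\mathfrak l^{\lfloor m/2\rfloor}}$ has no unit solution, since $\lfloor m/2\rfloor\ge1$; then $S=0$. If both are units, then because $\ell$ is odd, Hensel's lemma shows that $a_0^2\equiv d'/b'$ has at most two solutions modulo $\mathfrak l^{\lfloor m/2\rfloor}$. Since $a_0$ runs over classes modulo $\mathfrak l^{k}$, at most $2\ell^{k-\lfloor m/2\rfloor}$ classes $a_0$ contribute. Therefore
\[
|S|\le 2\ell^{k-\lfloor m/2\rfloor}\,\ell^{m-k}=2\ell^{m-\lfloor m/2\rfloor}.
\]
Multiplying by $\ell^r$ and dividing by $\ell^{n-n_0}$ gives the claimed bound $2\ell^{n_0-\lfloor(n-r)/2\rfloor}$.

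The main point needing care is Step 2 when $m$ is odd. Taking the ceiling $k=\lceil m/2\rceil$, rather than $\lfloor m/2\rfloor$, is what kills the quadratic term. One must then check that the resulting loss, $a_0$ being defined only modulo $\mathfrak l^{k}$ while the congruence holds only modulo $\mathfrak l^{\lfloor m/2\rfloor}$, is absorbed exactly by the factor $\ell^{k-\lfloor m/2\rfloor}$ in the solution count. One must also check that the hypothesis $n-r\ge 2\max\{n_0,e\}$ guarantees both $k\ge n_0$, so the parametrization respects the coset $c\Gamma_{n_0}$, and $k\ge e$, so $\rho$ is constant on the fibres.
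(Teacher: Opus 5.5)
Your proposal is correct and is essentially the paper's proof. The paper also rescales by $\varpi_{\mathfrak l}^{-r}$ and reduces to level $\mathfrak l^{n-r}$, then parametrizes $a=t(1+\varpi_{\mathfrak l}^{\lceil (n-r)/2\rceil}x)$ to linearize the phase. Additive orthogonality then reduces the sum to the congruence $b_0t^2\equiv d_0$, and the paper bounds the contributing classes by two square roots lifted to at most $2\ell^{2m-h}$ classes, which is your count $2\ell^{k-\lfloor m/2\rfloor}$ in different notation.
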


\begin{proof}
Put $h=n-r$, $b_0=\varpi_{\mathfrak l}^{-r}b$,
and $d_0=\varpi_{\mathfrak l}^{-r}d$.
Since $e\leq h$, the summand is invariant under $\Gamma_h$.
Reduction modulo $\mathfrak l^h$ therefore gives
\[
 \frac{1}{\ell^{n-n_0}}
 \sum_{a\in c\Gamma_{n_0}/\Gamma_n}
 \rho(a)\mathbf e_F\left(
 \frac{ba+da^{-1}}{\varpi_{\mathfrak l}^{n}}\right)
 =\frac{1}{\ell^{h-n_0}}
 \sum_{a\in c\Gamma_{n_0}/\Gamma_h}
 \rho(a)\mathbf e_F\left(
 \frac{b_0a+d_0a^{-1}}{\varpi_{\mathfrak l}^{h}}\right).
\]
Note that $h\geq n_0$. Set $m=\lceil h/2\rceil$ and write
\[
 a=t(1+\varpi_{\mathfrak l}^{m}x),\qquad
 t\in c\Gamma_{n_0}/\Gamma_m,\qquad
 x\in O_{\mathfrak l}/\mathfrak l^{h-m}O_{\mathfrak l}.
\]
Since $m\geq e$, we have $\rho(a)=\rho(t)$.
Since $2m\geq h$, we also have
\[
 b_0a+d_0a^{-1}
 \equiv b_0t+d_0t^{-1}
 +\varpi_{\mathfrak l}^{m}x(b_0t-d_0t^{-1})
 \pmod{\mathfrak l^h}.
\]
Additive orthogonality rewrites the unnormalized sum modulo
$\mathfrak l^h$ as
\[
 \ell^{h-m}
 \sum_{\substack{t\in c\Gamma_{n_0}/\Gamma_m\\
 b_0t^2\equiv d_0\; (\mathrm{mod}\;\mathfrak l^{h-m})}}
 \rho(t)\mathbf e_F\left(
 \frac{b_0t+d_0t^{-1}}{\varpi_{\mathfrak l}^{h}}\right).
\]
At least one of $b_0,d_0$ is a unit, and $h-m\geq1$.
If exactly one is a unit, the congruence has no solution.
Otherwise, since $\ell$ is odd, it has at most two solutions
modulo $\mathfrak l^{h-m}$.
These lift to at most $2\ell^{2m-h}$ classes modulo $\mathfrak l^m$.
The unnormalized sum is thus bounded by $2\ell^m$.
Dividing by $\ell^{h-n_0}$ proves the assertion.
\end{proof}

\subsection{Vertical limit of \emph{L}-values}
Let $f\in S_2(\mathfrak N,\chi)$ be a cusp form and $e:=\mathrm{ord}_{\mathfrak l}(\mathfrak N)$.
Assume that \(2n_0\ge e\), $e>0$, and the conductor of $\chi$ divides $\mathfrak l^e$. 
For an integer \(n\ge2n_0\), set
\[
 \mathfrak c=\mathfrak l^n,
 \qquad
 \mathfrak d=\mathfrak N+\mathfrak c^2=\mathfrak l^e,
 \qquad
 \mathfrak N_0=\mathfrak N\mathfrak d^{-1},
 \qquad
 \mathfrak M=\mathfrak N\cap \mathfrak c^2=\mathfrak N_0\mathfrak l^{2n}.
\]
Then $\mathfrak d$ is a Hall divisor of $\mathfrak N$, and
condition~\eqref{eq:assumption:conductor:exponent:large} holds,
so Proposition~\ref{prop:additive-approximate-functional-equation} is applicable.
Since $\chi$ is a Hecke character unramified outside $\mathfrak l$, 
the local restriction condition~\eqref{eq:atkin-lehner-unit-condition} is satisfied automatically,
and we take $\chi_1=\chi$ and $\chi_2=1$ in the Atkin--Lehner construction (\S\ref{subsec:atkin-lehner-func-eq}).

Recall the additively twisted partial $L$-function from
\S\ref{int:rep:sec}.  For $a\in O_{\mathfrak l}^{\times}$, we
regard $a/\varpi_{\mathfrak l}^n$ as the finite adele whose components
away from $\mathfrak l$ are zero.
For $c\in O_{\mathfrak l}^{\times}$ and $\alpha\in\mathfrak d_F^{-1}\backslash\{0\}$, define
\[
 L_n^{\mathrm{av}}(f;c,\alpha)
 :=\frac{1}{\ell^{n-n_0}}
 \sum_{a\in c\Gamma_{n_0}/\Gamma_n}
 \mathbf e_F\left(-\frac{\alpha a}{\varpi_{\mathfrak l}^n}\right)
 L_1\left(1,f;\frac{a}{\varpi_{\mathfrak l}^n}\right).
\]
By~\eqref{eq:inversion:representative}, 
we can choose $(a/\varpi_{\mathfrak l}^n)'=-a^{-1}/\varpi_{\mathfrak l}^n$. 
Choose $\beta_0\in F^\times$ such that
\[
  b_{\mathfrak M}:=\beta_0\varpi_{\mathfrak M}
\]
is prime to $\mathfrak l$. For
$\alpha\in\mathfrak d_F^{-1}$, $\beta\in(b_{\mathfrak M}\mathfrak d_F)^{-1}$ and $c\in O_{\mathfrak l}^\times$,
define the additive kernels:
\begin{align}\label{eq:additive-dual-kloosterman-kernel}
\mathcal A_n(c;\alpha)
 &:=\frac{1}{\ell^{n-n_0}}
 \sum_{a\in c\Gamma_{n_0}/\Gamma_n}
 \mathbf e_F\left(
  \frac{\alpha a}{\varpi_{\mathfrak l}^n}
 \right), \notag \\
\mathcal A_n^\chi(c;\alpha,\beta)
 &:=\frac{1}{\ell^{n-n_0}}
 \sum_{a\in c\Gamma_{n_0}/\Gamma_n}
 \mathbf e_F\left(
  \frac{-\alpha a-\beta b_{\mathfrak M}a^{-1}}
       {\varpi_{\mathfrak l}^n}
 \right)
 \epsilon_1^\chi\left(\frac{a}{\varpi_{\mathfrak l}^n}\right),
\end{align}
where $\epsilon_1^\chi(u)$ is the factor defined immediately before
Proposition~\ref{Lftn:analytic:conti}.
Applying Proposition~\ref{prop:additive-approximate-functional-equation}
to the partial $L$-function $L_1(s,f;a/\varpi_{\mathfrak l}^n)$ and
taking the weighted average over $a$, we obtain
\begin{equation}\label{eq:additive-average-two-pieces}
 L_n^{\mathrm{av}}(f;c,\alpha)
 =\mathcal L_1(y)
  +\mathcal L_2(y)\qquad (y>0),
\end{equation}
where
\begin{align}
 \mathcal L_1(y)
 :={}&\frac{1}{\#E_F}
 \sum_{\beta\in F^\times\cap\mathfrak d_F^{-1}}
 \frac{a_f(\beta\mathfrak d_F)}{N(\beta\mathfrak d_F)}
 \mathcal A_n(c;\beta-\alpha)
 V_1\left(\frac{N(\beta\mathfrak d_F)}{y}\right),
 \label{eq:additive-average-first-piece}\\
 \mathcal L_2(y)
 :={}&\frac{1}{\#E_F}
 \sum_{\substack{\beta\in F^\times \\ \beta b_{\mathfrak M}\mathfrak d_F\subset\mathcal O_F}}
 \frac{a_{W_{\mathfrak N}W_{\mathfrak d}f}(b_{\mathfrak M}\beta\mathfrak d_F)}
      {N(b_{\mathfrak M}\beta\mathfrak d_F)}
 \mathcal A_n^\chi(c;\alpha,\beta)
  V_2\left(
  \frac{N(b_{\mathfrak M}\beta\mathfrak d_F)y}
       {N(\mathfrak M)}
  \right).
 \label{eq:additive-average-second-piece}
\end{align}

\begin{proposition}\label{prop:additive-dual-kloosterman-bound}
For fixed $\alpha\in \mathfrak d_F^{-1}\setminus\{0\}$ and all sufficiently large $n$,
we have the bound
\[
 \left|\mathcal A_n^\chi(c;\alpha,\beta)\right|
 \ll_{\ell,n_0,\alpha}\ell^{-n/2}
\]
which holds uniformly in $c\in O_{\mathfrak l}^{\times}$ and $\beta\in (b_{\mathfrak M}\mathfrak d_F)^{-1}$.
\end{proposition}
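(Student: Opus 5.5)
The plan is to reduce $\mathcal A_n^\chi(c;\alpha,\beta)$ to the twisted local Kloosterman sum of Lemma~\ref{lem:twisted-local-kloosterman-bound}, with $b=-\alpha$ and $d=-\beta b_{\mathfrak M}$ suitably interpreted at $\mathfrak l$.

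The first step is to make the dependence of $\epsilon_1^\chi(a/\varpi_{\mathfrak l}^n)$ on $a$ explicit. By its definition, the only factor depending on $u=a/\varpi_{\mathfrak l}^n$ is $\chi_{\mathfrak d}(\varpi_{\mathfrak c}u)=\chi_{\mathfrak l}(a)$. Here $\mathfrak d=\mathfrak l^e$, and we have taken $\chi_1=\chi$. The remaining factors $\chi(\varpi_{\mathfrak c})$, $\overline{\chi_1}(\varpi_{\mathfrak M})$, $\overline{\chi_1}(\varpi_{\mathfrak N})$ and $\chi(\varpi_{\mathfrak d})$ are roots of unity independent of $a$. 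Hence
\[
|\mathcal A_n^\chi(c;\alpha,\beta)|=\frac{1}{\ell^{n-n_0}}\Bigl|\sum_{a\in c\Gamma_{n_0}/\Gamma_n}\rho(a)\,\mathbf e_F\Bigl(\frac{-\alpha a-\beta b_{\mathfrak M}a^{-1}}{\varpi_{\mathfrak l}^n}\Bigr)\Bigr|,
\]
with $\rho=\chi_{\mathfrak l}|_{O_{\mathfrak l}^\times}$. Since the conductor of $\chi$ divides $\mathfrak l^e$, $\rho$ is trivial on $\Gamma_e$.

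The second step is to pass to the local component. Only the $\mathfrak l$-component of $\mathbf e_F$ matters, because the adeles $a/\varpi_{\mathfrak l}^n$ are supported at $\mathfrak l$. Since $\ell$ splits, $F_{\mathfrak l}=\mathbb Q_\ell$ and the different is prime to $\mathfrak l$, so $\alpha$ and $\beta b_{\mathfrak M}$ lie in $O_{\mathfrak l}$. This uses that $b_{\mathfrak M}$ is prime to $\mathfrak l$ and $\beta\in(b_{\mathfrak M}\mathfrak d_F)^{-1}$. We set $b=-\alpha$ and $d=-\beta b_{\mathfrak M}$, both in $O_{\mathfrak l}$. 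Because $\alpha\ne0$ is fixed, $r\le\operatorname{ord}_{\mathfrak l}(\alpha)=:r_\alpha$ holds uniformly in $\beta$ and $c$. This is the key point for uniformity: the minimum defining $r$ is controlled by $\alpha$ alone, and $\beta=0$ is harmless.

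The third step applies Lemma~\ref{lem:twisted-local-kloosterman-bound}. For $n\ge r_\alpha+2\max\{n_0,e\}$ we have $n-r\ge2\max\{n_0,e\}$, so the lemma gives
\[
|\mathcal A_n^\chi(c;\alpha,\beta)|\le 2\ell^{n_0-\lfloor(n-r)/2\rfloor}\le 2\ell^{n_0+1/2+r_\alpha/2}\,\ell^{-n/2}.
\]
This is the claimed bound, with the implied constant depending only on $\ell,n_0,\alpha$.

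The main obstacle is the bookkeeping in the first two steps. One must check carefully that the $\epsilon$-factor is exactly $\rho(a)$ times a constant of modulus one. One must also check that the choice of $(a/\varpi_{\mathfrak l}^n)'$ and of $\mathcal R_{\mathfrak c}$ representatives does not introduce extra $a$-dependence, and that the global-to-local identification of $\mathbf e_F$ is correct, including possible sign conventions at $\mathfrak l$. Once these are in place, the estimate follows directly from the lemma.
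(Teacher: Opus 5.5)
Your proposal is correct and follows the paper's proof: the paper also writes $\epsilon_1^\chi(a/\varpi_{\mathfrak l}^n)=-\overline\chi(\varpi_{\mathfrak c})\chi(a)$, uses that $\alpha$ and $\beta b_{\mathfrak M}$ are $\mathfrak l$-integral because $\ell$ splits and $b_{\mathfrak M}$ is a local unit, and applies Lemma~\ref{lem:twisted-local-kloosterman-bound} with $b=-\alpha$, $d=-\beta b_{\mathfrak M}$ once $n\geq\operatorname{ord}_{\mathfrak l}(\alpha)+2\max\{n_0,e\}$. Your explicit constant $2\ell^{n_0+1/2+r_\alpha/2}$ spells out the final step that the paper leaves implicit.
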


\begin{proof}
We have
\[
 \epsilon_1^\chi\left(\frac{a}{\varpi_{\mathfrak l}^n}\right)
 =-\overline\chi(\varpi_{\mathfrak c})\chi(a).
\]
Since $\ell$ splits in $F$ and $b_{\mathfrak M}$ is a local unit,
both $\alpha$ and $\beta b_{\mathfrak M}$ are integral at $\mathfrak l$,
and $\operatorname{ord}_{\mathfrak l}(\beta b_{\mathfrak M})
=\operatorname{ord}_{\mathfrak l}(\beta)$.
Note that the restriction of $\chi$ to $\Gamma_e$ is trivial, 
so we apply Lemma~\ref{lem:twisted-local-kloosterman-bound} with
$b=-\alpha$ and $d=-\beta b_{\mathfrak M}$.
If $n\geq\operatorname{ord}_{\mathfrak l}(\alpha)+2\max\{n_0,e\}$, then
the bound holds uniformly in $\beta$.
\end{proof}

The unramified version of the following limit formula, in which
$\mathfrak l$ is prime to the level, is proved in
\cite[Proposition~6.2]{Kwon:2023}.  We record the corresponding
ramified-level statement.

\begin{proposition}
\label{prop:additive-average-vertical-limit}
Let $c\in O_{\mathfrak l}^{\times}$ and
$\alpha\in\mathfrak d_F^{-1}\setminus\{0\}$.  Then
\[
 \lim_{n\to\infty}L_n^{\mathrm{av}}(f;c,\alpha)
 =\frac{1}{\#E_F}
  \frac{a_f(\alpha\mathfrak d_F)}
       {N(\alpha\mathfrak d_F)}.
\]
\end{proposition}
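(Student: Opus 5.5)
The proof will start from the decomposition \eqref{eq:additive-average-two-pieces}, which holds for every $y>0$. I will choose the cutoff $y=y_n$ depending on $n$ so that $\mathcal L_1(y_n)$ tends to the main term and $\mathcal L_2(y_n)$ tends to $0$. A natural choice is $y_n=N(\mathfrak M)^{1/2}\asymp\ell^{n}$, balancing the two pieces; more generally $y_n=\ell^{\kappa n}$ with $\kappa$ slightly less than $2$ might be needed to kill the dual sum.

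\emph{First piece.} By Lemma~\ref{lem:additive-character-multiplicative-coset-average} with $b=\beta-\alpha$ (integral at $\mathfrak l$, since $\ell$ is odd and splits and hence is unramified, so $\mathfrak d_F$ is prime to $\mathfrak l$), the kernel $\mathcal A_n(c;\beta-\alpha)$ vanishes unless $\beta\equiv\alpha\pmod{\mathfrak l^{n-n_0}}$. In that case it has modulus $1$ and equals $\mathbf e_F(c(\beta-\alpha)/\varpi_{\mathfrak l}^n)$. The term $\beta=\alpha$ contributes
\[
\frac{1}{\#E_F}\frac{a_f(\alpha\mathfrak d_F)}{N(\alpha\mathfrak d_F)}V_1\left(\frac{N(\alpha\mathfrak d_F)}{y_n}\right),
\]
which tends to the claimed limit because $V_1(x)\to\Phi(0)=1$ as $x\to0^+$. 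To see this, shift the contour to the left past $t=0$; the residue is $1$ and the remainder is $O(x^{\delta})$.

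For $\beta\neq\alpha$ with $\beta-\alpha\in\mathfrak l^{n-n_0}\mathfrak d_F^{-1}$, the number of such $\beta$ with $N(\beta\mathfrak d_F)\le X$ is $\ll 1+X\ell^{-(n-n_0)}$, since these $\beta$ lie in a translate of a lattice of covolume $\asymp\ell^{n}$. Each such $\beta$ also satisfies $N(\beta)\gg\ell^{n-n_0}$, apart from the translate by $\alpha$, which is handled by noting $N(\beta-\alpha)\ge \ell^{n-n_0}/D_F$, so $|\beta|\gg\ell^{n/2}$ for large $n$. Using Blomer--Brumley (or just Rankin--Selberg mean-square bounds) for $a_f$ and the rapid decay of $V_1$ beyond $y_n$, this tail is
\[
\ll \sum_{\ell^{n-n_0}\ll N\ll y_n^{1+\epsilon}}\frac{|a_f|}{N}\cdot(\text{density }\ell^{-n}),
\]
which is $\ll y_n^{1/2+\theta+\epsilon}\ell^{-n}+\ell^{-n(1/2-\theta-\epsilon)}$. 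This tends to $0$ provided $y_n=o(\ell^{n(1-\epsilon')/(1/2+\theta)})$, which holds for $y_n=\ell^{n}$.

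\emph{Second piece.} Proposition~\ref{prop:additive-dual-kloosterman-bound} bounds $\mathcal A_n^\chi$ by $\ll\ell^{-n/2}$ uniformly in $\beta$. The remaining sum is
\[
\sum_\beta \frac{|a_{W_{\mathfrak N}W_{\mathfrak d}f}(b_{\mathfrak M}\beta\mathfrak d_F)|}{N(b_{\mathfrak M}\beta\mathfrak d_F)}\left|V_2\left(\frac{N(b_{\mathfrak M}\beta\mathfrak d_F)\,y_n}{N(\mathfrak M)}\right)\right|,
\]
which is essentially a sum over integral ideals of norm $\ll N(\mathfrak M)/y_n\asymp\ell^{n}$. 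A Rankin--Selberg or Blomer--Brumley average gives that $\sum_{N(\mathfrak a)\le X}|a(\mathfrak a)|/N(\mathfrak a)\ll X^{1/2+\epsilon}$, more crudely $X^{1/2+\theta+\epsilon}$ with the pointwise bound. Here we normalize $a_f(\mathfrak a)$ so that the Ramanujan exponent in this weight-$2$ normalization is $N(\mathfrak a)^{1/2+\theta}$. With $y_n=\ell^n$ this gives $\mathcal L_2\ll\ell^{-n/2}\ell^{n(1/2+\epsilon)}$, which is borderline. This is the main obstacle. To resolve it I will take $y_n=\ell^{(2-\delta)n}$, so that the dual length is $\ell^{\delta n}$ and $\mathcal L_2\ll\ell^{-n/2+\delta n(1/2+\theta+\epsilon)}\to0$. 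I then recheck the first-piece tail: it requires $y_n^{1/2+\theta+\epsilon}\ell^{-n}\to0$, i.e.\ $(2-\delta)(1/2+\theta)<1$, which fails for $\theta=7/64$.

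So I will instead handle the first-piece tail more carefully. I will use the mean-square bound $\sum_{N(\mathfrak a)\le X}|a(\mathfrak a)|^2/N(\mathfrak a)\ll X^{1+\epsilon}$ together with Cauchy--Schwarz over the arithmetic progression $\beta\equiv\alpha\pmod{\mathfrak l^{n-n_0}}$. This bounds the tail by $\ll(X\ell^{-n})^{1/2}X^{\epsilon}\cdot X^{-1/2}\cdot\ldots$, which gives $\ll\ell^{-n/2+\epsilon}$ uniformly in the cutoff. The reason is that in each dyadic range $N\sim X\ge\ell^{n-n_0}$, the progression contains about $X\ell^{-n}$ points, and summing $|a|^2/N$ over all of them is still $\ll X^\epsilon$; this yields a dyadic contribution $\ll(X\ell^{-n})^{1/2}X^{\epsilon}/X^{1/2}$. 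With this, the choice $y_n=\ell^{(2-\delta)n}$ makes both pieces $o(1)$, and letting $n\to\infty$ proves the proposition.
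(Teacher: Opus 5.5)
Your skeleton is the paper's own: the paper only sketches it and defers the choice of $y_n$ and the tail bounds to \cite[Proposition~6.2]{Kwon:2023}. Everything up to your last paragraph is sound:
\begin{itemize}
\item the main term;
\item the count $\ll 1+X\ell^{-n}$ for the coset $\alpha+\mathfrak l^{n-n_0}\mathfrak d_F^{-1}$;
\item the estimate $\mathcal L_1(y_n)-(\text{main term})\ll y_n^{1/2+\theta+\epsilon}\ell^{-n}+\ell^{-n(1/2-\theta-\epsilon)}$;
\item the estimate $\mathcal L_2(y_n)\ll\ell^{-n/2}(\ell^{2n}/y_n)^{1/2+\theta+\epsilon}$.
\end{itemize}

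The final step, however, rests on a false inequality. Let $P_X$ be the set of $\beta\neq\alpha$ in that coset with $N(\beta\mathfrak d_F)\sim X$. Then $\#P_X\ll X\ell^{-n}$. After dropping the congruence, the global mean square gives $\sum_{\beta\in P_X}|a_f(\beta\mathfrak d_F)|^2/N(\beta\mathfrak d_F)^2\ll X^{-1}\cdot X^{1+\epsilon}=X^{\epsilon}$. So Cauchy--Schwarz bounds the dyadic block by $(X\ell^{-n})^{1/2}X^{\epsilon}$, with no extra factor $X^{-1/2}$.

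That factor would appear only if $|a_f(\mathfrak a)|$ had size $1$ on average. In this normalization it has size $N(\mathfrak a)^{1/2}$; the same slip underlies your assertion that $\sum|a|^2/N$ over these points is $\ll X^{\epsilon}$. The correct block bound grows with $X$ and is weaker than the pointwise bound throughout $X\le\ell^{2n}$. At $X\approx y_n=\ell^{(2-\delta)n}$ it is about $\ell^{(1-\delta)n/2}$.

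Nor could a bound $\ll\ell^{-n/2+\epsilon}$ uniform in the cutoff be expected from the triangle inequality: heuristically the absolute tail up to $y_n$ has size about $y_n^{1/2}\ell^{-n}$. Keeping $y_n=\ell^{(2-\delta)n}$ with $\delta$ small would need a mean-square bound for $|a_f|^2$ on the coset that saves the full factor $\ell^{-n}$. That is a Shiu-type input you have not supplied.

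The repair uses only what you already proved: $\delta$ need not be small. Take $y_n=\ell^{\kappa n}$.
\begin{itemize}
\item Your first bound tends to $0$ once $\kappa(\tfrac12+\theta)<1$.
\item Your dual bound tends to $0$ once $(2-\kappa)(\tfrac12+\theta)<\tfrac12$.
\end{itemize}
For $\theta=7/64$ this gives the nonempty window $46/39<\kappa<64/39$. The window widens to $1<\kappa<64/39$ if you use the Rankin--Selberg average in the dual sum. That is legitimate there because the dual sum carries no congruence restriction.

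For example, $y_n=\ell^{3n/2}$ gives $\mathcal L_1(y_n)-(\text{main term})\ll\ell^{(-11/128+\epsilon)n}$ and $\mathcal L_2(y_n)\ll\ell^{(-25/128+\epsilon)n}$. With this choice, and the Cauchy--Schwarz paragraph deleted, your proof is complete and follows the paper's route.
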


\begin{proof}
The proof closely follows the proof of
\cite[Proposition~6.2]{Kwon:2023}.  We briefly recall the main steps.
Apply \eqref{eq:additive-average-two-pieces} and choose the auxiliary
parameter $y=y_n$ as in the proof cited above.  In
\eqref{eq:additive-average-first-piece},
Lemma~\ref{lem:additive-character-multiplicative-coset-average} shows
that only the terms satisfying
\[
 \beta\in (\alpha+\mathfrak l^{n-n_0}O_{\mathfrak l})\cap\mathfrak d_F^{-1}
\]
can contribute.  Among them, the term 
\[
 \frac{1}{\#E_F}
 \frac{a_f(\alpha\mathfrak d_F)}
      {N(\alpha\mathfrak d_F)}
 V_1\left(\frac{N(\alpha\mathfrak d_F)}{y_n}\right)
\]
at $\beta=\alpha$ is the main term.
This converges to the asserted main term as $n\to\infty$ since
$V_1(x)\to1$ as $x\to0^+$.
After removing this term, the remaining part of
\eqref{eq:additive-average-first-piece} and the whole of
\eqref{eq:additive-average-second-piece} tend to zero.  Indeed, one
uses Lemma~\ref{lem:additive-character-multiplicative-coset-average},
the triangle-inequality argument of
\cite[Proposition~6.2]{Kwon:2023}, the bound of Blomer--Brumley
\cite{BlomerBrumley:2011} for the coefficients of $f$ and
$W_{\mathfrak N}W_{\mathfrak d}f$, and Abel summation.  For the dual
sum, one additionally applies
Proposition~\ref{prop:additive-dual-kloosterman-bound} to
$\mathcal A_n^\chi(c;\alpha,\beta)$, then the result follows.
\end{proof}

\section{Cohomology and Homology of Manifolds}\label{section:conj:on:mod:symb}

\subsection{Universal Coefficient Theorems}
In this subsection, we recall the universal coefficient theorems for ordinary homology and cohomology.
Let \(R\) be a commutative ring with identity, \(C_\bullet\) a chain
complex of free \(R\)-modules, \(A\) an \(R\)-algebra, and \(j\geq0\) an integer. 
We have the following well-defined
$R$-bilinear map
\[
	H_j(C_\bullet)\times A\;\longrightarrow\;
	H_j(C_\bullet\otimes_R A),\qquad ([c],a)\mapsto[c\otimes a].
\]
This induces an \(A\)-linear map
\[
	\beta_A:H_j(C_\bullet)\otimes_R A\;\longrightarrow\;
	H_j(C_\bullet\otimes_R A).
\]
\begin{lemma}\label{universal:coefficient:theorem:homology}
Assume that $R$ is a principal ideal domain. Then we have the following
short exact sequence:
\begin{equation*}
\begin{tikzcd}
0 \arrow{r} &
H_j(C_\bullet)\otimes_R A \arrow{r}{\beta_A} &
H_j(C_\bullet\otimes_R A) \arrow{r} &
\operatorname{Tor}_1^R\!\bigl(H_{j-1}(C_\bullet),A\bigr)
\arrow{r} & 0
\end{tikzcd}.
\end{equation*}
We adopt the convention that \(H_{-1}(C_\bullet)=0\).
The sequence is natural in \(C_\bullet\) and \(A\).
\end{lemma}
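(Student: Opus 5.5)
The plan is to use the standard argument via the short exact sequence of chain complexes obtained by tensoring a free resolution of \(A\). Since \(R\) is a principal ideal domain, choose a free resolution of length one,
\[
0\longrightarrow F_1\xrightarrow{\ \iota\ }F_0\longrightarrow A\longrightarrow 0,
\]
with \(F_0,F_1\) free \(R\)-modules. The submodule \(F_1\) is free because submodules of free modules over a PID are free. Each \(C_j\) is free, hence flat, so tensoring with \(C_\bullet\) gives a short exact sequence of chain complexes
\[
0\to C_\bullet\otimes_R F_1\to C_\bullet\otimes_R F_0\to C_\bullet\otimes_R A\to 0 .
\]

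Next I will identify the homology of the outer complexes. For a free module \(F\cong R^{(I)}\), the complex \(C_\bullet\otimes_R F\) is a direct sum of copies of \(C_\bullet\). Homology commutes with direct sums, so \(H_j(C_\bullet\otimes_R F)\cong H_j(C_\bullet)\otimes_R F\), naturally in \(F\). The long exact homology sequence then reads
\[
H_j(C)\otimes F_1\xrightarrow{1\otimes\iota} H_j(C)\otimes F_0\to H_j(C\otimes A)\xrightarrow{\ \partial\ } H_{j-1}(C)\otimes F_1\xrightarrow{1\otimes\iota} H_{j-1}(C)\otimes F_0 .
\]
I will check that the connecting maps in this sequence are \(\pm(1\otimes\iota)\). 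This is a routine diagram chase: lifting a cycle of \(C\otimes F_0\) and applying the differential stays inside the image of \(C\otimes F_1\).

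The exact sequence then breaks into a short exact sequence
\[
0\to\operatorname{coker}(1\otimes\iota)_j\to H_j(C\otimes A)\to\ker(1\otimes\iota)_{j-1}\to0 .
\]
By right exactness of \(\otimes\), the cokernel is \(H_j(C)\otimes_R A\). By definition of Tor, computed with the resolution above, the kernel is \(\operatorname{Tor}_1^R(H_{j-1}(C),A)\). For \(j=0\) the kernel term is \(0\), which matches the convention \(H_{-1}=0\).

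It remains to check that the first map is \(\beta_A\). This follows by tracing a class \([c]\otimes a\): choose a lift \(\tilde a\in F_0\) of \(a\), send it to \([c\otimes\tilde a]\), and project to \([c\otimes a]\).

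Naturality in \(C_\bullet\) is immediate from naturality of the long exact sequence. For naturality in \(A\), a map \(A\to A'\) lifts to a map of resolutions, unique up to homotopy, and the resulting map on Tor is independent of the lift.

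The only step requiring care is identifying the connecting homomorphism with \(1\otimes\iota\), including its sign, since this is what pins down the outer terms. Everything else is formal.
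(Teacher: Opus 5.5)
The paper does not prove this lemma; it recalls it as the standard universal coefficient theorem and later applies it with $R=\mathbb Z$ to singular and relative Borel--Serre chain complexes. Your argument is a correct proof, but it takes a different route from the usual textbook one.

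The textbook proof tensors the degreewise split sequence $0\to Z_\bullet\to C_\bullet\to B_{\bullet-1}\to 0$ with $A$. There $\operatorname{Tor}$ arises from the free resolution $0\to B_{j-1}\to Z_{j-1}\to H_{j-1}(C_\bullet)\to 0$ of the first variable. The connecting homomorphism $B_j\otimes_R A\to Z_j\otimes_R A$ must be identified with the inclusion, and one also obtains the non-natural splitting of the sequence. You resolve $A$ instead. This needs only flatness of the $C_j$ and no computation of any connecting map, at the price of using balancedness of $\operatorname{Tor}$ and comparison of resolutions to get naturality in $A$.

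One correction: in your long exact sequence, the maps $H_j(C_\bullet)\otimes_R F_1\to H_j(C_\bullet)\otimes_R F_0$ are not connecting homomorphisms. They are induced by the chain map $1\otimes\iota\colon C_\bullet\otimes_R F_1\to C_\bullet\otimes_R F_0$. So they equal $1\otimes\iota$, with no sign, by the naturality in $F$ of $[c]\otimes x\mapsto[c\otimes x]$, which you already stated.

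The chase you describe (lift, apply the differential, land in $C_\bullet\otimes_R F_1$) computes the genuine connecting map $\partial\colon H_j(C_\bullet\otimes_R A)\to H_{j-1}(C_\bullet)\otimes_R F_1$. Your argument never needs that map explicitly, since only exactness at $H_j(C_\bullet\otimes_R A)$ enters. So the step you single out as the delicate one is immediate in your setup; the sign issue belongs to the cycles--boundaries proof.
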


Let \(\partial_j:C_j\to C_{j-1}\) denote the boundary maps. Set
\[
C_A^j:=\mathrm{Hom}_R(C_j,A),\qquad C_A^\bullet:=\mathrm{Hom}_R(C_\bullet,A).
\]
We regard $C_A^\bullet$ as a cochain complex with differential
\[
d_A^j:C_A^j\longrightarrow C_A^{j+1},\qquad d_A^j(\varphi):=\varphi\circ\partial_{j+1}.
\]
For a cycle $c\in C_j$ and a cocycle $\delta\in C_A^j$, the value $\delta(c)$ depends only on the classes
$[c]\in H_j(C_\bullet)$ and $[\delta]\in H^j(C_A^\bullet)$. Indeed, if $c$ is replaced by
$c+\partial_{j+1}b$, then $\delta(c+\partial_{j+1}b)=\delta(c)$ because $d_A^j\delta=0$.
Likewise, replacing $\delta$ by $\delta+d_A^{j-1}\eta$ 
does not change $\delta(c)$, since $\partial_jc=0$.
Hence we have a well-defined $R$-bilinear pairing
\[
\cap:H_j(C_\bullet)\times H^j(C_A^\bullet)\;\longrightarrow\; A,\qquad ([c],[\delta])\longmapsto\delta(c).
\]

For an $R$-module $M$, an $A$-module $H$, and 
an $R$-bilinear pairing $\cap:M\times H\to A$ which is $A$-linear in the second variable, let us denote by
\[
\mathcal{D}_\cap: H\;\longrightarrow\; \mathrm{Hom}_R\left(M,A\right),\qquad h\mapsto (m\mapsto m\cap h)
\]
the $A$-linear map induced by this pairing.

\begin{lemma}\label{universal:coefficient:theorem:cohomology}
Assume that $R$ is a principal ideal domain. Then there is a short exact sequence:
\begin{equation*}
\begin{tikzcd}
0 \arrow{r} 
& 
\mathrm{Ext}^1_R\left(H_{j-1}(C_\bullet),A\right) \arrow{r} 
& 
H^j(C_A^\bullet) \arrow{r}{\mathcal{D}_\cap} 
& 
\mathrm{Hom}_R\left(H_j(C_\bullet),A\right) \arrow{r} 
&
0
\end{tikzcd}.
\end{equation*}
We adopt the convention that $H_{-1}(C_\bullet)=0$. The sequence is natural in \(C_\bullet\) and \(A\).
\end{lemma}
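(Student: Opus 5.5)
The plan is to run the standard proof of the universal coefficient theorem for cohomology, taking $A$ as the coefficient module. Throughout, $\operatorname{Hom}_R(-,A)$ and $\operatorname{Ext}^1_R(-,A)$ carry their $A$-module structures coming from $A$, so every map below will be $A$-linear.

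\textbf{Splitting the complex.} Put $Z_j:=\ker\partial_j$ and $B_{j-1}:=\operatorname{im}\partial_j\subset C_{j-1}$. Since $R$ is a principal ideal domain and each $C_j$ is free, the submodules $Z_j$ and $B_{j-1}$ are free. Hence the degreewise short exact sequence
\[
0\to Z_\bullet\to C_\bullet\xrightarrow{\partial} B_{\bullet-1}\to0
\]
splits in each degree. Here $Z_\bullet$ and $B_{\bullet-1}$ are complexes with zero differentials. Because the sequence is degreewise split, applying $\operatorname{Hom}_R(-,A)$ gives a short exact sequence of cochain complexes
\[
0\to \operatorname{Hom}_R(B_{\bullet-1},A)\to C_A^\bullet\to \operatorname{Hom}_R(Z_\bullet,A)\to 0.
\]
The outer complexes have zero differential, so their cohomology equals their terms. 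The long exact sequence then reads
\[
\cdots\to\operatorname{Hom}_R(B_{j-1},A)\to H^j(C_A^\bullet)\to\operatorname{Hom}_R(Z_j,A)\xrightarrow{\delta}\operatorname{Hom}_R(B_j,A)\to\cdots .
\]

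\textbf{Identifying the connecting map and the outer terms.} The key check is that the connecting map $\delta$ is restriction along the inclusion $i_j:B_j\hookrightarrow Z_j$, up to sign. To see this, extend $\varphi$ on $Z_j$ to $C_j$ using the splitting, apply $d_A^j$, and observe that the resulting cochain is $\varphi|_{B_j}$ pulled back through $\partial_{j+1}$. This yields a short exact sequence
\[
0\to\operatorname{coker}(i_{j-1}^*)\to H^j(C_A^\bullet)\to\ker(i_j^*)\to0 .
\]
The sequence $0\to B_{j-1}\to Z_{j-1}\to H_{j-1}(C_\bullet)\to0$ is a free resolution of $H_{j-1}(C_\bullet)$. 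Therefore $\operatorname{coker}(i_{j-1}^*)\cong\operatorname{Ext}^1_R(H_{j-1}(C_\bullet),A)$, and by left exactness of $\operatorname{Hom}$, $\ker(i_j^*)\cong\operatorname{Hom}_R(H_j(C_\bullet),A)$. When $j=0$ we have $B_{-1}=0$, which matches the convention $H_{-1}=0$.

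\textbf{Matching with $\mathcal D_\cap$ and naturality.} It remains to check that the composite map $H^j(C_A^\bullet)\to\operatorname{Hom}_R(Z_j,A)\to\operatorname{Hom}_R(H_j,A)$ sends $[\delta]$ to $[c]\mapsto\delta(c)$, so that it coincides with $\mathcal D_\cap$ as defined above. This is immediate from the construction. For naturality, a chain map $C_\bullet\to C'_\bullet$ or an algebra map $A\to A'$ induces maps on every stage: on $Z$, $B$, $H$, on the long exact sequence, and on the free resolutions. By the usual independence of the resolution, these maps are compatible with the identifications of $\operatorname{Ext}^1$.

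The only point requiring care is the first one. Naturality in $C_\bullet$ holds even though the splitting of $C_j\to B_{j-1}$ is not natural: the splitting is used only to make the $\operatorname{Hom}$-sequence exact, and the resulting long exact sequence does not depend on the splitting chosen. The non-naturality of any splitting of the final short exact sequence is irrelevant, because the statement does not claim such a splitting.
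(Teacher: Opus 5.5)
Your proposal is correct: it is the standard textbook proof of the universal coefficient theorem. You split $0\to Z_\bullet\to C_\bullet\to B_{\bullet-1}\to 0$ degreewise, identify the connecting map with restriction along $B_j\hookrightarrow Z_j$, and read off $\operatorname{Ext}^1_R(H_{j-1}(C_\bullet),A)$ and $\operatorname{Hom}_R(H_j(C_\bullet),A)$ from the free resolution $0\to B_{j-1}\to Z_{j-1}\to H_{j-1}(C_\bullet)\to 0$. The paper gives no proof of this lemma and simply recalls it as the standard result, so your argument supplies exactly the proof being cited, including the correct observations that the splitting is needed only for exactness and not for naturality.
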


\subsection{Singular Homology and Cohomology}
We begin with a brief review of singular homology and cohomology theory.

Let $Y$ be a topological space and $R$ a commutative ring with unity.  
Let $j\geq 0$ be an integer.
A continuous map from the standard $j$-simplex to $Y$ is called a \emph{singular $j$-simplex} of $Y$.  
Let $C_j(Y)$ be the free abelian group generated by the singular $j$-simplices of $Y$, and set
\[
  C_j(Y,R):=C_j(Y)\otimes_{\mathbb{Z}}R.
\]
The usual boundary maps on singular simplices induce maps $\partial_j: C_{j}(Y,R) \to C_{j-1}(Y,R)$, 
making \(C_\bullet(Y,R)\) a chain complex.
The $j$-th {\it (singular) homology} group of $Y$ with coefficients in $R$ is then
\[
  H_j(Y,R) := H_j\left(C_\bullet(Y,R)\right).
\]
The dual modules
\[
  C^j(Y,R) := \mathrm{Hom}_{\mathbb{Z}}(C_j(Y),R)
\]
form a cochain complex $C^\bullet(Y,R)$, with coboundary maps
induced by the boundary maps on \(C_\bullet(Y)\). 
Its cohomology groups
\[
  H^j(Y,R) := H^j\left(C^\bullet(Y,R)\right)
\]
are called the {\it (singular) cohomology} groups of $Y$ with coefficients in $R$.
The canonical homomorphism \(\mathbb Z\to R\) induces coefficient-change maps
\[
H_j(Y,\mathbb{Z}) \;\longrightarrow\; H_j(Y,R),\quad H^j(Y,\mathbb{Z}) \;\longrightarrow\; H^j(Y,R),
\]
which are induced by tensoring chains with $R$ and by the canonical map $C^j(Y,\mathbb{Z})\to C^j(Y,R)$, respectively.

There is a well-defined $R$-bilinear pairing
\[
  \cap : H_j(Y,R) \times H^j(Y,R) \;\longrightarrow\; R, 
		\quad ([c],[\delta]) \longmapsto \delta(c).
\]
This is called the \emph{evaluation pairing}.
The evaluation pairing is natural with respect to change of coefficients.

\subsection{De Rham cohomology}
Let \(Y\) be a smooth manifold, and let
\(\Omega^j(Y)\) denote the real vector space of smooth
differential \(j\)-forms on \(Y\). These spaces form the de Rham
complex
\[
  \Omega^0(Y)\xrightarrow{\;d\;}\Omega^1(Y)
  \xrightarrow{\;d\;}\Omega^2(Y)\xrightarrow{\;d\;}\cdots.
\]
Let \(R\) be either \(\mathbb R\) or \(\mathbb C\).
The \emph{de Rham cohomology group} of $Y$ (with coefficients in $R$) is defined by
\[
  H^j_{\mathrm{dR}}(Y,R) := H^j(\Omega^\bullet(Y)\otimes_\mathbb{R}R,d\otimes 1).
\]
By the de Rham theorem, there is a canonical isomorphism
\[
  H^j(Y,R) \xrightarrow{\;\simeq\;} H^j_{\mathrm{dR}}(Y,R).
\]
Let \(C_\bullet^\infty(Y)\) denote the smooth singular chain complex.
Its inclusion into \(C_\bullet(Y)\) induces an isomorphism on homology in every degree.
Using this identification, we define the integration pairing by
\[
  \cap:H_j(Y,\mathbb Z)\times H^j_{\mathrm{dR}}(Y,R)
\longrightarrow R,
\qquad
([c],[\omega])\longmapsto\int_c\omega.
\]
By Stokes' theorem, this pairing is well-defined.
The pairing $\cap$ is \(\mathbb Z\)-linear in the first variable
and \(R\)-linear in the second.
Under the de Rham isomorphism, this pairing agrees with the evaluation pairing in 
singular homology and cohomology.

From now on, when no confusion arises, we use $H^j(Y,R)$ to denote the de Rham cohomology group,
by a slight abuse of notation.

\subsection{Borel--Moore Homology and Cohomology with compact support}

Let $Y$ be a locally compact space and $R$ a commutative ring with unity.
Let \(C_j^{\rm BM}(Y,R)\) be the \(R\)-module of locally finite
formal sums
\[
\sum_\sigma z_\sigma\sigma,
\qquad z_\sigma\in R,
\]
where \(\sigma\) ranges over the singular \(j\)-simplices of \(Y\).
Local finiteness means that, for every compact subset \(K\subset Y\),
only finitely many simplices with nonzero coefficient meet \(K\).
The usual boundary operators make \(C_\bullet^{\rm BM}(Y,R)\)
a chain complex. The \(j\)-th \emph{Borel--Moore homology group}
of \(Y\) with coefficients in \(R\) is defined by
\[
  H_j^{\rm BM}(Y,R) := H_j\left(C_\bullet^{\rm BM}(Y,R)\right).
\]
 
Let $C_{\rm c}^\bullet(Y,R)\subset C^\bullet(Y,R)$
be the subcomplex consisting of compactly supported cochains.
Thus, \(\delta\in C_{\rm c}^j(Y,R)\) if there exists a compact
subset \(K\subset Y\) such that \(\delta\) vanishes on every
singular \(j\)-chain supported in \(Y\setminus K\).
The \emph{$j$-th cohomology group with compact support} is defined by
\[
  H^j_{\mathrm{c}}(Y,R) := H^j\left(C^\bullet_{\mathrm{c}}(Y,R)\right).
\]
A proper map
\(f:Y_1\to Y_2\) induces the pullback
\[
  f^\ast : H^j_{\mathrm{c}}(Y_2,R) \;\longrightarrow\; H^j_{\mathrm{c}}(Y_1,R).
\]

Suppose now that $Y$ is a connected, oriented, $d$-dimensional manifold without boundary.
Then there is a canonical trace map
\[
\mathrm{Tr}_Y : H^d_{\mathrm{c}}(Y,R) \;\longrightarrow\; R.
\]
This map is given by evaluation on the fundamental class of \(Y\). 
If \(A\) is an \(R\)-algebra, the trace maps are
compatible with the coefficient-change maps, as expressed by the
following commutative diagram.
\begin{equation}\label{cpt.supp.cohom.:identification:scalar:extension:compatible}
\begin{tikzcd}
H^d_{\mathrm{c}}(Y,R) \arrow{r}{\mathrm{Tr}_Y} \arrow{d} & R \arrow{d}  \\
H^d_{\mathrm{c}}(Y,A) \arrow{r}{\mathrm{Tr}_Y} & A,
\end{tikzcd}
\end{equation}

When $R=\mathbb{R}$ or $\mathbb{C}$, compactly supported cohomology 
is naturally isomorphic to the corresponding compactly supported de Rham cohomology. 
Under this identification, the trace map is given by integration:
\[
 \mathrm{Tr}_Y: H^d_{\mathrm{c}}(Y,R) \;\longrightarrow\; R,
  \qquad \omega \mapsto \int_Y \omega.
\]

Let $z=\sum_\sigma z_\sigma\sigma\in C_j^{\rm BM}(Y,R)$ and
$\delta\in C_{\rm c}^j(Y,R)$. Since $z$ is locally finite and
$\delta$ has compact support, the sum
\[
z\cap\delta:=\sum_\sigma z_\sigma\delta(\sigma)
\]
has only finitely many nonzero terms. Moreover, for
$w\in C_{j+1}^{\rm BM}(Y,R)$ and $\delta\in C_{\rm c}^j(Y,R)$,
\[
\partial w\cap\delta=w\cap d\delta.
\]
This follows directly from the definition
$(d\delta)(w)=\delta(\partial w)$ of the coboundary operator.
Hence evaluation induces a well-defined $R$-bilinear pairing
\[
\cap:H_j^{\rm BM}(Y,R)\times H_{\rm c}^j(Y,R)
\longrightarrow R,
\qquad ([z],[\delta])\longmapsto z\cap\delta.
\]
\begin{remark}\label{remark:borel-moore-relative-chain-model}
Suppose that $Y$ admits a Borel--Serre compactification
$\overline Y^{\rm BS}$. Then there are canonical identifications:
\[
H_j^{\rm BM}(Y,R)
\simeq
H_j(\overline Y^{\rm BS},\partial\overline Y^{\rm BS};R),
\qquad
H_{\rm c}^j(Y,R)
\simeq
H^j(\overline Y^{\rm BS},\partial\overline Y^{\rm BS};R).
\]
Consequently, Borel--Moore homology is computed by the free relative
chain complex
\[
C_\bullet(\overline Y^{\rm BS},
\partial\overline Y^{\rm BS};R),
\]
and compactly supported cohomology is computed by its dual. Thus the
universal coefficient theorems apply to these theories through this
relative model.
\end{remark}

There is a natural inclusion of chain complexes
\[
C_\bullet(Y,R)\longrightarrow C^{\rm BM}_\bullet(Y,R).
\]
The Borel--Moore and ordinary evaluation
pairings are compatible with these maps.

\subsection{Parabolic (Co)Homology and Duality Pairing}\label{subsec:par:cohom:dual}
Let $Y$ be a locally compact space and $R$ a commutative ring with unity.
The inclusion $C^\bullet_{\mathrm{c}}(Y,R)\subset C^\bullet(Y,R)$ induces a natural map
\[
  H^j_{\mathrm{c}}(Y,R) \;\longrightarrow\; H^j(Y,R).
\]
The \(j\)-th parabolic cohomology group is defined by
\[
H_!^j(Y,R):=
\operatorname{im}\bigl(H_{\rm c}^j(Y,R)\to H^j(Y,R)\bigr).
\]
Similarly, define the \(j\)-th parabolic homology group by
\[
H_j^!(Y,R):=
\operatorname{im}\bigl(H_j(Y,R)\to H_j^{\rm BM}(Y,R)\bigr).
\]

Assume that \(Y\) admits a Borel--Serre compactification.
Whenever we apply the universal coefficient theorems to
Borel--Moore homology or compactly supported cohomology, we use the
relative chain model described in
Remark~\ref{remark:borel-moore-relative-chain-model}.
For classes $c\in H_j^!(Y,R)$ and $\delta\in H^j_!(Y,R)$, choose a lift
$c'\in H_j(Y,R)$ of $c$ under the map $H_j(Y,R)\to H_j^!(Y,R)$ and define
\[
c\cap\delta:=c'\cap\delta,
\]
where the pairing on the right is the evaluation pairing between $H_j(Y,R)$ and $H^j(Y,R)$.
This gives a pairing
\[
\cap:H_j^!(Y,R)\times H^j_!(Y,R)\longrightarrow R.
\]
Let us explain why this pairing is well-defined.
Let $\overline{Y}^{\rm BS}$ 
be the Borel--Serre compactification of $Y$. The map
$H_j(Y,R)\to H_j^{\rm BM}(Y,R)$ fits into the localization exact sequence
\begin{equation}\label{boundary:exact:sequence}
\begin{tikzcd}
H_j(\partial\overline{Y}^{\rm BS},R) \arrow{r} &
H_j(Y,R) \arrow{r} &
H_j^{\rm BM}(Y,R) \arrow{r} &
H_{j-1}(\partial\overline{Y}^{\rm BS},R).
\end{tikzcd}
\end{equation}
Two lifts of the same class in \(H_j^!(Y,R)\) differ by a class
in the image of
\[
H_j(\partial\overline Y^{\rm BS},R)\longrightarrow H_j(Y,R).
\]
On the other hand, every parabolic cohomology class restricts to
zero on \(\partial\overline Y^{\rm BS}\). By naturality of the
evaluation pairing, it therefore pairs trivially with every
homology class coming from the boundary. Hence the definition is
independent of the choice of the lifting \(c'\).

\begin{lemma}\label{parabolic:duality:pairing:lemma}
Let $A$ be a Dedekind domain. Suppose that
\[
\operatorname{Tor}_1^{\mathbb{Z}}
\bigl(H_{j-1}(Y,\mathbb{Z}),A\bigr)
=
\operatorname{Tor}_1^{\mathbb{Z}}
\bigl(H_{j-1}^{\rm BM}(Y,\mathbb{Z}),A\bigr)=0,
\]
\[
\operatorname{Ext}_{\mathbb{Z}}^1
\bigl(H_{j-1}(Y,\mathbb{Z}),A\bigr)
=
\operatorname{Ext}_{\mathbb{Z}}^1
\bigl(H_{j-1}^{\rm BM}(Y,\mathbb{Z}),A\bigr)=0,
\]
and
\[
\operatorname{Ext}_A^1\left(
\operatorname{coker}\left(H_j(Y,A)\longrightarrow
H_j^{\rm BM}(Y,A)\right),A\right)=0.
\]
Then the parabolic pairing induces an isomorphism
\[
\mathcal{D}_\cap:H^j_!(Y,A)\;\longrightarrow\;
\operatorname{Hom}_A\bigl(H_j^!(Y,A),A\bigr),
\qquad \delta\longmapsto(c\mapsto c\cap\delta).
\]
\end{lemma}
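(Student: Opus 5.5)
The plan is to reduce the parabolic statement to the two universal coefficient theorems, applied to the ordinary chain complex $C_\bullet(Y)$ and to the relative model $C_\bullet(\overline Y^{\rm BS},\partial\overline Y^{\rm BS})$ of Remark~\ref{remark:borel-moore-relative-chain-model}, with $R=\mathbb Z$. One caveat: $A$ is a Dedekind domain, not a PID, so Lemma~\ref{universal:coefficient:theorem:cohomology} is only used with $R=\mathbb Z$ and the $A$-module $A$. By Lemma~\ref{universal:coefficient:theorem:cohomology} and the $\operatorname{Ext}$ hypotheses, the evaluation maps
\[
H^j(Y,A)\xrightarrow{\ \sim\ }\operatorname{Hom}_{\mathbb Z}(H_j(Y,\mathbb Z),A),\qquad
H^j_{\rm c}(Y,A)\xrightarrow{\ \sim\ }\operatorname{Hom}_{\mathbb Z}(H_j^{\rm BM}(Y,\mathbb Z),A)
\]
are isomorphisms. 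By Lemma~\ref{universal:coefficient:theorem:homology} and the $\operatorname{Tor}$ hypotheses, $H_j(Y,\mathbb Z)\otimes A\simeq H_j(Y,A)$ and $H_j^{\rm BM}(Y,\mathbb Z)\otimes A\simeq H_j^{\rm BM}(Y,A)$. Tensor--Hom adjunction then gives
\[
H^j(Y,A)\simeq\operatorname{Hom}_A(H_j(Y,A),A),\qquad H^j_{\rm c}(Y,A)\simeq\operatorname{Hom}_A(H_j^{\rm BM}(Y,A),A).
\]
By naturality of the evaluation pairing, these isomorphisms are compatible with the forgetful maps $H^j_{\rm c}\to H^j$ and $\iota:H_j(Y,A)\to H_j^{\rm BM}(Y,A)$. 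Concretely, the map $H^j_{\rm c}\to H^j$ becomes $\iota^\vee:\operatorname{Hom}_A(H_j^{\rm BM},A)\to\operatorname{Hom}_A(H_j,A)$.

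Consequently $H^j_!(Y,A)$ is identified with $\operatorname{im}(\iota^\vee)$. Factor $\iota$ as $H_j\twoheadrightarrow H_j^!\hookrightarrow H_j^{\rm BM}$ and dualize to get
\[
\iota^\vee:\ (H_j^{\rm BM})^\vee\xrightarrow{\ r\ }(H_j^!)^\vee\xrightarrow{\ p\ }(H_j)^\vee .
\]
Here $p$ is injective, because $H_j\to H_j^!$ is surjective and $\operatorname{Hom}_A(-,A)$ is left exact. The element of $(H_j)^\vee$ coming from $\delta\in H^j_!$ is $c'\mapsto c'\cap\delta$, which is $p$ applied to $\mathcal D_\cap(\delta)$ by the definition of the parabolic pairing. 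So $\mathcal D_\cap$ is the composite of $H^j_!\simeq\operatorname{im}(\iota^\vee)=p(\operatorname{im} r)$ with $p^{-1}$. This shows $\mathcal D_\cap$ is well-defined and injective, with image $\operatorname{im}(r)$.

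It remains to show that $r$ is surjective, that is, every $A$-linear functional on $H_j^!$ extends to $H_j^{\rm BM}$. I expect this to be the main point. Apply $\operatorname{Hom}_A(-,A)$ to
\[
0\to H_j^!(Y,A)\to H_j^{\rm BM}(Y,A)\to Q\to 0,\qquad Q:=\operatorname{coker}\bigl(H_j(Y,A)\to H_j^{\rm BM}(Y,A)\bigr).
\]
The long exact sequence gives $(H_j^{\rm BM})^\vee\to(H_j^!)^\vee\to\operatorname{Ext}^1_A(Q,A)$, and $\operatorname{Ext}^1_A(Q,A)=0$ by the last hypothesis. Hence $r$ is onto and $\mathcal D_\cap$ is an isomorphism.

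The bookkeeping I would check carefully is the compatibility of the relative-chain UCT isomorphisms with the maps $H_j\to H_j^{\rm BM}$ and $H^j_{\rm c}\to H^j$. This follows from naturality in $C_\bullet$ applied to the chain map $C_\bullet(\overline Y^{\rm BS})\to C_\bullet(\overline Y^{\rm BS},\partial\overline Y^{\rm BS})$, together with the homotopy equivalence $Y\simeq\overline Y^{\rm BS}$.
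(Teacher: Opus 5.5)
Your proposal is correct and follows essentially the same route as the paper. The universal coefficient theorems over $\mathbb Z$, applied to the ordinary and relative Borel--Serre chain models, identify $H^j(Y,A)$ and $H^j_{\rm c}(Y,A)$ with the $A$-duals of $H_j(Y,A)$ and $H_j^{\rm BM}(Y,A)$, compatibly with $H^j_{\rm c}\to H^j$ and $H_j\to H_j^{\rm BM}$; the dual of $H_j\to H_j^{\rm BM}$ then factors through the injection $(H_j^!)^\vee\hookrightarrow H_j(Y,A)^\vee$, and the restriction $H_j^{\rm BM}(Y,A)^\vee\to(H_j^!)^\vee$ is surjective because $\operatorname{Ext}^1_A(Q,A)=0$. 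Your remark that this identification also gives independence of the lift in the parabolic pairing is a small bonus, since the paper proves that separately via the boundary exact sequence before the lemma.
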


\begin{proof}
Write $V^\vee:=\operatorname{Hom}_A(V,A)$
for an $A$-module $V$. Apply
Lemma~\ref{universal:coefficient:theorem:homology} over
$\mathbb{Z}$ with coefficients in $A$. The first vanishing
assumption implies that the following scalar-extension maps
are isomorphisms
\[
\beta_A:H_j(Y,\mathbb{Z})\otimes_{\mathbb{Z}}A
\xrightarrow{\ \simeq\ }H_j(Y,A), \qquad
\beta_A^{\rm BM}:H_j^{\rm BM}(Y,\mathbb{Z})
\otimes_{\mathbb{Z}}A
\xrightarrow{\ \simeq\ }H_j^{\rm BM}(Y,A).  
\]
Tensor--Hom adjunction, 
evaluation maps and the preceding scalar-extension isomorphisms give the commutative diagrams
\[
\begin{tikzcd}
H^j(Y,A) \arrow{r}{\mathcal D_\cap}
\arrow{d}{\mathcal D_\cap^{\mathbb Z}} &
H_j(Y,A)^\vee \arrow{d}{\beta_A^\vee} \\
\operatorname{Hom}_{\mathbb Z}(H_j(Y,\mathbb Z),A)
\arrow{r}{\operatorname{adj}} &
\left(H_j(Y,\mathbb Z)\otimes_{\mathbb Z}A\right)^\vee
\end{tikzcd}
\]
and
\[
\begin{tikzcd}
H_{\rm c}^j(Y,A) \arrow{r}{\mathcal D_\cap}
\arrow{d}{\mathcal D_\cap^{\mathbb Z}} &
H_j^{\rm BM}(Y,A)^\vee \arrow{d}{(\beta_A^{\rm BM})^\vee} \\
\operatorname{Hom}_{\mathbb Z}(H_j^{\rm BM}(Y,\mathbb Z),A)
\arrow{r}{\operatorname{adj}} &
\left(H_j^{\rm BM}(Y,\mathbb Z)\otimes_{\mathbb Z}A\right)^\vee
\end{tikzcd}.
\]
The vertical maps $\mathcal D_\cap^{\mathbb Z}$ are isomorphisms
for both the ordinary and the Borel--Moore settings
by Lemma~\ref{universal:coefficient:theorem:cohomology} with the second vanishing assumption.
In the ordinary case, for
$[\delta]\in H^j(Y,A)$, $[c]\in H_j(Y,\mathbb Z)$, and $a\in A$,
commutativity gives
\begin{equation}\label{eq:eval:map:compatible}
\mathcal D_\cap([\delta])\left(\beta_A([c]\otimes a)\right)
=a\,\mathcal D_\cap^{\mathbb Z}([\delta])([c])
=a\,\delta(c).
\end{equation}
The analogous identity in the Borel--Moore setting holds for
$[c]\in H_j^{\rm BM}(Y,\mathbb Z)$ and
$[\delta]\in H_{\rm c}^j(Y,A)$, with $\beta_A$ replaced by
$\beta_A^{\rm BM}$.
Let
\[
f_A:H_j(Y,A)\longrightarrow H_j^{\rm BM}(Y,A),
\qquad
\iota_A:H_{\rm c}^j(Y,A)\longrightarrow H^j(Y,A)
\]
be the natural maps.
The compatibility~\eqref{eq:eval:map:compatible} gives
\[
\mathcal D_\cap(\iota_A(\delta))(x)
=f_A^\vee\bigl(\mathcal D_\cap(\delta)\bigr)(x)
\]
for every $x\in H_j(Y,A)$ and $\delta\in H_{\rm c}^j(Y,A)$.
Hence the following diagram commutes, and its horizontal maps are
isomorphisms:
\begin{equation}\label{eq:comm:diag:parabolic}
\begin{tikzcd}
H_{\rm c}^j(Y,A) \arrow{r}{\mathcal D_\cap}
\arrow{d}{\iota_A} &
H_j^{\rm BM}(Y,A)^\vee \arrow{d}{f_A^\vee} \\
H^j(Y,A) \arrow{r}{\mathcal D_\cap} & H_j(Y,A)^\vee
\end{tikzcd}.  
\end{equation}
Put $I:=\operatorname{im}(f_A)=H_j^!(Y,A)$ and
$Q:=\operatorname{coker}(f_A)$. Applying $\vee$ to the exact sequence
\begin{equation}\label{eq:exact:seq:bm}
  0\longrightarrow I\rightarrow H_j^{\rm BM}(Y,A)
\longrightarrow Q\longrightarrow0
\end{equation}
and using the last vanishing assumption that $\operatorname{Ext}_A^1(Q,A)=0$, 
we find that the following restriction map is surjective:
\[
H_j^{\rm BM}(Y,A)^\vee\longrightarrow I^\vee.
\]
Moreover, the surjection $H_j(Y,A)\to I$ induces an
injection $I^\vee\hookrightarrow H_j(Y,A)^\vee$, and
\[
f_A^\vee=
\left(I^\vee\lhook\joinrel\longrightarrow H_j(Y,A)^\vee\right)
\circ
\left(H_j^{\rm BM}(Y,A)^\vee\longrightarrow I^\vee\right).
\]
Consequently, the image of $f_A^\vee$ is canonically isomorphic to
$I^\vee$. The commutative diagram~\eqref{eq:comm:diag:parabolic} therefore
induces an isomorphism
\[
H_!^j(Y,A)=\operatorname{im}(\iota_A)
\xrightarrow{\;\simeq\;}
\operatorname{im}(f_A^\vee)
\xrightarrow{\;\simeq\;}
I^\vee
=
\operatorname{Hom}_A\bigl(H_j^!(Y,A),A\bigr).
\]
This isomorphism sends \(\delta\in H_!^j(Y,A)\) to the functional
$c\longmapsto c\cap\delta$.
It is therefore the parabolic pairing map \(\mathcal D_\cap\).
\end{proof}

\begin{lemma}\label{universal:coefficient:parabolic:dedekind}
Let $A$ be a Dedekind domain and let $k$ be a field equipped with an
$A$-algebra structure. Assume that, for each $B\in\{A,k\}$,
\[
\operatorname{Tor}_1^{\mathbb{Z}}\left(H_{j-1}(Y,\mathbb{Z}),B\right)=
\operatorname{Tor}_1^{\mathbb{Z}}\left(H_{j-1}^{\rm BM}(Y,\mathbb{Z}),B\right)=0,
\]
and that
\[
\operatorname{Tor}_1^A\left(
\operatorname{coker}\left(H_j(Y,A)\longrightarrow
H_j^{\rm BM}(Y,A)\right),k\right)=0.
\]
Then the scalar-extension map
\[
H_j^!(Y,A)\otimes_Ak
\longrightarrow H_j^!(Y,k)
\]
is a well-defined isomorphism.
\end{lemma}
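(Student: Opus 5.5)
We compare the exact sequences defining parabolic homology over $A$ and over $k$. Write $f_B:H_j(Y,B)\to H_j^{\rm BM}(Y,B)$ for $B\in\{A,k\}$, and put $I_B:=\operatorname{im}(f_B)=H_j^!(Y,B)$ and $Q_B:=\operatorname{coker}(f_B)$. The scalar-extension map $H_j^!(Y,A)\otimes_Ak\to H_j^!(Y,k)$ is defined as the restriction of $\beta\otimes k$. Here one uses that $H_j(Y,A)\otimes_Ak\to H_j(Y,k)$ and its Borel--Moore analogue commute with $f_A$ and $f_k$ by naturality, so the image of $I_A\otimes_A k$ (which is generated by images of classes of $I_A$) lands in $I_k$. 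More precisely, we will define it via the diagram below, and well-definedness will follow once we show that $I_A\otimes_Ak\to H_j^{\rm BM}(Y,A)\otimes_Ak$ is injective.

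\emph{Step 1: base change of ordinary and Borel--Moore homology.} The Tor vanishing over $\mathbb Z$ for $B=A$ and $B=k$, combined with Lemma~\ref{universal:coefficient:theorem:homology} (using the relative chain model of Remark~\ref{remark:borel-moore-relative-chain-model} in the Borel--Moore case), gives isomorphisms
\[
H_j(Y,\mathbb Z)\otimes_{\mathbb Z}B\simeq H_j(Y,B),\qquad H_j^{\rm BM}(Y,\mathbb Z)\otimes_{\mathbb Z}B\simeq H_j^{\rm BM}(Y,B).
\]
Since $k$ is an $A$-algebra, $(-\otimes_{\mathbb Z}A)\otimes_Ak=-\otimes_{\mathbb Z}k$. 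Hence the natural maps
\[
H_j(Y,A)\otimes_Ak\to H_j(Y,k),\qquad H_j^{\rm BM}(Y,A)\otimes_Ak\to H_j^{\rm BM}(Y,k)
\]
are isomorphisms compatible with $f_A\otimes k$ and $f_k$. By naturality of $\beta$, we get $f_k\cong f_A\otimes_A k$.

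\emph{Step 2: right exactness and the Tor condition.} The sequence $H_j(Y,A)\to H_j^{\rm BM}(Y,A)\to Q_A\to0$ is right exact, so tensoring with $k$ gives $Q_k\cong Q_A\otimes_Ak$. Moreover $I_k=\operatorname{im}(f_A\otimes k)$ is the image of $H_j(Y,A)\otimes_A k$, which equals the image of $I_A\otimes_Ak$ in $H_j^{\rm BM}(Y,A)\otimes_Ak$. Now tensor the short exact sequence $0\to I_A\to H_j^{\rm BM}(Y,A)\to Q_A\to0$ with $k$. This yields
\[
\operatorname{Tor}_1^A(Q_A,k)\to I_A\otimes_Ak\to H_j^{\rm BM}(Y,A)\otimes_Ak\to Q_A\otimes_A k\to 0.
\]
The hypothesis $\operatorname{Tor}_1^A(Q_A,k)=0$ makes $I_A\otimes_Ak\to H_j^{\rm BM}(Y,A)\otimes_A k\simeq H_j^{\rm BM}(Y,k)$ injective. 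Its image is $I_k$ by the previous remark, so $I_A\otimes_Ak\simeq I_k$, and this map is the scalar-extension map.

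\emph{Main obstacle.} The only delicate point is bookkeeping. One must check that the base-change isomorphisms of Step 1 commute with $f$, which follows from naturality in Lemma~\ref{universal:coefficient:theorem:homology} with respect to the map of chain complexes $C_\bullet(Y)\to C_\bullet(\overline Y^{\rm BS},\partial\overline Y^{\rm BS})$. One must also confirm that the map so obtained coincides with the naive $[c]\otimes x\mapsto[c\otimes x]$. Beyond that, everything is formal homological algebra.
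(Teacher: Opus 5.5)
Your proposal is correct and follows essentially the same route as the paper: you obtain the base-change isomorphisms for ordinary and Borel--Moore homology from the universal coefficient theorem over $\mathbb Z$ and associativity of tensor products, compatibly with $f_A\otimes 1$ and $f_k$, and then use $\operatorname{Tor}_1^A(Q_A,k)=0$ to make $I_A\otimes_A k\to H_j^{\rm BM}(Y,A)\otimes_A k$ injective with image $\operatorname{im}(f_A\otimes 1)$, which is carried onto $H_j^!(Y,k)$. One small correction: the map itself is already well-defined without the injectivity, since it is induced by $I_A\to I_k$. Injectivity is what makes it an isomorphism.
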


\begin{proof}
By Lemma~\ref{universal:coefficient:theorem:homology} and the first vanishing assumption, 
the scalar-extension maps 
\[\beta_A^{\mathbb Z}: H_j(Y,\mathbb Z)\otimes_{\mathbb Z}A\longrightarrow H_j(Y,A),
\qquad
\beta_k^{\mathbb Z}:H_j(Y,\mathbb Z)\otimes_{\mathbb Z}k\longrightarrow H_j(Y,k)
\]
are isomorphisms.
Therefore, the following diagram is commutative, and its horizontal maps are isomorphisms:
\[
\begin{tikzcd}
\left(H_j(Y,\mathbb{Z})\otimes_{\mathbb{Z}}A\right)\otimes_A k
\arrow{r}{\beta_A^{\mathbb Z}\otimes 1}\arrow{d}{\operatorname{assoc}} &
H_j(Y,A)\otimes_A k \arrow{d}{\beta_k} \\
H_j(Y,\mathbb{Z})\otimes_{\mathbb{Z}}k
\arrow{r}{\beta_k^{\mathbb Z}} & H_j(Y,k)
\end{tikzcd}.
\]
Here, $\operatorname{assoc}$ is the canonical associativity
isomorphism given by
\[
\operatorname{assoc}\left(([c]\otimes a)\otimes\lambda\right)
=[c]\otimes(a\lambda),
\]
and $\beta_k$ is the scalar-extension map.
$\beta_k$ is therefore an isomorphism. 
The same argument for Borel--Moore homology shows that 
the scalar-extension map
\[
\beta_k^{\rm BM}:H_j^{\rm BM}(Y,A)\otimes_A k
\longrightarrow H_j^{\rm BM}(Y,k)
\]
is an isomorphism. Let
\[
f_A:H_j(Y,A)\longrightarrow H_j^{\rm BM}(Y,A),
\qquad
f_k:H_j(Y,k)\longrightarrow H_j^{\rm BM}(Y,k)
\]
be the natural maps. By naturality, the scalar-extension maps fit into the commutative
diagram
\[
\begin{tikzcd}
H_j(Y,A)\otimes_A k \arrow{r}{\beta_k}\arrow{d}{f_A\otimes 1} &
H_j(Y,k) \arrow{d}{f_k} \\
H_j^{\rm BM}(Y,A)\otimes_A k
\arrow{r}{\beta_k^{\rm BM}} &
H_j^{\rm BM}(Y,k)
\end{tikzcd}.
\]
Tensoring the exact sequence~\eqref{eq:exact:seq:bm}
with $k$ and using the last vanishing assumption $\operatorname{Tor}_1^A(\mathrm{Coker}(f_A),k)=0$, 
we obtain an injection
\[
H_j^!(Y,A)\otimes_A k\lhook\joinrel\longrightarrow
H_j^{\rm BM}(Y,A)\otimes_A k.
\]
Its image is precisely $\operatorname{im}(f_A\otimes 1)$. By the second
commutative diagram and its two horizontal isomorphisms, the latter
image is carried isomorphically onto
$\operatorname{im}(f_k)=H_j^!(Y,k)$.
Thus, the induced map in the statement is a well-defined isomorphism.
\end{proof}

\section{Arithmetic orbifolds for \texorpdfstring{$\mathrm{GL}(2)$}{GL(2)} and its (co)homology}
\label{sec:arithmetic-orbifolds}
\label{alg:co:hom:mod:sym} 
Let $\mathscr H_3=\mathbb C\times\mathbb R_{>0}$
be the upper half 3-space.
The group $\mathrm{GL}_2(\mathbb C)$ acts on $\mathscr H_3$ by
\begin{equation*}
\left(\begin{matrix} a & b \\ c & d \end{matrix}\right)\cdot (x,y)
:=
\bigg(\frac{(ax+b)\overline{(cx+d)}+a\overline{c}y^2}{|cx+d|^2+|c|^2y^2},\frac{|ad-bc|y}{|cx+d|^2+|c|^2y^2}\bigg).
\end{equation*}
This gives a \(\mathrm{GL}_2(\mathbb C)\)-equivariant diffeomorphism
\begin{equation}\label{eq:diff:liegp:hyperbolic}
\mathrm{GL}_2(\mathbb{C})/\mathbb{C}^\times\mathrm{U}_2(\mathbb{C})
\longrightarrow
\mathscr{H}_3,
\qquad g\longmapsto g\cdot (0,1).  
\end{equation}

\subsection{Arithmetic orbifolds of GL(2)}\label{subsec:arith:orbi}
Let $U$ be an open compact subgroup of $\mathrm{GL}_2(\mathbb{A}_F^{(\infty)})$.
Define
\[
Y(U):=\mathrm{GL}_2(F)\backslash 
\mathrm{GL}_2(\mathbb{A}_F)/\mathbb{C}^\times\mathrm{U}_2(\mathbb{C})U.
\]
Here, \(\mathrm{GL}_2(F)\) is embedded diagonally in
\(\mathrm{GL}_2(\mathbb A_F)\).
We call \(Y(U)\) the arithmetic orbifold of level \(U\).
Set
\[
\mathrm{Cl}_F(U):=
F^\times\backslash\mathbb A_F^\times/
\mathbb C^\times\det(U).
\]
The determinant induces a map
\[
\det:Y(U)\longrightarrow\mathrm{Cl}_F(U).
\]
Since $\det(U)$ is an open subgroup of $\widehat{\mathcal O}_F^\times$, there is a
nonzero ideal $\mathfrak m$ of $\mathcal O_F$ such that
$1+\widehat{\mathfrak m}\subseteq\det(U)$. Consequently,
$\mathrm{Cl}_F(U)$ is finite. 
Choose a set of representatives
$\mathcal C_F(U)\subset\mathrm{GL}_2(\mathbb A_F^{(\infty)})$
for $\mathrm{Cl}_F(U)$ via the determinant.
Strong approximation for $\mathrm{GL}_2$ then
gives
\begin{equation}\label{eq:strong:approx:GL2:adele}
\mathrm{GL}_2(\mathbb{A}_F)
=
\bigsqcup_{t\in\mathcal C_F(U)}\mathrm{GL}_2(F)t\mathrm{GL}_2(\mathbb C) U.
\end{equation}

For a congruence subgroup \(\Gamma\subset\mathrm{GL}_2(F)\), write
\[
    Y_\Gamma:=\Gamma\backslash 
    \mathrm{GL}_2(\mathbb{C})/\mathbb{C}^\times\mathrm{U}_2(\mathbb{C})
    \simeq\Gamma\backslash\mathscr H_3.
\]
This is called the {\it arithmetic orbifold} attached to $\Gamma$.
Here, the identification is induced by the diffeomorphism~\eqref{eq:diff:liegp:hyperbolic}.
For $t\in\mathcal C_F(U)$, set 
$
\Gamma^t(U):=\mathrm{GL}_2(F)\cap tUt^{-1},
$
which is a congruence subgroup of $\mathrm{GL}_2(F)$. 
Put $Y^t(U):=Y_{\Gamma^t(U)}$.
Then the decomposition~\eqref{eq:strong:approx:GL2:adele} induces a homeomorphism
\begin{equation}\label{eq:arith:mfd:disjoint:union}
\bigsqcup_{t\in\mathcal C_F(U)} Y^t(U)\xrightarrow{\;\;\simeq\;\;} Y(U),
\end{equation}
where the map on the \(t\)-component sends \(g_\infty\) to
\(tg_\infty\).
Under this homeomorphism, $Y^t(U)$ is the fiber over $\det(t)$.
The determinant map is continuous, and $\mathrm{Cl}_F(U)$ is discrete.
Hence every connected component of $Y(U)$ lies in a single fiber.
Since $\mathscr H_3$ is connected, each $Y^t(U)$ is connected. 
Thus the $Y^t(U)$ are precisely the connected components of $Y(U)$, 
and determinant induces a bijection
\[
\pi_0(Y(U))\xrightarrow{\;\sim\;}\mathrm{Cl}_F(U).
\]

\subsection{Hecke-equivariant section of the natural map}\label{subsec:hecke:equiv:section}
Let us set $U$ and $\mathcal C_F(U)$ as the preceding subsection. 
Fix $t\in\mathcal C_F(U)$.
Assume that $U$ is sufficiently small, 
so that every component $Y^t(U)$ is a manifold.
In this case, $Y(U)$ and $Y^t(U)$ are called \emph{arithmetic manifolds}.
By Hida \cite[\S2]{Hida:1999}, there is a Hecke-equivariant section
\[
\iota_t:H^1_!(Y^t(U),\mathbb{C})
\;\longrightarrow\; 
H^1_{\mathrm{c}}(Y^t(U),\mathbb{C})
\]
of the natural map $H^1_{\mathrm{c}}\to H^1_!$.
We recall the construction, since it will be used later.
\begin{proof}[Sketch of construction]
For each cusp \(s\) of
\(Y^t(U)\), choose a cusp neighborhood
\[
  U_s\simeq
  (y_0,\infty)\times\Gamma_s\backslash\mathbb R^{d-1},
  \qquad d=\dim Y^t(U).
\]
Here, \(y\) is the height coordinate at \(s\); thus \(y\to\infty\)
means approaching the cusp.
Choose a closed differential form \(\omega\) representing a
class in \(H_!^1(Y^t(U),\mathbb C)\). 
Then \(\omega\) is rapidly decreasing in the height coordinate at every cusp.
There is therefore a canonical smooth function \(\theta_s\) on \(U_s\) satisfying
\[
  d\theta_s=\omega|_{U_s},
  \qquad
  \lim_{y\to\infty}\theta_s(y,x)=0.
\]
Choose a smooth function \(\phi_s\) supported in \(U_s\) and equal to
\(1\) for sufficiently large \(y\). Then
\[
  \omega':=\omega-d\left(\sum_s\phi_s\theta_s\right)
\]
is compactly supported, and we define
\[
  \iota_t([\omega])
  :=
  \left[
    \omega'
  \right]
  \in H_{\mathrm c}^1(Y^t(U),\mathbb C).
\]
The resulting class is independent of the cusp neighborhoods and
cutoff functions. $\iota_t$ is compatible with the Hecke action; 
for details, see \cite[\S3]{Hida:1993} and \cite[\S2]{Hida:1999}.
\end{proof}

\subsection{Arithmetic orbifolds at level $U_0$ and $U_1$}\label{subsec:u0:u1}
Let us set
\[
U_1(\mathfrak{N})
:=
\left\{\left(\begin{matrix} a & b \\ c & d \end{matrix}\right)
\in 
U_0(\mathfrak{N})
\;:\;
d\in 1+\widehat{\mathfrak{N}}\right\}.
\]
This is an open compact subgroup of $\mathrm{GL}_2(\mathbb A_F^{(\infty)})$.
For both $U=U_0(\mathfrak N)$ and $U=U_1(\mathfrak N)$, we have
\[
\det(U)=\widehat{\mathcal O}_F^\times.
\]
Therefore, in the notation of \S\ref{subsec:arith:orbi},
\[
\mathrm{Cl}_F(U_0(\mathfrak N))
=\mathrm{Cl}_F(U_1(\mathfrak N))
=\mathrm{Cl}_F.
\]
Choose a complete set $\{a_i\}_{i=1}^{h_F}$ of representatives of $\mathrm{Cl}_F$.
Set $t_i:=\left(\begin{smallmatrix} a_i & 0 \\ 0 & 1 \end{smallmatrix}\right)$.
We then take
\[
\mathcal C_F(U_0(\mathfrak N))
=\mathcal C_F(U_1(\mathfrak N))
:=\{t_1,\ldots,t_{h_F}\}.
\]
Let $\Gamma^t(U)$ be the group defined in \S\ref{subsec:arith:orbi}. Put
\[
\Gamma_0^i(\mathfrak N):=\Gamma^{t_i}(U_0(\mathfrak{N})),
\qquad
\Gamma_1^i(\mathfrak N):=\Gamma^{t_i}(U_1(\mathfrak{N})).
\]
For any subgroup $\Gamma\subset\mathrm{GL}_2(F)$, put
\[
\overline{\Gamma}:=\Gamma\cap\mathrm{SL}_2(F).
\]
Since $\left(\begin{smallmatrix} \varepsilon & 0 \\ 0 & 1 \end{smallmatrix}\right)
\in\Gamma_1^i(\mathfrak N)$, the determinant map gives an exact sequence
\begin{equation}\label{eq:gamma1-bargamma1-unit-sequence}
\begin{tikzcd}
1 \arrow{r} &
\overline{\Gamma_1^i(\mathfrak N)} \arrow{r} &
\Gamma_1^i(\mathfrak N) \arrow{r}{\det} &
E_F \arrow{r} & 1.
\end{tikzcd}
\end{equation}
Let $Y_\Gamma$ be the orbifold defined in~\ref{subsec:arith:orbi}. Set
\[
Y_0^i(\mathfrak N):=Y_{\Gamma_0^i(\mathfrak N)},
\qquad
Y_1^i(\mathfrak N):=Y_{\Gamma_1^i(\mathfrak N)},
\qquad
Y^i(\mathfrak N):=Y_{\overline{\Gamma_1^i(\mathfrak N)}}.
\]
The map $E_F\rightarrow\Gamma_1^i(\mathfrak N)$, 
$\varepsilon\mapsto\left(\begin{smallmatrix}\varepsilon&0\\0&1\end{smallmatrix}\right)$
is a section of the determinant map in
\eqref{eq:gamma1-bargamma1-unit-sequence}. Via this splitting, $E_F$
acts on $Y^i(\mathfrak N)$ by
\[
\varepsilon\cdot[z]
:=
\left[
\left(\begin{smallmatrix}\varepsilon&0\\0&1\end{smallmatrix}\right)z
\right].
\]
Thus $Y^i(\mathfrak N)\to Y_1^i(\mathfrak N)$ is an orbifold covering
with orbifold deck group $E_F$. Put
\[
E_F(\mathfrak N):=\{\varepsilon\in E_F:
\varepsilon\equiv1\pmod{\mathfrak N}\}.
\]
For $\varepsilon\in E_F(\mathfrak N)$, 
$\left(\begin{smallmatrix}\varepsilon&0\\0&\varepsilon^{-1}\end{smallmatrix}\right)$
is an element of $\overline{\Gamma_1^i(\mathfrak N)}$. Since 
$\left(\begin{smallmatrix}\varepsilon&0\\0&1\end{smallmatrix}\right)^2
=\varepsilon\left(\begin{smallmatrix}\varepsilon&0\\0&\varepsilon^{-1}\end{smallmatrix}\right)$
and $\mathbb C^\times$ acts trivially on $\mathscr H_3$, the subgroup $E_F(\mathfrak N)^2$ 
acts trivially on $Y^i(\mathfrak N)$. 

Reduction of the lower-right entry gives an exact sequence
\begin{equation}\label{eq:gamma0-gamma1-diamond-sequence}
\begin{tikzcd}
1 \arrow{r} &
\Gamma_1^i(\mathfrak N) \arrow{r} &
\Gamma_0^i(\mathfrak N) \arrow{r} &
(\mathcal O_F/\mathfrak N)^\times \arrow{r} & 1.
\end{tikzcd}
\end{equation}
Consequently, $Y_1^i(\mathfrak N)\to Y_0^i(\mathfrak N)$ is an
orbifold covering with deck group isomorphic to $(\mathcal O_F/\mathfrak N)^\times$.
For an element $d$ in $(\mathcal O_F/\mathfrak N)^\times$, choose a lift
$\gamma_{d,i}\in\Gamma_0^i(\mathfrak N)$.  Since
$\Gamma_1^i(\mathfrak N)$ is normal in $\Gamma_0^i(\mathfrak N)$, the
map
\[
R_{d,i}:Y_1^i(\mathfrak N)\longrightarrow Y_1^i(\mathfrak N),
\qquad
[z]\longmapsto[\gamma_{d,i}z]
\]
is independent of the lift.
The componentwise diamond operator associated with \(d\) is defined by the pullback
$R_{d,i}^*$ on the cohomology.
Under~\eqref{eq:gamma0-gamma1-diamond-sequence}, 
the diagonal matrix 
$\left(\begin{smallmatrix}\varepsilon&0\\0&\varepsilon\end{smallmatrix}\right)$
maps to $\varepsilon$ mod $\mathfrak N$.
Since scalar matrices act trivially on $\mathscr H_3$,
the image of $E_F$ in $(\mathcal O_F/\mathfrak N)^\times$
acts trivially on $Y_1^i(\mathfrak N)$.

\subsection{Generalized Eichler-Shimura isomorphism}\label{subsec:eichler:shimura}
For $\left(\begin{smallmatrix} a & b \\ c & d \end{smallmatrix}\right)
\in\mathrm{SL}_2(\mathbb{C})$ and $(x,y)\in\mathscr{H}_3$,
let us define the automorphic factor by
\[
\mathrm{j}
\left(\left(\begin{matrix} a & b \\ c & d \end{matrix}\right),
(x,y)\right):=
\left(\begin{matrix} cx+d & -cy \\ \overline{c}y & \overline{cx+d} \end{matrix}\right).
\]

Let $f\in S_2(\mathfrak{N},\chi)$ be a cusp form.
For \(1\le i\le h_F\), define a function $f_i:\mathrm{SL}_2(\mathbb{C})\to\mathbf{L}_2$ 
by $f_i(g,\mathbf{x}):=f(t_i g,\mathbf{x})$.
For $0\leq j\leq 2$, let $f_{i,j}$ be the $X^{2-j}Y^{j}$-component of $f_i$.
Define the function 
$f_i^\prime:\mathrm{SL}_2(\mathbb{C})\to\mathbf{L}_2$ by 
\begin{align*}
    f_i^\prime(g,\mathbf{a})
    :=
    \left(f_{i,j}(g)\right)_{j=0}^2
    \times
    \Psi\left(\mathrm{j}(g,(0,1))^{\top}\mathbf{a}\right),
\end{align*}
where $\mathbf{a}=\left(\begin{smallmatrix} A \\ B \end{smallmatrix}\right)$, 
$A$, $B$ are indeterminates, 
$\Psi(\mathbf{a})=(A^2,AB,B^2)$ 
is a vector of monomials, and
$\times$ is the standard inner product of vectors.
Let us define a differential $1$-form 
$\delta_i(f)$ on $\mathscr{H}_3$ by 
\[
\delta_i(f)\left((x,y)\right)
:=
f_i^\prime
\left(\left(\begin{matrix} \sqrt{y} & x/\sqrt{y} \\ 0 & 1/\sqrt{y} \end{matrix}\right),
\mathbf{a}\right)\bigg|_{(A^2,AB,B^2)=(-dx,dy,d\overline{x})},
\]
where $x,y,\overline{x}$ are the local coordinates of $\mathscr{H}_3$.

Let $\rho$ be a finite-order character of $\widehat{\mathcal O}_F^\times$
that is trivial on $1+\widehat{\mathfrak N}$ and on the image
of $E_F$. 
Let $R$ be a commutative ring containing the values of $\rho$.
For $\gamma\in\Gamma_0^i(\mathfrak N)$, 
there exists an element $u_\gamma\in U_0(\mathfrak N)$ such that
the image of $\gamma$ in $\mathrm{GL}_2(\mathbb A_F^{(\infty)})$
is given by $t_iu_\gamma t_i^{-1}$. Let us set $\rho_i(\gamma):=\rho(d)$
for the lower-right entry $d$ of $u_\gamma$.
This $\rho_i$ is a character of $\Gamma_0^i(\mathfrak N)$, which is well-defined by the assumption on $\rho$. 
Let us define an action of $\Gamma_0^i(\mathfrak N)$ on $\mathscr H_3\times R$ as
$\gamma\cdot(z,r):=\left(\gamma z,\rho_i^{-1}(\gamma)r\right)$.
Let $\mathcal L_i(\rho,R)$ be the sheaf of locally constant sections of the projection
\[
  \Gamma_0^i(\mathfrak N)\backslash\left(\mathscr H_3\times R\right)
  \longrightarrow
  Y_0^i(\mathfrak N).
\]
Let $\mathcal L(\rho,R)$ denote the local system on $Y_0(\mathfrak N)$
whose restriction to the $i$-th connected component is
$\mathcal L_i(\rho,R)$.
Denote $\chi_0:=\chi\vert_{\widehat{\mathcal O}_F^\times}$.
If $\chi_0=\rho$, the property of $f$ shows that
$\delta_i(f)$ descends to \(Y_0^i(\mathfrak N)\) and represents a class in
\[
 H_!^1\left(Y_0^i(\mathfrak N),\mathcal L_i(\rho,\mathbb C)\right).
\]

\begin{proposition}[{Hida \cite[Proposition 3.1]{Hida:1994}}]
\label{eichler:shimura:harder}
The direct sum of the maps $\delta_i$ induces a Hecke-equivariant isomorphism
\[
\bigoplus_{\chi_0=\rho}
S_2(\mathfrak N,\chi)
\xrightarrow{\ \simeq\ }
\bigoplus_{i=1}^{h_F}H_!^1\left(Y_0^i(\mathfrak N),\mathcal L_i(\rho,\mathbb C)\right)
\xrightarrow{\ \simeq\ }
H_!^1\left(Y_0(\mathfrak N),\mathcal L(\rho,\mathbb C)\right).
\]
Here, the leftmost direct sum is over the finite-order Hecke characters \(\chi\) of modulus $\mathfrak N$
satisfying \(\chi_0=\rho\).
On the \(\chi\)-summand, the first map sends \(f\) to \((\delta_i(f))_{i=1}^{h_F}\). 
The second map is induced by~\eqref{eq:arith:mfd:disjoint:union}.
\end{proposition}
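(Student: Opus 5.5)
The statement is Hida's generalized Eichler--Shimura isomorphism. I would prove it by reducing to the connected components and, on each component, to the classical Eichler--Shimura--Harder description of cuspidal cohomology of $\Gamma\backslash\mathscr H_3$ by harmonic forms.

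\textbf{Step 1: the second map.} The second isomorphism is formal. The homeomorphism \eqref{eq:arith:mfd:disjoint:union} identifies $Y_0(\mathfrak N)$ with $\bigsqcup_i Y_0^i(\mathfrak N)$, and by definition $\mathcal L(\rho,\mathbb C)$ restricts to $\mathcal L_i(\rho,\mathbb C)$ on the $i$-th piece. Compactly supported cohomology and ordinary cohomology both decompose as direct sums over a finite disjoint union. The map $H^1_{\rm c}\to H^1$ is compatible with these decompositions, so its image $H^1_!$ decomposes as well.

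\textbf{Step 2: $\delta_i(f)$ is a well-defined cuspidal class.} Let $\Gamma=\Gamma_0^i(\mathfrak N)$ and $\gamma\in\Gamma$, written $\gamma=t_iu_\gamma t_i^{-1}$ at the finite places. Condition (2) of Definition~\ref{adel:cuspform:defn}, applied to $g=t_i g_\infty$, gives
\[
f(t_i\gamma g_\infty)=\chi_{\mathfrak N}(d)f(t_ig_\infty).
\]
The $\mathrm{SU}_2$-equivariance is absorbed by the factor $\Psi(\mathrm j(g,(0,1))^\top\mathbf a)$. Together these show that $\gamma^*\delta_i(f)=\rho_i(\gamma)\,\delta_i(f)$, so $\delta_i(f)$ is a section of $\mathcal L_i(\rho,\mathbb C)\otimes\Omega^1$ on $Y_0^i(\mathfrak N)$. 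The condition $\chi_0=\rho$ ensures that the central-character twist and the diamond twist coincide.

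Closedness $d\delta_i(f)=0$ should follow from the Casimir condition (1): for weight $(2,2)$, $D_\tau f=0$ is equivalent to harmonicity of the vector-valued form. The Fourier--Whittaker expansion \eqref{Fourier:Whittaker:expansion} decays like $K_0(4\pi|\alpha|y)$ at each cusp, so $\delta_i(f)$ is rapidly decreasing. By the construction recalled in \S\ref{subsec:hecke:equiv:section}, it therefore lifts to $H^1_{\rm c}$ and defines a class in $H^1_!$. Hecke equivariance follows from the adelic definition of the Hecke operators, since both sides are defined by the same double cosets acting on the right.

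\textbf{Step 3: bijectivity.} \emph{Injectivity.} If $\delta_i(f)$ is exact for every $i$, its pairing with every compactly supported class vanishes. Use the Petersson-type pairing
\[
\int_{Y_0^i}\delta_i(f)\wedge\ast\overline{\delta_i(f)},
\]
which is well defined by rapid decay. Exactness of a rapidly decreasing harmonic form forces this integral to be $0$, hence $f_i=0$ for every $i$. Strong approximation \eqref{eq:strong:approx:GL2:adele} then gives $f=0$. The sum over $\chi$ with $\chi_0=\rho$ is direct, by the character-independence argument used for $S_{\mathfrak d}$; the different $\chi$ differ only on $F^\times\backslash\mathbb A_F^\times/\widehat{\mathcal O}_F^\times\mathbb C^\times$, which permutes components.

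\emph{Surjectivity.} This is the main obstacle. I would invoke Harder's theorem (Borel's theorem on square-integrable cohomology together with Harder's description of the Eisenstein part) that $H^1_!(\Gamma\backslash\mathscr H_3,\mathcal L)$ is represented by cuspidal harmonic forms. Via relative Lie algebra cohomology, these correspond to cuspidal automorphic representations with cohomological archimedean component for the trivial (weight $(2,2)$) coefficient system. Each such harmonic form is $\delta_i$ of a vector-valued function on $\Gamma\backslash\mathrm{SL}_2(\mathbb C)$. Using \eqref{eq:arith:mfd:disjoint:union}, these functions glue to a function on $\mathrm{GL}_2(\mathbb A_F)$ that is left $\mathrm{GL}_2(F)$-invariant and right $U_0(\mathfrak N)$-equivariant.

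Decomposing this function under the action of the finite group $F^\times\backslash\mathbb A_F^\times/\mathbb C^\times(1+\widehat{\mathfrak N})$ by central characters produces components in the spaces $S_2(\mathfrak N,\chi)$ with $\chi_0=\rho$. The delicate points are extending from $\mathrm{SL}_2$ to $\mathrm{GL}_2$ through the units $E_F$, which is where triviality of $\rho$ on $E_F$ is needed, and identifying $H^1_!$ with cuspidal rather than merely $L^2$ cohomology in degree $1$. For both I would follow \cite[\S3]{Hida:1994} directly.
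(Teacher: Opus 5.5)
Your proposal is correct and follows essentially the paper's route: the paper gives no independent argument and simply invokes Hida~\cite[Proposition~3.1]{Hida:1994}. Your sketch is the standard proof behind that citation: componentwise splitting over the connected components; well-definedness and rapid decay of $\delta_i(f)$ from the transformation law and the Whittaker expansion; injectivity by the $L^2$ integration-by-parts argument together with strong approximation and independence of central characters; and surjectivity via Harder's identification of $H^1_!$ with cuspidal cohomology. You also correctly locate where $\rho|_{E_F}=1$ enters, and, like the paper, you defer the hard step to Hida and Harder.
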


\begin{remark}
By Proposition~\ref{prop:hecke-character-local-extension}, the assumption on $\rho$ 
allows $\rho$ to be extended to a finite-order Hecke character of modulus $\mathfrak N$.
Hence the indexing set of the direct sum $\bigoplus_{\chi_0=\rho}$ is nonempty.
\end{remark}

We now relate this generalized Eichler--Shimura isomorphism 
to the \(U_1\)-level formulation, component by component.
Under the decomposition~\eqref{eq:arith:mfd:disjoint:union},
the componentwise diamond actions, which are defined in \S\ref{subsec:u0:u1}, 
induce the action on $Y_1(\mathfrak N):=Y(U_1(\mathfrak N))$.
Pullback along the orbifold covering gives the identification
\[
H_!^1\left(Y_0^i(\mathfrak N),
\mathcal L_i(\rho,\mathbb C)\right)
\xrightarrow{\ \simeq\ }
H_!^1(Y_1^i(\mathfrak N),\mathbb C)[\rho_i],
\]
where the right-hand side denotes the \(\rho_i\)-isotypic subspace for the diamond action.
Taking the direct sum over $i$ yields
\begin{equation}\label{eq:localsys:eigensp}
H_!^1(Y_0(\mathfrak N),\mathcal L(\rho,\mathbb C))
\xrightarrow{\ \simeq\ }
H_!^1(Y_1(\mathfrak N),\mathbb C)[\rho].  
\end{equation}

\begin{proposition}
\label{prop:eichler-shimura-u1-componentwise}
The direct sum of the maps $\delta_i$ induces a Hecke-equivariant isomorphism
\[
 \bigoplus_{\chi}S_2(\mathfrak N,\chi)
 \xrightarrow{\ \simeq\ }
 \bigoplus_{i=1}^{h_F}H_!^1(Y_1^i(\mathfrak N),\mathbb C)
 \xrightarrow{\ \simeq\ }
 H_!^1(Y_1(\mathfrak N),\mathbb C),
 \qquad
 f\longmapsto(\delta_i(f))_{i=1}^{h_F},
\]
where the sum is over the finite-order Hecke characters $\chi$ of
modulus $\mathfrak N$.
\end{proposition}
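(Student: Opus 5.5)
We derive Proposition~\ref{prop:eichler-shimura-u1-componentwise} from Proposition~\ref{eichler:shimura:harder} by decomposing $H_!^1(Y_1(\mathfrak N),\mathbb C)$ into isotypic pieces for the diamond action and applying \eqref{eq:localsys:eigensp} to each piece.

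\textbf{Step 1: decomposing the cohomology.} The diamond operators $R_{d,i}^*$ give an action of the finite abelian group $(\mathcal O_F/\mathfrak N)^\times$ on each $H_!^1(Y_1^i(\mathfrak N),\mathbb C)$. Scalar matrices act trivially on $\mathscr H_3$, so this action factors through the quotient of $(\mathcal O_F/\mathfrak N)^\times$ by the image of $E_F$. Over $\mathbb C$ the action is semisimple, so
\[
H_!^1(Y_1(\mathfrak N),\mathbb C)=\bigoplus_{\rho}H_!^1(Y_1(\mathfrak N),\mathbb C)[\rho],
\]
where $\rho$ runs over the characters of $\widehat{\mathcal O}_F^\times$ that are trivial on $1+\widehat{\mathfrak N}$ and on the image of $E_F$. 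Only these $\rho$ can occur, since any character appearing must be trivial on the image of $E_F$.

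\textbf{Step 2: matching each isotypic piece with cusp forms.} For each such $\rho$, combine \eqref{eq:localsys:eigensp} with Proposition~\ref{eichler:shimura:harder}. This gives an isomorphism
\[
\bigoplus_{\chi_0=\rho}S_2(\mathfrak N,\chi)\xrightarrow{\ \simeq\ } H_!^1(Y_1(\mathfrak N),\mathbb C)[\rho].
\]
One must check that this composite is still $f\mapsto(\delta_i(f))_i$, with $\delta_i(f)$ now read as a form on $Y_1^i(\mathfrak N)$. This holds because pulling back along the covering $Y_1^i(\mathfrak N)\to Y_0^i(\mathfrak N)$ just reinterprets the same $1$-form $\delta_i(f)$ on $\mathscr H_3$, now descended to $\Gamma_1^i(\mathfrak N)\backslash\mathscr H_3$. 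On $\Gamma_1^i(\mathfrak N)$ the character $\rho_i$ is trivial, so the local system becomes the constant sheaf.

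\textbf{Step 3: summing over $\rho$.} Every finite-order Hecke character $\chi$ of modulus $\mathfrak N$ has restriction $\chi_0$ trivial on $1+\widehat{\mathfrak N}$. It is also trivial on the image of $E_F$, since $\chi$ is trivial on $F^\times$ and on $\mathbb C^\times$. Hence each $\chi$ appears for exactly one $\rho$, and summing Step 2 over all $\rho$ gives the full source $\bigoplus_\chi S_2(\mathfrak N,\chi)$ and the full target. The second isomorphism in the statement is induced by \eqref{eq:arith:mfd:disjoint:union}. Hecke equivariance holds summand by summand, because the pullback identification in Step 2 commutes with Hecke correspondences.

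\textbf{Main obstacle.} The point needing most care is the claim in Step 2 that \eqref{eq:localsys:eigensp} is an isomorphism onto the $\rho$-isotypic part of the \emph{parabolic} cohomology, and not just of ordinary cohomology. For a finite orbifold covering with deck group $G$ we have $H^\bullet(Y_0,\mathcal L)\cong H^\bullet(Y_1)[\rho]$ with $\mathbb C$-coefficients, and the analogous statement for $H_{\rm c}$ holds by averaging over $G$. Taking $\rho$-isotypic parts is an exact functor, so it commutes with taking the image of $H_{\rm c}\to H$, and this gives the parabolic version. I would cite this transfer argument rather than redo it.
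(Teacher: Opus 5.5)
Your proposal is correct and follows the paper's proof: for each character $\rho$ of $\widehat{\mathcal O}_F^\times$ trivial on $1+\widehat{\mathfrak N}$ and on the image of $E_F$, you combine Proposition~\ref{eichler:shimura:harder} with \eqref{eq:localsys:eigensp} and then sum over $\rho$. Your extra checks are exactly the details the paper leaves implicit: the semisimplicity of the diamond action, the triviality of every $\chi_0$ on the image of $E_F$ (so each $\chi$ lands in exactly one $\rho$-summand), and the passage of the isotypic identification to parabolic cohomology.
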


\begin{proof}
For each character $\rho$ of $\widehat{\mathcal O}_F^\times$ trivial on $1+\widehat{\mathfrak N}$ and $E_F$,
Proposition~\ref{eichler:shimura:harder} and~\eqref{eq:localsys:eigensp} 
give the identification between the sum of cusp form spaces and the $\rho$-eigenspace 
$H_!^1(Y_1(\mathfrak N),\mathbb C)[\rho]$. Taking the sum over $\rho$ gives the asserted isomorphism.
\end{proof}

\begin{remark}
As in Subsection~\ref{subsec:atkin-lehner-func-eq}, cusp forms with
distinct central characters are linearly independent. Hence the direct
sum in Proposition~\ref{prop:eichler-shimura-u1-componentwise} may be
identified with the ordinary sum
\[
\bigoplus_{\chi} S_2(\mathfrak N,\chi)
=
\sum_{\chi} S_2(\mathfrak N,\chi)
\]
inside the space of functions
$\mathrm{GL}_2(\mathbb A_F)\to\mathbf L_2$.  
\end{remark}

\subsection{Arithmetic manifolds at level \(U_1^1\)}\label{subsec:SL2:covering}
Recall that $Y^i(\mathfrak N):=Y_{\overline{\Gamma_1^i(\mathfrak N)}}$ and 
$t_i=\left(\begin{smallmatrix} a_i & 0 \\ 0 & 1 \end{smallmatrix}\right)$,
where $\{a_i\}_{i=1}^{h_F}$ is a complete set of representatives of $\mathrm{Cl}_F$.
Assume that the positive generator of $\mathfrak N\cap\mathbb Z$ is
greater than $3$. Then $\overline{\Gamma_1^i(\mathfrak N)}$ is
torsion-free by Urban~\cite[Lemme~2.3.1]{Urban:1995}. Consequently,
$Y^i(\mathfrak N)$ is a manifold.
Set
\[
U_1^1(\mathfrak N):=
\left\{
\begin{pmatrix}a&b\\c&d\end{pmatrix}\in U_1(\mathfrak N):
a-1\in\widehat{\mathfrak N}
\right\}.
\]
The assumption on $\mathfrak N\cap\mathbb Z$ implies that
$E_F(\mathfrak N)=\{1\}$.
Under this assumption, we additionally obtain
$Y^{t_i}(U_1^1(\mathfrak N))=Y^i(\mathfrak N)$.

\begin{proposition}\label{prop:hoch:serre}
Let $A$ be a coefficient ring in which $\#E_F$ is invertible. Then the pullback of the orbifold covering 
$Y^i(\mathfrak N)\longrightarrow Y_1^i(\mathfrak N)$ induces an isomorphism
\[
H^j(Y_1^i(\mathfrak N),A)
\xrightarrow{\;\simeq\;}
H^j(Y^i(\mathfrak N),A)^{E_F}
\]
\end{proposition}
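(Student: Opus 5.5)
We prove Proposition~\ref{prop:hoch:serre} by identifying both sides with group cohomology and applying the Hochschild--Serre spectral sequence to the extension \eqref{eq:gamma1-bargamma1-unit-sequence}. Since $\overline{\Gamma_1^i(\mathfrak N)}$ is torsion-free (Urban), it acts freely and properly on the contractible space $\mathscr H_3$. Hence $Y^i(\mathfrak N)$ is a $K(\overline{\Gamma_1^i(\mathfrak N)},1)$, and
\[
H^j(Y^i(\mathfrak N),A)\simeq H^j(\overline{\Gamma_1^i(\mathfrak N)},A).
\]
For the orbifold $Y_1^i(\mathfrak N)$ we will interpret $H^j(Y_1^i(\mathfrak N),A)$ as the cohomology of the quotient space. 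We will first show that this agrees with $H^j(\Gamma_1^i(\mathfrak N),A)$ when $\#E_F$ is invertible in $A$, and then take $E_F$-invariants.

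\textbf{Step 1: the orbifold quotient.} The subgroup $\overline{\Gamma_1^i(\mathfrak N)}$ is normal in $\Gamma_1^i(\mathfrak N)$ with quotient $E_F$ via $\det$. The action of $\Gamma_1^i(\mathfrak N)$ on $\mathscr H_3$ factors through the image of $\Gamma_1^i(\mathfrak N)$ modulo scalars, so $E_F$ acts on $Y^i(\mathfrak N)$ through the splitting $\varepsilon\mapsto\mathrm{diag}(\varepsilon,1)$. The quotient $Y^i(\mathfrak N)/E_F$ is $Y_1^i(\mathfrak N)$. This $E_F$-action need not be free, and it may have a kernel: $E_F(\mathfrak N)^2$ acts trivially, although here $E_F(\mathfrak N)=1$. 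Note also that $\Gamma_1^i(\mathfrak N)$ contains no nontrivial scalars, because a scalar $\varepsilon$ must satisfy $\varepsilon\equiv1\pmod{\mathfrak N}$.

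For a finite group $G$ acting on a reasonable space $X$ (for instance a CW complex, or a manifold with a smooth action), the map $H^j(X/G,A)\to H^j(X,A)^G$ is an isomorphism whenever $\#G$ is invertible in $A$. The inverse is given by the transfer, and the key identity is that the composite $H^j(X,A)^G\to H^j(X/G,A)\to H^j(X,A)^G$ is multiplication by $\#G$. I would prove this directly on singular cochains: every singular simplex of $X/G$ lifts to $X$, and the lifts form a $G$-orbit, so one can average over lifts. Alternatively one can cite Bredon's theorem on transformation groups. Applying this with $X=Y^i(\mathfrak N)$ and $G=E_F$, which is finite of order $2$, $4$ or $6$, gives the claim directly, without using group cohomology.

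\textbf{Step 2 (consistency check).} As a cross-check, the Hochschild--Serre spectral sequence
\[
E_2^{p,q}=H^p\bigl(E_F,H^q(\overline{\Gamma_1^i(\mathfrak N)},A)\bigr)\Rightarrow H^{p+q}(\Gamma_1^i(\mathfrak N),A)
\]
degenerates, because $H^p(E_F,-)=0$ for $p>0$ when $\#E_F\in A^\times$. This gives
\[
H^j(\Gamma_1^i(\mathfrak N),A)\simeq H^j(\overline{\Gamma_1^i(\mathfrak N)},A)^{E_F},
\]
which is consistent with Step~1. One must check that the $E_F$-action on $H^j(\overline{\Gamma_1^i(\mathfrak N)},A)$ induced by conjugation agrees with the pullback action of the deck transformations. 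This follows from the equivariance of the identification $Y^i(\mathfrak N)\simeq K(\overline{\Gamma_1^i(\mathfrak N)},1)$.

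\textbf{Main obstacle.} The main difficulty is the transfer argument in Step~1, because the action has fixed points. One needs the transfer $\tau\colon C^\bullet(X,A)\to C^\bullet(X/G,A)$, defined by $\tau(\varphi)(\bar\sigma)=\sum_{\sigma\mapsto\bar\sigma}\varphi(\sigma)$, where the sum runs over the lifts $\sigma$ counted with multiplicity $\#\mathrm{Stab}$, that is, as the sum $\sum_{g\in G}\varphi(g\tilde\sigma)$ over $G$ for a chosen lift $\tilde\sigma$. This sum is independent of the lift $\tilde\sigma$ and commutes with the coboundary. Its composite with pullback, in either order, is then multiplication by $\#G$ on $G$-invariants. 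Dividing by $\#E_F$ shows that pullback $H^j(Y_1^i(\mathfrak N),A)\to H^j(Y^i(\mathfrak N),A)^{E_F}$ is injective and surjective.
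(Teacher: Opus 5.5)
Your Step~2 is exactly the paper's proof. The paper applies Hochschild--Serre to \eqref{eq:gamma1-bargamma1-unit-sequence}, uses that $E_F$-invariants are exact, and leaves implicit the identification of $H^j(Y_1^i(\mathfrak N),A)$ with $H^j(\Gamma_1^i(\mathfrak N),A)$. That identification is legitimate: each point stabilizer of $\Gamma_1^i(\mathfrak N)$ on $\mathscr H_3$ meets the torsion-free group $\overline{\Gamma_1^i(\mathfrak N)}$ trivially, so it embeds in $E_F$ via $\det$ and has order invertible in $A$.

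Your Step~1, which works directly on the quotient space, is a genuinely different route. However, the self-contained cochain transfer you sketch fails at exactly the point you flag as the main obstacle. It is false that the lifts of a singular simplex of $X/G$ form a single $G$-orbit once the action has fixed points, and here it does. The element $d_\varepsilon$ fixes the image of the axis $\{(0,y)\}$ in $Y^i(\mathfrak N)$ pointwise and acts near it by $x\mapsto\varepsilon x$. For small $\delta>0$, take the image in $Y_1^i(\mathfrak N)$ of the path $t\mapsto(\delta(t-\tfrac12),y_0)$, $0\le t\le 1$. It has the two lifts $(\delta(t-\tfrac12),y_0)$ and $(\delta|t-\tfrac12|,y_0)$. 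These coincide on $[\tfrac12,1]$ but are not $E_F$-translates of each other. Consequences:
\begin{itemize}
\item $\sum_{g\in G}\varphi(g\tilde\sigma)$ depends on the chosen lift, even for $G$-invariant $\varphi$. Weighting by $\#\mathrm{Stab}$ only accounts for lifts inside one orbit.
\item Choosing lifts simplex by simplex is incompatible with taking faces.
\end{itemize}
So your $\tau$ is neither well defined nor a cochain map.

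A clean repair is sheaf-theoretic.
\begin{itemize}
\item The quotient map $\pi\colon Y^i(\mathfrak N)\to Y_1^i(\mathfrak N)$ is finite, so $H^j(Y^i(\mathfrak N),A)=H^j(Y_1^i(\mathfrak N),\pi_*A)$.
\item The stalks of $\pi_*A$ are $\operatorname{Map}(\pi^{-1}(\bar x),A)$, and $E_F$ acts transitively on the fibres, so $(\pi_*A)^{E_F}=A$.
\item The idempotent $e=\frac{1}{\#E_F}\sum_{\varepsilon}\varepsilon$ splits $A$ off $\pi_*A$ as a direct summand. Hence $H^j(Y_1^i(\mathfrak N),A)=e\,H^j(Y^i(\mathfrak N),A)=H^j(Y^i(\mathfrak N),A)^{E_F}$, and the inclusion is $\pi^*$.
\item Both spaces are paracompact and locally contractible, so this is singular cohomology.
\end{itemize}
Two alternatives also work: an $E_F$-equivariant triangulation of the smooth action, on which the simplicial transfer is well defined, or citing Bredon's theorem for finite transformation groups (proved sheaf- or \v{C}ech-theoretically, not by naive lifting of singular simplices).

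With this repair, Step~1 proves the proposition on its own, without group cohomology. Combined with your Step~2, it also justifies the comparison with $H^j(\Gamma_1^i(\mathfrak N),A)$ that the paper's one-line argument takes for granted.
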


\begin{proof}
The assertion follows from the Hochschild--Serre spectral sequence of the $E_F$-covering 
(see~\eqref{eq:gamma1-bargamma1-unit-sequence}).
Indeed, as $\#E_F$ is invertible in $A$, the $E_F$-invariant functor is exact, 
so the sequence degenerates at $E_2$.
\end{proof}

\section{Integral automorphic symbols and {\it L}-values}
\label{sec:integral-symbols}

Throughout this section, we work at $U_1(\mathfrak N)$, $U_1^1(\mathfrak N)$-level and use
the notation $Y_1^i(\mathfrak N)$ and
$Y^i(\mathfrak N)$ introduced in the preceding section.

\subsection{Integral parabolic duality}\label{subsec:int:par:dual}
Let $A$ be a Dedekind domain, regarded as a trivial $\mathrm{GL}_2(F)$-module.

\begin{lemma}\label{lem:degree-one-bm-cokernel}
Let $r$ be the number of cusps of $Y_1^i(\mathfrak N)$.
Then there is a canonical identification
\[
\operatorname{coker}\left(H_1(Y_1^i(\mathfrak N),A)\longrightarrow H_1^{\rm BM}(Y_1^i(\mathfrak N),A)\right)
\cong
\ker\left(A^r\xrightarrow{\ (a_s)_s\mapsto\sum_s a_s\ }A\right).
\]
In particular, this cokernel is a free $A$-module of rank $r-1$.
\end{lemma}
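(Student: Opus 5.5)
We use the localization exact sequence \eqref{boundary:exact:sequence} for $Y=Y_1^i(\mathfrak N)$ with coefficients in $A$, via the Borel--Serre model of Remark~\ref{remark:borel-moore-relative-chain-model}. Write $\overline Y=\overline Y^{\rm BS}$ and $\partial\overline Y=\bigsqcup_{s}\partial_s$, where $s$ runs over the $r$ cusps. Each $\partial_s$ is a connected closed surface, a torus or an orbifold quotient of one, homotopy equivalent to $\Gamma_s\backslash\mathbb R^2$. The long exact sequence of the pair $(\overline Y,\partial\overline Y)$ reads
\[
H_1(\overline Y,A)\longrightarrow H_1(\overline Y,\partial\overline Y;A)\xrightarrow{\ \partial\ } H_0(\partial\overline Y,A)\longrightarrow H_0(\overline Y,A)\longrightarrow H_0(\overline Y,\partial\overline Y;A).
\]
The inclusion $Y\hookrightarrow\overline Y$ is a homotopy equivalence, so $H_1(\overline Y,A)=H_1(Y,A)$. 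We identify $H_1(\overline Y,\partial\overline Y;A)=H_1^{\rm BM}(Y,A)$ by the remark. Exactness then identifies the cokernel of $H_1(Y,A)\to H_1^{\rm BM}(Y,A)$ with the image of $\partial$, which equals $\ker\bigl(H_0(\partial\overline Y,A)\to H_0(\overline Y,A)\bigr)$.

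Next we compute the two $H_0$ terms. Since each $\partial_s$ is path-connected, $H_0(\partial\overline Y,A)=\bigoplus_s A=A^r$, with the generator of the $s$-th summand represented by a point of $\partial_s$. Since $\overline Y$ is connected (\S\ref{subsec:arith:orbi}), $H_0(\overline Y,A)=A$, and every point class maps to the generator. The map $H_0(\partial\overline Y,A)\to H_0(\overline Y,A)$ is therefore $(a_s)_s\mapsto\sum_s a_s$. This gives the asserted identification, and it is canonical because it comes from the connecting map of the pair.

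For the freeness statement, note that the sum map $A^r\to A$ is surjective and $A$ is projective, so the sequence splits. Hence the kernel is isomorphic to $A^{r-1}$, for instance with basis $e_s-e_{s_0}$ for a fixed cusp $s_0$. In fact it is free, not just projective.

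The main point needing care is the orbifold issue. $Y_1^i(\mathfrak N)$ may have torsion in $\Gamma_1^i(\mathfrak N)$, so we must make sure the Borel--Serre compactification and the identifications in Remark~\ref{remark:borel-moore-relative-chain-model} hold for the coarse quotient space. We also need each boundary component to be connected, with one component per cusp. I would handle this by working with the topological quotient $\Gamma\backslash\overline{\mathscr H_3}^{\rm BS}$. This is a compact space with boundary $\bigsqcup_s \Gamma_s\backslash e(s)$, where $e(s)\simeq\mathbb R^2$ is the boundary face at $s$. Each such quotient is connected as the image of a connected set. The interior inclusion is a homotopy equivalence, by a collar deformation along the height coordinate. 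Only these facts and connectedness enter the $H_0$ computation, so torsion in the stabilizers causes no difficulty.
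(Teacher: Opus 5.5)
Your proposal is correct and follows the same route as the paper: the localization sequence \eqref{boundary:exact:sequence} identifies the cokernel with $\ker\bigl(H_0(\partial\overline Y^{\rm BS},A)\to H_0(Y,A)\bigr)$, which is the summation map $A^r\to A$ because $Y$ is connected and the boundary has one connected component per cusp. Your explicit basis $e_s-e_{s_0}$ and your remarks on the Borel--Serre quotient in the orbifold case only supply details that the paper leaves implicit.
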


\begin{proof}
Put $Y=Y_1^i(\mathfrak N)$.
The boundary exact sequence~\eqref{boundary:exact:sequence} gives
\[
H_1(Y,A)\longrightarrow H_1^{\rm BM}(Y,A)
\longrightarrow H_0(\partial\overline Y^{\rm BS},A)
\longrightarrow H_0(Y,A).
\]
Since $Y$ is connected and its boundary has $r$ connected components,
the last map is the summation map $A^r\to A$.
Exactness gives the identification. Since $r\geq 1$, its kernel
is isomorphic to $A^{r-1}$.
\end{proof}

\begin{lemma}
\label{lem:parabolic-homology-base-change}
Let $k$ be a field equipped with an $A$-algebra structure.
Scalar extension induces an isomorphism
\[
H_1^!(Y_1^i(\mathfrak N),A)\otimes_A k
\xrightarrow{\ \sim\ }
H_1^!(Y_1^i(\mathfrak N),k).
\]
\end{lemma}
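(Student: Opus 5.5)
The plan is to deduce the statement from Lemma~\ref{universal:coefficient:parabolic:dedekind} with $Y=Y_1^i(\mathfrak N)$ and $j=1$. This requires three vanishing conditions. The first two say that, for $B\in\{A,k\}$,
\[
\operatorname{Tor}_1^{\mathbb Z}(H_0(Y,\mathbb Z),B)=0
\quad\text{and}\quad
\operatorname{Tor}_1^{\mathbb Z}(H_0^{\rm BM}(Y,\mathbb Z),B)=0.
\]
The third says that $\operatorname{Tor}_1^A$ of the degree-one cokernel against $k$ vanishes.

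The third condition is immediate. By Lemma~\ref{lem:degree-one-bm-cokernel}, the cokernel of $H_1(Y,A)\to H_1^{\rm BM}(Y,A)$ is free of rank $r-1$ over $A$, so its $\operatorname{Tor}_1^A$ against any $A$-module vanishes.

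For the first condition, $Y$ is connected (it is $\Gamma_1^i(\mathfrak N)\backslash\mathscr H_3$ with $\mathscr H_3$ connected). Hence $H_0(Y,\mathbb Z)\cong\mathbb Z$ is free, and $\operatorname{Tor}_1^{\mathbb Z}(\mathbb Z,B)=0$.

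For the Borel--Moore term, use the relative model of Remark~\ref{remark:borel-moore-relative-chain-model}: $H_0^{\rm BM}(Y,\mathbb Z)\cong H_0(\overline Y^{\rm BS},\partial\overline Y^{\rm BS};\mathbb Z)$. The pair exact sequence ends with
\[
H_0(\partial\overline Y^{\rm BS},\mathbb Z)=\mathbb Z^r\longrightarrow H_0(\overline Y^{\rm BS},\mathbb Z)=\mathbb Z\longrightarrow H_0(\overline Y^{\rm BS},\partial\overline Y^{\rm BS};\mathbb Z)\longrightarrow0.
\]
The first map is summation, which is surjective because $r\ge1$; note $Y$ is noncompact, so it has at least one cusp. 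Therefore $H_0^{\rm BM}(Y,\mathbb Z)=0$, and its Tor vanishes trivially.

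With all hypotheses verified, Lemma~\ref{universal:coefficient:parabolic:dedekind} yields that scalar extension $H_1^!(Y,A)\otimes_Ak\to H_1^!(Y,k)$ is a well-defined isomorphism.

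The main point needing care is the identification of Borel--Moore homology with relative homology of the Borel--Serre compactification for the orbifold $Y$, which may have torsion in $\Gamma_1^i(\mathfrak N)$. I would take this as given by Remark~\ref{remark:borel-moore-relative-chain-model}, as the paper does. I would also note that $\overline Y^{\rm BS}$ is connected, being the closure of the connected space $Y$, and that its boundary components correspond to the $r$ cusps. Nothing about $\#E_F$ needs to be inverted, because only degree-zero groups enter the Tor conditions and they are free or zero.
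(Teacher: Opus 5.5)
Your proposal is correct and follows the paper's proof exactly: the paper also deduces the statement from Lemma~\ref{universal:coefficient:parabolic:dedekind} with $j=1$, citing Lemma~\ref{lem:degree-one-bm-cokernel} together with the connectedness and noncompactness of $Y_1^i(\mathfrak N)$. You simply make explicit what those facts give, namely $H_0(Y,\mathbb Z)\cong\mathbb Z$, $H_0^{\rm BM}(Y,\mathbb Z)=0$, and the freeness over $A$ of the degree-one cokernel.
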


\begin{proof}
By Lemma~\ref{lem:degree-one-bm-cokernel}
and the connectedness and noncompactness of $Y_1^i(\mathfrak N)$, all the hypotheses of
Lemma~\ref{universal:coefficient:parabolic:dedekind} are satisfied for
$j=1$. The result follows from that lemma.
\end{proof}

In this subsection, for simplicity,
put
\[
H:=H_!^1(Y_1^i(\mathfrak N),A),
\qquad
M:=H_1^!(Y_1^i(\mathfrak N),A),
\]
and write $(-)^\vee:=\operatorname{Hom}_A(-,A)$.  

\begin{lemma}\label{dualmap:isom}
For the parabolic pairing for homology and cohomology with coefficient $A$,
the map 
\[\mathcal{D}_\cap:
H \;\longrightarrow\; M^\vee
\]
is an isomorphism. Furthermore, if $A$ is a field, 
then the parabolic pairing is perfect.
\end{lemma}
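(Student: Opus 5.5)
The plan is to apply Lemma~\ref{parabolic:duality:pairing:lemma} with $Y=Y_1^i(\mathfrak N)$ and $j=1$. This reduces the first assertion to checking its five vanishing hypotheses.

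The hypotheses in degree $j-1=0$ are handled as follows. Since $Y$ is connected, $H_0(Y,\mathbb Z)\cong\mathbb Z$, which is free. Hence
\[
\operatorname{Tor}_1^{\mathbb Z}(H_0(Y,\mathbb Z),A)=0
\qquad\text{and}\qquad
\operatorname{Ext}^1_{\mathbb Z}(H_0(Y,\mathbb Z),A)=0 .
\]
For the Borel--Moore side, use the relative model of Remark~\ref{remark:borel-moore-relative-chain-model}:
\[
H_0^{\rm BM}(Y,\mathbb Z)\simeq H_0(\overline Y^{\rm BS},\partial\overline Y^{\rm BS};\mathbb Z).
\]
This group vanishes, because $\overline Y^{\rm BS}$ is connected and the boundary is nonempty ($Y$ is noncompact, so there is at least one cusp). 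The long exact sequence of the pair then gives the surjection $H_0(\partial)\to H_0(\overline Y^{\rm BS})\cong\mathbb Z$, whose cokernel is $H_0$ of the pair, so that cokernel is $0$. Thus the Borel--Moore Tor and Ext groups also vanish.

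The last hypothesis is $\operatorname{Ext}^1_A(Q,A)=0$, where $Q=\operatorname{coker}(H_1(Y,A)\to H_1^{\rm BM}(Y,A))$. By Lemma~\ref{lem:degree-one-bm-cokernel}, $Q$ is free of rank $r-1$, so it is projective and the Ext group vanishes. Lemma~\ref{parabolic:duality:pairing:lemma} then gives the isomorphism $\mathcal D_\cap:H\to M^\vee$.

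If $A$ is a field, perfectness means that both $\mathcal D_\cap:H\to M^\vee$ and the induced map $M\to H^\vee$ are isomorphisms. The first holds by the previous step. For the second, note that $M$ is finite-dimensional, being a subquotient of the homology of a finite CW complex, namely the Borel--Serre compactification. Dualizing $H\cong M^\vee$ gives $H^\vee\cong M^{\vee\vee}$, and finite-dimensionality gives $M\cong M^{\vee\vee}$. The composite $M\to M^{\vee\vee}\cong H^\vee$ is exactly $m\mapsto(\eta\mapsto m\cap\eta)$, so it is an isomorphism.

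The only point needing care is the Borel--Moore degree-zero vanishing. This requires identifying $H_0^{\rm BM}$ with relative homology and using that there is at least one cusp. That fact is standard for $\Gamma_1^i(\mathfrak N)$, which contains unipotent elements. Everything else is formal.
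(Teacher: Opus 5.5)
Your proposal is correct and follows the paper's own argument: apply Lemma~\ref{parabolic:duality:pairing:lemma} with $j=1$, whose hypotheses hold because $Y$ is connected and noncompact and the cokernel is free by Lemma~\ref{lem:degree-one-bm-cokernel}, then deduce perfectness over a field from $M\cong M^{\vee\vee}$. You also spell out the degree-zero vanishing that the paper leaves implicit, in particular $H_0^{\rm BM}(Y,\mathbb Z)=0$ via the relative Borel--Serre model.
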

\begin{proof}
Put $Y=Y_1^i(\mathfrak N)$. By Lemma~\ref{lem:degree-one-bm-cokernel}
and the connectedness and noncompactness of $Y$, all the hypotheses of
Lemma~\ref{parabolic:duality:pairing:lemma} are satisfied for $j=1$.
That lemma gives the asserted isomorphism. If $A$ is a field, 
then $M$ is finite-dimensional and hence canonically
isomorphic to its double dual. Therefore the parabolic pairing is perfect.
\end{proof}

By Lemma~\ref{dualmap:isom}, 
the parabolic pairing induces an isomorphism
\[
\mathcal D_\cap:H\xrightarrow{\ \sim\ }M^\vee,
\qquad
\eta\longmapsto\left(m\mapsto m\cap\eta\right).
\]
Dualizing it gives an isomorphism
\[
\mathcal D_\cap^\vee:M^{\vee\vee}
\xrightarrow{\ \sim\ }H^\vee,
\qquad
x\longmapsto x\circ \mathcal D_\cap.
\]
On the other hand, the parabolic pairing also induces a map
\[
\mathcal P_\cap:M\longrightarrow H^\vee,
\qquad
m\longmapsto\left(\eta\mapsto m\cap\eta\right).
\]
Let
\[
{(-)}^{\vee\vee}:M\longrightarrow M^{\vee\vee},
\qquad
m\longmapsto\left(L\mapsto L(m)\right)
\]
be the evaluation map.  Directly from the definitions, we have the
factorization
\begin{equation}\label{eq:parabolic-pairing-double-dual-factorization}
\mathcal P_\cap=\mathcal D_\cap^\vee\circ(-)^{\vee\vee}.
\end{equation}
Indeed, for $m\in M$ and $\eta\in H$,
\[
\left(\mathcal D_\cap^\vee(m^{\vee\vee})\right)(\eta)
=m^{\vee\vee}\left(\mathcal D_\cap(\eta)\right)
=\mathcal D_\cap(\eta)(m)
=m\cap\eta
=\mathcal{P}_\cap(m)(\eta).
\]

The $A$-module $M$ is finitely generated over the Dedekind domain $A$.
By the structure theorem, we have the following short exact sequence:
\begin{equation*}
\begin{tikzcd}
0 \arrow{r} &
M_{\rm tors} \arrow{r} &
M \arrow{r}{\;(-)^{\vee\vee}\;} &
M^{\vee\vee} \arrow{r} &
0,  
\end{tikzcd}  
\end{equation*}
where $M_{\rm tors}$ is the torsion submodule of $M$.
As $M^{\vee\vee}$ is projective by the structure theorem again, 
the surjection $(-)^{\vee\vee}:M\twoheadrightarrow M^{\vee\vee}$
admits a section.  Fix such a section
\begin{equation}\label{eq:double-dual-section}
s_M:M^{\vee\vee}\longrightarrow M.
\end{equation}
Let $\lambda$ be an element of $H^\vee$. Then, for $\eta\in H$,
\begin{equation}\label{eq:duality:section:pair:well-def}
s_M\left((\mathcal D_\cap^{\vee})^{-1}(\lambda)\right)\cap \eta
=\mathcal{P}_\cap
\left(s_M\left((\mathcal D_\cap^{\vee})^{-1}(\lambda)\right)\right)(\eta)
=\lambda(\eta)
\end{equation}
by equation~\eqref{eq:double-dual-section} and the definition of $s_M$.

\subsection{Annihilators of boundary cohomology groups}
Choose a complete set of representatives $\{a_i\}_i$ of $\mathrm{Cl}_F$.
Put $\Gamma:=\Gamma_1^i(\mathfrak{N})$ and $\mathfrak a_i:=a_i\mathcal O_F$. We have
\[
\Gamma=
\left\{
\begin{pmatrix}a&b\\c&d\end{pmatrix}\in\operatorname{GL}_2(F)
:\begin{array}{c}
a,d\in\mathcal{O}_F,\quad b\in \mathfrak a_i,\quad 
c\in \mathfrak a_i^{-1}\mathfrak{N}, \\
ad-bc\in\mathcal{O}_F^\times,\quad d\equiv1\pmod{\mathfrak N}
\end{array}
\right\}.
\]
Let $q\in 1+\mathfrak N$ be such that $q\mathcal{O}_F$ is a prime ideal.
For a cusp $s\in\Gamma\backslash\mathbb{P}^1(F)$, write $\Gamma_s$ for its
stabilizer in $\Gamma$.  Choose a representative $[x_s:z_s]$ of $s$ and set
$\mathfrak{a}_s:=x_s\mathcal{O}_F+z_s\mathfrak a_i$.  Since
$x_s\mathfrak{a}_s^{-1}+z_s\mathfrak a_i\mathfrak{a}_s^{-1}
=\mathcal{O}_F$, we may choose
$y_s\in \mathfrak a_i\mathfrak{a}_s^{-1}$ and
$w_s\in\mathfrak{a}_s^{-1}$ such that $x_sw_s-y_sz_s=1$.  Put
\[
g_s:=\begin{pmatrix}x_s&y_s\\z_s&w_s\end{pmatrix}\in\operatorname{GL}_2(F).
\]

\begin{proposition}\label{double:coset:operator:representatives}
Every double coset in the quotient space
\[
\Gamma\backslash
\Gamma\begin{pmatrix}1&0\\0&q\end{pmatrix}\Gamma
/\Gamma_s
\]
has a representative in the set
\[
\left\{
g_s\begin{pmatrix}1&0\\0&q\end{pmatrix}g_s^{-1},\ 
g_s\begin{pmatrix}q&0\\0&1\end{pmatrix}g_s^{-1}
\right\}.
\]
Both elements fix \(s\).
\end{proposition}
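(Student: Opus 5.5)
The plan is to conjugate by $g_s$ so that $s$ becomes the cusp $\infty$, and then classify the double cosets with a Smith normal form argument. Put $\Gamma':=g_s^{-1}\Gamma g_s$, so that $\Gamma'_\infty=g_s^{-1}\Gamma_s g_s$ is the stabilizer of $\infty$ in $\Gamma'$. Conjugation by $g_s$ carries
\[
\Gamma\backslash\Gamma\begin{pmatrix}1&0\\0&q\end{pmatrix}\Gamma/\Gamma_s
\]
bijectively onto
\[
\Gamma'\backslash\Gamma' h\Gamma'/\Gamma'_\infty,
\qquad h=g_s^{-1}\begin{pmatrix}1&0\\0&q\end{pmatrix}g_s .
\]
It therefore suffices to show two things. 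First, every element of $\Gamma'h\Gamma'$ lies in $\Gamma'\operatorname{diag}(1,q)\Gamma'_\infty$ or in $\Gamma'\operatorname{diag}(q,1)\Gamma'_\infty$. Second, both diagonal matrices fix $\infty$. The second point is immediate, since upper triangular matrices fix $\infty$. Conjugating back gives both the representatives and the assertion that they fix $s$.

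For the main step I would describe $\Gamma' h\Gamma'$ lattice-theoretically. Because $q\in 1+\mathfrak N$ and $q\mathcal O_F$ is prime to $\mathfrak N$, the double coset $\Gamma\operatorname{diag}(1,q)\Gamma$ consists of the elements
\[
\gamma=\begin{pmatrix}a&b\\c&d\end{pmatrix}
\]
of the $\Gamma$-order (the lattice conditions $a,d\in\mathcal O_F$, $b\in\mathfrak a_i$, $c\in\mathfrak a_i^{-1}\mathfrak N$) that satisfy $\det\gamma\in q\mathcal O_F^\times$ and $d\equiv 1\pmod{\mathfrak N}$. This is the standard description of $T_q$ at $U_1(\mathfrak N)$-level, and it follows from strong approximation at $q$ together with the elementary divisor theorem for $\mathrm{GL}_2(O_q)$.

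Now take $\delta$ in the conjugated double coset and look at its lower-left entry, that is, at the image of $\infty$. Write $\delta\infty=[x:z]$ in reduced form. Since $\delta$ is integral at $q$ with determinant of valuation one, the elementary divisors at $q$ are $(1,q)$, so the column $(\delta_{11},\delta_{21})^{\top}$ has content either a unit or exactly $q$ at $q$. In the first case I would choose $\gamma'\in\Gamma'$ with $\gamma'\infty=\delta\infty$. Then $\gamma'^{-1}\delta$ is upper triangular, with diagonal entries $(u,qu')$ where $u,u'$ are units. In the second case the same construction gives diagonal entries $(qu,u')$. Modifying on the left by diagonal unit matrices of the form $\operatorname{diag}(\varepsilon,1)$, which lie in $\Gamma$, normalizes the determinant. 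The resulting matrix is $\operatorname{diag}(1,q)n$ or $\operatorname{diag}(q,1)n$ with $n$ upper unipotent. Finally I absorb $n$: write $\operatorname{diag}(1,q)n=n'\operatorname{diag}(1,q)$, and arrange, by adjusting on the right by an element of $\Gamma'_\infty$ and on the left by an element of $\Gamma'$, that the unipotent part disappears.

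I expect the main obstacle to be this last absorption together with the choice of $\gamma'$. To find $\gamma'$, one must know that $\delta\infty$ is $\Gamma'$-equivalent to $\infty$ \emph{in the correct $\Gamma_1$ sense}, meaning that the condition $d\equiv1\pmod{\mathfrak N}$ can be met and the ideal class of the cusp is correct. This is where the hypothesis $q\equiv1\pmod{\mathfrak N}$ is used. It guarantees that the lower-right congruence is preserved by both $\operatorname{diag}(1,q)$ and $\operatorname{diag}(q,1)$. It also guarantees that the ideal $\mathfrak a_s$ attached to $\delta\infty$ differs from that of $s$ only by $q\mathcal O_F$ or $\mathcal O_F$. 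I would handle the unipotent part by noting that the translations in $\Gamma'_\infty$ form the lattice $\mathfrak a$ of $b$-entries. After conjugating by $\operatorname{diag}(1,q)$ this lattice becomes $q^{\pm1}\mathfrak a$. I would then check that the residues of the unipotent part modulo the lattice are exactly what the left factor in $\Gamma'$ and the right factor in $\Gamma'_\infty$ can absorb, so the two diagonal representatives suffice. Since the statement only asserts that representatives lie in the two-element set, and not that the resulting cosets are distinct, no counting argument is needed.
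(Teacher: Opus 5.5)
Your reduction to the cusp $\infty$ and your lattice description of $\Gamma D\Gamma$ (with $D=\mathrm{diag}(1,q)$, $D^\vee=\mathrm{diag}(q,1)$) are fine. The gap is the step you flag yourself: it is the entire content of the proposition, and it is not proved. You need, for every $\delta\in\Gamma'h\Gamma'$, some $\gamma'\in\Gamma'$ with $\gamma'\infty=\delta\infty$. In the original frame this says that every $\beta\in\Gamma D\Gamma$ carries $s$ to a $\Gamma$-equivalent cusp. That statement follows immediately from the proposition, since both representatives fix $s$. So your argument reduces the proposition to an equivalent statement and then leaves that statement unproved.

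Your justification only compares invariants of the two cusps: the ideal attached to $\beta s$ is $\mathfrak a_s$ or $q\mathfrak a_s$, and $q\equiv1\pmod{\mathfrak N}$ preserves the lower-right congruence. It does not produce an element of $\Gamma$ moving one cusp to the other, i.e. one with unit determinant, entries in $\mathcal O_F,\mathfrak a_i,\mathfrak a_i^{-1}\mathfrak N$, and lower-right entry in $1+\mathfrak N$. Nor do you state or prove any classification of $\Gamma_1^i(\mathfrak N)$-cusps by those invariants.

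The paper builds this element explicitly. Write a class as $D\gamma$ with $\gamma g_s=\left(\begin{smallmatrix}a_s&b_s\\c_s&d_s\end{smallmatrix}\right)$.
\begin{itemize}
\item When $a_s\mathfrak a_s^{-1}$ is prime to $q$, it solves $a_se-qc_sf=1$. It then uses the translation $u_{s,t}\in\Gamma_s$ with $t=(1-q)b_se$ to show $\gamma_1:=g_sDg_s^{-1}u_{s,t}\gamma^{-1}D^{-1}\in\Gamma$.
\item When $a_s\in q\mathfrak a_s$, it checks directly that $\gamma_2:=g_sD^\vee g_s^{-1}\gamma^{-1}D^{-1}\in\Gamma$.
\end{itemize}
The elements $g_s^{-1}\gamma_1^{-1}g_s$ and $g_s^{-1}\gamma_2^{-1}g_s$ are exactly the $\gamma'$ you need. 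An adelic argument would also work: identify $\Gamma$-classes of cusps via $g\infty\mapsto B(F)g^{-1}t_iU_1(\mathfrak N)$, with $B$ the upper-triangular Borel subgroup. Then use the Iwasawa decomposition at $q$ and the global elements $D,D^\vee\in B(F)$, which lie in $U_1(\mathfrak N)_v$ for $v\nmid q$. Either way, some such argument has to be supplied.

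Your final step has two further problems.
\begin{itemize}
\item Normalizing the determinant with $\mathrm{diag}(\varepsilon,1)$ is not legitimate in the conjugated frame, because you need elements of $\Gamma'=g_s^{-1}\Gamma g_s$, not of $\Gamma$. For $s=0$ and $g_s=\left(\begin{smallmatrix}0&-1\\1&0\end{smallmatrix}\right)$ one has $g_s\,\mathrm{diag}(\varepsilon,1)\,g_s^{-1}=\mathrm{diag}(1,\varepsilon)\notin\Gamma$ whenever $\varepsilon\not\equiv1\pmod{\mathfrak N}$.
\item The absorption of the unipotent part is left as an unchecked residue count.
\end{itemize}
Neither step is needed. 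Suppose $T=\gamma'^{-1}\delta$ is upper triangular with $T_{11}$ a unit at $q$. Then $g_sD^{-1}Tg_s^{-1}=(g_sDg_s^{-1})^{-1}(g_sTg_s^{-1})$, and $g_sTg_s^{-1}\in\Gamma D\Gamma$. Moreover $g_sDg_s^{-1}=I+(q-1)g_s\left(\begin{smallmatrix}0&0\\0&1\end{smallmatrix}\right)g_s^{-1}$ satisfies the lattice and congruence conditions, which uses $q\equiv1\pmod{\mathfrak N}$. Hence this element lies in $t_iU_1(\mathfrak N)_vt_i^{-1}$ for $v\nmid q$, and a short computation shows it is integral at $q$. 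Therefore $D^{-1}T\in\Gamma'_\infty$ directly, and symmetrically $T(D^\vee)^{-1}\in\Gamma'_\infty$ in the other case. So the only substantive gap is the existence of $\gamma'$.
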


\begin{proof}
For this proof, put
\[
D:=\begin{pmatrix}1&0\\0&q\end{pmatrix},
\qquad
D^\vee:=\begin{pmatrix}q&0\\0&1\end{pmatrix},
\qquad
\mathfrak b=\mathfrak a_i.
\]
Every class is represented by $D\gamma$ for some
$\gamma=\left(\begin{smallmatrix}a&b\\c&d\end{smallmatrix}\right)\in\Gamma$.
Write
\[
\gamma g_s=
\begin{pmatrix}a_s&b_s\\ c_s&d_s\end{pmatrix}.
\]
Then
\begin{equation}\label{eq:membership:entry}
a_s\in\mathfrak{a}_s,\qquad
b_s\in\mathfrak{b}\mathfrak{a}_s^{-1},\qquad
c_s\in\mathfrak{b}^{-1}\mathfrak{a}_s,\qquad
d_s\in\mathfrak{a}_s^{-1},  
\end{equation}
and
$a_sd_s-b_sc_s=u$, where
$u:=\det(\gamma)\in\mathcal{O}_F^\times$. It follows that the integral ideals
$a_s\mathfrak{a}_s^{-1}$ and $c_s\mathfrak{b}\mathfrak{a}_s^{-1}$ are coprime.
We now distinguish two cases.

Suppose first that $a_s\mathfrak{a}_s^{-1}$ is coprime to
$q\mathcal{O}_F$.
Then
\[
a_s\mathfrak{a}_s^{-1}
+q c_s\mathfrak{b}\mathfrak{a}_s^{-1}=\mathcal{O}_F,
\]
so there exist $e\in\mathfrak{a}_s^{-1}$ and
$f\in\mathfrak{b}\mathfrak{a}_s^{-1}$ such that
$a_s e-qc_s f=1$.  Set
\[
t:=(1-q)b_s e
\in\mathfrak{b}\mathfrak{a}_s^{-2}\mathfrak{N},
\qquad
u_{s,t}:=g_s\begin{pmatrix}1&t\\0&1\end{pmatrix}g_s^{-1}.
\]
Explicitly,
\[
u_{s,t}=
\begin{pmatrix}
1-tx_sz_s&tx_s^2\\
-tz_s^2&1+tx_sz_s
\end{pmatrix}.
\]
Its diagonal entries lie in $\mathcal{O}_F$, its upper-right entry lies in
$\mathfrak{b}$, and its lower-left entry lies in
$\mathfrak{b}^{-1}\mathfrak{N}$.  Since $\det(u_{s,t})=1$ and
$u_{s,t}$ fixes $s$, we have $u_{s,t}\in\Gamma_s$.  Let
\[
H_{s,1}:=g_sDg_s^{-1},
\qquad
\gamma_1:=H_{s,1}u_{s,t}\gamma^{-1}D^{-1}.
\]
Since
\[
\gamma^{-1}
=u^{-1}g_s
\begin{pmatrix}d_s&-b_s\\-c_s&a_s\end{pmatrix},
\]
we first obtain
\[
\gamma_1
=u^{-1}g_s
\begin{pmatrix}
d_s-tc_s&q^{-1}(-b_s+ta_s)\\
-qc_s&a_s
\end{pmatrix}.
\]
Substituting $t=(1-q)b_s e$ and using
$a_s e-qc_s f=1$ gives
\[
\gamma_1=
u^{-1}
\begin{pmatrix}
d+c_s(1-q)(y_s-b_s e x_s)
&b_sc_s f x_s+a_s(y_s-b_s e x_s)\\
-c+c_s(1-q)(w_s-b_s e z_s)
&b_sc_s f z_s+a_s(w_s-b_s e z_s)
\end{pmatrix}.
\]
Here
\[
y_s-b_s e x_s\in\mathfrak{b}\mathfrak{a}_s^{-1},
\qquad
w_s-b_s e z_s\in\mathfrak{a}_s^{-1},
\qquad
f\in\mathfrak{b}\mathfrak{a}_s^{-1}.
\]
Using also the memberships~\eqref{eq:membership:entry},
$x_s\in\mathfrak a_s$ and $z_s\in\mathfrak b^{-1}\mathfrak a_s$,
the displayed formula for $\gamma_1$ and the fact that $u$ is a unit show that
\[
\gamma_1\in
\begin{pmatrix}
\mathcal O_F&\mathfrak b\\
\mathfrak b^{-1}\mathfrak N&\mathcal O_F
\end{pmatrix}.
\]
Moreover, $\det(\gamma_1)=u^{-1}\in\mathcal O_F^\times$.
It remains to check the congruence condition on the lower-right entry.
The upper-left entry is congruent to $\det(\gamma_1)$ modulo $\mathfrak N$.
Consequently, the lower-right entry belongs to $1+\mathfrak N$.
Hence $\gamma_1\in\Gamma$.
The identity
$\gamma_1D\gamma=H_{s,1}u_{s,t}$ therefore yields
\[
\Gamma D\gamma\Gamma_s=\Gamma H_{s,1}\Gamma_s.
\]
Suppose now that $a_s\mathfrak{a}_s^{-1}\subseteq q\mathcal{O}_F$.  Write
$a_s=qa_s'$ with $a_s'\in\mathfrak{a}_s$, and let
\[
H_{s,2}:=g_sD^\vee g_s^{-1},
\qquad
\gamma_2:=H_{s,2}\gamma^{-1}D^{-1}.
\]
A direct calculation gives
\[
\gamma_2=
u^{-1}
\begin{pmatrix}
d+(q-1)d_s x_s&(1-q)a_s'y_s-b\\
-c+(q-1)d_s z_s&a+(1-q)a_s'w_s
\end{pmatrix}\in
\begin{pmatrix}
\mathcal O_F&\mathfrak b\\
\mathfrak b^{-1}\mathfrak N& 1+\mathfrak{N}
\end{pmatrix}
\]
and $\det(\gamma_2)=u^{-1}$.
This shows that $\gamma_2\in\Gamma$.  Hence
\[
\Gamma D\gamma\Gamma_s=\Gamma H_{s,2}\Gamma_s.
\]
The two cases are exhaustive because $q\mathcal{O}_F$ is prime, proving the
claim.
\end{proof}

We can now determine the Hecke action in degree zero.

\begin{lemma}\label{hecke:action:boundary:cohom}
Let $A$ be a commutative ring with unity equipped with the trivial $\operatorname{GL}_2(F)$-action.
If $\mathfrak{a}_i$ is coprime to $q\mathcal O_F$, then
\[
T_{q\mathcal{O}_F}=N(q\mathcal O_F)+1
\]
on $H^0(\partial \overline{Y_1^i(\mathfrak{N})}^{\mathrm{BS}},A)$.
\end{lemma}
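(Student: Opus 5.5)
The plan is to compute $T_{q\mathcal O_F}$ on each cusp summand of $H^0(\partial\overline{Y_1^i(\mathfrak N)}^{\mathrm{BS}},A)$ separately. The starting point is the decomposition
\[
H^0(\partial\overline{Y_1^i(\mathfrak N)}^{\mathrm{BS}},A)=\bigoplus_{s\in\Gamma\backslash\mathbb P^1(F)}H^0(\Gamma_s\backslash e_s,A)=\bigoplus_s A,
\]
where each boundary face $\Gamma_s\backslash e_s$ is connected. A degree-zero class is thus a function $\phi$ on cusps.

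The Hecke operator attached to $D=\left(\begin{smallmatrix}1&0\\0&q\end{smallmatrix}\right)$ acts by
\[
(T\phi)(s)=\sum_{\alpha}\phi(\alpha^{-1}s)\cdot[\text{multiplicity}].
\]
Here $\alpha$ runs over $\Gamma\backslash\Gamma D\Gamma$. Grouping these cosets according to the double cosets $\Gamma\backslash\Gamma D\Gamma/\Gamma_s$ gives the standard formula
\[
(T\phi)(s)=\sum_{\beta\in\Gamma\backslash\Gamma D\Gamma/\Gamma_s}[\Gamma_s:\Gamma_s\cap\beta^{-1}\Gamma\beta]\,\phi(\beta s).
\]
The intended normalization is the restriction-to-the-face, push-forward definition on $H^0$ of the boundary. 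By Proposition~\ref{double:coset:operator:representatives}, the representatives may be taken as $H_{s,1}=g_sDg_s^{-1}$ and $H_{s,2}=g_sD^\vee g_s^{-1}$. Both fix $s$, so $\phi(\beta s)=\phi(s)$ in both terms. Hence $T$ acts on the $s$-summand by the scalar
\[
[\Gamma_s:\Gamma_s\cap H_{s,1}^{-1}\Gamma H_{s,1}]+[\Gamma_s:\Gamma_s\cap H_{s,2}^{-1}\Gamma H_{s,2}].
\]
If the two representatives define the same double coset, only one term appears; I will check this directly.

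It remains to compute these indices. Conjugating by $g_s$ moves the cusp to $\infty$. There $g_s^{-1}\Gamma_sg_s$ consists of upper-triangular matrices $\left(\begin{smallmatrix}\varepsilon&t\\0&\varepsilon'\end{smallmatrix}\right)$, with unipotent part $t$ ranging over a lattice $\Lambda_s$; this is the set of $t$ with $u_{s,t}\in\Gamma_s$ as in that proof. Conjugating by $D$ scales $t\mapsto qt$ or $t\mapsto q^{-1}t$.

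For $H_{s,1}$ the intersection condition becomes $t\in q\Lambda_s$. This gives index $[\Lambda_s:q\Lambda_s]=N(q\mathcal O_F)$, provided the diagonal (unit) parts are unaffected. The diagonal parts are indeed unaffected because conjugation by a diagonal matrix fixes them, and the congruence condition $d\equiv1\pmod{\mathfrak N}$ is preserved since $q\equiv1\pmod{\mathfrak N}$. For $H_{s,2}$ the condition becomes $q^{-1}\Lambda_s\supset\Lambda_s$, which is automatic, so the index is $1$. The total is therefore $N(q\mathcal O_F)+1$.

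I expect the main obstacle to be the lattice bookkeeping. I need $\Lambda_s$ to be an $\mathcal O_F$-module of the shape $\mathfrak b\mathfrak a_s^{-2}\mathfrak N$ (or similar), and I need $g_sDg_s^{-1}\in\Gamma\cdot(\text{integral matrices})$ to respect the lattices attached to $\mathfrak a_i$. This is where the hypothesis that $\mathfrak a_i$ is coprime to $q\mathcal O_F$ enters: it guarantees $[\Lambda_s:q\Lambda_s]=N(q\mathcal O_F)$, and that the two double cosets are distinct (one has $q$-adically integral conjugation, the other does not). This mirrors the classical argument that $T_p$ acts on cusps as $p+1$.
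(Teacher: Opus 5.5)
Your local index values are right: with $P=g_s^{-1}\Gamma_s g_s$ and $\Lambda_s$ its unipotent lattice, one gets $[\Gamma_s:\Gamma_s\cap H_{s,1}^{-1}\Gamma H_{s,1}]=N(q\mathcal O_F)$ and $[\Gamma_s:\Gamma_s\cap H_{s,2}^{-1}\Gamma H_{s,2}]=1$. The gap is the step ``hence $T$ acts on the $s$-summand by the sum of these two indices.''

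Proposition~\ref{double:coset:operator:representatives} gives only an inclusion: every double coset in $\Gamma\backslash\Gamma D\Gamma/\Gamma_s$ is represented by $H_{s,1}$ or by $H_{s,2}$. It does not say that both $H_{s,1}$ and $H_{s,2}$ lie in $\Gamma D\Gamma$. Distinctness, which you plan to check, is automatic. The number of cosets $\Gamma\beta'$ contained in $\Gamma\beta\Gamma_s$ is an invariant of the double coset, and $N(q\mathcal O_F)\neq1$. The real issue is existence. From the inclusion alone, your argument only shows that the scalar is one of $N(q\mathcal O_F)$, $1$, or $N(q\mathcal O_F)+1$, and nothing in the proposal decides among them. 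To close the gap, you would need to exhibit $\gamma\in\Gamma$ landing in each of the two cases of that proposition's proof (upper-left entry $a_s$ of $\gamma g_s$ with $a_s\mathfrak a_s^{-1}$ prime to $q$, resp.\ divisible by $q$), or otherwise show $H_{s,1},H_{s,2}\in\Gamma D\Gamma$.

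The paper avoids the question entirely, and its argument is the natural fix. Since $\Gamma D\Gamma=\bigsqcup_i\Gamma\beta_i\Gamma_s$ and $\Gamma\beta_i\Gamma_s=\bigsqcup_{j=1}^{m_i}\Gamma\beta_i\pi_{ij}$, the multiplicities sum to $\sum_i m_i=|\Gamma\backslash\Gamma D\Gamma|$, whichever double cosets occur. This degree is then read off from the global decomposition $\Gamma D\Gamma=\Gamma\left(\begin{smallmatrix}q&0\\0&1\end{smallmatrix}\right)\sqcup\bigsqcup_{\beta\in\mathfrak b/q\mathfrak b}\Gamma\left(\begin{smallmatrix}1&\beta\\0&q\end{smallmatrix}\right)$, which is where the coprimality of $\mathfrak a_i$ and $q\mathcal O_F$ is invoked. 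With that count in hand, no lattice bookkeeping at the cusp is needed. Your computation then only adds the finer information of which double coset carries which multiplicity.

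Two smaller points. First, $[\Lambda_s:q\Lambda_s]=N(q\mathcal O_F)$ holds for every fractional ideal, so the hypothesis on $\mathfrak a_i$ is not what makes that step work. Second, ``the torus part is unaffected'' needs more than the fact that diagonal conjugation fixes diagonal entries. For $\left(\begin{smallmatrix}\varepsilon&t\\0&\varepsilon'\end{smallmatrix}\right)\in P$, the entry $t$ lies only in a coset of $\Lambda_s$, not in $\Lambda_s$ itself. What you actually need is $(1-q)t\in\Lambda_s$. This follows from $1-q\in\mathfrak N$ and $x_s^2t\in\mathfrak b$, $z_s^2t\in\mathfrak b^{-1}$, $x_sz_st\in\mathcal O_F$, combined with $q\mathcal O_F+(1-q)\mathcal O_F=\mathcal O_F$. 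The same fact is what gives the index $1$ for $H_{s,2}$.
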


\begin{proof}
Put $\mathfrak{b}=\mathfrak a_i$ and
\(
D=\left(\begin{smallmatrix}1&0\\0&q\end{smallmatrix}\right).
\)
Under the canonical decomposition
\[
H^0(\partial \overline{Y_1^i(\mathfrak{N})}^{\mathrm{BS}},A)
\cong
\bigoplus_{s\,:\,\textrm{cusps}}H^0(\Gamma_s,A),
\]
write $c=(c_s)_s$.  Fix a cusp $s$, and choose decompositions
\[
\Gamma D\Gamma
=\bigsqcup_i\Gamma\beta_i\Gamma_s,
\qquad
\Gamma\beta_i\Gamma_s
=\bigsqcup_{j=1}^{m_i}\Gamma\beta_i\pi_{ij},
\quad \pi_{ij}\in\Gamma_s.
\]
Hida's boundary formula \cite[(3.1a)--(3.1c)]{Hida:1993} gives
\[
(T_{q\mathcal O_F}c)_s
=\sum_i
c_{\beta_i(s)}\bigm|
[\Gamma_{\beta_i(s)}\beta_i\Gamma_s].
\]
By Proposition~\ref{double:coset:operator:representatives}, the $\beta_i$ may
be chosen so that $\beta_i(s)=s$.  Hida's result
\cite[Lemma~3.3]{Hida:1993} identifies 
\(\Gamma_{\beta_i(s)}\beta_i\Gamma_s
=\bigsqcup_{j=1}^{m_i}
\Gamma_{\beta_i(s)}\beta_i\pi_{ij}.\)
Since $A$ has
trivial action, each of these cosets acts as the identity on $H^0(\Gamma_s,A)$.
Therefore
\[
(T_{q\mathcal O_F}c)_s
=\sum_i m_i c_s
=\left|\Gamma\backslash\Gamma D\Gamma\right|c_s.
\]
It remains to compute the degree of the Hecke correspondence. 
Since $q\mathcal O_F$ is coprime to $\mathfrak b$,
\[
\Gamma D\Gamma
=\Gamma\begin{pmatrix}q&0\\0&1\end{pmatrix}
\sqcup
\bigsqcup_{\beta\in\mathfrak b/q\mathfrak b}
\Gamma\begin{pmatrix}1&\beta\\0&q\end{pmatrix}.
\]
Hence
\(
\left|\Gamma\backslash\Gamma D\Gamma\right|
=1+\left|\mathfrak b/q\mathfrak b\right|
=N(q\mathcal O_F)+1.
\)
Since \(s\) was arbitrary, 
\(T_{q\mathcal O_F}\) acts as multiplication by \(N(q\mathcal O_F)+1\).
\end{proof}

Let $A$ be a commutative ring with unity, equipped with the trivial \(\mathrm{GL}_2(F)\)-action.
Let us define an operator acting on both ordinary and compactly supported cohomology by
\[
\mathcal E_q:=T_{q\mathcal O_F}-N(q\mathcal O_F)-1.
\]
Assume that $\mathfrak a_i$ is coprime to $q\mathcal O_F$.
By Lemma~\ref{hecke:action:boundary:cohom}, 
$\mathcal E_q$ annihilates the boundary cohomology of coefficients in $A$.

\begin{definition}\label{defn:compact:class:lifting}

For a class $\eta$ in $H^1_!(Y_1^i(\mathfrak N),A)$, 
let $\eta'\in H_{\mathrm c}^1(Y_1^i(\mathfrak N),A)$ be a lift of $\eta$, and set
\[
\eta_{\mathrm c}:=\mathcal E_q\eta'\in H_{\mathrm c}^1(Y_1^i(\mathfrak N),A).
\]
\end{definition}

\begin{proposition}\label{prop:canonical-compact-lift}
The map
\[
(-)_{\operatorname{c}}:H_!^1(Y_1^i(\mathfrak N),A)
\longrightarrow
H_{\mathrm c}^1(Y_1^i(\mathfrak N),A),
\qquad
\eta\longmapsto\eta_{\mathrm c},
\]
is a well-defined \(A\)-linear map. Furthermore,
\[
(H^1_{\operatorname{c}}\to H^1_!)\circ (-)_{\operatorname{c}}=\mathcal E_q.
\]
\end{proposition}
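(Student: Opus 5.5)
The plan is to use the long exact sequence of the pair $(\overline Y^{\rm BS},\partial\overline Y^{\rm BS})$ for $Y=Y_1^i(\mathfrak N)$:
\[
H^0(Y,A)\longrightarrow H^0(\partial\overline Y^{\rm BS},A)\xrightarrow{\ \delta\ }H^1_{\rm c}(Y,A)\xrightarrow{\ \iota\ }H^1(Y,A).
\]
This is the cohomological counterpart of \eqref{boundary:exact:sequence}, computed via the relative chain model of Remark~\ref{remark:borel-moore-relative-chain-model}. The key point is that this sequence is Hecke-equivariant. The Hecke operator $T_{q\mathcal O_F}$ acts on each term by the same double coset $\Gamma\left(\begin{smallmatrix}1&0\\0&q\end{smallmatrix}\right)\Gamma$, via correspondences that are proper and extend to the Borel--Serre compactification. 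Hence $\delta$ and $\iota$ commute with $T_{q\mathcal O_F}$, and therefore with $\mathcal E_q$. On the boundary term, the action is the one computed in Lemma~\ref{hecke:action:boundary:cohom}.

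For well-definedness, let $\eta'_1,\eta'_2\in H^1_{\rm c}(Y,A)$ be two lifts of $\eta\in H^1_!(Y,A)$, i.e.\ $\iota(\eta'_1)=\iota(\eta'_2)$. By exactness, $\eta'_1-\eta'_2=\delta(c)$ for some $c\in H^0(\partial\overline Y^{\rm BS},A)$. Equivariance gives
\[
\mathcal E_q(\eta'_1-\eta'_2)=\delta(\mathcal E_q c).
\]
By Lemma~\ref{hecke:action:boundary:cohom}, we have $\mathcal E_q c=0$, using the standing assumption that $\mathfrak a_i$ is coprime to $q\mathcal O_F$. Thus $\mathcal E_q\eta'_1=\mathcal E_q\eta'_2$, so $\eta_{\rm c}$ does not depend on the lift.

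For $A$-linearity, note that if $\eta'_1,\eta'_2$ lift $\eta_1,\eta_2$ and $a\in A$, then $\eta'_1+a\eta'_2$ lifts $\eta_1+a\eta_2$. Since $\mathcal E_q$ is $A$-linear, the map $(-)_{\rm c}$ is $A$-linear as well. For the last identity, take a lift $\eta'$ of $\eta$. Then
\[
\iota(\eta_{\rm c})=\iota(\mathcal E_q\eta')=\mathcal E_q\iota(\eta')=\mathcal E_q\eta,
\]
where $\mathcal E_q$ acts on $H^1_!$ by restriction, which makes sense because $\iota$ is equivariant.

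The main obstacle is justifying the Hecke equivariance of the connecting map $\delta$ with the \emph{same} normalization of $T_{q\mathcal O_F}$ on the boundary as in Hida's boundary formula used in Lemma~\ref{hecke:action:boundary:cohom}. My approach would be to realize $T_{q\mathcal O_F}$ as $\pi_{2*}\pi_1^*$ for the two finite coverings $Y_{\Gamma\cap D^{-1}\Gamma D}\rightrightarrows Y$, where $D=\left(\begin{smallmatrix}1&0\\0&q\end{smallmatrix}\right)$. Both maps extend to finite maps of Borel--Serre compactifications, carrying boundary to boundary. Pullback and trace (transfer) are then compatible with the long exact sequence of the pair, and this compatibility is exactly what Hida \cite[\S3]{Hida:1993} uses to obtain the boundary formula. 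This step should be routine, but it is where care is needed.
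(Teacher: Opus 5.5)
Your proposal is correct and follows the paper's proof: two lifts differ by an element in the image of the Hecke-equivariant connecting map $H^0(\partial\overline{Y}^{\mathrm{BS}},A)\to H^1_{\mathrm c}(Y,A)$, which $\mathcal E_q$ annihilates by Lemma~\ref{hecke:action:boundary:cohom}, and the final identity follows from the Hecke equivariance of $H^1_{\mathrm c}\to H^1_!$. Your explicit check of $A$-linearity and your justification of equivariance via the correspondence $\pi_{2*}\pi_1^*$ on Borel--Serre compactifications only fill in details that the paper asserts without proof.
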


\begin{proof}
Put $Y=Y_1^i(\mathfrak N)$.
If \(\eta'\) and \(\eta''\) map to \(\eta\), then
\(\eta'-\eta''\) lies in the image of the Hecke-equivariant map
\[
H^0(\partial\overline{Y}^{\mathrm{BS}},A)
\longrightarrow H_{\mathrm c}^1(Y,A).
\]
Lemma~\ref{hecke:action:boundary:cohom} shows that
\(\mathcal E_q\) annihilates this image. Hence
\[
\mathcal E_q\eta'=\mathcal E_q\eta''.
\]
The final assertion follows from the Hecke equivariance of
\(H_{\mathrm c}^1\to H_!^1\).
\end{proof}

Note that if $A=\mathbb{C}$, then Proposition~\ref{prop:canonical-compact-lift} implies that
\begin{equation}\label{eq:hecke:section:compact:lifting}
\eta_{\operatorname{c}}=\iota_i(\mathcal{E}_q\eta)
\end{equation}
in the cohomology of coefficients in $\mathbb C$ as $\iota_i$ is Hecke equivariant 
(see~\S\ref{subsec:hecke:equiv:section}).

\subsection{Integral automorphic symbols}\label{subsection:integral-automorphic-symbols}
Let $1\leq i\leq h_F$ be an integer.
Recall that $E:=\mathbb C_1^\times\backslash\mathbb C^\times$
and define the proper map
\[
\Delta_i:E\longrightarrow Y^i(\mathfrak N),
\qquad y\longmapsto(0,|y|).
\]
We use the same notation for the corresponding map at any level when no confusion arises.

Let $\mathfrak c$ be an integral ideal of $F$
such that \(\operatorname{ord}_v(\mathfrak c)\ge\operatorname{ord}_v(\mathfrak N)\) 
for any \(v\mid\mathfrak c\), which is the condition~\eqref{eq:assumption:conductor:exponent:large}.
By changing $a_i$ if necessary, we may assume that $a_i$ is prime to $\mathfrak c$.
Choose an element $u$ of $\mathcal{R}_\mathfrak{c}$.
Recall that $t(u)=\left(\begin{smallmatrix} 1 & u \\ 0 & 1 \end{smallmatrix}\right)$.
Abusing the notation, let us denote by $t(u)$ the right translation map on
$\mathrm{GL}_2(\mathbb{A}_F)$ defined by $g\mapsto gt(u)$.
Put $\mathfrak M=\mathfrak N\cap\mathfrak c^2$.
By the strong approximation theorem (see Hida \cite[p.490]{Hida:1994}), 
we can find an element
\[
\alpha_{u,i}\in\mathrm{SL}_2(F)\cap 
t_it(u)U_1^1(\mathfrak M)t_i^{-1}.
\]
Assume that the positive generator of $\mathfrak N\cap\mathbb Z$ is greater than $3$. 
Then $Y^i(\mathfrak N)$ coincides with $Y^{t_i}(U_1^1(\mathfrak N))$ and is a manifold.
Abusing notation, let us write the resulting level-changing map
\[
\alpha_{u,i}:Y^i(\mathfrak M)
\longrightarrow Y_1^i(\mathfrak N)
\] 
by $g\mapsto \alpha_{u,i}g$. It is a finite covering, hence proper.
This map is well-defined because the normalization
$t(u)U_1^1(\mathfrak M)t(-u)\subset U_1(\mathfrak N)$
gives 
\begin{equation}\label{eq:semi:normalization}
\alpha_{u,i}\overline{\Gamma_1^i(\mathfrak M)}
\alpha_{u,i}^{-1}\subset\overline{\Gamma_1^i(\mathfrak N)}
\subset\Gamma_1^i(\mathfrak N).
\end{equation}

Let $q$ be a prime element in $1+\mathfrak{N}$. 
Note that the density of principal prime ideals generated by such $q$ is positive 
due to Chebotarev's density theorem. By using this prime, we construct the lifting 
$(-)_{\mathrm c}$ in Proposition~\ref{prop:canonical-compact-lift}.

From now on, whenever necessary, we regard a cohomology class of level
$U_1(\mathfrak N)$ as a class of level $U_1^1(\mathfrak N)$ via pullback
along the finite covering $Y^i(\mathfrak N)\to Y_1^i(\mathfrak N)$.
Let $A$ be a Dedekind domain.
The linear map 
\begin{equation}\label{eq:symbol:defining:functional}
H_!^1(Y_1^i(\mathfrak N),A)\longrightarrow A,\qquad
\eta\longmapsto \mathrm{Tr}_E\Delta_i^*\alpha_{u,i}^*
\eta_{\operatorname{c}}
\end{equation}
is well-defined. Here, the embedding $\Delta_i$ in the above equation
is defined for level $\mathfrak M$.
Retain the notations $\mathcal D_\cap$ and $s_M$ introduced in~\S\ref{subsec:int:par:dual}.
\begin{definition}[Automorphic symbol]\label{def:auto:sym}
Let $\lambda_{u,i}$
be the well-defined functional~\eqref{eq:symbol:defining:functional}.
Define the \emph{automorphic symbol} associated with $u\in\mathcal R_{\mathfrak c}$ by
\[
\Upsilon_i(u)
:=s_M\!\left((\mathcal D_\cap^\vee)^{-1}
(\lambda_{u,i})\right)
\in H_1^!(Y_1^i(\mathfrak N),A).
\]
\end{definition}
By equation~\eqref{eq:duality:section:pair:well-def},
we have
\[
\Upsilon_i(u)\cap\eta=\mathrm{Tr}_E\Delta_i^*\alpha_{u,i}^*
\eta_{\operatorname{c}}
\]
Thus the pairing defined by \(\Upsilon_i(u)\) is independent of the choice of \(s_M\).
From now on, we fix such section $s_M$.

\begin{remark}\label{remark:automorphic-symbol-choice-independency}
We can check that the automorphic symbol does not depend
on the choice of $\alpha_{u,i}$: Let us choose another element
$\beta_{u,i}$. 
By equation~\eqref{eq:semi:normalization}, 
the two induced maps differ by the action of an element of
$\Gamma_1^i(\mathfrak N)$.  This action is trivial on
$Y_1^i(\mathfrak N)$, and hence the functional
\eqref{eq:symbol:defining:functional} is independent of the choice.
\end{remark}

Let us give the properties of the automorphic symbols:

\begin{lemma}
\label{lem:unit-action-automorphic-symbol}
Let $\varepsilon\in E_F$ and $u\in\mathcal R_{\mathfrak c}$.
Let $\Upsilon_i(\varepsilon u)$ be the symbol for the parameter $u_\varepsilon$,
where $u_\varepsilon$ is the element of $\mathcal R_{\mathfrak c}$ 
congruent to $\varepsilon u$ mod $O_{\mathfrak c}$.
Then
\[
  \Upsilon_i(\varepsilon u)=\Upsilon_i(u)
  \quad\text{in}\quad H_1^!(Y_1^i(\mathfrak N),A).
\]
\end{lemma}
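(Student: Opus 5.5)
Since $M^\vee$ separates points of $M^{\vee\vee}$ and both symbols are defined as $s_M\bigl((\mathcal D_\cap^\vee)^{-1}(\lambda)\bigr)$ for the functionals $\lambda=\lambda_{u,i}$ and $\lambda=\lambda_{u_\varepsilon,i}$ in $H^\vee$, it suffices to prove the equality of functionals $\lambda_{u_\varepsilon,i}=\lambda_{u,i}$ on $H=H_!^1(Y_1^i(\mathfrak N),A)$. The plan is to compare the level-changing maps $\alpha_{u_\varepsilon,i}$ and $\alpha_{u,i}$ directly, and to show that they differ by the action of $\mathrm{diag}(\varepsilon,1)\in\Gamma_1^i(\mathfrak N)$ on the left together with a self-map of $Y^i(\mathfrak M)$ preserving the oriented geodesic $\Delta_i(E)$.

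First, since $u_\varepsilon\equiv\varepsilon u$ modulo $O_{\mathfrak c}$ and $t(x)\in U_1^1(\mathfrak M)$ for $x\in O_{\mathfrak c}$, we have $t(u_\varepsilon)U_1^1(\mathfrak M)=t(\varepsilon u)U_1^1(\mathfrak M)$. Put $e=\mathrm{diag}(\varepsilon,1)$ and $e'=\mathrm{diag}(1,\varepsilon^{-1})$. Then $t(\varepsilon u)=e\,t(u)\,e^{-1}$, and $t_i$ commutes with $e$. Hence $\varepsilon^{-1}\,e\,\alpha_{u,i}\,e'^{-1}\cdot\varepsilon$, or more cleanly $e\,\alpha_{u,i}\,e^{-1}$, lies in $\mathrm{SL}_2(F)\cap t_it(\varepsilon u)\,eU_1^1(\mathfrak M)e^{-1}\,t_i^{-1}$. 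Since $e_v\in U_1(\mathfrak M)_v$ normalizes $U_1^1(\mathfrak M)$ only up to the condition on the upper-left entry, I would instead use $e\,\alpha_{u,i}\,e^{-1}$ together with the scalar $\varepsilon$. Modulo scalars, which act trivially on $\mathscr H_3$, this is represented by $\mathrm{diag}(\varepsilon,\varepsilon^{-1})$-conjugation composed with an element of $\overline{\Gamma_1^i(\mathfrak M)}$ correction. By Remark~\ref{remark:automorphic-symbol-choice-independency}, we may take $\alpha_{u_\varepsilon,i}$ to be this element, up to adjusting by $\Gamma$.

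Second, the map $y\mapsto e^{-1}\cdot y$ on $Y^i(\mathfrak M)$, or its $\mathrm{PGL}_2$-equivalent $\mathrm{diag}(1,\varepsilon)$, acts on $\mathscr H_3$ by $(x,y)\mapsto(\varepsilon^{-1}x,y)$ up to scaling. This is well-defined on $Y^i(\mathfrak M)$ via the $E_F$-action of \S\ref{subsec:u0:u1}, and it fixes the geodesic $\{(0,r)\}$ pointwise with orientation. Thus $\Delta_i^*\circ e^*=\Delta_i^*$ on compactly supported classes. On the other side, left multiplication by $e\in\Gamma_1^i(\mathfrak N)$ is trivial on $Y_1^i(\mathfrak N)$, so $e^*$ acts as the identity on $\eta_{\mathrm c}$. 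Combining these gives
\[
\mathrm{Tr}_E\Delta_i^*\alpha_{u_\varepsilon,i}^*\eta_{\mathrm c}=\mathrm{Tr}_E\Delta_i^*e^{*}\alpha_{u,i}^*(e^{-1})^*\eta_{\mathrm c}=\mathrm{Tr}_E\Delta_i^*\alpha_{u,i}^*\eta_{\mathrm c}.
\]

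The main obstacle is the bookkeeping in the first step. One must verify that the conjugated element still lies in $\mathrm{SL}_2(F)\cap t_it(u_\varepsilon)U_1^1(\mathfrak M)t_i^{-1}$: conjugating by $e$ changes the upper-left congruence condition defining $U_1^1$. One must also verify that the map induced on $Y^i(\mathfrak M)$ is the $E_F$-deck action, which preserves $\Delta_i$, rather than something moving the geodesic. I would handle this by conjugating with $\mathrm{diag}(\varepsilon,\varepsilon^{-1})\in\mathrm{SL}_2(\mathcal O_F)$ instead, which lies in $U_1(\mathfrak M)$ locally and normalizes $U_1^1(\mathfrak M)$. It acts on $\mathscr H_3$ by $(x,y)\mapsto(\varepsilon^2x,y)$, hence still fixes the geodesic. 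If needed, I would use $E_F(\mathfrak N)=\{1\}$ and the fact that $Y^i(\mathfrak M)$ is a manifold to make the deck transformation argument rigorous.
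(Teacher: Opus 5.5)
Your main line is the paper's proof. You take $\alpha_{u_\varepsilon,i}:=d_\varepsilon\alpha_{u,i}d_\varepsilon^{-1}$ with $d_\varepsilon=\mathrm{diag}(\varepsilon,1)$ and justify that choice by Remark~\ref{remark:automorphic-symbol-choice-independency}. You then use that $d_\varepsilon$ fixes $\Delta_i(E)$ pointwise on $Y^i(\mathfrak M)$, while $d_\varepsilon\in\Gamma_1^i(\mathfrak N)$ acts trivially on $Y_1^i(\mathfrak N)$. Your remark that left multiplication by $d_\varepsilon$ is literally the identity map of $Y_1^i(\mathfrak N)$ is slightly cleaner than the paper's route, which goes through deck-invariance on $Y^i(\mathfrak N)$ and the commutation of $\mathcal E_q$ with $d_\varepsilon^*$.

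However, the ``main obstacle'' in your last paragraph does not exist, and the workaround you propose would break the argument. Conjugation by a diagonal matrix leaves the diagonal entries unchanged:
\[
d_\varepsilon\begin{pmatrix}a&b\\c&d\end{pmatrix}d_\varepsilon^{-1}=\begin{pmatrix}a&\varepsilon b\\ \varepsilon^{-1}c&d\end{pmatrix}.
\]
So $d_\varepsilon$ normalizes $U_1^1(\mathfrak M)$ exactly. Equivalently, $U_1^1(\mathfrak M)$ is normal in $U_1(\mathfrak M)$, which contains $d_\varepsilon$. Hence $d_\varepsilon\alpha_{u,i}d_\varepsilon^{-1}\in\mathrm{SL}_2(F)\cap t_it(u_\varepsilon)U_1^1(\mathfrak M)t_i^{-1}$ with no correction needed.

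Your proposed replacement $\mathrm{diag}(\varepsilon,\varepsilon^{-1})$ agrees with $d_\varepsilon^2$, not with $d_\varepsilon$, modulo scalars, so your ``modulo scalars'' identification is false. Moreover, $\mathrm{diag}(\varepsilon,\varepsilon^{-1})\,t(u)\,\mathrm{diag}(\varepsilon,\varepsilon^{-1})^{-1}=t(\varepsilon^2u)$. Conjugating by it therefore proves only $\Upsilon_i(\varepsilon^2u)=\Upsilon_i(u)$. That is invariance under the index-two subgroup $E_F^2$, which is trivial when $E_F=\{\pm1\}$, and it does not give the lemma. Drop the scalar detour and the last paragraph, keep $d_\varepsilon$ throughout, and the proof is complete.
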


\begin{proof}
Put
$d_\varepsilon=
  \left(\begin{smallmatrix}\varepsilon&0\\0&1\end{smallmatrix}\right)$ and
$b=\varepsilon u-u_\varepsilon\in O_{\mathfrak c}$.
The diagonal matrices $d_\varepsilon$ and $t_i$ commute. Moreover,
\[
  d_\varepsilon t(u)=t(u_\varepsilon)
  \left(\begin{smallmatrix}1&b\\0&1\end{smallmatrix}\right)d_\varepsilon
\]
and $d_\varepsilon$ normalizes $U_1^1(\mathfrak M)$.
It follows that
$d_\varepsilon\alpha_{u,i}d_\varepsilon^{-1}\in
t_it(u_\varepsilon)U_1^1(\mathfrak M)t_i^{-1}$
so we may choose
$\alpha_{\varepsilon u,i}
=d_\varepsilon\alpha_{u,i}d_\varepsilon^{-1}$ by 
remark~\ref{remark:automorphic-symbol-choice-independency}.
The definition of $\Delta_i$ gives a commutative
diagram
\[
\begin{tikzcd}
E \arrow{r}{=} \arrow{d}{\Delta_i}
  & E \arrow{d}{\Delta_i} \\
Y^i(\mathfrak M)
\arrow{r}{d_\varepsilon^{\pm 1}} &
Y^i(\mathfrak M) .
\end{tikzcd}
\]
By the commutative diagram, it follows that
\begin{equation}\label{eq:tr:unit:equiv}
\operatorname{Tr}_E\Delta_i^*(d_\varepsilon^*)^{\pm1}
=\operatorname{Tr}_E\Delta_i^*.  
\end{equation}

Now let $\eta\in H_!^1(Y_1^i(\mathfrak N),A)$. As recalled in \S\ref{subsec:u0:u1}, the pullback to
$Y^i(\mathfrak N)$ of a cohomology class on $Y_1^i(\mathfrak N)$ is invariant under
the deck transformation $d_\varepsilon^*$. 
Since the matrices $d_\varepsilon$ and $\left(\begin{smallmatrix}1&0\\0&q\end{smallmatrix}\right)$ commute,
we have \(d_\varepsilon^*\eta_{\mathrm c}=d_\varepsilon^*\mathcal E_q\eta'
=\mathcal E_qd_\varepsilon^*\eta'=\eta_{\mathrm c}.\)
Consequently, by~\eqref{eq:tr:unit:equiv},
\[
\begin{aligned}
  \operatorname{Tr}_E\Delta_i^*\alpha_{\varepsilon u,i}^*
  \eta_{\operatorname c}
  &=
  \operatorname{Tr}_E\Delta_i^*(d_\varepsilon^*)^{-1}
  \alpha_{u,i}^*d_\varepsilon^*\eta_{\operatorname c}\\
  &=
  \operatorname{Tr}_E\Delta_i^*\alpha_{u,i}^*
  \eta_{\operatorname c}.
\end{aligned}
\]
Therefore, both symbols are constructed from the same functional using the fixed section \(s_M\).
So the assertion is true.
\end{proof}

\begin{lemma}\label{lem:parabolic-cohomology-base-change}
Assume that \(A\subset\mathbb C\) is a Dedekind domain. 
Then the coefficient-change map induces an isomorphism
\[
H_!^1(Y_1^i(\mathfrak N),A)\otimes_A\mathbb C
\xrightarrow{\ \sim\ }
H_!^1(Y_1^i(\mathfrak N),\mathbb C).
\]
\end{lemma}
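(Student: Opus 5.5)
We want to show $H_!^1(Y,A)\otimes_A\mathbb C\to H_!^1(Y,\mathbb C)$ is an isomorphism, where $Y=Y_1^i(\mathfrak N)$. The approach is to combine two earlier results: the integral duality of Lemma~\ref{dualmap:isom}, and the base change for parabolic homology in Lemma~\ref{lem:parabolic-homology-base-change}. Throughout, the fact that $\mathbb C$ is flat over $A$ will let dualization commute with scalar extension.

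\textbf{Step 1.} Apply Lemma~\ref{dualmap:isom} over $A$. This gives an isomorphism
\[
\mathcal D_\cap:H_!^1(Y,A)\xrightarrow{\sim}\operatorname{Hom}_A(M,A),\qquad M=H_1^!(Y,A).
\]
Tensoring with $\mathbb C$ yields
\[
H_!^1(Y,A)\otimes_A\mathbb C\simeq \operatorname{Hom}_A(M,A)\otimes_A\mathbb C.
\]
Since $M$ is finitely generated and $A$ is noetherian, hence $M$ is finitely presented, and $\mathbb C$ is a flat $A$-module (it is torsion-free over a Dedekind domain), the natural map
\[
\operatorname{Hom}_A(M,A)\otimes_A\mathbb C\to\operatorname{Hom}_{\mathbb C}(M\otimes_A\mathbb C,\mathbb C)
\]
is an isomorphism.

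\textbf{Step 2.} Apply Lemma~\ref{lem:parabolic-homology-base-change} with $k=\mathbb C$. This identifies $M\otimes_A\mathbb C$ with $H_1^!(Y,\mathbb C)$. Then Lemma~\ref{dualmap:isom} over the field $\mathbb C$ identifies $\operatorname{Hom}_{\mathbb C}(H_1^!(Y,\mathbb C),\mathbb C)$ with $H_!^1(Y,\mathbb C)$ via $\mathcal D_\cap$. Composing, we obtain an abstract isomorphism
\[
H_!^1(Y,A)\otimes_A\mathbb C\simeq H_!^1(Y,\mathbb C).
\]

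\textbf{Step 3.} The remaining point, which I expect to be the main obstacle, is to check that this composite equals the coefficient-change map. I would first verify that the latter is well defined: the map $H^1(Y,A)\to H^1(Y,\mathbb C)$ is compatible with $H^1_{\rm c}\to H^1$, so it carries $H_!^1(Y,A)$ into $H_!^1(Y,\mathbb C)$, and it extends $\mathbb C$-linearly. For the identification, take $\eta\in H_!^1(Y,A)$, $z\in\mathbb C$ and $m\in M$. By naturality of the evaluation pairing under change of coefficients (recorded in \S\ref{section:conj:on:mod:symb}),
\[
(m\otimes 1)\cap(\eta\otimes z)=z\,(m\cap\eta).
\]
This is exactly the image of $\mathcal D_\cap(\eta)\otimes z$ under the maps of Step 1. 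Hence the square relating the two $\mathcal D_\cap$ maps commutes, and the coefficient-change map coincides with the composite of isomorphisms from Steps 1 and 2, which proves the lemma.

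The delicate part is confirming that the pairing is natural for parabolic classes, since $\cap$ on $H_1^!\times H_!^1$ is defined by choosing a lift $c'$ to $H_1$. I would handle this by choosing an integral lift $c'\in H_1(Y,A)$ and using its image $c'\otimes 1$ as the lift over $\mathbb C$; this is legitimate because the scalar-extension map $\beta_{\mathbb C}$ is compatible with $H_1\to H_1^{\rm BM}$. With that choice, the identity above reduces to the naturality of ordinary evaluation.
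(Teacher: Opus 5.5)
Your proposal is correct and follows essentially the same route as the paper: both prove it through the commutative square relating $\mathcal D_\cap\otimes 1$ and $\mathcal D_{\cap,\mathbb C}$, which commutes by naturality of the pairing. Both also use the duality isomorphisms of Lemma~\ref{dualmap:isom} and the homology base change of Lemma~\ref{lem:parabolic-homology-base-change}. The only cosmetic difference is how you justify $\operatorname{Hom}_A(M,A)\otimes_A\mathbb C\cong\operatorname{Hom}_{\mathbb C}(M\otimes_A\mathbb C,\mathbb C)$: you use flat base change for finitely presented modules, whereas the paper passes to the projective quotient $M/M_{\mathrm{tors}}$.
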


\begin{proof}
Put $Y=Y_1^i(\mathfrak N)$, $H=H_!^1(Y,A)$, $M=H_1^!(Y,A)$, 
$H_{\mathbb C}=H_!^1(Y,\mathbb C)$ and $M_{\mathbb C}=H_1^!(Y,\mathbb C)$.
Set
\[
M^\vee=\operatorname{Hom}_A(M,A),\qquad
M_{\mathbb C}^\vee
=\operatorname{Hom}_{\mathbb C}(M_{\mathbb C},\mathbb C).
\]
The coefficient-change map $H\longrightarrow H_{\mathbb C}$
induces the $\mathbb C$-linear map
\[
\beta_H:H\otimes_A\mathbb C\longrightarrow H_{\mathbb C}.
\]
By
Lemma~\ref{lem:parabolic-homology-base-change}, scalar extension
induces an isomorphism
\[
\beta_M:M\otimes_A\mathbb C
\xrightarrow{\;\simeq\;}M_{\mathbb C}.
\]
Write $T=M_{\mathrm{tors}}$ and $P=M/T$.
Since $M$ is finitely generated over the Dedekind domain $A$,
the module $P$ is finitely generated projective. Moreover,
\[
M^\vee=P^\vee,
\qquad
M\otimes_A\mathbb C=P\otimes_A\mathbb C.
\]
It follows that the natural map
$\Theta:M^\vee\otimes_A\mathbb C\longrightarrow M_{\mathbb C}^\vee$
defined by
\[
\Theta(\lambda\otimes z)
\left(\beta_M(m\otimes w)\right)
=zw\lambda(m)
\]
is an isomorphism.
We claim that the following diagram commutes:
\[
\begin{tikzcd}
H\otimes_A\mathbb C
 \arrow{r}{\beta_H}
 \arrow{d}{\mathcal D_{\cap}\otimes 1}
&
H_{\mathbb C}
 \arrow{d}{\mathcal D_{\cap,\mathbb C}}
\\
M^\vee\otimes_A\mathbb C
 \arrow{r}{\Theta}
&
M_{\mathbb C}^\vee.
\end{tikzcd}
\]
Indeed, for $\eta\in H$, $m\in M$, and $z,w\in\mathbb C$,
the naturality of the parabolic pairing gives
\[
\begin{aligned}
\mathcal D_{\cap,\mathbb C}
\left(\beta_H(\eta\otimes z)\right)
\left(\beta_M(m\otimes w)\right)
&=
\beta_M(m\otimes w)\cap\beta_H(\eta\otimes z)\\
&=zw(m\cap\eta)\\
&=
\Theta\left(
\mathcal D_{\cap}(\eta)\otimes z
\right)
\left(\beta_M(m\otimes w)\right).
\end{aligned}
\]
This proves the claim. By Lemma~\ref{dualmap:isom}, both vertical
maps are isomorphisms. Since $\Theta$ is also an isomorphism, the
commutative diagram shows that $\beta_H$ is an isomorphism.
\end{proof}

Assume further that $A\subset\mathbb C$.  Abusing the notation, denote 
$\Upsilon_i(u)\in H_1^!(Y_1^i(\mathfrak N),\mathbb C)$ 
by the image of $\Upsilon_i(u)$ under the scalar extension of parabolic homology.

\begin{proposition}
\label{prop:additive-lvalue-automorphic-symbol}
For every
$f\in S_2(\mathfrak N,\chi)$ and $u\in\mathcal R_{\mathfrak c}$, we have
\[
\Upsilon_i(u)\cap\delta_i(f)
=\Gamma_F(1)L_{a_i}(1,\mathcal E_qf;u).
\]
\end{proposition}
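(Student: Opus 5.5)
We first unwind the left-hand side. By the identity following Definition~\ref{def:auto:sym}, after scalar extension to $\mathbb C$ (using naturality of the parabolic pairing, Lemma~\ref{lem:parabolic-homology-base-change} and Lemma~\ref{lem:parabolic-cohomology-base-change}),
\[
\Upsilon_i(u)\cap\delta_i(f)=\mathrm{Tr}_E\Delta_i^*\alpha_{u,i}^*\,\delta_i(f)_{\mathrm c}.
\]
By \eqref{eq:hecke:section:compact:lifting} and Hecke equivariance of Proposition~\ref{prop:eichler-shimura-u1-componentwise}, we get $\delta_i(f)_{\mathrm c}=\iota_i(\mathcal E_q\delta_i(f))=\iota_i(\delta_i(\mathcal E_qf))$. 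Put $g:=\mathcal E_qf\in S_2(\mathfrak N,\chi)$. It therefore suffices to show
\[
\mathrm{Tr}_E\Delta_i^*\alpha_{u,i}^*\,\iota_i(\delta_i(g))=\Gamma_F(1)L_{a_i}(1,g;u).
\]

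Next we replace the compactly supported representative by the closed form itself. Write $\iota_i(\delta_i(g))=[\omega-d(\sum_s\phi_s\theta_s)]$ with $\omega=\delta_i(g)$, as in \S\ref{subsec:hecke:equiv:section}. The pullback along the proper map $\alpha_{u,i}\circ\Delta_i$ is then a compactly supported $1$-form on $E\simeq\mathbb R_{>0}$, and the trace is its integral. The correction term integrates to the difference of the boundary values of the pulled-back function $\sum_s\phi_s\theta_s$ at the two ends $|y|\to0,\infty$ of the geodesic. These ends map to the cusps $\alpha_{u,i}(\infty)$ and $\alpha_{u,i}(0)$, where the function is $\theta_s$ near the cusp, and $\theta_s\to0$ there by normalization. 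Hence the trace equals the (absolutely convergent, since $\omega$ is rapidly decreasing at cusps) improper integral $\int_0^\infty(\alpha_{u,i}\circ\Delta_i)^*\omega$.

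We then compute this integral. Along $(0,r)$ only the $dy$ component contributes, and this is the $AB$, i.e.\ $XY$, coefficient. So the integrand is $\widetilde{g}$ evaluated at $t_i\alpha_{u,i}\left(\begin{smallmatrix}\sqrt r&0\\0&1/\sqrt r\end{smallmatrix}\right)_\infty$, times $dr$ up to the factor from $\mathrm j$. Using left $\mathrm{GL}_2(F)$-invariance and $\alpha_{u,i}\in t_it(u)U_1^1(\mathfrak M)t_i^{-1}$ (with finite part $\alpha^{-1}$ moved to the right), this becomes
\[
\widetilde g\left(t_it(u)\left(\begin{smallmatrix}r&0\\0&1\end{smallmatrix}\right)_\infty\right)\cdot(\text{power of }r),
\]
after absorbing the central element $\sqrt r$ (trivial central character at $\infty$) and the $U_1^1(\mathfrak M)$-factor (trivial on $g$). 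Since $t_it(u)=t(a_iu)t_i$ commutes appropriately, and $t(u)$ has trivial archimedean part, this equals $\widetilde{(g|_u)_{a_i}}(y)$ with $|y|=r$, up to a unit absorbed by $\mathbf e_F(\alpha a_iu)$ conventions. Proposition~\ref{integral:representation:lfunction} at $s=1$, extended to $\mathrm{Re}(s)>0$ by Remark~\ref{rem:mellin-transform-extension}, gives $\int_E\widetilde{(g|_u)_{a_i}}(y)\,d^\times y=\Gamma_F(1)L_{a_i}(1,g;u)$, which is the claim.

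\textbf{Main obstacle.} The delicate step is the bookkeeping in the last computation. We must match the normalization of $\delta_i$ (the $\mathrm j$-factor and the substitution $AB=dy$) with the measure $dr/r$ and the factor $|a_iy|_{\mathbb A_F}$ in \eqref{modularform:part:expansion}, so that no stray constant or sign appears. We must also check that conjugating $t(u)$ past $t_i$ and the $U_1^1(\mathfrak M)$-component is legitimate, using the assumption that $a_i$ is prime to $\mathfrak c$. The first thing we would try is to compute on the Fourier–Whittaker expansion term by term and compare with the proof of Proposition~\ref{integral:representation:lfunction}.
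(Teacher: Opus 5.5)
Your proposal is correct and follows the paper's proof step for step. You reduce, via the defining identity of $\Upsilon_i(u)$, coefficient change and \eqref{eq:hecke:section:compact:lifting}, to $\operatorname{Tr}_E\Delta_i^*\alpha_{u,i}^*\iota_i(\delta_i(\mathcal E_qf))$; you drop the exact correction term by Stokes, since $\theta$ vanishes at the cusps; you identify the remaining integral with $\int_E\widetilde{((\mathcal E_qf)\vert_u)_{a_i}}(y)\,d^\times y$; and you conclude with Proposition~\ref{integral:representation:lfunction} at $s=1$ via Remark~\ref{rem:mellin-transform-extension}.

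Your hedge ``up to a unit'' can be made precise: moving the finite part of $\alpha_{u,i}^{-1}$ to the right actually produces $t(-u)$, and this is harmless because $L_{a_i}(1,g;-u)=L_{a_i}(1,g;u)$ by the substitution $\alpha\mapsto-\alpha$ in the defining sum.
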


\begin{proof}
By the definition of the automorphic symbol,
the Hecke equivariance of the generalized Eichler--Shimura map 
(Proposition~\ref{prop:eichler-shimura-u1-componentwise}),
the naturality of the parabolic pairing,
Lemma~\ref{lem:parabolic-cohomology-base-change},
and~\eqref{eq:hecke:section:compact:lifting},
we obtain
\[
\Upsilon_i(u)\cap\delta_i(f)
=\operatorname{Tr}_E\Delta_i^*\alpha_{u,i}^*
\iota_i\left(\delta_i(\mathcal E_qf)\right).
\]
Choose a differential form $\omega$ representing
$\delta_i(\mathcal E_qf)$. By the construction of section $\iota_i$ (see \S\ref{subsec:hecke:equiv:section}),
there is a function $\theta$, vanishing at the cusps,
such that
\[
\iota_i\left(\delta_i(\mathcal E_qf)\right)=[\omega-d\theta].
\]
From the defining property of $\alpha_{u,i}$ and the right invariance at
level $U_1^1(\mathfrak M)$, we have
\[
\begin{aligned}
\operatorname{Tr}_E\Delta_i^*\alpha_{u,i}^*
\iota_i\left(\delta_i(\mathcal E_qf)\right)
&=
\int_E\Delta_i^*
\left(\alpha_{u,i}^*\omega-\alpha_{u,i}^*d\theta\right)=
\int_E\Delta_i^*\alpha_{u,i}^*\omega\\
&=
\int_E
\widetilde{((\mathcal E_qf)\vert_u)_{a_i}}(y)
\frac{dy}{y}.
\end{aligned}
\]
by Stokes' theorem.
Remark~\ref{rem:mellin-transform-extension} allows us to apply 
Proposition~\ref{integral:representation:lfunction} for $s=1$. 
\end{proof}

\subsection{Representation of \(L\)-values by automorphic symbols}

Let $\xi$ be a finite order Hecke character of conductor $\mathfrak{c}$. 
By (6.9) and the proof of Theorem 8.1 in \cite{Hida:1994}, we obtain
\begin{equation}\label{eq:twisting:class:character}
(f\otimes\xi)_i=\frac{\xi(a_i)}{G(\xi)}
\sum_{u}
\xi(\varpi_{\mathfrak c}u)f_i\vert\alpha_{u,i},\quad
\delta_i(f\otimes\xi)=\frac{\xi(a_i)}{G(\xi)}
\sum_{u}
\xi(\varpi_\mathfrak{c}u)\alpha_{u,i}^*\delta_i(f),
\end{equation}
where $u$ runs through $\mathcal R_{\mathfrak c}$.
Consider the $A$-linear twisting map
\begin{align*}
  \mathcal{R}_i(\xi): 
  H^1_!(Y_1^i(\mathfrak{N}),A)&\longrightarrow 
  H^1_!(Y_1^i(\mathfrak M),A[\xi]),\qquad 
  \eta\longmapsto
  \xi(a_i)\sum_{u\in\mathcal R_{\mathfrak c}}
  \xi(\varpi_{\mathfrak c}u)\alpha_{u,i}^*\eta.
\end{align*}
When \(A=\mathbb C\), the definition of \(\mathcal R_i(\xi)\) gives
    \begin{equation}\label{twisting:operators:compare}
        G(\xi)\delta_i(f\otimes\xi)=\mathcal{R}_i(\xi)\delta_i(f).
    \end{equation}

\begin{proposition}\label{integral:lvalue:automorphic:symbol}
Suppose that \(A\subset\mathbb C\). Then, for every $f\in S_{2}(\mathfrak N,\chi)$, we have
\[
\Gamma_F(1)G(\xi)L_{a_i}(1,\mathcal E_qf\otimes\xi)
=
\xi(a_i)\sum_{u\in\mathcal{R}_{\mathfrak{c}}}
\xi(\varpi_\mathfrak{c}u)\Upsilon_i(u)\cap\delta_i(f).
\]
\end{proposition}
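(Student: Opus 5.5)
The plan is to reduce the statement to Proposition~\ref{prop:additive-lvalue-automorphic-symbol} applied termwise, combined with the twisting formula~\eqref{twisting:operators:compare}. The key observation is that both sides are linear in the additive twists: the left-hand side is a Gauss-sum-weighted combination of additively twisted partial $L$-values, while the right-hand side is the same combination of symbol pairings.

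First, I express $L_{a_i}(1,\mathcal E_qf\otimes\xi)$ through additive twists. Put $g:=\mathcal E_qf\in S_2(\mathfrak N,\chi)$; this is allowed since $\mathcal E_q$ is a Hecke operator preserving the space. By the definition of the twisted form,
\[
G(\xi)\,g\otimes\xi(h)=\xi(\det h)\sum_{u\in\mathcal R_{\mathfrak c}}\xi(\varpi_{\mathfrak c}u)\,g(ht(u)).
\]
Evaluate the $XY$-component at $h=\left(\begin{smallmatrix}a_i&0\\0&1\end{smallmatrix}\right)\left(\begin{smallmatrix}y&0\\0&1\end{smallmatrix}\right)$ with $y\in\mathbb C^\times$. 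Since $\xi$ is of finite order, $\xi_\infty=1$, so $\xi(\det h)=\xi(a_i)$. This gives
\[
G(\xi)\widetilde{(g\otimes\xi)_{a_i}}(y)=\xi(a_i)\sum_u\xi(\varpi_{\mathfrak c}u)\widetilde{(g\vert_u)_{a_i}}(y).
\]
Integrating against $d^\times y$ over $E$ and applying Proposition~\ref{integral:representation:lfunction} at $s=1$, justified by Remark~\ref{rem:mellin-transform-extension} for both $g\otimes\xi$ and each $g\vert_u$, yields
\[
\Gamma_F(1)G(\xi)L_{a_i}(1,g\otimes\xi)=\xi(a_i)\sum_u\xi(\varpi_{\mathfrak c}u)\,\Gamma_F(1)L_{a_i}(1,g;u).
\]
A small point to check is that $|a_i|^{s-1}$ equals $1$ at $s=1$, and that $g\otimes\xi$ has the right level for the integral representation. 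The latter holds because the representation only uses the Fourier--Whittaker expansion.

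Second, I substitute Proposition~\ref{prop:additive-lvalue-automorphic-symbol}, namely $\Gamma_F(1)L_{a_i}(1,\mathcal E_qf;u)=\Upsilon_i(u)\cap\delta_i(f)$, into each summand. This gives exactly the right-hand side of the statement. Here $\Upsilon_i(u)$ means its image in complex parabolic homology, and the pairing is $\mathbb C$-linear in the cohomology variable by Lemma~\ref{lem:parabolic-cohomology-base-change}. Alternatively, one could argue on the cohomological side, pairing~\eqref{twisting:operators:compare} with the symbol at level $\mathfrak M$. The route above avoids having to compare symbols at two levels, so I prefer it.

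The main obstacle is justifying that $\widetilde{(g\vert_u)_{a_i}}$ matches the pullback used in the proof of Proposition~\ref{prop:additive-lvalue-automorphic-symbol}. The representatives $u$, the matrices $\alpha_{u,i}$, and the normalization of $a_i$ prime to $\mathfrak c$ must be the same in both places. The same concern applies to the convention $\xi(\varpi_{\mathfrak c}u)$ and its independence of the choice of $\mathcal R_{\mathfrak c}$. I would check this by noting that $g\vert_u$ depends only on $u$ modulo $O_{\mathfrak c}$, by $U_0(\mathfrak N)$-invariance together with condition~\eqref{eq:assumption:conductor:exponent:large}, and that $\Upsilon_i(u)$ is independent of the choice of $\alpha_{u,i}$ by Remark~\ref{remark:automorphic-symbol-choice-independency}.
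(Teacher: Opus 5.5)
Your proposal is correct and is essentially the paper's argument, with the two steps done in the opposite order: the paper first applies Proposition~\ref{prop:additive-lvalue-automorphic-symbol} to each summand, then recombines the additively twisted values into $\Gamma_F(1)G(\xi)L_{a_i}(1,\mathcal E_qf\otimes\xi)$ via Proposition~\ref{integral:representation:lfunction} and the twisting formula~\eqref{eq:twisting:class:character} (the componentwise form of the definition of $f\otimes\xi$ that you unfold directly). The compatibility issue you flag as the main obstacle is already absorbed into Proposition~\ref{prop:additive-lvalue-automorphic-symbol}, since both sides are indexed by the same $u\in\mathcal R_{\mathfrak c}$.
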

\begin{proof}
By Proposition~\ref{prop:additive-lvalue-automorphic-symbol}, we obtain
\[
\begin{aligned}
\xi(a_i)\sum_u
\xi(\varpi_{\mathfrak c}u)
\Upsilon_i(u)\cap\delta_i(f)
&=
\Gamma_F(1)\xi(a_i)
\sum_u
\xi(\varpi_{\mathfrak c}u)
L_{a_i}(1,\mathcal E_qf;u).
\end{aligned}
\]
Then the assertion is true by Proposition~\ref{integral:representation:lfunction} 
and~\eqref{eq:twisting:class:character}.
\end{proof}

\section{Generation of parabolic homology by automorphic symbols}\label{sec:full-rank}

Let $\mathfrak N$ be an integral ideal.
Choose a boundary-killing prime \(q\in 1+\mathfrak N\).
Let $A$ be a Dedekind domain. Then the section $s_M$ exists (see~\eqref{eq:double-dual-section}).
By using $q$ and $s_M$, we can define the automorphic symbols of coefficients in $A$.
Assume that the positive generator of $\mathfrak N\cap\mathbb Z$ is
greater than $3$ so that $Y^i(\mathfrak N)$ is a manifold and 
the Hecke-equivariant sections $\iota_i$ are well-defined.
Assume further that $A\subset\mathbb C$. This allows us to extend the
automorphic symbols from $A$ to $\mathbb C$. The section $\iota_i$
then expresses their pairings with complex parabolic cohomology
classes in terms of compactly supported classes. Applied to
the Eichler--Shimura class $\delta_i(f)$, this description yields the
$L$-value formula of
Proposition~\ref{prop:additive-lvalue-automorphic-symbol}.

Let $\mathfrak l$ be a split prime ideal of $F$ and assume that $N(\mathfrak l)>3$
so that $Y^i(\mathfrak l^e)$ is a manifold for $e>0$.

\subsection{Additively twisted symbols}
For $a\in O_{\mathfrak l}^{\times}$, set
\[
\Upsilon_i\left(\frac{a}{\varpi_{\mathfrak l}^n}\right)
:=\Upsilon_i(u_a)\in H_1^!(Y_1^i(\mathfrak N),A),
\]
where $u_a$ is the unique element of $\mathcal R_{\mathfrak l^n}$ such that
$\varpi_{\mathfrak l}^n u_a\equiv a
\pmod{\mathfrak l^n}$.
Thus this symbol depends only on $a$ mod $\mathfrak l^n$.
Fix an integer $n_0>0$. 

\begin{definition}
\label{def:additive-average-symbols}
For an integer $n\geq n_0$ and elements $\alpha\in\mathfrak d_F^{-1}\setminus\{0\}$, 
$c\in O_\mathfrak{l}^\times$, define
\[
\Upsilon_c^n(\alpha):=\frac{\#E_F}{\ell^{n-n_0}}
\sum_{a\in c\Gamma_{n_0}/\Gamma_n}
\mathbf{e}_F\left(
-\frac{\alpha a}{\varpi_{\mathfrak l}^n}
\right)
\Upsilon_1\left(
\frac{a}{\varpi_{\mathfrak l}^n}
\right)\in H_1^!(Y_1^1(\mathfrak N),\mathbb C).
\]
\end{definition}

\begin{proposition}\label{prop:automorphic-symbol-pairing-limit}
Let $0<e\leq 2n_0$ be an integer and set $\mathfrak{N}=\mathfrak l^e$.
For \(f\in \bigoplus_{\chi} S_2(\mathfrak l^e,\chi)\),
\[
\lim_{n\to\infty}
\Upsilon_c^n(\alpha)\cap\delta_1(f)
=
\Gamma_F(1)\cdot\frac{a_{\mathcal E_qf}(\alpha \mathfrak d_F)}{N(\alpha \mathfrak d_F)}.
\]
\end{proposition}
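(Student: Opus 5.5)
By linearity in $f$ it suffices to treat $f\in S_2(\mathfrak l^e,\chi)$ for a single finite-order Hecke character $\chi$ of modulus $\mathfrak l^e$. Both sides are linear in $f$, and the Eichler--Shimura map $\delta_1$ and $\mathcal E_q$ respect the decomposition $\bigoplus_\chi S_2(\mathfrak l^e,\chi)$. The plan is to unfold the pairing via Proposition~\ref{prop:additive-lvalue-automorphic-symbol} into the average $L_n^{\mathrm{av}}$ of \S\ref{sec:vertical-limits}, and then apply Proposition~\ref{prop:additive-average-vertical-limit} to the cusp form $\mathcal E_q f$.

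\emph{Step 1 (unfolding).} Fix $n\geq 2n_0$, so that $\mathfrak c=\mathfrak l^n$ satisfies \eqref{eq:assumption:conductor:exponent:large}, since $n\ge 2n_0\ge e$. The symbols $\Upsilon_1(a/\varpi_{\mathfrak l}^n)=\Upsilon_1(u_a)$ are then defined with $u_a\in\mathcal R_{\mathfrak l^n}$. Since $u_a$ and $a/\varpi_{\mathfrak l}^n$ differ by an element of $O_{\mathfrak l}$, and $\mathbf e_F(\alpha a\beta)$ with $\beta$ integral is $1$ on the support of $a_f$, the partial $L$-function is unchanged: $L_{a_1}(1,\mathcal E_qf;u_a)=L_{a_1}(1,\mathcal E_qf;a/\varpi_{\mathfrak l}^n)$. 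By $\mathbb C$-linearity of the pairing in the homology variable and Proposition~\ref{prop:additive-lvalue-automorphic-symbol},
\[
\Upsilon_c^n(\alpha)\cap\delta_1(f)=\frac{\#E_F\,\Gamma_F(1)}{\ell^{n-n_0}}\sum_{a\in c\Gamma_{n_0}/\Gamma_n}\mathbf e_F\left(-\frac{\alpha a}{\varpi_{\mathfrak l}^n}\right)L_{a_1}\left(1,\mathcal E_qf;\frac{a}{\varpi_{\mathfrak l}^n}\right).
\]
We may take the class representative $a_1=1$ (the trivial class), which is prime to $\mathfrak c$ as required in \S\ref{subsection:integral-automorphic-symbols}. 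The right-hand side is then exactly $\#E_F\,\Gamma_F(1)\,L_n^{\mathrm{av}}(\mathcal E_qf;c,\alpha)$.

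\emph{Step 2 (limit).} The operator $\mathcal E_q=T_{q\mathcal O_F}-N(q\mathcal O_F)-1$ preserves $S_2(\mathfrak l^e,\chi)$, because $q\mathcal O_F$ is prime to the level. Hence $\mathcal E_qf$ satisfies the hypotheses of \S\ref{sec:vertical-limits}: level $\mathfrak l^e$ with $0<e\le 2n_0$, and central character of conductor dividing $\mathfrak l^e$. Proposition~\ref{prop:additive-average-vertical-limit} then gives $L_n^{\mathrm{av}}(\mathcal E_qf;c,\alpha)\to \frac{1}{\#E_F}a_{\mathcal E_qf}(\alpha\mathfrak d_F)/N(\alpha\mathfrak d_F)$. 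Multiplying by $\#E_F\Gamma_F(1)$ yields the claim. Summing over $\chi$ finishes the proof for general $f$.

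The main obstacle is bookkeeping in Step 1. I would check three points. First, the symbols of Definition~\ref{def:additive-average-symbols} use component $i=1$ with $a_1$ trivial, so that the finite idele in $L_{a_1}$ matches the subscript $1$ in $L_n^{\mathrm{av}}$. Second, the normalization $\#E_F$ in $\Upsilon_c^n$ cancels the factor $1/\#E_F$ in the definition of $L_a$. Third, the identification of $\Upsilon_1(u)\cap\delta_1(f)$ over $\mathbb C$ with the extended integral symbol is compatible with linear combinations; this is Lemma~\ref{lem:parabolic-cohomology-base-change} together with naturality of the pairing. Each of these is routine once written out.
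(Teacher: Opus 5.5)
Your proposal is correct and follows the paper's proof: reduce to a single $S_2(\mathfrak l^e,\chi)$, unfold the pairing via Proposition~\ref{prop:additive-lvalue-automorphic-symbol} into $\#E_F\,\Gamma_F(1)\,L_n^{\mathrm{av}}(\mathcal E_qf;c,\alpha)$, and conclude with Proposition~\ref{prop:additive-average-vertical-limit}. Your extra bookkeeping is exactly what makes this match the stated right-hand side: taking $a_1=1$, noting that $L_1(1,\cdot\,;u)$ is unchanged by the integral shift $u_a-a/\varpi_{\mathfrak l}^n$, and applying the limit to $\mathcal E_qf$ rather than to $f$, as the paper's displayed identity loosely writes.
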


\begin{proof}
Since the maps $f\mapsto a_{\mathcal E_qf}$ and $\delta_1$ are linear,
we may assume that $f\in S_2(\mathfrak l^e,\chi)$.
By Proposition~\ref{prop:additive-lvalue-automorphic-symbol}, we obtain
\[
\Upsilon_c^n(\alpha)\cap\delta_1(f)
=\# E_F \Gamma_F(1) L_n^{\mathrm{av}}(f;c,\alpha).
\]
Then Proposition~\ref{prop:additive-average-vertical-limit} gives the asserted limit.
\end{proof}

\subsection{Full-rankness}\label{subsec:full:rank}

Recall that $\Gamma_n:=1+\mathfrak{l}^n O_{\mathfrak l}$.
\begin{definition}
For an integral level $\mathfrak N$ and
$c\in O_{\mathfrak l}^{\times}$, define the $A$-submodule
\[
M_n^i(c):=
\operatorname{Span}_A
\left\{
  \Upsilon_i\left(\frac{a}{\varpi_\mathfrak{l}^n}\right):
  a\in c\Gamma_{n_0}
\right\}
\subset H_1^!(Y_1^i(\mathfrak N),A).
\]
\end{definition}

\begin{lemma}\label{lem:hecke-boundary-killing-operator-invertible}
$\mathcal E_q$ is invertible on the complex parabolic cohomology 
$H_!^1(Y_1(\mathfrak N),\mathbb{C})$.
\end{lemma}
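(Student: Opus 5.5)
We want to show that $\mathcal E_q=T_{q\mathcal O_F}-N(q\mathcal O_F)-1$ has no kernel on $H_!^1(Y_1(\mathfrak N),\mathbb C)$. Since this space is finite-dimensional, injectivity is enough. We transport the question to cusp forms via the Hecke-equivariant isomorphism of Proposition~\ref{prop:eichler-shimura-u1-componentwise}:
\[
\bigoplus_\chi S_2(\mathfrak N,\chi)\xrightarrow{\ \simeq\ } H_!^1(Y_1(\mathfrak N),\mathbb C).
\]
Under this isomorphism $\mathcal E_q$ corresponds to $T_{q\mathcal O_F}-N(q\mathcal O_F)-1$ acting on cusp forms. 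The Hecke operators $T_{\mathfrak p}$ for $\mathfrak p\nmid\mathfrak N$ commute with each other, and on each $S_2(\mathfrak N,\chi)$ they are normal with respect to the Petersson inner product, because their adjoint is $\overline{\chi}(\mathfrak p)T_{\mathfrak p}$. Hence the space decomposes into simultaneous generalized eigenspaces, which are in fact eigenspaces. It therefore suffices to show that no eigenvalue $\lambda_q$ of $T_{q\mathcal O_F}$ on a cuspidal eigenform (more precisely, on a system of Hecke eigenvalues) equals $N(q\mathcal O_F)+1$.

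\textbf{Step 1: reduce to a cuspidal automorphic representation.} A simultaneous eigenvector $f$ generates a cuspidal automorphic representation $\pi$ of $\mathrm{GL}_2(\mathbb A_F)$. Since $q\in 1+\mathfrak N$, the prime $q\mathcal O_F$ is prime to $\mathfrak N$, so $\pi_q$ is unramified. The eigenvalue $\lambda_q$ of $T_{q\mathcal O_F}$ is then $N(q)^{1/2}(\alpha+\beta)$, where $\alpha,\beta$ are the Satake parameters of $\pi_q$. These satisfy $\alpha\beta=\chi(q)$, which is a root of unity, up to the normalization fixed by the weight-$2$ convention of \eqref{Fourier:Whittaker:expansion}.

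\textbf{Step 2: bound the eigenvalue.} By the Blomer--Brumley bound~\cite{BlomerBrumley:2011} already used in \S\ref{int:rep:sec}, $|\alpha|,|\beta|\le N(q)^{\theta}$ with $\theta=7/64$. Consequently
\[
|\lambda_q|\le 2N(q)^{1/2+\theta}.
\]
Since $N(q)\ge 2$, one has $2N(q)^{1/2+7/64}<N(q)+1$: at $N(q)=2$ the left side is about $3.04$ against $3$, so care is needed, but in fact $N(q)\ge 5$ here because $q\equiv 1 \bmod \mathfrak l^e$ and $N(\mathfrak l)>3$. This strict inequality gives $\lambda_q\neq N(q)+1$.

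The main point to check is the normalization: that the Hecke operator $T_{q\mathcal O_F}$ on $H^1$, as used in Lemma~\ref{hecke:action:boundary:cohom}, has eigenvalue exactly $a_f(q\mathcal O_F)$ with $|a_f(q\mathcal O_F)|\le 2N(q)^{1/2+\theta}$ in weight two (the Ramanujan-normalized coefficient times $N(q)^{1/2}$). The boundary computation gives degree $N(q)+1$, consistent with the Eisenstein eigenvalue $N(q)+1$, which has "weight" $N(q)^{1}$; the cuspidal ones sit at weight $N(q)^{1/2}$. If the Blomer--Brumley bound is awkward to cite in this form, an alternative is the trivial Hecke bound $|\alpha|,|\beta|<N(q)^{1/2}$. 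This follows from unitarity of $\pi_q$ together with the fact that $\pi_q$ is non-trivial-dimensional (it is generic), and it still yields $|\lambda_q|<N(q)+1$. We then conclude that $\mathcal E_q$ acts by the nonzero scalar $\lambda_q-N(q)-1$ on each eigenspace, hence is invertible.
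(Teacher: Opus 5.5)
Your strategy is the paper's. You transport to $\bigoplus_\chi S_2(\mathfrak N,\chi)$ by the Hecke-equivariant Eichler--Shimura isomorphism, use that $T_{q\mathcal O_F}$ acts on each cuspidal constituent $\pi$ by a scalar $\lambda_\pi(\mathfrak q)$ because $\pi_{\mathfrak q}$ is unramified, and show $\lambda_\pi(\mathfrak q)\neq Q+1$ with $Q=N(q\mathcal O_F)$. Your normality argument replaces the paper's appeal to the decomposition of $S_2(\mathfrak N,\chi)$ into cuspidal representations; either works.

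There is one false step: the claim that $Q\ge 5$ because $q\equiv 1\pmod{\mathfrak l^e}$ and $N(\mathfrak l)>3$. Take $F=\mathbb Q(i)$, $\mathfrak l=(2-i)$ above $\ell=5$, $\mathfrak N=\mathfrak l$ and $q=-1+i$. Then $q-1=-(2-i)\in\mathfrak N$, while $q\mathcal O_F=(1+i)$ has norm $2$. The congruence $q\equiv 1\bmod\mathfrak l$ says nothing about $\bar q\bmod\mathfrak l$, so it does not constrain $N(q)$. As you computed, $2\cdot 2^{1/2+7/64}\approx 3.05>3$, so the bound $|\lambda|\le 2Q^{1/2+\theta}$ does not exclude $\lambda=Q+1$ when $Q=2$. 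The paper's own one-line deduction (``since $Q>1$ and $\theta<1/2$'') has the same slip with this displayed bound.

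Your fallback is the correct repair, but it needs the determinant relation you already wrote in Step 1. The bound $|\alpha|,|\beta|<Q^{1/2}$ alone only gives $|\lambda|<2Q$. Since $\chi$ has finite order, $|\alpha\beta|=1$. Unitarity and genericity of the unramified $\pi_{\mathfrak q}$ then force $\{|\alpha|,|\beta|\}=\{Q^{s},Q^{-s}\}$ with $0\le s<1/2$ (tempered or complementary series; $s=1/2$ only for one-dimensional representations). Hence
\[
|\lambda_\pi(\mathfrak q)|\le Q^{1/2}\left(Q^{s}+Q^{-s}\right)<Q^{1/2}\left(Q^{1/2}+Q^{-1/2}\right)=Q+1,
\]
because $x\mapsto x+x^{-1}$ is strictly increasing on $[1,\infty)$. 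Alternatively, keep Blomer--Brumley but use $|\alpha\beta|=1$. This gives $|\lambda|\le Q^{1/2}(Q^{\theta}+Q^{-\theta})$, which is less than $Q+1$ for every $Q\ge 2$ (about $2.84<3$ at $Q=2$). With either of these in place of the $Q\ge 5$ claim, your proof is complete, and it also covers the case $Q=2$ that the paper's displayed inequality misses.
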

\begin{proof}
By the Hecke equivariance of the generalized Eichler--Shimura
isomorphism, it suffices to prove that $\mathcal E_q$ is invertible
on $S_2(\mathfrak N,\chi)$ for every $\chi$.

By~\cite{JacquetLanglands:1970}, 
the space $S_2(\mathfrak N,\chi)$ decomposes into its irreducible subspaces of cuspidal
automorphic forms. Since $\mathfrak q=q\mathcal O_F$ is prime
to $\mathfrak N$, 
$T_{\mathfrak q}$ acts on each summand by the multiplication by the Hecke eigenvalue $\lambda_f(\mathfrak q)$.
Put $Q=N(\mathfrak q)$. The bound of Blomer--Brumley~\cite{BlomerBrumley:2011} gives
\[
\left|\lambda_\pi(\mathfrak q)\right|
\leq 2Q^{1/2+\theta},
\qquad
\theta=\frac{7}{64}.
\]
Since $Q>1$ and $\theta<1/2$, we have
$\lambda_f(\mathfrak q)\neq Q+1$.
Thus $\mathcal E_q=T_{\mathfrak q}-Q-1$ acts on every automorphic
summand by a nonzero scalar. It is consequently invertible.
\end{proof}

By changing $\{a_i\}_i$, we may assume that $a_1=1$. 
From now on, we study $M_n^i(c)$ at $i=1$.

\begin{theorem}[Full-rankness]\label{vertical:direction}
Let $0<e\leq 2n_0$ be an integer and set $\mathfrak N=\mathfrak l^e$.
Fix $c\in O_{\mathfrak l}^{\times}$.
For all sufficiently large \(n\),
$M_n^1(c)$ is of full rank in
$H_1^!(Y_1^1(\mathfrak l^e),A)$.
\end{theorem}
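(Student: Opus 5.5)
The proof will go by contradiction over $\mathbb C$, using the vertical limit of Proposition~\ref{prop:automorphic-symbol-pairing-limit}. First I will reduce full rank to a statement over $\mathbb C$. By Lemma~\ref{lem:parabolic-homology-base-change}, $H_1^!(Y_1^1(\mathfrak l^e),A)\otimes_A\mathbb C\simeq H_1^!(Y_1^1(\mathfrak l^e),\mathbb C)$. Hence $M_n^1(c)$ has full rank exactly when the $\mathbb C$-span of the images of $\Upsilon_1(a/\varpi_{\mathfrak l}^n)$, $a\in c\Gamma_{n_0}$, is all of $H_1^!(Y_1^1(\mathfrak l^e),\mathbb C)$.

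Suppose the conclusion fails. Then there are infinitely many $n$, say $n$ in an infinite set $\mathcal N$, for which the complex span $M_n^1(c)_{\mathbb C}$ is a proper subspace. By Lemma~\ref{dualmap:isom} the parabolic pairing over $\mathbb C$ is perfect, so for each $n\in\mathcal N$ there is a nonzero $\eta_n\in H_!^1(Y_1^1(\mathfrak l^e),\mathbb C)$ annihilating $M_n^1(c)_{\mathbb C}$. I will normalize $\eta_n$ to lie on the unit sphere for a fixed norm on this finite-dimensional space. By compactness, after passing to a subsequence, $\eta_n\to\eta\neq0$.

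By Proposition~\ref{prop:eichler-shimura-u1-componentwise}, restricted to the component $i=1$, every class is of the form $\delta_1(f)$ with $f\in\bigoplus_\chi S_2(\mathfrak l^e,\chi)$. Write $\eta_n=\delta_1(f_n)$ and $\eta=\delta_1(f)$, so that $f_n\to f$ in this finite-dimensional space. The averaged symbol $\Upsilon_c^n(\alpha)$ is a linear combination of the generators of $M_n^1(c)$, so $\Upsilon_c^n(\alpha)\cap\delta_1(f_n)=0$ for every $\alpha\in\mathfrak d_F^{-1}\setminus\{0\}$.

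The main obstacle is to pass to the limit when both the symbol and the class depend on $n$. Proposition~\ref{prop:automorphic-symbol-pairing-limit} is stated only for a fixed $f$. The way around this is to fix a basis $f^{(1)},\dots,f^{(k)}$ of the finite-dimensional space of cusp forms and expand $f_n=\sum_j c_{n,j}f^{(j)}$ with $c_{n,j}\to c_j$. For each fixed $\alpha$ and each basis element, the pairing $\Upsilon_c^n(\alpha)\cap\delta_1(f^{(j)})$ converges as $n\to\infty$ and is in particular bounded. Hence
\[
0=\lim_{n}\Upsilon_c^n(\alpha)\cap\delta_1(f_n)=\Gamma_F(1)\frac{a_{\mathcal E_qf}(\alpha\mathfrak d_F)}{N(\alpha\mathfrak d_F)}.
\]
This holds for every $\alpha\in\mathfrak d_F^{-1}\setminus\{0\}$. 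Since $a_1=1$, these $\alpha$ range over generators of all principal integral ideals of the form $\alpha\mathfrak d_F$.

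It remains to deduce $\mathcal E_qf=0$. The expansion~\eqref{Fourier:Whittaker:expansion}, evaluated at $y^{(\infty)}=1$, involves only the coefficients $a_{\mathcal E_qf}(\alpha\mathfrak d_F)$, and all of these vanish. Hence $\widetilde{\mathcal E_qf}$ vanishes on the identity-component Borel elements $\left(\begin{smallmatrix}y&x\\0&1\end{smallmatrix}\right)$ with $y$ archimedean. By the left $\mathrm{GL}_2(F)$-invariance and the $\mathrm{SU}_2$-equivariance in Definition~\ref{adel:cuspform:defn}, the form $\mathcal E_qf$ is then zero on the component $t_1\mathrm{GL}_2(\mathbb C)U_1(\mathfrak l^e)$. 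Equivalently $\delta_1(\mathcal E_qf)=\mathcal E_q\eta=0$ in $H_!^1(Y_1^1,\mathbb C)$; only this component-one cohomology statement is needed.

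By Lemma~\ref{lem:hecke-boundary-killing-operator-invertible}, $\mathcal E_q$ is invertible, so $\eta=0$. This contradicts $\eta\neq0$. Therefore $M_n^1(c)$ has full rank for all sufficiently large $n$.
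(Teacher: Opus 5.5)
Your proof is correct and has the same overall architecture as the paper's: contradiction, a class annihilating the symbols, the vertical limit of Proposition~\ref{prop:automorphic-symbol-pairing-limit}, vanishing of the coefficients $a_{\mathcal E_qf}(\alpha\mathfrak d_F)$, and invertibility of $\mathcal E_q$ from Lemma~\ref{lem:hecke-boundary-killing-operator-invertible}. Two sub-steps are carried out differently.

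First, the compactness step. The paper quotes \cite[Lemma~6.3]{Kwon:2023} to obtain one fixed $\omega$ with $m_j\cap\omega\to0$ along bounded sequences, and then checks separately that $\Upsilon_c^{n_j}(\alpha)$ is bounded. You instead take exact annihilators $\eta_n$ on a unit sphere, extract a convergent subsequence, and pass to the limit by expanding in a fixed basis of cusp forms. The paper's route is shorter by citation. Yours is self-contained and needs only the convergence of each $\Upsilon_c^n(\alpha)\cap\delta_1(f^{(j)})$, which is the same input the paper uses for boundedness. Since $\delta_1$ is not injective when $h_F>1$, you should choose $f_n$ through a fixed linear section, e.g.\ as the preimage of $(\eta_n,0,\dots,0)$ under Proposition~\ref{prop:eichler-shimura-u1-componentwise}, so that $f_n\to f$ really holds.

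Second, the final vanishing step. The paper uses the projector $P_1$ and the identity $\delta_1(P_1h)=\delta_1(h)$. You read off the expansion at $y^{(\infty)}=1$, where only the coefficients $a_{\mathcal E_qf}(\alpha\mathfrak d_F)$ occur, which is more direct and avoids averaging over Hilbert characters.

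Your passage from the vanishing of the $XY$-component $\widetilde{\mathcal E_qf}$ on upper-triangular matrices to the vanishing of $\mathcal E_qf$ itself needs one more line. The reason is that $h(bk,\mathbf x)=h(b,k\mathbf x)$ mixes the three coordinates, so knowing one coordinate on the Borel does not immediately give the others. Either of the following fixes it.
\begin{itemize}
\item Invoke Hida's full vector-valued expansion \cite[Theorem~6.1]{Hida:1994}, whose three components carry the same coefficients. The paper's step $P_1(\mathcal E_qf)=0$ tacitly relies on this too.
\item For fixed $g\in\mathrm{GL}_2(\mathbb C)$, note that the set of $k\in\mathrm{SU}_2(\mathbb C)$ with $gk\cdot\infty\in\Gamma_1^1(\mathfrak l^e)\cdot\infty$ is dense, because this cusp orbit is dense in $\mathbb P^1(\mathbb C)$. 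For such $k$, left invariance gives $\widetilde{\mathcal E_qf}(gk)=0$. Continuity in $k$ and irreducibility of $\mathbf L_2$ under $\mathrm{SU}_2(\mathbb C)$ then force $\mathcal E_qf(g)=0$.
\end{itemize}
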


\begin{proof}
Assume the contrary.
Then there is a sequence $n_j\to\infty$ for which
\[
M_{n_j}^1(c)\otimes_A\mathbb C
\subsetneq H_1^!(Y_1^1(\mathfrak l^e),A)\otimes_A\mathbb C
\xrightarrow{\;\simeq\;}H_1^!(Y_1^1(\mathfrak l^e),\mathbb C),
\]
where the isomorphism is given by 
Lemma~\ref{lem:parabolic-homology-base-change}.
After passage to a subsequence,
\cite[Lemma~6.3]{Kwon:2023} gives a nonzero class
$\omega\in H_!^1(Y_1^1(\mathfrak l^e),\mathbb C)$ such that
\[
m_j\cap\omega\longrightarrow0
\]
for every bounded sequence
$m_j\in M_{n_j}^1(c)\otimes_A\mathbb C$. Write
$\omega=\delta_1(f)$ by
Proposition~\ref{prop:eichler-shimura-u1-componentwise}.
Let $\alpha\in \mathfrak d_F^{-1}\backslash\{0\}$.
By the definition,
\[
\Upsilon_c^{n_j}(\alpha)
\in M_{n_j}^1(c)\otimes_A\mathbb C.
\]
Choose cusp forms $\{g_r\}_r$ such that
$\{\delta_1(g_r)\}_r$ is a basis of $H^1_!(Y_1^1(\mathfrak l^e),\mathbb C)$. 
By the perfect pairing of Lemma~\ref{dualmap:isom} for $A=\mathbb C$, this fixed basis
defines a norm on $H_1^!(Y_1^1(\mathfrak N),\mathbb C)$.
Proposition~\ref{prop:automorphic-symbol-pairing-limit}
shows that, for every $r$, the sequence
$\{\Upsilon_c^{n_j}(\alpha)\cap\delta_1(g_r)\}_j$ converges.
This implies that $\{\Upsilon_c^{n_j}(\alpha)\}_j$ is bounded. 
The defining property of $\omega$ and Proposition~\ref{prop:automorphic-symbol-pairing-limit},
now applied to $f$, give
\begin{equation}\label{eq:pairing:limit:vertical}
  0=
\lim_{j\to\infty}
\Upsilon_c^{n_j}(\alpha)\cap\delta_1(f)
 =\Gamma_F(1)\cdot
\frac{a_{\mathcal E_qf}(\alpha  \mathfrak d_F)}{N(\alpha \mathfrak d_F)}.
\end{equation}
Let us define the principal-class projector:
\[
P_1:S_2(\mathfrak l^e,\chi)\longrightarrow \sum_{\psi\in H_F}S_2(\mathfrak l^e,\chi\psi^2),\qquad
h\longmapsto\frac{1}{h_F}\sum_{\psi\in H_F}h\otimes\psi.
\]
Here, $H_F$ is the group of Hilbert characters.
Extend $P_1$ linearly to $\bigoplus_\chi S_2(\mathfrak l^e,\chi)$.
Since $\alpha$ was arbitrary, $P_1(\mathcal E_qf)=0$ by~\eqref{eq:pairing:limit:vertical}.
Therefore, we obtain 
\[
\delta_1(\mathcal E_qf)=\delta_1(P_1(\mathcal E_qf))=0.
\]
As $\delta$ is Hecke equivariant and 
$\mathcal E_q$ preserves the connected component,
Lemma~\ref{lem:hecke-boundary-killing-operator-invertible} now forces
$\omega=\delta_1(f)=0$, contradicting the choice of $\omega$.
\end{proof}

\subsection{Regularized Bianchi modular symbols}
We define regularized Bianchi modular symbols and compare them
with the automorphic symbols $\Upsilon_1(u)$.
Let $\Gamma$ be a congruence subgroup of $\mathrm{GL}_2(\mathcal O_F)$.
Set
\[
X_\Gamma:=\Gamma\backslash\mathscr H_3^{*},
\qquad
\mathscr H_3^{*}:=\mathscr H_3\cup\mathbb P^1(F).
\]
Cremona~\cite{Cremona:1984} studied integrals along oriented geodesic
paths between cusps.  Via Poincar\'e duality and the Eichler--Shimura
isomorphism, such an integration functional can be considered as an
element of $H_1(X_\Gamma,\mathbb C)$.
We now give a regularized version adapted to the parabolic homology used in this paper.

For $\eta\in H_!^1(Y_1^i(\mathfrak N),A)$, choose a lift
$\eta'\in H_{\mathrm c}^1(Y_1^i(\mathfrak N),A)$ and recall that
$\eta_{\mathrm c}:=\mathcal E_q\eta'$ is independent of the choice of
$\eta'$ by Proposition~\ref{prop:canonical-compact-lift}.
For distinct $r,s\in\mathbb P^1(F)$, let
\[
\Delta_i^{r\to s}:E\longrightarrow Y^i(\mathfrak N)
\]
be a parametrization of the oriented geodesic from $r$ to $s$, where
$E\simeq\mathbb R_{>0}$ is oriented by $dy/y$.
Define an $A$-linear
functional
\begin{equation}\label{eq:regularized-bianchi-functional}
\lambda_{r,s}:H_!^1(Y_1^i(\mathfrak N),A)\to A,\qquad\eta\longmapsto
\operatorname{Tr}_E\left(\Delta_i^{r\to s}\right)^*\eta_{\mathrm c}
\end{equation}
and define the regularized Bianchi modular symbol by
\[
\{r,s\}_i
:=s_M\!\left((\mathcal D_\cap^\vee)^{-1}(\lambda_{r,s})\right)
\in H_1^!(Y_1^i(\mathfrak N),A).
\]
For $r=s$, put $\{r,r\}_i=0$.
The definition is independent of the
parametrization of the oriented geodesic.

For $r,s,t\in\mathbb P^1(F)$ and
$\gamma\in\Gamma_1^i(\mathfrak N)$, the regularized symbols satisfy the following relations:
\begin{equation}\label{eq:regularized-bianchi-relations}
\{r,s\}_i=-\{s,r\}_i,
\qquad
\{r,s\}_i+\{s,t\}_i=\{r,t\}_i,
\qquad
\{\gamma r,\gamma s\}_i=\{r,s\}_i.
\end{equation}

Indeed, after pairing with
$\eta\in H_!^1(Y_1^i(\mathfrak N),A)$, the first identity is orientation reversal.  
For the second identity, choose a compactly supported singular
$1$-cocycle $z$ representing $\eta_{\mathrm c}$.  
Choose cusp neighborhoods disjoint from the support of \(z\).
Replace the portions of the three geodesics contained in these
neighborhoods by paths along the boundaries of neighborhoods.  The resulting closed
chain is the boundary of a singular \(2\)-chain \(C\) in $Y_1^i(\mathfrak N)$.  Since \(z\) vanishes on the
added boundary paths, we have 
\[
 0=z(\partial C)
  =\lambda_{r,s}(\eta)+\lambda_{s,t}(\eta)
   -\lambda_{r,t}(\eta).
\]
This proves the asserted additivity.
The final identity follows because the two paths have the same image in the quotient $Y_1^i(\mathfrak N)$.

As $A\subset\mathbb C$, we have the equality
$\eta_{\mathrm c}=\iota_i(\mathcal E_q\eta)$ under the scalar extension, and hence
$\{r,s\}_i=\mathcal E_q^\vee\{r,s\}_i^{\mathrm{Cr}}$,
where $\mathcal E_q^\vee$ is adjoint to $\mathcal E_q$ under the
complex parabolic pairing and $\{r,s\}_i^{\mathrm{Cr}}$ is Cremona's modular symbol.

For $u\in\mathcal R_{\mathfrak c}$, let $r_u\in F^\times$ be an
element whose image under the diagonal map
\[
 F^\times\longrightarrow\prod_{v\mid\mathfrak c}F_v^\times
\]
is $u$.  
Such an element exists by the definition of $\mathcal R_{\mathfrak c}$.
\begin{theorem}
\label{thm:automorphic-bianchi-comparison}
Let $u\in\mathcal R_{\mathfrak c}$. Then, in
$H_1^!(Y_1^1(\mathfrak N),A)$, one has
\begin{equation*}
\Upsilon_1(u)=\{r_u,\infty\}_1.
\end{equation*}
\end{theorem}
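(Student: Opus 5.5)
Both sides of the identity are produced by the same recipe: an $A$-linear functional on $H=H_!^1(Y_1^1(\mathfrak N),A)$ is fed through the fixed map $s_M\circ(\mathcal D_\cap^\vee)^{-1}$. For $\Upsilon_1(u)$ the functional is $\lambda_{u,1}$ from \eqref{eq:symbol:defining:functional}, and for $\{r_u,\infty\}_1$ it is $\lambda_{r_u,\infty}$ from \eqref{eq:regularized-bianchi-functional}. So the plan is to prove $\lambda_{u,1}=\lambda_{r_u,\infty}$ as elements of $H^\vee$; the equality of homology classes then follows at once. Concretely, for every $\eta\in H$ we must show
\[
\operatorname{Tr}_E\Delta_1^*\alpha_{u,1}^*\eta_{\mathrm c}
=\operatorname{Tr}_E\bigl(\Delta_1^{r_u\to\infty}\bigr)^*\eta_{\mathrm c}.
\]

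\textbf{Step 1: choose a convenient $\alpha_{u,1}$.} Since $a_1=1$, we have $t_1=1$, so $\alpha_{u,1}$ may be any element of $\mathrm{SL}_2(F)\cap t(u)U_1^1(\mathfrak M)$. By Remark~\ref{remark:automorphic-symbol-choice-independency} the functional $\lambda_{u,1}$ does not depend on this choice. We try
\[
\alpha:=\begin{pmatrix}1&r_u\\0&1\end{pmatrix}\in\mathrm{SL}_2(F).
\]
At places $v\mid\mathfrak c$ we have $t(-u)\alpha=\begin{pmatrix}1&r_u-u_v\\0&1\end{pmatrix}=1\in U_1^1(\mathfrak M)_v$, because $r_u$ maps to $u$. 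At places $v\nmid\mathfrak c$ we have $u_v=0$, so we need $r_u\in O_v$, i.e.\ $r_u$ integral away from $\mathfrak c$. This is arranged by the choice of $r_u$: it is a representative in $\mathfrak c^{-1}$, matching the definition of $\mathcal R_{\mathfrak c}$, and $\mathfrak c^{-1}$ is integral away from $\mathfrak c$. With this choice $\alpha\in\mathrm{SL}_2(F)\cap t(u)U_1^1(\mathfrak M)$, as required.

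\textbf{Step 2: identify the path.} The composite $\alpha\circ\Delta_1\colon E\to Y_1^1(\mathfrak N)$ sends $y$ to the class of $(r_u,|y|)$. This is the image of the vertical geodesic in $\mathscr H_3$ from $r_u$ to $\infty$, oriented by $dy/y$. Hence $\Delta_1^*\alpha^*=(\Delta_1^{r_u\to\infty})^*$ on compactly supported classes, and taking $\operatorname{Tr}_E$ gives the displayed identity. Because $\eta_{\mathrm c}$ is compactly supported and both maps from $E$ are proper, the traces are honest finite evaluations. No regularization issue arises, and the independence of the parametrization stated after \eqref{eq:regularized-bianchi-functional} covers any reparametrization.

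\textbf{Step 3: the main obstacle, matching levels.} The symbol $\Upsilon_1(u)$ pulls $\eta_{\mathrm c}$ back along $Y^1(\mathfrak M)\to Y_1^1(\mathfrak N)$ and evaluates on $\Delta_1$ at level $\mathfrak M$. The Bianchi symbol instead evaluates on a geodesic at level $Y^1(\mathfrak N)$, with $\eta_{\mathrm c}$ first pulled back from $Y_1^1(\mathfrak N)$. Both are maps $E\to\mathscr H_3$ followed by a projection, and pullback commutes with the factorization
\[
\mathscr H_3\to Y^1(\mathfrak M)\xrightarrow{\alpha} Y_1^1(\mathfrak N),\qquad
\mathscr H_3\to Y^1(\mathfrak N)\to Y_1^1(\mathfrak N),
\]
where both composites equal $z\mapsto[\alpha z]$. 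I will therefore write both traces as the evaluation of a single compactly supported class on one and the same proper path $E\to Y_1^1(\mathfrak N)$. This uses the convention that a class of level $U_1(\mathfrak N)$ is regarded at level $U_1^1$ via pullback, together with the compatibility \eqref{eq:semi:normalization}.

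The one delicate check is the integrality of $r_u$ away from $\mathfrak c$ in Step 1. If the chosen representative is not integral there, I would modify it by an element of $\mathcal O_F$ that is close to $0$ at $\mathfrak c$. By \eqref{eq:regularized-bianchi-relations}, translating both endpoints by such an element only changes $\{r_u,\infty\}_1$ by an element of $\Gamma_1^1(\mathfrak N)$, which leaves the symbol unchanged.
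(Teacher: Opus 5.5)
Your proposal is correct and is essentially the paper's proof: take $\alpha_{u,1}=\left(\begin{smallmatrix}1&r_u\\0&1\end{smallmatrix}\right)$, check it is admissible, invoke Remark~\ref{remark:automorphic-symbol-choice-independency}, and observe that $\alpha_{u,1}\circ\Delta_1$ and $\Delta_1^{r_u\to\infty}$ give the same path into $Y_1^1(\mathfrak N)$, so the two defining functionals coincide and hence so do the symbols built with the same $s_M$. Your closing contingency is not needed, and as written it would not work, since adding an element of $\mathcal O_F$ cannot restore integrality at a place $v\nmid\mathfrak c$; the case never arises, because $F\to\prod_{v\mid\mathfrak c}F_v$ is injective, so $r_u$ is uniquely determined by $u$ and equals the chosen representative in $\mathfrak c^{-1}$, which is integral away from $\mathfrak c$.
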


\begin{proof}
Set $\alpha_{u,1}:=\left(\begin{smallmatrix}1&r_u\\0&1\end{smallmatrix}\right)$.
By the choice of \(r_u\),
\[
\alpha_{u,1}\in
\mathrm{SL}_2(F)\cap
t(u)U_1^1(\mathfrak N\cap\mathfrak c^2).
\]
Thus $\alpha_{u,1}$ is an admissible choice in the definition of $\Upsilon_1(u)$.  
By Remark~\ref{remark:automorphic-symbol-choice-independency},
the symbol is independent of the choice of $\alpha_{u,1}$. Note that
\[
\alpha_{u,1}\circ\Delta_1
=\Delta_1^{r_u\to\infty}.
\]Thus \(\alpha_{u,1}\) translates the geodesic $\Delta_1(E)$ by \(r_u\).
Here $E\simeq\mathbb R_{>0}$ is oriented by $dy/y$, so the resulting
path runs from $r_u$ to $\infty$.
Therefore, for $\eta\in H_!^1(Y_1^1(\mathfrak N),A)$, we have
\[
\operatorname{Tr}_E\Delta_1^*
\alpha_{u,1}^*\eta_{\mathrm c}
=
\operatorname{Tr}_E
\bigl(\Delta_1^{r_u\to\infty}\bigr)^*\eta_{\mathrm c},
\]
which is the equality of the defining functionals of 
\(\Upsilon_1(u)\) and \(\{r_u,\infty\}_1\). This proves the assertion.
\end{proof}

\subsection{Full-rankness over finite fields}
Let $\mathfrak N=\mathfrak l^e$ for some integer $0< e\leq 2n_0$.
\begin{theorem}[Vertical persistence]
\label{full:rank:vertical}
There exist positive integers $n^\circ$ and $d^\circ$
such that, for every $c\in O_{\mathfrak l}^{\times}$, the module
$M_{n^\circ}^1(c)$ has full rank in
$H_1^!(Y_1^1(\mathfrak l^e),A)$ and
\[
M_{n^\circ}^1(c)\subset M_n^1(c)\quad\text{for every}\quad n\in n^\circ+d^\circ\mathbb Z_{\geq 0}.
\]
\end{theorem}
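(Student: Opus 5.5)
Two things have to be shown: a single exponent $n^\circ$ that works uniformly in $c$, and the inclusion $M_{n^\circ}^1(c)\subset M_n^1(c)$ along an arithmetic progression.

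\emph{Uniformity in $c$.} The module $M_n^1(c)$ depends only on the coset $c\Gamma_{n_0}$, and $O_{\mathfrak l}^\times/\Gamma_{n_0}$ is finite. Theorem~\ref{vertical:direction} gives, for each representative $c$, an exponent beyond which $M_n^1(c)$ has full rank. I will take $n^\circ$ to be the maximum of these finitely many thresholds. Full rank is then needed only at $n=n^\circ$ itself, which is all the statement asks.

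\emph{Passing to the Bianchi description.} By Theorem~\ref{thm:automorphic-bianchi-comparison}, for $a\in c\Gamma_{n_0}$ we have
\[
\Upsilon_1(a/\varpi_{\mathfrak l}^n)=\{r,\infty\}_1 ,
\]
where $r\in F^\times$ is any element whose $\mathfrak l$-component is congruent to $a/\varpi_{\mathfrak l}^n$ modulo $O_{\mathfrak l}$ and which is integral away from $\mathfrak l$. Because $\ell$ splits and $h_F$ is prime to $\ell$, some power $\mathfrak l^{h}$ is principal, say $\mathfrak l^{h}=\pi\mathcal O_F$. I then set $d^\circ:=h$, possibly enlarged to a multiple of the order of $\pi\bmod\mathfrak l^{n_0}$ in $(\mathcal O_F/\mathfrak l^{n_0})^\times$. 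With this choice, $\pi^{d^\circ}$ is a local unit at $\mathfrak l'\neq\mathfrak l$ and lies in $\Gamma_{n_0}$ up to the uniformizer at $\mathfrak l$.

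\emph{Realizing old symbols at the new level.} Write $n=n^\circ+kd^\circ$. For a generator $\{r,\infty\}_1$ of $M_{n^\circ}^1(c)$ with $r=a/\varpi_{\mathfrak l}^{n^\circ}$, I aim to express $r$ as $r'+b$ with $b\in\mathcal O_F$ and $r'$ of exact denominator $\mathfrak l^n$ and numerator class in $c\Gamma_{n_0}$. Translation by $b$ lies in $\Gamma_1^1(\mathfrak l^e)$, so the invariance in~\eqref{eq:regularized-bianchi-relations} gives $\{r,\infty\}_1=\{r',\infty\}_1\in M_n^1(c)$. The point is that $r$ has denominator $\mathfrak l^{n^\circ}$ while the new generators need exact denominator $\mathfrak l^n$, so a pure translation cannot work directly. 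I plan to combine the relations instead. First use the cocycle relation $\{r,\infty\}=\{r,s\}+\{s,\infty\}$. Then move $\{r,s\}$ by a matrix $\gamma\in\Gamma_1^1(\mathfrak l^e)$ with $\gamma\infty\neq\infty$ (unipotent lower-triangular with entry in $\mathfrak l^e$) so as to produce differences of cusps of exact denominator $\mathfrak l^n$. Concretely, for $\gamma=\left(\begin{smallmatrix}1&0\\ x&1\end{smallmatrix}\right)$ with $x\in\mathfrak l^{e}$, one has $\gamma r=r/(1+xr)$; choosing $x$ with $\mathrm{ord}_{\mathfrak l}(x)$ tuned, this becomes a cusp of denominator $\mathfrak l^{n}$.

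The main obstacle is controlling the numerator class so that it lies in $c\Gamma_{n_0}$. I expect the periodicity $d^\circ$ to serve exactly this purpose: it makes the ratio of new to old numerators congruent to $1$ modulo $\mathfrak l^{n_0}$, using $\pi^{d^\circ}\equiv 1$ in the appropriate sense. Should the Bianchi manipulation fail to land in $M_n^1(c)$, the fallback is a Hecke-type trace relation. In that approach one would show directly that the old functional $\lambda_{u,1}$ equals a sum of the new functionals, by summing $\alpha_{u',1}^*$ over $u'$ lifting $u$ in $c\Gamma_{n_0}/\Gamma_n$ and applying Lemma~\ref{lem:unit-action-automorphic-symbol} to absorb unit ambiguities.
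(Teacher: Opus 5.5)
There is a genuine gap. Your mechanism matches the paper's: the Bianchi comparison, a lower unipotent element of $\Gamma_1^1(\mathfrak l^e)$, and the relations \eqref{eq:regularized-bianchi-relations}. But the decisive step is left as ``choosing $x$ with $\operatorname{ord}_{\mathfrak l}(x)$ tuned'', and that cannot work. As a result, your $d^\circ$ is too weak.

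To see why, enlarge $n^\circ$ to $hm$ with $m\geq e$ (harmless). Write an old cusp as $b=\beta\pi^{-m}$ with $\beta\in\mathcal O_F$ prime to $\mathfrak l$. For $\gamma=\left(\begin{smallmatrix}1&0\\x&1\end{smallmatrix}\right)$ with $x\in\mathfrak l^e$ you get $\gamma b=\beta/(\pi^m+x\beta)$, and $\beta$ and $\pi^m+x\beta$ are coprime.

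So $\gamma b$, even modulo translation by $\mathcal O_F$, has the form $r_u$ with $u\in\mathcal R_{\mathfrak l^n}$ only if the ideal $(\pi^m+x\beta)$ equals $\mathfrak l^n$. That means $\pi^m+x\beta=\epsilon\pi^{m+s}$ with $\epsilon\in E_F$ and $hs=n-n^\circ$. With $x$ integral, this is the global divisibility condition $\beta\mid\epsilon\pi^s-1$.

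Prescribing $\operatorname{ord}_{\mathfrak l}(x)$ does not achieve this. For a generic $x$, the element $1+xb$ picks up primes other than $\mathfrak l$, so $\gamma b$ is not a generator of $M_n^1(c)$.

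The missing idea is the explicit choice $x=g=\pi^m(\pi^s-1)/\beta$. Then $bg+1=\pi^s$ and $\gamma_g b=\beta\pi^{-(m+s)}$, and integrality of $g$ requires $\beta\mid\pi^s-1$. Since $n^\circ$ is fixed, the paper fixes a finite set $\mathcal B\subset\mathcal O_F$ of global representatives of $c\Gamma_{n_0}/\Gamma_{n^\circ}$, for all $c$ modulo $\mathfrak l^{n_0}$. It then takes $d^\circ=hD$, with $D$ a common multiple of $L_0=\#(O_{\mathfrak l}/\mathfrak l^{n_0}O_{\mathfrak l})^\times$ and of the orders of $\pi$ in $(\mathcal O_F/\beta\mathcal O_F)^\times$ for all $\beta\in\mathcal B$. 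Your $d^\circ$ depends only on local data at $\mathfrak l$ and does not force $\pi^s\equiv 1$ modulo these $\beta$.

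The numerator-class issue you single out as the main obstacle is the easy part. With $w=\pi^{-1}\varpi_{\mathfrak l}^{h}\in O_{\mathfrak l}^\times$, the new numerator is the old one multiplied by $w^{s}$, which lies in $\Gamma_{n_0}$ once $L_0\mid s$. As written, ``the order of $\pi$ in $(\mathcal O_F/\mathfrak l^{n_0})^\times$'' is meaningless because $\pi\in\mathfrak l^h$; it is $w$ that matters.

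Your fallback does not close the gap either. Summing the pullbacks over the lifts of $a/\varpi_{\mathfrak l}^{n^\circ}$ is a distribution relation: it writes a $U_{\mathfrak l}$-type Hecke operator applied to the old symbol as a sum of new symbols. Since $U_{\mathfrak l}$ need not be invertible at level $\mathfrak l^e$, this does not give $M_{n^\circ}^1(c)\subset M_n^1(c)$.
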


\begin{proof}
Let $L_0$ be the order of the group $(O_{\mathfrak l}/\mathfrak l^{n_0}O_{\mathfrak l})^\times$.
Since $M_n^1(c)$ depends only on $c$ mod $\mathfrak l^{n_0}$,
Theorem~\ref{vertical:direction} allows us to choose a sufficiently
large $r^\circ\geq e$, divisible by $L_0$, such that, for
$n^\circ=h_Fr^\circ$, every module $M_{n^\circ}^1(c)$ with
$c\in O_{\mathfrak l}^\times$ has full rank.

Choose a generator $\pi\in \mathcal O_F$ 
of the principal ideal $\mathfrak l^{h_F}$.
For $r\geq0$ and $\beta\in \mathcal O_F$ prime to $\mathfrak l$, let
$u_{\beta,r}\in\mathcal R_{\mathfrak l^{rh_F}}$ be the image of 
$\beta\pi^{-r}\in(\mathfrak l^{-rh_F}/\mathcal O_F)^\times$.
Then Theorem~\ref{thm:automorphic-bianchi-comparison} gives
\begin{equation}\label{eq:global-numerator-comparison}
\Upsilon_1(u_{\beta,r})
=\{\beta\pi^{-r},\infty\}_1.
\end{equation}
For each $c\in O_{\mathfrak l}^\times$, choose a set of representatives $B_c\subset \mathcal O_F$ of
\[
c\Gamma_{n_0}/\Gamma_{n^\circ}.
\]
Put $\mathcal B=\bigcup_{c\in(O_{\mathfrak l}/\mathfrak l^{n_0} O_{\mathfrak l})^\times}B_c$.
Let $D>0$ be a common multiple of $L_0$ and of the orders of $\pi$ in the finite groups
$(\mathcal O_F/\beta\mathcal O_F)^\times$
for all $\beta\in\mathcal B$. Fix $c\in O_{\mathfrak l}^\times$ and
choose $\beta\in B_c$, and $s\in D\mathbb Z_{\geq0}$. Set
\[
b=\beta\pi^{-r^\circ},
\qquad
g=\frac{\pi^{r^\circ}(\pi^s-1)}{\beta}.
\]
The factor $\pi^{r^\circ}$ is divisible by
$\mathfrak l^e$, whereas $\pi^s-1$ is divisible by
$\beta$.  Hence $\gamma_g:=\left(\begin{smallmatrix}1&0\\g&1\end{smallmatrix}\right)$
belongs to $\Gamma_1^1(\mathfrak l^e)$.  The relations
\eqref{eq:regularized-bianchi-relations} give
\begin{align*}
\{b,\infty\}_1
&=\gamma_g\{b,\infty\}_1
=\left\{\frac{b}{bg+1},\infty\right\}_1
  +\{\infty,0\}_1
  +\gamma_g\{0,\infty\}_1 \\
&=\left\{\beta\pi^{-(r^\circ+s)},\infty\right\}_1.
\end{align*}
Applying~\eqref{eq:global-numerator-comparison} to the above equation, we obtain
\[
\Upsilon_1(u_{\beta,r^\circ})
=
\Upsilon_1(u_{\beta,r^\circ+s})\quad\text{in}\quad H_1^!(Y_1^1(\mathfrak l^e),A).
\]
The element $u_{\beta,r}$ represents the same class as $\beta w^r/\varpi_{\mathfrak l}^{rh_F}$, where
$
w=\pi^{-1}
\varpi_{\mathfrak l}^{h_F}\in O_{\mathfrak l}^{\times}.
$
Since \(L_0\mid r^\circ\) and $s$, both \(w^{r^\circ}\) and
\(w^s\) belong to \(\Gamma_{n_0}\).
Consequently, 
\[
M_{n^\circ}^1(c)\subset M_{h_F(r^\circ+s)}^1(c)=M_{n^\circ+h_Fs}^1(c).
\]
Take $d^\circ=h_FD$, then this proves the assertion.
\end{proof}

For the fixed level $\mathfrak N$, put
$
\mathfrak V:=n^\circ+d^\circ\mathbb Z_{\geq0},
$
where the two integers are supplied by
Theorem~\ref{full:rank:vertical}. 
For $c\in O_{\mathfrak l}^\times$, let $\mathcal Q_c$ be the finitely generated torsion $A$-module
\[
\mathcal Q_c
=H_1^!(Y_1^1(\mathfrak N),A)/
 M_{n^\circ}^1(c)
\]
and consider its zeroth Fitting ideal $\operatorname{Fitt}_A^0(\mathcal Q_c)$. 

\begin{theorem}\label{prop:vertical-full-rank-finite-field}
Let $B\subset\mathbb{C}$ be a Dedekind domain containing $A$.
Let $\mathfrak q$ be a prime ideal of $B$ such that $\mathfrak q\cap A$ is prime to
$\operatorname{Fitt}_A^0(\mathcal Q_c)$. Then, for any
$n\in\mathfrak V$, the inclusion induces an isomorphism
\[
M_n^1(c)\otimes_A B/\mathfrak q
\xrightarrow{\ \simeq\ }
H_1^!(Y_1^1(\mathfrak N),B/\mathfrak q).
\]
\end{theorem}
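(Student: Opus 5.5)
We reduce the statement to the case $n=n^\circ$ and then use Fitting ideals together with base change. Put $H_1^!:=H_1^!(Y_1^1(\mathfrak N),A)$ and let $k:=B/\mathfrak q$, regarded as an $A$-algebra through $A\subset B\to k$.

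First, by Theorem~\ref{full:rank:vertical} we have $M_{n^\circ}^1(c)\subset M_n^1(c)\subset H_1^!$ for every $n\in\mathfrak V$. Applying the right exact functor $-\otimes_A k$ gives a commutative triangle in which $M_{n^\circ}^1(c)\otimes_A k\to H_1^!\otimes_A k$ factors through $M_n^1(c)\otimes_A k$. It therefore suffices to prove two things: that $H_1^!\otimes_A k\to H_1^!(Y_1^1(\mathfrak N),k)$ is an isomorphism, and that $M_{n}^1(c)\otimes_A k\to H_1^!\otimes_A k$ is an isomorphism for each such $n$. The first is exactly Lemma~\ref{lem:parabolic-homology-base-change}, applied to the field $k$ with its $A$-algebra structure.

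For the second, we tensor the sequence $M_{n^\circ}^1(c)\to H_1^!\to\mathcal Q_c\to 0$ with $k$. Since $\operatorname{Fitt}^0$ commutes with base change, we get $\operatorname{Fitt}_k^0(\mathcal Q_c\otimes_A k)=\operatorname{Fitt}_A^0(\mathcal Q_c)k$. The ideal $\mathfrak p:=\mathfrak q\cap A$ is prime to $\operatorname{Fitt}_A^0(\mathcal Q_c)$, and $\mathfrak p$ is the kernel of $A\to k$, so this Fitting ideal is not contained in $\mathfrak p$ and its image in $k$ is nonzero. A nonzero ideal of a field is the whole field, so $\mathcal Q_c\otimes_A k=0$. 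Consequently both $M_{n^\circ}^1(c)\otimes k\to H_1^!\otimes k$ and $M_n^1(c)\otimes k\to H_1^!\otimes k$ are surjective. One point needs care: since $\mathcal Q_c$ is torsion, $\operatorname{Fitt}^0_A(\mathcal Q_c)\neq0$, which is where full rank enters. If $\mathfrak p=0$, the condition "prime to" should be read as coprimality with the zero ideal, and the argument still goes through: $k$ is then an extension of $\operatorname{Frac}A$, and $\mathcal Q_c\otimes k=0$ because $\mathcal Q_c$ is torsion.

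The main obstacle is injectivity. Since $M_n^1(c)\otimes_A k$ need not inject into $H_1^!\otimes_A k$ when $\operatorname{Tor}_1^A$ terms appear, we interpret the map in the statement as the map onto the image, meaning the span of the reductions of the symbols. We verify the isomorphism on that image: the images of $M_n^1(c)$ and of $H_1^!$ in $H_1^!(Y_1^1(\mathfrak N),k)$ coincide by the surjectivity just shown. If a genuine tensor-level injectivity is required, we will argue as follows. Localize at $\mathfrak p$, so that $A_{\mathfrak p}$ is a DVR. There $\mathcal Q_{c,\mathfrak p}=0$ by Nakayama's lemma, so $M_{n^\circ}^1(c)_{\mathfrak p}=H^!_{1,\mathfrak p}$, and the sandwich then forces $M_n^1(c)_{\mathfrak p}=H^!_{1,\mathfrak p}$. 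Since $A\to k$ factors through $A_{\mathfrak p}$, we obtain $M_n^1(c)\otimes_A k=M_n^1(c)_{\mathfrak p}\otimes k=H^!_{1,\mathfrak p}\otimes k=H_1^!\otimes_A k$, which gives injectivity as well. Composing this with Lemma~\ref{lem:parabolic-homology-base-change} yields the asserted isomorphism.
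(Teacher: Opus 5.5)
Your proposal is correct, and its decisive step matches the paper's proof exactly: localize at $\mathfrak p=\mathfrak q\cap A$ so that $(\mathcal Q_c)_{\mathfrak p}=0$, use the sandwich $M_{n^\circ}^1(c)\subset M_n^1(c)\subset H_1^!$ from Theorem~\ref{full:rank:vertical} to get $M_n^1(c)_{\mathfrak p}=(H_1^!)_{\mathfrak p}$, tensor with $k$ over $A_{\mathfrak p}$, and finish with Lemma~\ref{lem:parabolic-homology-base-change}. The preliminary Fitting-ideal base-change surjectivity argument and the reinterpretation of the map as a map onto its image can be dropped, since the localization argument already gives the genuine tensor-level isomorphism.
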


\begin{proof}
Fix $c\in O_{\mathfrak l}^\times$ and put
\[
H_A:=H_1^!(Y_1^1(\mathfrak N),A),
\qquad
M^\circ:=M_{n^\circ}^1(c),
\qquad
\mathfrak{p}:=\mathfrak q\cap A,
\qquad
k=B/\mathfrak q.
\]
The assumption on $\mathcal Q_c$ implies
$(\mathcal Q_c)_{\mathfrak p}=0$.
Localizing the exact sequence
\[
0\longrightarrow M^\circ\longrightarrow H_A
\longrightarrow \mathcal Q_c
\longrightarrow0
\]
at $\mathfrak p$ gives the isomorphism
$
(M^\circ)_{\mathfrak p}\simeq(H_A)_{\mathfrak p}.
$
For every $n\in\mathfrak V$,
Theorem~\ref{full:rank:vertical} gives
\[
M^\circ\subset M_n^1(c)\subset H_A.
\]
After localization at \(\mathfrak p\), the two inclusions become
equalities.  Hence
$M_n^1(c)_{\mathfrak p}=(H_A)_{\mathfrak p}$.
Tensoring this equality over \(A_{\mathfrak p}\) with \(k\) gives
\[
M_n^1(c)\otimes_A k
\xrightarrow{\ \sim\ }
H_A\otimes_A k.
\]
Finally,
Lemma~\ref{lem:parabolic-homology-base-change} identifies the
right-hand side with $H_1^!(Y_1^1(\mathfrak N),k)$. This proves the proposition.
\end{proof}

\section{Non-vanishing mod {\it p} of integral {\it L}-values}
\label{sec:mod-p-nonvanishing}
In this section, $\ell$ is an odd rational prime.

\subsection{Averages of roots of unity and their reductions}\label{subsec:roots:averages}
For an integer $n$ and a number field $M$, write $M_n:=M(\mu_{\ell^n})$.
For integers $n,m>0$ with $n\geq m$ and an element $\zeta\in\mu_{\ell^n}$, 
we have the following formula for cyclotomic Galois averages:
\begin{equation}\label{eq:standard:cyclotomic:average}
  \frac{1}{[\mathbb{Q}_n:\mathbb{Q}_{m}]}
  \operatorname{Tr}_{\mathbb{Q}_n/\mathbb{Q}_{m}}(\zeta)
  =
  \begin{cases}
    \zeta, & \text{if }\zeta\in\mu_{\ell^{m}},\\
    0, & \text{otherwise}.
  \end{cases}
\end{equation}

Let us fix a number field $L$ in this subsection.
Let $p\neq \ell$ be an odd rational prime
and $\mathfrak{P}$ a prime of $\overline{\mathbb{Z}}$ lying above it.
Put $\mathfrak{p}_n:=\mathfrak{P}\cap\mathcal O_{L_n}$ for $n\geq 0$.
Then reduction modulo $\mathfrak{p}_n$ sends $\zeta\in\mu_{\ell^n}$
to $\overline{\zeta}\in\mathcal O_{L_{n}}/\mathfrak p_{n}$.
Set 
\[r_L^{\mathrm{cyc}}:=
\min\left\{r\ge0\;:\;L\cap\mathbb Q(\mu_{\ell^\infty})\subseteq\mathbb Q_r\right\}.
\]
Let $n_0\geq r_L^{\mathrm{cyc}}$ be an integer.
For a prime ideal $\mathfrak p$ of $L$, 
let $r_\ell(\mathfrak p)$ be the $\ell$-adic valuation of $N(\mathfrak p)^d-1$, where
$d$ is the multiplicative order of $N(\mathfrak p)$ in $\mathbb{F}_\ell^\times$.

\begin{lemma}\label{lem:galois-average-reduction}
Assume that $n_0\geq r_\ell(\mathfrak p_0)$.
Then for every $\zeta\in\mu_{\ell^n}$ with $n\geq n_0$,
\[
 \frac{1}{\#D_n}\sum_{\tau\in D_n}\zeta^\tau
 \equiv
 \frac{1}{[\mathbb Q_n:\mathbb Q_{n_0}]}\sum_{\sigma\in\mathrm{Gal}(\mathbb Q_n/\mathbb Q_{n_0})}\zeta^\sigma
 \pmod{\mathfrak p_n},
\]
where $D_n\subset \operatorname{Gal}(L_n/L_{n_0})$ is the decomposition group of $\mathfrak p_n$.

\end{lemma}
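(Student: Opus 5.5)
The plan is to show that $D_n$ restricts isomorphically onto $\mathrm{Gal}(\mathbb Q_n/\mathbb Q_{n_0})$. Once that is known, the two averages are the same sum reindexed, so the congruence modulo $\mathfrak p_n$ in fact holds as an equality.

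First, restriction to $\mathbb Q_n$ gives an injection $\mathrm{Gal}(L_n/L_{n_0})\hookrightarrow\mathrm{Gal}(\mathbb Q_n/\mathbb Q_{n_0})$, because $L_n=L_{n_0}(\mu_{\ell^n})$. Via the cyclotomic character, the target is identified with $\Gamma'_{n_0}:=(1+\ell^{n_0}\mathbb Z_\ell)/(1+\ell^n\mathbb Z_\ell)$, a cyclic group of order $\ell^{n-n_0}$ (here $\ell$ is odd and $n\ge n_0\ge 1$ may be assumed). The hypothesis $n_0\ge r_L^{\mathrm{cyc}}$ shows the injection is a bijection, but the argument below does not actually need this.

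Next, since $p\neq\ell$, the prime $\mathfrak p_{n_0}$ is unramified in $L_n/L_{n_0}$. Hence $D_n$ is cyclic, generated by the Frobenius, which acts on $\mu_{\ell^n}$ by $\zeta\mapsto\zeta^{N(\mathfrak p_{n_0})}$. So the image of $D_n$ in $\Gamma'_{n_0}$ is the subgroup generated by $N(\mathfrak p_{n_0})$. It therefore suffices to show
\[
v_\ell\bigl(N(\mathfrak p_{n_0})-1\bigr)=n_0,
\]
because an element of exact valuation $n_0$ generates the cyclic $\ell$-group $\Gamma'_{n_0}$.

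This valuation computation is the key step. The residue field of $\mathfrak p_{n_0}$ is obtained from the residue field $\mathbb F_{N(\mathfrak p_0)}$ of $\mathfrak p_0$ by adjoining the reductions of $\mu_{\ell^{n_0}}$; these reductions remain of exact order $\ell^{n_0}$ since $p\ne\ell$. Hence $N(\mathfrak p_{n_0})=N(\mathfrak p_0)^{f}$, where $f$ is the least integer with $N(\mathfrak p_0)^f\equiv1\pmod{\ell^{n_0}}$. Put $x=N(\mathfrak p_0)^d$, so that $v_\ell(x-1)=r_\ell(\mathfrak p_0)=:r$ by definition. The lifting-the-exponent lemma (valid as $\ell$ is odd) gives $v_\ell(x^{k}-1)=r+v_\ell(k)$. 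Therefore $f=d\,\ell^{\,n_0-r}$, using $n_0\ge r$, and consequently $v_\ell(N(\mathfrak p_{n_0})-1)=n_0$ exactly. The hypothesis $n_0\ge r_\ell(\mathfrak p_0)$ is used precisely here; without it the valuation would be $r>n_0$, $D_n$ would be a proper subgroup, and the two averages would genuinely differ.

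Finally, with $D_n\simeq\mathrm{Gal}(\mathbb Q_n/\mathbb Q_{n_0})$ via restriction, we get $\#D_n=[\mathbb Q_n:\mathbb Q_{n_0}]$ and $\{\zeta^\tau\}_{\tau\in D_n}=\{\zeta^\sigma\}_\sigma$ as multisets. So the two averages agree in $\mathcal O_{L_n}[1/\ell]$. Both are $\mathfrak p_n$-integral, since $\ell$ is a unit at $\mathfrak p_n$, and the congruence follows. By \eqref{eq:standard:cyclotomic:average}, both sides equal $\zeta$ or $0$ according as $\zeta\in\mu_{\ell^{n_0}}$ or not.
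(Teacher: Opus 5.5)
Your proof is correct and is essentially the paper's argument: both rest on the lifting-the-exponent computation $v_\ell(N(\mathfrak p_{n_0})-1)=n_0$. This says that Frobenius at $\mathfrak p_{n_0}$ generates $(1+\ell^{n_0}\mathbb Z_\ell)/(1+\ell^n\mathbb Z_\ell)$, or equivalently that the residue fields $\lambda_m:=\mathcal O_{L_m}/\mathfrak p_m$ satisfy $[\lambda_n:\lambda_{n_0}]=\ell^{n-n_0}$. The only difference is packaging: the paper evaluates the normalized trace of $\bar\zeta$ from $\lambda_n$ to $\lambda_{n_0}$ and matches it with \eqref{eq:standard:cyclotomic:average}, while you stay in characteristic zero and identify $D_n$ with $\mathrm{Gal}(\mathbb Q_n/\mathbb Q_{n_0})$ by restriction, which gives an exact equality of the two averages and spells out the valuation step the paper only cites.
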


\begin{proof}
Let $\zeta\in\mu_{\ell^n}$.
Put $\lambda_n:=\mathcal O_{L_n}/\mathfrak p_n$. The action of $D_n$ on $\lambda_n$ induces a surjection
$
 D_n\twoheadrightarrow\operatorname{Gal}(\lambda_n/\lambda_{n_0}).
$
Consequently, compatibility of reduction with the Galois action gives
\[
 \frac{1}{\#D_n}\sum_{\tau\in D_n}\zeta^\tau
 \equiv
 \frac{1}{[\lambda_n:\lambda_{n_0}]}\operatorname{Tr}_{\lambda_n/\lambda_{n_0}}(\overline\zeta)
 \pmod{\mathfrak p_n}.
\]
Since $p\ne\ell$, reduction preserves the order of every
$\ell$-power root of unity. 
By the Lifting The Exponent lemma and our assumption on $n_0$, it follows that
\[
 \frac{1}{[\lambda_n:\lambda_{n_0}]}\operatorname{Tr}_{\lambda_n/\lambda_{n_0}}(\overline\zeta)
 =
 \begin{cases}
  \overline\zeta,&\zeta\in\mu_{\ell^{n_0}},\\
  0,&\text{otherwise}.
 \end{cases}
\]
By~\eqref{eq:standard:cyclotomic:average}, the assertion is true.
\end{proof}

\subsection{Coates--Wiles characters and their averages}\label{subsec:coates:wiles}
Assume that $\ell$ is unramified in $F$.
Let $\mathfrak{l}$ be a prime ideal of $F$ lying above $\ell$, and
$\Gamma_n=1+\mathfrak{l}^nO_\mathfrak{l}$. We have the following split short exact sequence
\[
  1
  \longrightarrow
  \Gamma_1
  \longrightarrow
  O_{\mathfrak l}^{\times}
  \longrightarrow
  (O_{\mathfrak l}/\mathfrak lO_{\mathfrak l})^\times
  \longrightarrow
  1
\]
induced by reduction modulo $\mathfrak l$.
The image of the section of the mod $\mathfrak l$-reduction map 
is denoted by $W_{\mathfrak l}$ and
is called the \emph{Teichm{\"u}ller group}. 
Thus the reduction restricts to an isomorphism
$W_{\mathfrak l}\xrightarrow{\;\simeq\;}
  (O_{\mathfrak l}/\mathfrak lO_{\mathfrak l})^\times.$
Let us set
\[
\mathrm{Cl}_F(\mathfrak{l}^\infty)
:=
\varprojlim_{m>0} \mathrm{Cl}_F(\mathfrak{l}^m).
\]
Applying the snake lemma to~\eqref{eq:ray-class-local-global-exact-sequence} for
\(\mathfrak c=\mathfrak l^n\) and \(\mathfrak c=\mathfrak l\), 
and then passing to the inverse limit over \(n\), we obtain the exact sequence
\begin{equation}\label{fund:exact:seq}
  \begin{tikzcd}
    1 \arrow{r} 
    & 
    \Gamma_1 \arrow{r} 
    & 
    \mathrm{Cl}_F(\mathfrak{l}^\infty) \arrow{r} 
    & 
    \mathrm{Cl}_F(\mathfrak{l}) \arrow{r} & 1. 
  \end{tikzcd}
\end{equation}
Here, the map from $\Gamma_1$ to $\mathrm{Cl}_F(\mathfrak{l}^\infty)$ is
induced from the injection 
$O_{\mathfrak l}^\times\hookrightarrow\mathbb{A}_F^\times$.
As $\ell$ is unramified in $F$,
we may therefore identify \(E_F\) with a subgroup of \(W_{\mathfrak l}\). 
Consequently, the following sequence is exact.
\begin{equation}\label{fund:exact:seq:2}
  \begin{tikzcd}
    1 \arrow{r} 
    & 
    W_\mathfrak{l}/E_F \arrow{r} 
    & 
    \mathrm{Cl}_F(\mathfrak{l}) \arrow{r} 
    & 
    \mathrm{Cl}_F \arrow{r} & 1. 
  \end{tikzcd}
\end{equation}
Let us assume that $\ell$ splits in $F$.
In this case, the $\mathbb{Z}_\ell$-rank of 
$\mathrm{Cl}_F(\mathfrak{l}^\infty)$ is $1$.

\begin{definition}\label{coates:wiles:extension}
For $n>0$, let us denote by $X_n$ the set of $\ell$-power order 
primitive Hecke characters of modulus $\mathfrak{l}^n$. 
\end{definition}

We impose the additional technical assumption that $\ell\nmid h_F$.
This makes the structure of $\mathrm{Cl}_F(\mathfrak{l}^\infty)$ easy.
We can split the ray class group 
as the product of its pro-$\ell$ part $\Gamma_1$ 
and its prime-to-$\ell$ part $\Delta$:
\[
  \mathrm{Cl}_F(\mathfrak{l}^\infty)=\Gamma_1\times\Delta.
\]
Clearly, $\Delta$ is equal to the split image of $\mathrm{Cl}_F(\mathfrak{l})$
in $\mathrm{Cl}_F(\mathfrak{l}^\infty)$ in the sequence~\eqref{fund:exact:seq}.
A character $\phi\in X_n$ restricts to a character of $\Gamma_1$
via the sequence~\eqref{fund:exact:seq}.

\begin{lemma}\label{lem:prim:char:isom}
Let $n>1$ and $\phi\in X_n$. 
Then the restriction $\phi\vert_{\Gamma_1}$ gives a bijection 
between $X_n$ and the set of isomorphisms
$\Gamma_1/\Gamma_n\cong\mu_{\ell^{n-1}}$.
\end{lemma}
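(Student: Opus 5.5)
The plan is to use the decomposition $\mathrm{Cl}_F(\mathfrak l^\infty)=\Gamma_1\times\Delta$ and the structure of $\Gamma_1$. First I will show that $\Gamma_1\cong\mathbb Z_\ell$ as a topological group. Since $\ell$ is odd and unramified in $F$, and $\mathfrak l$ is split, we have $O_{\mathfrak l}=\mathbb Z_\ell$. The $\ell$-adic logarithm then gives an isomorphism $\Gamma_1\to\ell\mathbb Z_\ell$ carrying $\Gamma_m$ onto $\ell^m\mathbb Z_\ell$ for every $m\geq1$. Hence $\Gamma_1/\Gamma_n$ is cyclic of order $\ell^{n-1}$, generated by the image of $1+\ell$, and the subgroups $\Gamma_m/\Gamma_n$ for $1\le m\le n$ are exactly its subgroups.

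Next I identify $X_n$ with characters of $\Gamma_1/\Gamma_n$. An $\ell$-power order character $\phi$ of $\mathrm{Cl}_F(\mathfrak l^\infty)$ is trivial on the prime-to-$\ell$ factor $\Delta$, so it is determined by $\phi\vert_{\Gamma_1}$. Conversely, any character of $\Gamma_1$ extends uniquely, by triviality on $\Delta$, to an $\ell$-power order character.

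The remaining task is to translate the conductor condition. The character $\phi$ has modulus $\mathfrak l^m$ exactly when it is trivial on the image of $1+\widehat{\mathfrak l^m}$. In $\mathrm{Cl}_F(\mathfrak l^\infty)$ this image is $\Gamma_m\subset\Gamma_1$, using the embedding in~\eqref{fund:exact:seq}. So $\phi$ has modulus $\mathfrak l^n$ if and only if $\phi\vert_{\Gamma_1}$ factors through $\Gamma_1/\Gamma_n$. It is primitive of conductor $\mathfrak l^n$ if and only if, in addition, it is nontrivial on $\Gamma_{n-1}/\Gamma_n$.

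For this step, note that since $n>1$ the group $\Gamma_{n-1}/\Gamma_n$ is the unique subgroup of order $\ell$ in the cyclic group $\Gamma_1/\Gamma_n$. A character of a cyclic $\ell$-group is nontrivial on that subgroup exactly when it is injective. Any injective character of $\Gamma_1/\Gamma_n$ lands in $\mu_{\ell^{n-1}}$, and by counting orders it is an isomorphism onto $\mu_{\ell^{n-1}}$. Conversely, every such isomorphism arises this way.

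One subtlety must be checked: that primitivity really means failure of modulus $\mathfrak l^{n-1}$, that is, that the conductor lies among the $\mathfrak l^m$. This holds because any modulus of $\phi$ can be taken supported at $\mathfrak l$, since $\phi$ factors through $\mathrm{Cl}_F(\mathfrak l^n)$. The hypothesis $n>1$ excludes the degenerate case $\Gamma_1/\Gamma_1$.

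I expect the main obstacle to be the bookkeeping that identifies the image of $1+\widehat{\mathfrak l^m}$ with $\Gamma_m$ inside $\Gamma_1$. This requires injectivity of $\Gamma_1\to\mathrm{Cl}_F(\mathfrak l^\infty)$, which is~\eqref{fund:exact:seq}, and relies on the units $E_F$ meeting $\Gamma_1$ trivially. That in turn holds because $E_F$ consists of roots of unity of order prime to $\ell$ (using $\ell>3$), so that $E_F\subset W_{\mathfrak l}$.
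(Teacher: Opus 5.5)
Your proposal is correct and follows the same route as the paper, whose one-line proof simply asserts that primitivity and triviality on $\Delta$ force $\phi\vert_{\Gamma_1}$ to have kernel exactly $\Gamma_n$. You supply what that line leaves implicit: that $\Gamma_1/\Gamma_n\cong\mathbb Z/\ell^{n-1}\mathbb Z$ is cyclic (so any larger kernel would contain $\Gamma_{n-1}/\Gamma_n$), and that the correspondence is surjective. Your appeal to $\ell>3$ is inessential, since $E_F\cap\Gamma_1=1$ already holds because $\Gamma_1$ is torsion-free for odd $\ell$.
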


\begin{proof}
The restriction has kernel $\Gamma_n$ and inducesan isomorphism
as $\phi$ is primitive of modulus $\mathfrak l^n$ and is trivial on $\Delta$.
\end{proof}

Let $n_0>0$ be an integer and assume that $n> n_0$.
For $\phi\in X_n$, let us denote by 
$G_\phi=\operatorname{Gal}\left(\mathbb Q(\phi)/\mathbb Q(\mu_{\ell^{n_0}})\right)$.

\begin{lemma}\label{lem:galois-average-residue-unit}
Let $\phi\in X_n$ and $a\in O_{\mathfrak l}^\times$.
Then,
\[
  \frac1{\#G_\phi}\sum_{\sigma\in G_\phi}\phi^\sigma(a)
  =
  \begin{cases}
    \displaystyle
    \phi(a)
    &
    \text{if }a\in W_\mathfrak{l}\Gamma_{n-n_0},
    \\
    0 & \text{otherwise}.
  \end{cases}
\]
\end{lemma}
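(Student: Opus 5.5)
The proof reduces the Galois average to the cyclotomic average \eqref{eq:standard:cyclotomic:average}. First we set up the values of $\phi$. Since $\phi$ has $\ell$-power order and is trivial on $\Delta$, and since $\mathrm{Cl}_F(\mathfrak l^\infty)=\Gamma_1\times\Delta$, the image of $a\in O_{\mathfrak l}^\times$ in $\mathrm{Cl}_F(\mathfrak l^\infty)$ decomposes as a $\Gamma_1$-part times a $\Delta$-part. Writing $a=wa_1$ with $w\in W_{\mathfrak l}$ and $a_1\in\Gamma_1$, the element $w$ has prime-to-$\ell$ order. Hence $\phi(w)$ is an $\ell$-power root of unity of prime-to-$\ell$ order, so $\phi(w)=1$ and $\phi(a)=\phi(a_1)$.

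By Lemma~\ref{lem:prim:char:isom}, $\phi|_{\Gamma_1}$ induces an isomorphism $\Gamma_1/\Gamma_n\simeq\mu_{\ell^{n-1}}$. It follows that $\mathbb Q(\phi)=\mathbb Q(\mu_{\ell^{n-1}})$, because the values of $\phi$ on $\Delta$ are trivial. Thus $G_\phi=\mathrm{Gal}(\mathbb Q_{n-1}/\mathbb Q_{n_0})$, where $\phi^\sigma$ denotes $\sigma\circ\phi$. Note that $n-1\ge n_0$ since $n>n_0$.

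Next we apply \eqref{eq:standard:cyclotomic:average} with $n-1$ in place of $n$ and $m=n_0$, taking $\zeta=\phi(a_1)\in\mu_{\ell^{n-1}}$. The average $\frac1{\#G_\phi}\sum_\sigma\phi^\sigma(a)$ equals $\phi(a)$ if $\phi(a_1)\in\mu_{\ell^{n_0}}$, and $0$ otherwise.

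It remains to identify the condition $\phi(a_1)\in\mu_{\ell^{n_0}}$. Under the isomorphism $\Gamma_1/\Gamma_n\simeq\mu_{\ell^{n-1}}$, the subgroup $\mu_{\ell^{n_0}}$ is the unique subgroup of order $\ell^{n_0}$. In the cyclic group $\Gamma_1/\Gamma_n$, whose order is $\ell^{n-1}$ because $\ell$ splits, that subgroup is $\Gamma_{n-n_0}/\Gamma_n$. So the condition is equivalent to $a_1\in\Gamma_{n-n_0}$, which is equivalent to $a\in W_{\mathfrak l}\Gamma_{n-n_0}$.

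The main point needing care is the identification $\mathbb Q(\phi)=\mathbb Q_{n-1}$ together with the fact that the Teichm\"uller component contributes trivially to $\phi$. Both rest on the $\ell$-power order of $\phi$ and the splitting $\mathrm{Cl}_F(\mathfrak l^\infty)=\Gamma_1\times\Delta$, which uses $\ell\nmid h_F$.
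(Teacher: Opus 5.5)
Your proof is correct and takes the same route as the paper, which simply cites Lemma~\ref{lem:prim:char:isom} together with the cyclotomic average \eqref{eq:standard:cyclotomic:average}. You have made explicit the details the paper leaves implicit: $\phi$ kills the Teichm\"uller component, $\mathbb Q(\phi)=\mathbb Q(\mu_{\ell^{n-1}})$ so that $G_\phi=\mathrm{Gal}(\mathbb Q_{n-1}/\mathbb Q_{n_0})$, and $\mu_{\ell^{n_0}}$ corresponds to $\Gamma_{n-n_0}/\Gamma_n$.
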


\begin{proof}
The assertion follows from Lemma~\ref{lem:prim:char:isom} 
and the formula~\eqref{eq:standard:cyclotomic:average}.
\end{proof}

Since $\#\Delta$ is prime to $\ell$, the $\#\Delta$-power map
on $\Gamma_1$ is an automorphism. 
Recall that $\{a_i\}_{i=1}^{h_F}$ is a complete set of representatives for $\mathrm{Cl}_F$
such that $a_1$ and the $\mathfrak l$-components of $a_i$'s are trivial.

\begin{lemma}\label{lem:character-average}
For \(1\le i\le h_F\) and \(a\in O_{\mathfrak l}^{\times}\),
\[
  \frac{1}{\#\Delta}\sum_{\psi\in\widehat{\Delta}}
  \psi(aa_i)
  =
    \begin{cases}
    \displaystyle
    1, &
    \text{if }i=1\text{ and }a\in E_F\Gamma_1,\\
    0, & \text{otherwise},
  \end{cases}
\]
\end{lemma}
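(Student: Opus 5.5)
The plan is to reduce the statement to orthogonality of characters on the finite group $\Delta$. The characters $\psi\in\widehat\Delta$ are viewed as characters of $\mathrm{Cl}_F(\mathfrak l^\infty)$ trivial on $\Gamma_1$, via the splitting $\mathrm{Cl}_F(\mathfrak l^\infty)=\Gamma_1\times\Delta$. Thus $\psi(aa_i)$ means $\psi$ evaluated at the image in $\mathrm{Cl}_F(\mathfrak l^\infty)$ of the idele $a\,a_i$, where $a\in O_{\mathfrak l}^\times\subset\mathbb A_F^\times$. Orthogonality gives
\[
\frac{1}{\#\Delta}\sum_{\psi\in\widehat\Delta}\psi(x)=\begin{cases}1,& x\mapsto 1 \text{ under } \mathrm{Cl}_F(\mathfrak l^\infty)\to\Delta,\\ 0,&\text{otherwise}.\end{cases}
\]
The image of $x$ in $\Delta$ is trivial if and only if $x\in\Gamma_1$ inside $\mathrm{Cl}_F(\mathfrak l^\infty)$. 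Since $\Gamma_1$ is the kernel of the projection to $\mathrm{Cl}_F(\mathfrak l)$ in \eqref{fund:exact:seq}, this is equivalent to $x$ mapping to the identity in $\mathrm{Cl}_F(\mathfrak l)$. So it suffices to determine exactly when $aa_i$ is trivial in $\mathrm{Cl}_F(\mathfrak l)$.

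First I would push further to $\mathrm{Cl}_F$. Since $a$ is a local unit at $\mathfrak l$, it maps to the identity in $\mathrm{Cl}_F$. Hence the image of $aa_i$ in $\mathrm{Cl}_F$ is the class of $a_i$, which is trivial only for $i=1$, because the $a_i$ represent distinct classes and $a_1=1$. Therefore, for $i\neq1$ the element $aa_i$ is nontrivial already in $\mathrm{Cl}_F$, hence nontrivial in $\mathrm{Cl}_F(\mathfrak l)$, and the average vanishes.

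For $i=1$, the element is just $a$. By the exact sequence \eqref{eq:ray-class-local-global-exact-sequence} with $\mathfrak c=\mathfrak l$, the image of $a$ in $\mathrm{Cl}_F(\mathfrak l)$ is trivial if and only if $a \bmod \mathfrak l$ lies in the image of $E_F$ in $(O_{\mathfrak l}/\mathfrak l)^\times$. This is equivalent to $a\in E_F\Gamma_1$, since $\Gamma_1$ is the kernel of reduction modulo $\mathfrak l$ on $O_{\mathfrak l}^\times$. Equivalently, via \eqref{fund:exact:seq:2}, write $a=w\gamma$ with $w\in W_{\mathfrak l}$ and $\gamma\in\Gamma_1$; the condition is then $w\in E_F$.

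The main point needing care is the identification of $\psi(aa_i)$ with the value on the class in $\mathrm{Cl}_F(\mathfrak l^\infty)$. Specifically, one must check that the $\mathfrak l$-component of $a_i$ being trivial makes $a\,a_i$ a legitimate idele whose class is the product of the two classes. This is immediate from multiplicativity of the class map. Everything else is finite-group orthogonality, using that $\#\Delta$ is invertible in $\mathbb C$.
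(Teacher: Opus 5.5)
Your proof is correct and follows essentially the same route as the paper. Orthogonality on $\Delta$ reduces the average to deciding when $aa_i$ is trivial in $\Delta\cong\mathrm{Cl}_F(\mathfrak l)$, which the paper reads off from \eqref{fund:exact:seq:2} via the Teichm{\"u}ller projection $\kappa$ of $a$. Your split into $i\neq 1$ (already detected in $\mathrm{Cl}_F$) and $i=1$ (the kernel of $(O_{\mathfrak l}/\mathfrak lO_{\mathfrak l})^\times\to\mathrm{Cl}_F(\mathfrak l)$) is simply an explicit unpacking of that exact sequence.
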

\begin{proof}
As $\psi$ is trivial on $\Gamma_1$, the average in the assertion is given by
$1$ if $\kappa a_i=1$ in $\Delta$ and $0$ otherwise,
where $\kappa\in W_{\mathfrak l}$ is the projection of $a$. 
$\kappa a_i=1$ if and only if $\kappa\in E_F$ and $i=1$ by~\eqref{fund:exact:seq:2}.
\end{proof}

Assume further that $n\geq 2n_0$.
Let $\xi$ be any finite-order Hecke character of modulus
$\mathfrak l^n$.  Since $\Gamma_1/\Gamma_n$ is an $\ell$-group, the
restriction of $\xi$ to this group has $\ell$-power order.  Hence
the map
$b\mapsto\xi(1+\varpi_{\mathfrak l}^{n-n_0}b)$ 
is an additive character of
$O_\mathfrak{l}$ whose kernel contains $\mathfrak{l}^{n_0}O_\mathfrak{l}$.  
There is an element $c_\xi\in O_\mathfrak{l}$ such that
\[
  \xi\left(1+\varpi_{\mathfrak l}^{n-n_0}b\right)
  =
  \mathbf{e}_F\left(
    \frac{b c_\xi}{\varpi_{\mathfrak l}^{n_0}}
  \right),
  \qquad b\in O_{\mathfrak l}.
\]
Here we regard $bc_\xi$ as an element of $\mathbb A_F$ 
via the embedding $F_\mathfrak l\to \mathbb A_F$.
The assignment $\xi\mapsto c_\xi$ gives a map
\[
 \widehat{\mathrm{Cl}_F(\mathfrak l^n)}
 \longrightarrow
 O_\mathfrak{l}/\mathfrak{l}^{n_0}O_\mathfrak{l}.
\]

\begin{proposition}\label{prop:average:lpower:l}
For \(1\le i\le h_F\), $\phi\in X_n$ and \(a\in O_{\mathfrak l}^{\times}\),
\[
\frac1{\# G_\phi}\sum_{\sigma\in G_\phi}\frac1{\#\Delta}\sum_{\psi\in\widehat\Delta}\phi^\sigma\psi(aa_i)=
\begin{cases}
\mathbf{e}_F\left(\frac{c_\phi(\kappa_a^{-1}a-1)}{\varpi_{\mathfrak l}^{n}}\right)
& \text{if }i=1\text{ and }a\in E_F\Gamma_{n-n_0}, \\  
0 & \text{otherwise},
\end{cases}
\]
where $\kappa_a$ is the image of $a$ under the projection 
$O_{\mathfrak l}^\times\rightarrow W_{\mathfrak l}$. 
\end{proposition}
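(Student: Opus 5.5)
The plan is to factor the double average as $\phi^\sigma\psi(aa_i)=\phi^\sigma(aa_i)\psi(aa_i)$ and treat the two averages separately. Since $\phi^\sigma$ is $\ell$-power order and trivial on $\Delta$, while $\psi\in\widehat\Delta$ is trivial on $\Gamma_1$, the sum over $\psi$ depends only on the $\Delta$-component of $aa_i$. The sum over $\sigma$ depends only on the $\Gamma_1$-component. Concretely, I will write $a=\kappa_a\cdot(\kappa_a^{-1}a)$ with $\kappa_a\in W_{\mathfrak l}$ and $\kappa_a^{-1}a\in\Gamma_1$.

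First, I apply Lemma~\ref{lem:character-average}. The inner $\psi$-average equals $1$ exactly when $i=1$ and $a\in E_F\Gamma_1$, that is, when $\kappa_a\in E_F$; otherwise it is $0$. This already yields the ``otherwise'' case whenever $i\neq1$ or $\kappa_a\notin E_F$. So assume $i=1$, $a_1=1$, and $\kappa_a\in E_F$.

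Since $\kappa_a$ is a global unit, its image in $\mathrm{Cl}_F(\mathfrak l^\infty)$ is trivial, so $\phi^\sigma(a)=\phi^\sigma(\kappa_a^{-1}a)$. I then apply Lemma~\ref{lem:galois-average-residue-unit} to the element $\kappa_a^{-1}a\in\Gamma_1$. The $G_\phi$-average equals $\phi(\kappa_a^{-1}a)$ if $\kappa_a^{-1}a\in W_{\mathfrak l}\Gamma_{n-n_0}$, and $0$ otherwise. Because $\kappa_a^{-1}a\in\Gamma_1$ and $W_{\mathfrak l}\cap\Gamma_1=1$, this condition is equivalent to $\kappa_a^{-1}a\in\Gamma_{n-n_0}$. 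Combined with $\kappa_a\in E_F$, it becomes $a\in E_F\Gamma_{n-n_0}$. Conversely, if $a\in E_F\Gamma_{n-n_0}$, then writing $a=\varepsilon\gamma$ forces $\kappa_a=\varepsilon$, because $E_F\subset W_{\mathfrak l}$ and Teichmüller projection is trivial on $\Gamma_1$.

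Finally, I evaluate $\phi(\kappa_a^{-1}a)$ when $\kappa_a^{-1}a=1+\varpi_{\mathfrak l}^{n-n_0}b$. The defining property of $c_\phi$ gives $\mathbf e_F(bc_\phi/\varpi_{\mathfrak l}^{n_0})$. This equals $\mathbf e_F(c_\phi(\kappa_a^{-1}a-1)/\varpi_{\mathfrak l}^{n})$, since $(\kappa_a^{-1}a-1)/\varpi_{\mathfrak l}^n=b/\varpi_{\mathfrak l}^{n_0}$.

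The main point of care is the step $\phi^\sigma(\kappa_a)=1$. It uses the triviality of $\phi^\sigma$ on the image of $E_F$. Through the exact sequences \eqref{fund:exact:seq} and \eqref{fund:exact:seq:2}, $\kappa_a$ in fact lies in $\Delta$-type torsion killed by $\ell$-power order characters, and in any case a global unit is trivial in the idele class group. This step also requires the convention that $\phi(a)$ means $\phi$ evaluated on the local idele. A second point of care is checking that the hypothesis $n\ge 2n_0$ makes $c_\phi$ well defined, so that the additive character description is valid.
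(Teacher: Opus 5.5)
Your proposal is correct and follows the paper's route. The paper's proof just combines Lemma~\ref{lem:character-average}, Lemma~\ref{lem:galois-average-residue-unit} and the defining property of $c_\phi$; you make explicit the factorization of the double average and the reduction $\phi^\sigma(a)=\phi^\sigma(\kappa_a^{-1}a)$, which holds for any $\kappa_a\in W_{\mathfrak l}$ because $\phi^\sigma$ has $\ell$-power order and $W_{\mathfrak l}$ has order prime to $\ell$.
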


\begin{proof}
The assertion follows from the definition of $c_\phi$ 
and Lemmas~\ref{lem:galois-average-residue-unit} and~\ref{lem:character-average}.
\end{proof}

\subsection{Hecke eigenspaces in parabolic cohomology}\label{subsec:parabolic:eigenspace}
A cusp form is called a \emph{Hecke eigenform} 
if it is a simultaneous eigenvector for all Hecke operators.
A Hecke eigenform satisfying $a_f(\mathcal O_F)=1$ is said to be 
{\it normalized}. In this case, the Fourier-Whittaker coefficients $a_f(\mathfrak{a})$ of $f$ 
coincide with the eigenvalues of $T_\mathfrak{a}$.

Let $f\in S_2(\mathfrak{N},\chi)$ be a normalized Hecke eigenform.
By Hida~\cite[p.~488]{Hida:1994}, there exists a number field
\(K\) such that the \((f,\chi_0)\)-eigenspace
\[
H_!^1(Y_1(\mathfrak N),\mathcal O_K)[f,\chi_0]
\]
is free of rank one over \(\mathcal O_K\) and $K$ contains the Hecke field of $f$ over $\mathbb{Q}$.
Let us set 
\[
A:=\mathcal O_K[\#E_F^{-1}].
\]
By flatness of localization, the corresponding eigenspace over \(A\)
is also free of rank one over $A$.
Fix a generator $\eta_f$ of this eigenspace.
We denote its image in
\(H_!^1(Y_1(\mathfrak N),\mathbb C)\) by the same symbol.
Hida's period \(\Omega_f\in\mathbb C^\times\) is characterized by
\[
    \frac{\delta(f)}{\Omega_f}=\eta_f
    \quad\text{in}\quad H^1_!(Y_1(\mathfrak{N}),\mathbb{C}).
\]
Thus, $\Omega_f$ is well-defined up to multiplication 
by an element of $A^\times$.
Under the decomposition~\eqref{eq:arith:mfd:disjoint:union}, let
\[
(-)_i:
H_!^1(Y_1(\mathfrak N),A)
\longrightarrow
H_!^1(Y_1^i(\mathfrak N),A),
\qquad \eta\longmapsto\eta_i,
\]
denote the projection onto the \(i\)-th component.
Choose a boundary killing prime $q\in 1+\mathfrak N$.
Recall from Definition~\ref{defn:compact:class:lifting} that 
for a lifting of $\eta_{f,i}$ under the natural map
\(H_c^1\to H_!^1\), $\eta_{f,i,\mathrm c}$ is the class $\mathcal E_q\eta'$,
which does not depend on the choice of the lifting $\eta'$. 
Let \(\varepsilon_q\in A\) be defined by
$\mathcal E_qf=\varepsilon_qf$.
Note that $\varepsilon_q$ is nonzero by Lemma~\ref{lem:hecke-boundary-killing-operator-invertible}.
\begin{lemma}\label{lemma:integrality:cohomology:class}
Under the coefficient-change map for $Y_1^i(\mathfrak N)$ induced by $A\hookrightarrow\mathbb C$,
the image of \(\eta_{f,i,\mathrm c}\) is
$\varepsilon_q\frac{\iota_i(\delta_i(f))}{\Omega_f}$.
\end{lemma}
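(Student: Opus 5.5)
The plan is to unwind the definition of $\eta_{f,i,\mathrm c}$ and use that every map involved commutes with the coefficient change $A\hookrightarrow\mathbb C$. Fix a lift $\eta'\in H^1_{\mathrm c}(Y_1^i(\mathfrak N),A)$ of $\eta_{f,i}$, so that $\eta_{f,i,\mathrm c}=\mathcal E_q\eta'$ by Definition~\ref{defn:compact:class:lifting} and Proposition~\ref{prop:canonical-compact-lift}. The coefficient-change maps on $H^1_{\mathrm c}$ and $H^1$ are induced by $C^\bullet_{\mathrm c}(Y,A)\to C^\bullet_{\mathrm c}(Y,\mathbb C)$ and $C^\bullet(Y,A)\to C^\bullet(Y,\mathbb C)$. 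Hence they commute with the natural map $H^1_{\mathrm c}\to H^1$ and with the Hecke operator $T_{q\mathcal O_F}$, which is defined by the same double coset on both coefficient rings. Consequently, the image $\eta'_{\mathbb C}$ of $\eta'$ is a lift of the image of $\eta_{f,i}$ in $H^1_!(Y_1^i(\mathfrak N),\mathbb C)$, and the image of $\eta_{f,i,\mathrm c}$ equals $\mathcal E_q\eta'_{\mathbb C}$.

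Next, apply Proposition~\ref{prop:canonical-compact-lift} with $A=\mathbb C$. Its well-definedness statement says that $\mathcal E_q$ applied to any compactly supported lift of a given parabolic class gives the same result. By \eqref{eq:hecke:section:compact:lifting}, this common value is $\iota_i(\mathcal E_q(\cdot))$. Therefore the image of $\eta_{f,i,\mathrm c}$ is $\iota_i(\mathcal E_q\,\eta_{f,i,\mathbb C})$, where $\eta_{f,i,\mathbb C}$ is the image of $\eta_{f,i}$ in complex cohomology.

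It remains to identify $\eta_{f,i,\mathbb C}$ and to move $\mathcal E_q$ through. By the characterization of Hida's period, $\eta_f=\delta(f)/\Omega_f$ in $H^1_!(Y_1(\mathfrak N),\mathbb C)$. The $i$-th projection is compatible with coefficient change because it is simply restriction to a connected component under \eqref{eq:arith:mfd:disjoint:union}. Projecting therefore gives $\eta_{f,i,\mathbb C}=\delta_i(f)/\Omega_f$, using Proposition~\ref{prop:eichler-shimura-u1-componentwise}. By Hecke equivariance of $\delta$ and $\mathcal E_qf=\varepsilon_qf$, we have $\mathcal E_q\delta_i(f)=\delta_i(\mathcal E_qf)=\varepsilon_q\delta_i(f)$. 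This step uses that $\mathcal E_q$ preserves components, which holds since $q\equiv1$ and $\mathfrak a_i$ is prime to $q$. Linearity of $\iota_i$ then yields $\iota_i(\varepsilon_q\delta_i(f)/\Omega_f)=\varepsilon_q\iota_i(\delta_i(f))/\Omega_f$.

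The main obstacle is the compatibility of $T_{q\mathcal O_F}$ with coefficient change, and with the componentwise decomposition, at the level of compactly supported cohomology. I expect to check this directly: the Hecke correspondence is a pair of finite proper maps (pullback along one and trace along the other), and both operations are defined on integral cochains functorially in the coefficient ring. A secondary point is that \eqref{eq:hecke:section:compact:lifting} requires $Y^i(\mathfrak N)$ to be a manifold so that $\iota_i$ exists. This is guaranteed by the standing assumption that the positive generator of $\mathfrak N\cap\mathbb Z$ exceeds $3$, with $\iota_i$ pulled back or descended via Proposition~\ref{prop:hoch:serre} since $\#E_F$ is invertible in $A$.
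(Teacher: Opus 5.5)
Your proposal is correct and follows essentially the paper's argument: lift $\eta_{f,i}$ to $\eta'$, push it to $\mathbb C$, and combine the fact that $\mathcal E_q$ annihilates the image of boundary cohomology with the Hecke equivariance of coefficient change, $\iota_i$ and $\delta_i$, together with $\mathcal E_q f=\varepsilon_q f$. The only difference is packaging. You invoke the already-recorded identity \eqref{eq:hecke:section:compact:lifting} over $\mathbb C$, whereas the paper re-runs the boundary-annihilation comparison directly between $\eta'_{\mathbb C}$ and the specific lift $\iota_i(\delta_i(f))/\Omega_f$.
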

 
\begin{proof}
Choose a lift $\eta^\prime$ of $\eta_{f,i}$.
After changing of coefficients to $\mathbb{C}$, naturality of the maps in the
localization sequence gives the commutative diagram
\[
\begin{tikzcd}
H_c^1(Y_1^i(\mathfrak{N}),A) \arrow[r] \arrow[d] & 
H^1(Y_1^i(\mathfrak{N}),A) \arrow[d] \\
H_c^1(Y_1^i(\mathfrak{N}),\mathbb{C}) \arrow[r] & 
H^1(Y_1^i(\mathfrak{N}),\mathbb{C})
\end{tikzcd},\quad
\begin{tikzcd}
\eta^\prime \arrow[r, mapsto] & \eta_{f,i} \arrow[d, mapsto] \\
\frac{\iota_i(\delta_i(f))}{\Omega_f} \arrow[r, mapsto] & \frac{\delta_i(f)}{\Omega_f}
\end{tikzcd}.
\]
Consequently, the difference between the image of \(\eta^\prime\)
under coefficient change and
\(\iota_i(\delta_i(f))/\Omega_f\) lies in the image of
\[
H^0\left(\partial \overline{Y_1^i(\mathfrak N)}^{\mathrm BS},\mathbb C\right)
\longrightarrow
H_c^1(Y_1^i(\mathfrak N),\mathbb C).
\]
Lemma~\ref{hecke:action:boundary:cohom} shows that
\(\mathcal E_q\) annihilates this image. Since all the maps involved
are Hecke-equivariant, we obtain
\[
    \eta_{f,i,\operatorname{c}} = \mathcal{E}_q\eta^\prime
    \longmapsto\mathcal{E}_q\frac{\iota_i(\delta_i(f))}{\Omega_f}
    =
    \varepsilon_q \frac{\iota_i(\delta_i(f))}{\Omega_f}.
\]
This proves the lemma.
\end{proof}

\subsection{Integrality of \emph{L}-values}\label{subsec:integral:Lvalue}

\begin{definition}
Define the $L$-function of $f$ by
\[
L(s,f):=\sum_{\mathfrak{a}}\frac{a_f(\mathfrak{a})}{N(\mathfrak{a})^s},
\qquad\mathrm{Re}(s)>\frac{3}{2}+\theta,
\]
where $\mathfrak{a}$ runs through the non-zero integral ideals of $F$.
\end{definition}
By the definition, $L(s,f)=\sum_{i=1}^{h_F}L_{a_i}(s,f)$, so that
$L(s,f)$ has an analytic continuation to $s=1$ by Proposition~\ref{Lftn:analytic:conti}.
For a Hecke character $\xi$ of modulus $\mathfrak c$, set
\[
L_f(\xi):=\Gamma_F(1)\frac{G(\xi)}{\Omega_f}L(1,f\otimes\xi)=\frac{D_F\# E_F}{16\pi^2}\frac{G(\xi)}{\Omega_f}L(1,f\otimes\xi).
\]
\begin{cor}\label{cor:eigenform-lvalue-automorphic-symbol}
If $f$ is a normalized Hecke eigenform, then
\[
\varepsilon_q L_f(\xi)=\sum_{i=1}^{h_F}
\xi(a_i)\sum_{u\in\mathcal R_{\mathfrak c}}
\xi(\varpi_{\mathfrak c}u)\Upsilon_i(u)\cap\eta_{f,i}.
\]
\end{cor}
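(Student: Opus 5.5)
The plan is to sum the componentwise identity of Proposition~\ref{integral:lvalue:automorphic:symbol} over $i$, and then pass from the transcendental class $\delta_i(f)$ to the integral class $\eta_{f,i}$ by dividing by $\Omega_f$. First, use $L(s,h)=\sum_{i=1}^{h_F}L_{a_i}(s,h)$, applied to $h=\mathcal E_qf\otimes\xi$ at $s=1$, which is justified by the analytic continuation in Proposition~\ref{Lftn:analytic:conti}. Since $f$ is an eigenform, $\mathcal E_qf=\varepsilon_qf$. Twisting commutes with scalars, so $\mathcal E_qf\otimes\xi=\varepsilon_q(f\otimes\xi)$. Hence
\[
\varepsilon_q\Gamma_F(1)G(\xi)L(1,f\otimes\xi)
=\sum_{i=1}^{h_F}\Gamma_F(1)G(\xi)L_{a_i}(1,\mathcal E_qf\otimes\xi).
\]
Proposition~\ref{integral:lvalue:automorphic:symbol} identifies each summand with $\xi(a_i)\sum_{u}\xi(\varpi_{\mathfrak c}u)\Upsilon_i(u)\cap\delta_i(f)$.

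Second, divide by $\Omega_f$. The defining relation $\delta(f)/\Omega_f=\eta_f$ in $H^1_!(Y_1(\mathfrak N),\mathbb C)$ projects, via the decomposition~\eqref{eq:arith:mfd:disjoint:union}, to $\delta_i(f)/\Omega_f=\eta_{f,i}$ in $H^1_!(Y_1^i(\mathfrak N),\mathbb C)$. Here $\eta_{f,i}$ denotes the image of the integral class under the coefficient change $A\hookrightarrow\mathbb C$.

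By naturality of the parabolic pairing under coefficient change, the complex pairing $\Upsilon_i(u)\cap\eta_{f,i}$ equals the image of the integral pairing in $A$. This is exactly the compatibility used in Lemma~\ref{lem:parabolic-cohomology-base-change}. The pairing is $\mathbb C$-linear in the cohomology variable, so $\Upsilon_i(u)\cap\delta_i(f)/\Omega_f=\Upsilon_i(u)\cap\eta_{f,i}$. Substituting this gives the asserted formula, with $L_f(\xi)$ as defined.

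An alternative for the last step is to bypass Proposition~\ref{integral:lvalue:automorphic:symbol} and argue directly. By definition, $\Upsilon_i(u)\cap\eta_{f,i}=\operatorname{Tr}_E\Delta_i^*\alpha_{u,i}^*\eta_{f,i,\mathrm c}$. Lemma~\ref{lemma:integrality:cohomology:class} gives $\eta_{f,i,\mathrm c}=\varepsilon_q\iota_i(\delta_i(f))/\Omega_f$ after coefficient change. The integral is then computed exactly as in the proof of Proposition~\ref{prop:additive-lvalue-automorphic-symbol}. This route makes it transparent that the factor $\varepsilon_q$ comes from the boundary-killing lift.

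The only delicate point is bookkeeping, and I expect no real obstacle. One must check that the twisting formula~\eqref{eq:twisting:class:character} and the factor $\xi(a_i)$ are applied consistently on each component. One must also check that the choice of $a_i$ prime to $\mathfrak c$, made in \S\ref{subsection:integral-automorphic-symbols}, is the same choice used in $L(s,f)=\sum_iL_{a_i}(s,f)$. Both are accommodated by the freedom to change representatives of $\mathrm{Cl}_F$.
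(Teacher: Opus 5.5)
Your proposal is correct and follows the paper's argument. You apply Proposition~\ref{integral:lvalue:automorphic:symbol} on each component, use $\mathcal E_qf=\varepsilon_qf$ and $\delta_i(f)/\Omega_f=\eta_{f,i}$, conclude by naturality of the parabolic pairing under $A\hookrightarrow\mathbb C$, and make explicit the summation over $i$ via $L(s,h)=\sum_{i}L_{a_i}(s,h)$, which the paper's one-line proof leaves implicit (your alternative route through Lemma~\ref{lemma:integrality:cohomology:class} is also valid).
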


\begin{proof}
Apply Proposition~\ref{integral:lvalue:automorphic:symbol} to
$f/\Omega_f$, noting that
$\frac{\delta_i(f)}{\Omega_f}=\eta_{f,i}$ and
$\mathcal E_qf=\varepsilon_qf$, then the assertion is true by the naturality of the pairing.
\end{proof}

The preceding corollary implies that
\begin{equation}\label{eq:integrality:lvalue}
\varepsilon_qL_f(\xi)\in A[\xi].
\end{equation}

\subsection{Fourier operators and pairing-content ideals}
Let $n_0>0$ be an integer and $\mathfrak N$ an integral ideal of $F$.
Denote $\mathfrak N'=\mathfrak N\cap\mathfrak l^{2n_0}$.
For $g\in O_{\mathfrak l}^\times$, define
\[
\mathcal F_{g,i} :=
\sum_{b\ \mathrm{mod}\ \mathfrak l^{n_0}}
\mathbf e_F\left(
\frac{bg}{\varpi_{\mathfrak l}^{n_0}}
\right)
\sum_{\varepsilon\in E_F}
\alpha_{b\varepsilon/\varpi_{\mathfrak l}^{n_0},i}^*.
\]
Here each \(\alpha_{u,i}^*\), with
\(u=b\varepsilon/\varpi_{\mathfrak l}^{n_0}\), is the level-changing
pullback defined in \S\ref{subsection:integral-automorphic-symbols}. 
Assume that $n_0\geq\mathrm{ord}_{\mathfrak l}(\mathfrak N)$. Then the pullbacks are well-defined.
The $\varepsilon\in E_F$-action is given by 
$d_\varepsilon^*$ and $d_\varepsilon^*\alpha_{u,i}^*=\alpha_{\varepsilon^{-1}u,i}^*d_\varepsilon^*$,
where $d_\varepsilon=\left(\begin{smallmatrix}\varepsilon&0\\0&1\end{smallmatrix}\right)$.
Therefore, the sum of $\alpha^*_{\varepsilon u,i}\eta$ in the summand of $\mathcal F_{g,i}\eta$
is \(E_F\)-invariant and hence defines a class on \(Y_1^i(\mathfrak N')\)
by Proposition~\ref{prop:hoch:serre}.
Consequently, the operator
\[
\mathcal F_{g,i}:
H_!^1(Y_1^i(\mathfrak N),A)
\longrightarrow
H_!^1\left(Y_1^i(\mathfrak N'),A[\xi,\mu_{\ell^{n_0}}]\right)
\]
is well-defined.

\begin{lemma}
\label{lem:gauss-inversion-fourier-operator}
Let $\xi$ be a primitive ray-class character of conductor
$\mathfrak l^{n_0}$. Then
\begin{equation}\label{eq:gauss-inversion-full-fourier-operator}
\xi(a_i)\sum_{[g]\in(O_{\mathfrak l}/\mathfrak l^{n_0}O_{\mathfrak l})^\times/E_F}
\overline\xi(g)\mathcal F_{g,i}\eta_{f,i}
=G(\overline\xi)\overline\xi(d_F)\xi(d_{F,\mathfrak l})\mathcal R_i(\xi)\eta_{f,i}.
\end{equation}
\end{lemma}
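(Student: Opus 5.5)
The proof is a Gauss-sum inversion: expand the left-hand side, change variables $u=b\varepsilon/\varpi_{\mathfrak l}^{n_0}$, and compare with the definition of $\mathcal R_i(\xi)\eta_{f,i}$. Unfolding $\mathcal F_{g,i}$ gives
\[
\xi(a_i)\sum_{[g]}\overline\xi(g)\sum_{b\bmod \mathfrak l^{n_0}}\mathbf e_F\Bigl(\frac{bg}{\varpi_{\mathfrak l}^{n_0}}\Bigr)\sum_{\varepsilon\in E_F}\alpha^*_{b\varepsilon/\varpi_{\mathfrak l}^{n_0},i}\eta_{f,i}.
\]
The summand in $g$ is invariant under $g\mapsto \varepsilon' g$ for $\varepsilon'\in E_F$. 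Indeed, $\xi$ is trivial on the image of $E_F$, since $\xi$ is a Hecke character unramified outside $\mathfrak l$ with $\xi_\infty=1$. The substitution $b\mapsto \varepsilon'^{-1}b$ then absorbs the resulting additive twist, because $\varepsilon'$ permutes the inner $\varepsilon$-sum. So the sum over $[g]$ modulo $E_F$ together with the sum over $\varepsilon$ can be rewritten as $\frac{1}{\#E_F}\cdot\#E_F$ times a sum over all $g\in(O_{\mathfrak l}/\mathfrak l^{n_0})^\times$ with a single $\varepsilon$. Here one must take care that $E_F$ acts freely on $(O_{\mathfrak l}/\mathfrak l^{n_0})^\times$; this holds since $\ell>3$ is unramified and $E_F\hookrightarrow W_{\mathfrak l}$.

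After this reduction, I interchange the sums and collect, for each $u=b/\varpi_{\mathfrak l}^{n_0}$, the coefficient of $\alpha_{u,i}^*\eta_{f,i}$.

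For $b$ a unit mod $\mathfrak l$, the inner sum over $g$ is the twisted Gauss sum $\sum_g\overline\xi(g)\mathbf e_F(bg/\varpi_{\mathfrak l}^{n_0})$. By the inversion formula \eqref{eq:gauss:inversion} applied to $\overline\xi$, this equals $\overline\xi(d_F)\,G(\overline\xi)\,\xi(b)$, up to the normalization relating $\mathbf e_F(\cdot/d_F)$ with the local $\mathfrak l$-component. That normalization is where the factor $\xi(d_{F,\mathfrak l})$ enters. We must rescale $g\mapsto g\,d_{F,\mathfrak l}$ to pass between $\mathbf e_F(bg/\varpi^{n_0})$ and $\mathbf e_F(u'/d_F)$. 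Since $\ell$ is unramified, $d_{F,\mathfrak l}$ is an $\mathfrak l$-unit, and this change of variables produces exactly $\xi(d_{F,\mathfrak l})$.

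For $b$ nonunit, primitivity of $\xi$ (conductor exactly $\mathfrak l^{n_0}$) makes the sum over $g$ vanish by the standard argument: average over $g\mapsto g(1+\varpi^{n_0-1}x)$. Then the remaining terms are $\xi(a_i)\overline\xi(d_F)\xi(d_{F,\mathfrak l})G(\overline\xi)\sum_{u\in\mathcal R_{\mathfrak l^{n_0}}}\xi(\varpi u)\alpha_{u,i}^*\eta_{f,i}$. By definition this is the right side of \eqref{eq:gauss-inversion-full-fourier-operator}, since $\xi(\varpi_{\mathfrak c}u)$ is $\xi(b)$ via the identification of $\mathcal R_{\mathfrak c}$ with $(O_{\mathfrak c}/\mathfrak c)^\times$.

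The main obstacle is bookkeeping of normalizations. We need to check that the local identification of $\xi(\varpi_{\mathfrak c}u)$ through $(O_{\mathfrak l}/\mathfrak l^{n_0})^\times\to\mathrm{Cl}_F(\mathfrak l^{n_0})$ matches the idelic $\mathbf e_F$ at $\mathfrak l$. We also need that the global factor $\xi(d_F)^{-1}$ in $G(\cdot)$ splits as the $\overline\xi(d_F)\xi(d_{F,\mathfrak l})$ pair. I would verify this by writing $d_F=d_{F,\mathfrak l}\cdot d_F^{(\mathfrak l)}$ and using that $\mathbf e_F$ of an element supported at $\mathfrak l$ sees only $d_{F,\mathfrak l}$. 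A secondary point is confirming that each $\alpha^*_{u,i}\eta_{f,i}$ descends, i.e. that the statement is an identity in $H^1_!(Y_1^i(\mathfrak N'),\cdot)$. This follows from the $E_F$-invariance noted before the lemma together with Proposition~\ref{prop:hoch:serre}.
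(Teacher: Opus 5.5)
Your proposal is correct and follows the paper's proof. The paper's argument is to unfold $\mathcal F_{g,i}$, use that $\xi$ is trivial on $E_F$ together with the substitution $b\mapsto\varepsilon^{-1}b$ to merge the sum over $[g]$ modulo $E_F$ with the $\varepsilon$-sum into a full sum over $g\in(O_{\mathfrak l}/\mathfrak l^{n_0}O_{\mathfrak l})^\times$, and then apply the inversion formula~\eqref{eq:gauss:inversion} to $\overline\xi$; you do exactly this, and correctly trace the factor $\xi(d_{F,\mathfrak l})$ to the rescaling by $d_{F,\mathfrak l}$. Your extra details (well-definedness of the sum over $E_F$-classes, freeness of the $E_F$-action, vanishing of the non-unit $b$ terms by primitivity) are sound, but note that it is the $E_F$-orbit sums that descend to $Y_1^i(\mathfrak N')$, not each individual $\alpha_{u,i}^*\eta_{f,i}$.
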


\begin{proof}
Unfolding the definition of $\mathcal F_{g,i}$,
the left-hand side of~\eqref{eq:gauss-inversion-full-fourier-operator} is given by
\[
\xi(a_i)\sum_{b\ \mathrm{mod}\ \mathfrak l^{n_0}}
\left(
\sum_{g\in(O_{\mathfrak l}/\mathfrak l^{n_0}O_{\mathfrak l})^\times}
\overline\xi(g)
\mathbf e_F\left(
\frac{bg}{\varpi_{\mathfrak l}^{n_0}}
\right)
\right)
\alpha_{b/\varpi_{\mathfrak l}^{n_0},i}^*\eta_{f,i}.
\]
Using inversion formula~\eqref{eq:gauss:inversion}, we are done.
\end{proof}

\subsection{Twisted averaged congruences and non-vanishing}\label{subsec:main:result}
Assume that $\ell\nmid h_F$ to use averaging lemmas in \S\ref{subsec:coates:wiles}.
Let $f$ be a normalized eigenform of $S_2(\mathfrak N,\chi)$ and $K$ a number field such that the eigenspace
$H_!^1(Y_1(\mathfrak N),\mathcal O_K)[f,\chi_0]$ is free of rank one over $\mathcal O_K$.

Set $L=K(\mu_{\#\Delta})$.  Retain the notations $\Delta$, $r_\ell(\mathfrak p)$ and
$r_L^{\rm cyc}$ from \S\ref{subsec:roots:averages} and \S\ref{subsec:coates:wiles}.
For an integer $h>0$, let us set
\[
\mathcal P_h:=\{\mathfrak p\nmid \ell\#E_F:\text{prime of }L\;:\;r_\ell(\mathfrak p)\leq h\}.
\]
By Chebotarev's density theorem, there exists $h_0>0$ such that 
$\mathcal  P_h$ has positive density for any $h>h_0$. 
Fix a cutoff $h>h_0$.
Choose an integer $n_0$ such that
\[
 n_0\geq
 \max\left\{\mathrm{ord}_{\mathfrak l}(\mathfrak N),r_L^{\rm cyc},h\right\}>0.
\]
This bound on $n_0$ permits the use of
Lemma~\ref{lem:galois-average-reduction}.
Put $\mathfrak N':=\mathfrak N\cap\mathfrak l^{2n_0}$.
Assume the condition on $\mathfrak N$ so that both $Y^1(\mathfrak N)$ and $Y^1(\mathfrak N')$ are manifolds,
so that the Hecke equivariant sections $\iota_1$ for both levels are well-defined.

Replace the boundary-killing prime \(q\) by one
satisfying \(q\in1+\mathfrak N'\). 
In what follows, the notations \(M_n^1(c)\) and \(\mathcal Q_c\)
refer to the fixed level \(\mathfrak N'\), rather than to the original
level \(\mathfrak N\).

Put $B:=\mathcal O_{L(\mu_{\ell^{n_0}})}[\#E_F^{-1}]$.
Then, $B$ is an $A$-algebra containing the Hecke eigenvalues of $f$ and the values
of every ray-class character of conductor $\mathfrak l^{n_0}$.
Define the pairing-content ideal
\[
\mathfrak J_{f,\xi}
:=
\left\{
m\cap\mathcal R_1(\xi)\eta_{f,1}:
m\in H_1^!(Y_1^1(\mathfrak N'),B)
\right\}
\subset B.
\]
\begin{lemma}
\label{lem:hecke-transport-twisted-class}
Let $\xi$ be a primitive finite-order Hecke character of conductor $\mathfrak l^{n_0}$. 
Then $\mathfrak J_{f,\xi}$ is a nonzero ideal of $B$.
\end{lemma}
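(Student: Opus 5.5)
We must show that $\mathfrak J_{f,\xi}$ is an ideal of $B$ and that it is nonzero. That it is an ideal is immediate: the pairing $\cap$ is $B$-bilinear, so the set of values $m\cap\mathcal R_1(\xi)\eta_{f,1}$, as $m$ ranges over the $B$-module $H_1^!(Y_1^1(\mathfrak N'),B)$, is closed under addition and under multiplication by $B$.

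For nonvanishing, the plan is to show that $\mathcal R_1(\xi)\eta_{f,1}$ is a nonzero element of $H_!^1(Y_1^1(\mathfrak N'),B)$. Then we invoke the isomorphism $\mathcal D_\cap\colon H\xrightarrow{\sim}M^\vee$ of Lemma~\ref{dualmap:isom}, with $A$ replaced by the Dedekind domain $B$ and the level by $\mathfrak N'$. Since $\mathcal D_\cap$ is injective, a nonzero class yields a nonzero functional, so some $m$ pairs nontrivially with it.

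To see that $\mathcal R_1(\xi)\eta_{f,1}\neq0$, I would pass to complex coefficients. The key steps are as follows.

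1. Lemma~\ref{lem:parabolic-cohomology-base-change} embeds the class into $H_!^1(Y_1^1(\mathfrak N'),\mathbb C)$. Its image there is $\mathcal R_1(\xi)\delta_1(f)/\Omega_f$.

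2. By \eqref{twisting:operators:compare}, this image equals $G(\xi)\delta_1(f\otimes\xi)/\Omega_f$.

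3. Since $\xi$ is primitive, \eqref{eq:gauss:product} gives $G(\xi)\neq0$.

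4. It therefore suffices to show $\delta_1(f\otimes\xi)\neq0$. By Proposition~\ref{prop:eichler-shimura-u1-componentwise}, the map $\delta=(\delta_i)_i$ is an isomorphism at level $\mathfrak N'$. This gives $\delta(f\otimes\xi)\ne0$ from $f\otimes\xi\neq0$; indeed $f\otimes\xi\neq0$ because $a_{f\otimes\xi}(\mathcal O_F)=a_f(\mathcal O_F)\xi(\mathcal O_F)=1$.

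The main obstacle is that only the first component $\delta_1$ is needed, not the full tuple $\delta$. The cleanest remedy is to observe that the form $f\otimes\xi$ has $\widetilde{(f\otimes\xi)}_{a_1}$ with nonvanishing Whittaker coefficient at $\alpha$ with $\alpha\mathfrak d_F=\mathcal O_F$ (using $a_1=1$). Then \eqref{Fourier:Whittaker:expansion} shows that the restriction of $f\otimes\xi$ to $t_1\mathrm{GL}_2(\mathbb C)$ is nonzero, so the differential form $\delta_1(f\otimes\xi)$ is a nonzero cuspidal harmonic form.

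A nonzero cuspidal harmonic form on $Y_1^1(\mathfrak N')$ represents a nonzero class in $H^1_!$. This follows from the injectivity of the Eichler--Shimura map restricted to one component: cusp forms supported on the first component map isomorphically onto $H_!^1(Y_1^1(\mathfrak N'),\mathbb C)$, as in Proposition~\ref{eichler:shimura:harder}.

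Alternatively, one could avoid this altogether via the pairing. Evaluating a suitable automorphic symbol against the class gives, by Proposition~\ref{prop:additive-lvalue-automorphic-symbol} and Proposition~\ref{prop:automorphic-symbol-pairing-limit} applied at level $\mathfrak N'$, a value whose vertical limit is a nonzero multiple of $\varepsilon_q a_{f\otimes\xi}(\alpha\mathfrak d_F)$. Here $\varepsilon_q\ne0$ by Lemma~\ref{lem:hecke-boundary-killing-operator-invertible}, so this directly exhibits $m$ with nonzero pairing.

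Finally, the complex class being nonzero forces the $B$-class to be nonzero, and injectivity of $\mathcal D_\cap$ gives $\mathfrak J_{f,\xi}\neq0$.
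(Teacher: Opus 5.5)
Your proof is correct and follows the paper's argument. You use \eqref{twisting:operators:compare} to identify the complex image of $\mathcal R_1(\xi)\eta_{f,1}$ with $G(\xi)\delta_1(f\otimes\xi)/\Omega_f$, deduce $\delta_1(f\otimes\xi)\neq0$ from $a_{f\otimes\xi}(\mathcal O_F)=1$, and conclude by the injectivity of $\mathcal D_\cap$ in Lemma~\ref{dualmap:isom}. Your extra steps only make explicit what the paper leaves implicit: $G(\xi)\neq0$ by \eqref{eq:gauss:product}, and the use of $a_1=1$ together with the principality of $\mathfrak d_F$ to see the nonvanishing on the first component. The alternative via Proposition~\ref{prop:automorphic-symbol-pairing-limit} is unnecessary and would only be available at $\mathfrak l$-power level, so keep the direct argument.
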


\begin{proof}
Equation~\eqref{twisting:operators:compare} gives
\[
\mathcal R_1(\xi)\eta_{f,1}
=
\frac{G(\xi)}{\Omega_f}\delta_1(f\otimes\xi).
\]
Since $f$ is a normalized eigenform, $a_{f\otimes\xi}(\mathcal O_F)=1$, 
which implies that $\delta_1(f\otimes\xi)\neq 0$.
This proves the assertion together with Lemma~\ref{dualmap:isom}.
\end{proof}
Define the ideal $\mathfrak J_f$ of $B$ by
$\mathfrak J_f:=\sum_{\xi}\mathfrak J_{f,\xi}$,
where $\xi$ runs through the primitive finite-order Hecke characters of conductor $\mathfrak l^{n_0}$. 
Then $\mathfrak J_f$ is nonzero by Lemma~\ref{lem:hecke-transport-twisted-class}.

Now choose $\mathfrak p$ from the positive-density set $\mathcal P_h$.
Let $\mathfrak P$ be a prime of $\overline{\mathbb Z}$ lying above $\mathfrak p$, and set 
$\mathfrak q:=B\cap\mathfrak P$, $k':=B/\mathfrak q$.

\begin{proposition}
\label{prop:residual-nonvanishing-eta-f}
Suppose that $\mathfrak q$ is prime to $\mathfrak J_f$.
Then there exists
$c\in O_{\mathfrak l}^\times$ such that
\[
\mathcal F_{c,1}\eta_{f,1}\ne0
\quad\text{in}\quad H_!^1(Y_1^1(\mathfrak N'),k').
\]
\end{proposition}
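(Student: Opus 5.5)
We argue by contradiction, using the Gauss inversion identity of Lemma~\ref{lem:gauss-inversion-fourier-operator}. Suppose that $\mathcal F_{c,1}\eta_{f,1}=0$ in $H_!^1(Y_1^1(\mathfrak N'),k')$ for every $c\in O_{\mathfrak l}^\times$.

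First we reduce the relevant classes modulo $\mathfrak q$. The operator $\mathcal F_{c,1}$ is defined integrally, with values in $H_!^1(Y_1^1(\mathfrak N'),B)$, and it is compatible with the coefficient change $B\to k'$. Fix a primitive character $\xi$ of conductor $\mathfrak l^{n_0}$. Lemma~\ref{lem:gauss-inversion-fourier-operator} with $i=1$ and $a_1=1$ expresses
\[
G(\overline\xi)\overline\xi(d_F)\xi(d_{F,\mathfrak l})\mathcal R_1(\xi)\eta_{f,1}
\]
as a $B$-linear combination of the classes $\mathcal F_{g,1}\eta_{f,1}$. The coefficients are values of $\overline\xi$, which lie in $B$ because $\mu_{\ell^{n_0}}\subset B$. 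Reducing modulo $\mathfrak q$, the hypothesis gives
\[
G(\overline\xi)\overline\xi(d_F)\xi(d_{F,\mathfrak l})\,\overline{\mathcal R_1(\xi)\eta_{f,1}}=0
\quad\text{in}\quad H_!^1(Y_1^1(\mathfrak N'),k').
\]

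Next we clear the scalar factor. The factor $\overline\xi(d_F)\xi(d_{F,\mathfrak l})$ is a root of unity. By \eqref{eq:gauss:product}, $G(\overline\xi)G(\xi)=\pm N(\mathfrak l)^{n_0}=\pm\ell^{n_0}$, and this is a unit mod $\mathfrak q$ since $\mathfrak p\nmid\ell$. Hence $G(\overline\xi)$ is a unit mod $\mathfrak q$, and therefore $\overline{\mathcal R_1(\xi)\eta_{f,1}}=0$ in $H_!^1(Y_1^1(\mathfrak N'),k')$.

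It remains to turn this residual vanishing into $\mathfrak J_{f,\xi}\subset\mathfrak q$, which is the main obstacle. A class in $H_!^1(\cdot,B)$ that maps to zero in $H_!^1(\cdot,k')$ need not lie in $\mathfrak q H_!^1(\cdot,B)$; there may be torsion or cokernel issues. To avoid this, we evaluate the pairing directly. Take $m\in H_1^!(Y_1^1(\mathfrak N'),B)$. By naturality of the parabolic pairing under coefficient change, which holds because the evaluation pairing is natural and the parabolic pairing is defined through lifts, the reduction of $m\cap\mathcal R_1(\xi)\eta_{f,1}$ modulo $\mathfrak q$ equals $\overline m\cap\overline{\mathcal R_1(\xi)\eta_{f,1}}$. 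Here $\overline m$ is the image of $m$ in $H_1^!(Y_1^1(\mathfrak N'),k')$, which exists by Lemma~\ref{lem:parabolic-homology-base-change}. This pairing is $0$, so $m\cap\mathcal R_1(\xi)\eta_{f,1}\in\mathfrak q$ for every $m$, that is, $\mathfrak J_{f,\xi}\subset\mathfrak q$.

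Since $\xi$ was arbitrary, summing over all primitive $\xi$ of conductor $\mathfrak l^{n_0}$ gives $\mathfrak J_f\subset\mathfrak q$. This contradicts the assumption that $\mathfrak q$ is prime to $\mathfrak J_f$, so some $c\in O_{\mathfrak l}^\times$ has $\mathcal F_{c,1}\eta_{f,1}\neq0$ in $H_!^1(Y_1^1(\mathfrak N'),k')$.

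One point needs care: the sum in Lemma~\ref{lem:gauss-inversion-fourier-operator} runs over $g$ modulo $E_F$, and $\mathcal F_{g,1}$ depends only on the class of $g$ modulo $E_F$. This holds because of the built-in sum over $\varepsilon\in E_F$ together with $\mathbf e_F(b\varepsilon g/\varpi_{\mathfrak l}^{n_0})$ reindexing. We also need $\xi$ to be trivial on $E_F$, which is automatic for a Hecke character. With these, the hypothesis that every $\mathcal F_{c,1}\eta_{f,1}$ vanishes covers all the terms appearing in the inversion formula.
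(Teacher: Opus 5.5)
Your proposal is correct and is the paper's argument run as a contrapositive: the paper picks $\xi$ with $\mathfrak J_{f,\xi}\not\subset\mathfrak q$, uses naturality of the pairing to get $\mathcal R_1(\xi)\eta_{f,1}\not\equiv0\pmod{\mathfrak q}$, and then reduces the Gauss inversion identity of Lemma~\ref{lem:gauss-inversion-fourier-operator}, using that $G(\overline\xi)$ is a $\mathfrak q$-adic unit because $p\neq\ell$. Your two extra points make the paper's terse steps explicit: you justify the unit property via $G(\xi)G(\overline\xi)=\pm\ell^{n_0}$, and you test residual vanishing by pairing with reduced homology classes instead of asking whether the class lies in $\mathfrak q H_!^1$.
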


\begin{proof}
By assumption on $\mathfrak q$, 
we can choose a primitive finite order character $\xi$ of modulus $\mathfrak l^{n_0}$ such that
$\mathfrak J_{f,\xi}$ is coprime to $\mathfrak q$.
The definition of $\mathfrak J_{f,\xi}$ gives a class $m$ in
$H_1^!(Y_1^1(\mathfrak N'),B)$
such that
\[
m\cap\mathcal R_1(\xi)\eta_{f,1}\not\equiv0\pmod{\mathfrak q}.
\]
Naturality of the pairing implies
\[
\mathcal R_1(\xi)\eta_{f,1}\not\equiv0\pmod{\mathfrak q}.
\]
Since $p\ne\ell$, the Gauss sum $G(\overline\xi)$ is a
$\mathfrak q$-adic unit. Thus the reduction of
\eqref{eq:gauss-inversion-full-fourier-operator} has a nonzero
right-hand side. This proves the assertion.
\end{proof}

As $\mathfrak q$ is prime to $\#E_F$, by the integrality~\eqref{eq:integrality:lvalue},
we can consider the mod $\mathfrak P$ reduction of $\varepsilon_q L_f(\phi\psi)$
for $\phi\in X_n$ and $\psi\in\widehat\Delta$.
For $\varepsilon$ in $E_F$, define $\sigma_\varepsilon$
the automorphism on $\mathbb{Q}(\mu_{\ell^\infty})$ given by
$\zeta\mapsto\zeta^{b(\varepsilon)}$.
Here, $b(\varepsilon)$ is the image of $\varepsilon$ under the isomorphism
$E_F\subset O_\mathfrak{l}^\times\xrightarrow{\;\simeq\;}\mathbb{Z}_\ell^\times$. 
For $\phi\in X_n$, put $\phi^\varepsilon:=\sigma_\varepsilon\circ\phi$,
which is an element of $X_n$. Clearly $\phi^1=\phi$.

\begin{proposition}\label{prop:first-averaged-congruence}
Let $\phi\in X_n$ and $g_0\in O_{\mathfrak l}^{\times}$.
Assume that $\mathfrak p$ is coprime to $\#\Delta$.
If $n\geq 2n_0$ and
\begin{equation}\label{eq:nonvanishing:lvalue}
\varepsilon_qL_f(\phi^\varepsilon\psi)\equiv0\pmod{\mathfrak P}
\end{equation}
for every $\psi\in\widehat\Delta$ and $\varepsilon \in E_F$, then
\begin{equation*}
\Upsilon_1
\left(\frac{g}{\varpi_{\mathfrak l}^n}\right)
\cap
\mathcal F_{c_\phi g_0^{-1},1}\eta_{f,1}
\equiv 0
\pmod{\mathfrak P}
\end{equation*}
for any $g\in g_0\Gamma_{n_0}$.
Here $\Upsilon_1$ is taken at level $\mathfrak N'$.
\end{proposition}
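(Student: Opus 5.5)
We start from Corollary~\ref{cor:eigenform-lvalue-automorphic-symbol} applied at level $\mathfrak N'$ with $\xi=\phi^{\sigma}\psi$, where $\sigma$ runs over $G_\phi$ and $\psi$ over $\widehat\Delta$. This gives
\[
\varepsilon_qL_f(\phi^\sigma\psi)=\sum_{i=1}^{h_F}\sum_{u\in\mathcal R_{\mathfrak l^n}}\phi^\sigma\psi(\varpi_{\mathfrak l}^nu\,a_i)\,\Upsilon_i(u)\cap\eta_{f,i}.
\]
We will then average over $\psi\in\widehat\Delta$ and over the decomposition group $D_n\subset\operatorname{Gal}(L_n/L_{n_0})$ of $\mathfrak p_n$. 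The hypothesis \eqref{eq:nonvanishing:lvalue} should persist under these averages.

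\textbf{Why the averages still vanish.} The $\psi$-average is harmless, since $\mathfrak p\nmid\#\Delta$. For the Galois average, note that $\tau\in D_n$ stabilizes $\mathfrak P$ and acts on $\phi$ through $\sigma_{b}$. The hypothesis is imposed for all $\varepsilon\in E_F$. We therefore first reduce to the case where each conjugate $\phi^\tau$ that occurs, twisted by a unit, is again of the form $\phi^\varepsilon$. The plan is to combine Lemma~\ref{lem:unit-action-automorphic-symbol}, which says the symbols are invariant under $u\mapsto\varepsilon u$, with the observation that the residual value $L_f(\phi^\tau\psi)\bmod\mathfrak P$ is the $\tau$-conjugate of $L_f(\phi\psi)\bmod\mathfrak P$.

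\textbf{Evaluating the averaged sum.} Lemma~\ref{lem:galois-average-reduction} replaces the $D_n$-average modulo $\mathfrak P$ by the full cyclotomic average over $G_\phi$. Proposition~\ref{prop:average:lpower:l} then kills every component except $i=1$, and every $a=\varpi_{\mathfrak l}^nu$ except those with $a\in E_F\Gamma_{n-n_0}$. On the surviving terms the coefficient is $\mathbf e_F(c_\phi(\kappa_a^{-1}a-1)/\varpi_{\mathfrak l}^n)$. Write $a=\varepsilon a'$ with $a'\in\Gamma_{n-n_0}$, and multiply through by $\phi(g)$, i.e.\ use the twist by $\phi^\varepsilon$ shifted by $g$. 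The expected outcome is
\[
0\equiv\sum_{\varepsilon\in E_F}\sum_{b\bmod\mathfrak l^{n_0}}\mathbf e_F\!\left(\frac{c_\phi b}{\varpi_{\mathfrak l}^{n_0}}\right)\Upsilon_1\!\left(\frac{\varepsilon g(1+\varpi_{\mathfrak l}^{n-n_0}b)}{\varpi_{\mathfrak l}^n}\right)\cap\eta_{f,1}\pmod{\mathfrak P}.
\]

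\textbf{Converting to the Fourier operator (main obstacle).} We must rewrite the last sum as $\Upsilon_1(g/\varpi_{\mathfrak l}^n)\cap\mathcal F_{c_\phi g_0^{-1},1}\eta_{f,1}$ at level $\mathfrak N'$. The shift $g\varpi_{\mathfrak l}^{-n}\mapsto g\varpi_{\mathfrak l}^{-n}+gb\varpi_{\mathfrak l}^{-n_0}$ is a translation by $t(gb/\varpi_{\mathfrak l}^{n_0})$. So we will choose $\alpha_{u+u_0,1}=\alpha_{u_0,1}\alpha_{u,1}$ with $u_0=gb/\varpi_{\mathfrak l}^{n_0}$, which is admissible by Remark~\ref{remark:automorphic-symbol-choice-independency} provided the levels nest. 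Here $\alpha_{u,1}$ goes from level $\mathfrak N'\cap\mathfrak l^{2n}$ to level $\mathfrak N'$, and $\alpha_{u_0,1}$ goes from level $\mathfrak N'$ to level $\mathfrak N$. This gives
\[
\alpha_{u+u_0,1}^*=\alpha_{u,1}^*\alpha_{u_0,1}^*.
\]
For this factorization we need the boundary-killing operators to commute with the pullbacks, which holds because $q\in1+\mathfrak N'$. We also need the $E_F$-sum $\sum_\varepsilon\alpha^*_{\varepsilon u_0,1}\eta_{f,1}$ to descend from $Y^1(\mathfrak N')$ to $Y_1^1(\mathfrak N')$. This descent is supplied by Proposition~\ref{prop:hoch:serre}, since $\#E_F$ is invertible in $B$. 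The unit factors $\varepsilon$ on the outer $\Upsilon_1$ are absorbed using Lemma~\ref{lem:unit-action-automorphic-symbol} together with $d_\varepsilon^*\alpha_{u,1}^*=\alpha^*_{\varepsilon^{-1}u,1}d_\varepsilon^*$.

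\textbf{Final reindexing.} Substituting $b\mapsto bg^{-1}$ turns the phase into $\mathbf e_F(c_\phi g^{-1}b/\varpi_{\mathfrak l}^{n_0})$. Since $g\equiv g_0\pmod{\mathfrak l^{n_0}}$, this equals the phase with $g_0$, which yields exactly $\mathcal F_{c_\phi g_0^{-1},1}$. Finally, the naturality of the parabolic pairing under pullback converts the pairing of $\Upsilon_1$ at level $\mathfrak N$ into the pairing with the pulled-back class at level $\mathfrak N'$. Tracking the levels and unit actions in this conversion is the delicate point of the argument.
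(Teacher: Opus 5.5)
Your outline breaks down at the passage to $\mathcal F_{c_\phi g_0^{-1},1}$, because the hypothesis over all $\varepsilon\in E_F$ is used in the wrong place. Put $u=g/\varpi_{\mathfrak l}^n$ and $v_b=b/\varpi_{\mathfrak l}^{n_0}$. In your displayed congruence the unit multiplies the whole parameter $u+gv_b$. So by Lemma~\ref{lem:unit-action-automorphic-symbol} the $\varepsilon$-sum is just $\#E_F$ times the $\varepsilon=1$ term, and ``absorbing'' the units leaves only $\#E_F\sum_b\mathbf e_F(c_\phi b/\varpi_{\mathfrak l}^{n_0})\operatorname{Tr}_E\Delta_1^*\alpha_{u,1}^*\alpha_{gv_b,1}^*\eta_{f,1,\mathrm c}$. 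But with $h=c_\phi g_0^{-1}$,
\[
\Upsilon_1(u)\cap\mathcal F_{h,1}\eta_{f,1}
=\sum_{\varepsilon\in E_F}\sum_{b}\mathbf e_F\Bigl(\frac{hb}{\varpi_{\mathfrak l}^{n_0}}\Bigr)\operatorname{Tr}_E\Delta_1^*\alpha_{u,1}^*\alpha_{\varepsilon v_b,1}^*\eta_{f,1,\mathrm c}.
\]
Here $\varepsilon$ moves only the $\mathfrak l^{n_0}$-translation, $u$ stays fixed, and the phase does not depend on $\varepsilon$. The terms with $\varepsilon\neq1$ cannot be reached from congruences for $\phi$ alone. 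Unit invariance turns $\Upsilon_1(u+\varepsilon v_b)$ into $\Upsilon_1(\varepsilon^{-1}u+v_b)$, and your congruence at the point $\varepsilon^{-1}g$ carries the phase $\varepsilon c_\phi g_0^{-1}$ instead of $c_\phi g_0^{-1}$. Nor can the inner $E_F$-sum be dropped. The class $\sum_b\mathbf e_F(hb/\varpi_{\mathfrak l}^{n_0})\alpha_{v_b,1}^*\eta_{f,1}$ on $Y^1(\mathfrak N')$ is in general not $E_F$-invariant. So it does not descend to $Y_1^1(\mathfrak N')$ (Proposition~\ref{prop:hoch:serre}) and cannot be paired with a level-$\mathfrak N'$ symbol.

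The missing step is to run your argument for each $\theta=\phi^\varepsilon$ separately. This gives $\sum_b\mathbf e_F(bc_\theta/\varpi_{\mathfrak l}^{n_0})\Upsilon_1(u+gv_b)\cap\eta_{f,1}\equiv0\pmod{\mathfrak P}$. Since $\sigma_\varepsilon$ raises $\ell$-power roots of unity to the power $b(\varepsilon)$, one has $c_{\phi^\varepsilon}\equiv\varepsilon c_\phi\pmod{\mathfrak l^{n_0}}$. The substitution $b\mapsto(\varepsilon g_0)^{-1}b$, together with $g\equiv g_0\pmod{\mathfrak l^{n_0}}$, then rewrites this as $\sum_b\mathbf e_F(bc_\phi g_0^{-1}/\varpi_{\mathfrak l}^{n_0})\Upsilon_1(u+\varepsilon^{-1}v_b)\cap\eta_{f,1}\equiv0$. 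Summing over $\varepsilon$ gives exactly the right-hand side of the display above. From there, your factorization $\alpha_{u+\varepsilon v_b,1}=\alpha_{\varepsilon v_b,1}\alpha_{u,1}$, the commutation of $\mathcal E_q$ with these pullbacks, and the invertibility of $\#E_F$ modulo $\mathfrak P$ finish the argument.

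The Galois step, by contrast, needs no unit conjugates. For $\tau$ in the decomposition group, $\varepsilon_qL_f(\theta^\tau\psi)=(\varepsilon_qL_f(\theta\psi))^\tau\equiv0$ directly, either by Hida's rationality or by applying $\tau$ to Corollary~\ref{cor:eigenform-lvalue-automorphic-symbol}, whose pairings lie in $A$. So your proposed reduction to $\phi^\tau\in\{\phi^\varepsilon\}$ is superfluous.

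Finally, the recentering at $g$ must happen before averaging. Multiply each $\varepsilon_qL_f(\theta^\tau\psi)$ by $\theta^\tau\psi(g^{-1})$, so that Proposition~\ref{prop:average:lpower:l} localizes to $i=1$, $a\in E_Fg\Gamma_{n-n_0}$. Multiplying the averaged congruence by $\phi(g)$ afterwards leaves the support at $E_F\Gamma_{n-n_0}$.
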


\begin{proof}
Let $g\in g_0\Gamma_{n_0}$. Put
\[
 G':=\operatorname{Gal}\left(
 L(\mu_{\ell^n})/
 L(\mu_{\ell^{n_0}})
 \right).
\]
Fix $\theta\in\{\phi^\varepsilon\}_{\varepsilon\in E_F}$.
The rationality of the algebraic $L$-values (Hida~\cite[\S8]{Hida:1994}) gives
\[
 L_f(\theta\psi)^\tau
 =L_f(\theta^\tau\psi)
 \qquad(\tau\in G').
\]
Apply Corollary~\ref{cor:eigenform-lvalue-automorphic-symbol} to~\eqref{eq:nonvanishing:lvalue}.
Take the average of the resulting equation twisted by $\theta^\tau\psi(g^{-1})$ over $\tau$ in the decomposition
group of $\mathfrak P$ in $G'$ and $\psi\in\widehat\Delta$. 
Lemma~\ref{lem:galois-average-reduction}
replaces the normalized decomposition-group average modulo
$\mathfrak P$ by the average for $G_\theta$. Thus, we obtain
\begin{equation}\label{eq:proof-prop106-character-average}
\begin{aligned}
 &\frac{1}{\#G_\theta}
 \sum_{\sigma\in G_\theta}
 \frac1{\#\Delta}
 \sum_{\psi\in\widehat\Delta}
 \sum_{i=1}^{h_F}\\
 &\qquad\qquad\cdot
 \sum_{a\in(O_{\mathfrak l}/\mathfrak l^nO_{\mathfrak l})^\times}
 \theta^\sigma\psi(aa_i g^{-1})
 \Upsilon_{i}\left(
   \frac{a}{\varpi_{\mathfrak l}^n}
 \right)\cap\eta_{f,i}
 \equiv0\pmod{\mathfrak P}.
\end{aligned}
\end{equation}
Here, the symbols are taken at level $\mathfrak N$.
Proposition~\ref{prop:average:lpower:l} shows that the common support of
the inner sums in \eqref{eq:proof-prop106-character-average} is
\[
 i=1,\qquad
 a=\varepsilon g \gamma\in E_Fg\Gamma_{n-n_0}/\Gamma_n.
\]
The character $\theta$ is trivial on $E_F$, and the automorphic symbols are $E_F$-invariant by
Lemma~\ref{lem:unit-action-automorphic-symbol}. Hence the terms indexed
by $\varepsilon g\gamma$ do not depend on $\varepsilon$.
Writing $\gamma=1+\varpi_{\mathfrak l}^{n-n_0}b$, the resulting
identity is
\begin{equation}\label{eq:proof-prop106-two-additive-sums}
 \#E_F\sum_{b\ {\rm mod}\ \mathfrak l^{n_0}}
 \mathbf e_F\left(
   \frac{bc_\theta}{\varpi_{\mathfrak l}^{n_0}}
 \right)
 \Upsilon_{1}\left(
   \frac{g(1+\varpi_{\mathfrak l}^{n-n_0}b)}
        {\varpi_{\mathfrak l}^{n}}
 \right)\cap\eta_{f,1}
 \equiv0\pmod{\mathfrak P}.
\end{equation}

Put $u:=g/\varpi_{\mathfrak l}^{n}$ and $v_b:=b/\varpi_{\mathfrak l}^{n_0}$.
Using the equality $c_{\phi^\varepsilon}\equiv \varepsilon c_\phi \pmod{\mathfrak l^{n_0}}$,
taking change of variable $b\mapsto (\varepsilon g_0)^{-1} b$,
and summing \eqref{eq:proof-prop106-two-additive-sums} over
\(\theta=\phi^\varepsilon\), indexed by \(\varepsilon\in E_F\), we obtain
\begin{equation}\label{eq:proof-prop106-level-composition}
 \#E_F\sum_{\varepsilon\in E_F}\sum_{b}
 \mathbf e_F\left(
   \frac{bc_\phi g_0^{-1}}{\varpi_{\mathfrak l}^{n_0}}
 \right)
 \mathrm{Tr}_E\Delta_1^*\alpha^*_{u+\varepsilon v_b,1}\eta_{f,1,\mathrm{c}}
 \equiv0\pmod{\mathfrak P}.
\end{equation}
Here, the underlying maps are
\[
 \alpha_{v_b,1}:Y^1(\mathfrak N')\longrightarrow
 Y_1^1(\mathfrak N),
 \qquad
 \alpha_{u,1}:Y^1(\mathfrak N\cap\mathfrak l^{2n})\longrightarrow
 Y_1^1(\mathfrak N').
\]
Note that we can choose matrices satisfying
$\alpha_{u+\varepsilon v_b,1}
=\alpha_{\varepsilon v_b,1}\alpha_{u,1}$
for every $b$ and $\varepsilon\in E_F$.

Let $\eta'$ be a compactly supported lift of $\eta_{f,1}$.
Since $\mathcal E_q$ commutes with the level-changing pullbacks at
$\mathfrak l$ and $\alpha_{v,1}^*\eta'=(\alpha_{v,1}^*\eta)'$
by the naturality, we have
\begin{align*}
\alpha_{u,1}^*(\mathcal F_{h,1}\eta_{f,1})_{\mathrm c}
&=\alpha_{u,1}^*\mathcal E_q
\left(
\sum_{b\ \mathrm{mod}\ \mathfrak l^{n_0}}
\mathbf e_F\left(\frac{bh}{\varpi_{\mathfrak l}^{n_0}}\right)
\sum_{\varepsilon\in E_F}
\alpha_{\varepsilon v_b,1}^*\eta'
\right)\\
&=\sum_{b}
\mathbf e_F\left(\frac{bh}{\varpi_{\mathfrak l}^{n_0}}\right)
\sum_{\varepsilon\in E_F}
\alpha_{u+\varepsilon v_b,1}^*\eta_{f,1,\mathrm c}
\end{align*}
for $h\in O_{\mathfrak l}^\times$.
Taking $h=c_\phi g_0^{-1}$, we conclude that
\eqref{eq:proof-prop106-level-composition} is equal to
\begin{equation*}
 \#E_F\operatorname{Tr}_E\Delta_1^*\alpha_{u,1}^*
 (\mathcal F_{c_\phi g_0^{-1},1}\eta_{f,1})_{\mathrm c}
 \equiv0\pmod{\mathfrak P}.
\end{equation*}
Then the assertion follows from the definition of automorphic symbol.
\end{proof}

Set $\mathfrak N=\mathfrak l^e$ for an integer $e>0$. Assume further that $2n_0\geq e$. 
Then we can apply Theorem~\ref{full:rank:vertical} at level \(\mathfrak N'\). 
Let \(n^\circ\) and \(d^\circ\) be the resulting integers. 
Put
\[
\mathfrak V:=n^\circ+d^\circ\mathbb Z_{\geq0},
\qquad
X^\circ:=\bigsqcup_{n\in\mathfrak V}X_n,
\qquad
\mathcal Q_c:=H_1^!(Y_1^1(\mathfrak N'),\mathcal O_K)/M_{n^\circ}^1(c).
\]
After increasing \(n^\circ\) if necessary, assume that $n^\circ\geq2n_0$. 
Define the obstruction ideal $\mathfrak J$ of $B$:
\[
\mathfrak J:=\varepsilon_q\#\Delta\bigcap_{[g_0]\in (O_{\mathfrak l}/\mathfrak l^{n_0}O_{\mathfrak l})^\times} 
\operatorname{Fitt}_{A}^0(\mathcal Q_{g_0})\cdot\mathfrak J_f.
\]
This ideal is independent of $\mathfrak p$ and nonzero 
since $\varepsilon_q$ and the Fitting ideals are nonzero, and $\mathfrak J_f$ is nonzero
by Lemma~\ref{lem:hecke-transport-twisted-class}.
Now we prove Theorem~\ref{thm:main:introduction}:

\begin{theorem}[Non-vanishing mod $p$ of $L$-value]
\label{thm:mod-p-nonvanishing}
Assume that $\mathfrak q$ is coprime to $\mathfrak J$.
Then, for every $\phi\in X^\circ$, there exist
$\varepsilon\in E_F$ and $\psi\in\widehat\Delta$ such that
\[
 L_f(\phi^\varepsilon\psi)
 \not\equiv0\pmod{\mathfrak P}.
\]
\end{theorem}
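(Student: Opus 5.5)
We argue by contradiction. Fix $\phi\in X_n$ with $n\in\mathfrak V$ and suppose that $L_f(\phi^\varepsilon\psi)\equiv0\pmod{\mathfrak P}$ for every $\varepsilon\in E_F$ and $\psi\in\widehat\Delta$. Multiplying by $\varepsilon_q$, the hypothesis \eqref{eq:nonvanishing:lvalue} of Proposition~\ref{prop:first-averaged-congruence} holds. Since $\mathfrak q$ is prime to $\mathfrak J$, it is prime to $\#\Delta$, so $\mathfrak p$ is coprime to $\#\Delta$. Because $n\ge n^\circ\ge 2n_0$, Proposition~\ref{prop:first-averaged-congruence} applies.

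First we produce a nonvanishing Fourier class. Since $\mathfrak q$ is prime to $\mathfrak J_f$, Proposition~\ref{prop:residual-nonvanishing-eta-f} gives $c\in O_{\mathfrak l}^\times$ with $\mathcal F_{c,1}\eta_{f,1}\neq0$ in $H_!^1(Y_1^1(\mathfrak N'),k')$. We choose $g_0:=c_\phi c^{-1}$, so that $c_\phi g_0^{-1}=c$. This requires $c_\phi$ to be a unit. That holds because $\phi$ is primitive of conductor $\mathfrak l^n$: by Lemma~\ref{lem:prim:char:isom}, $\phi$ is nontrivial on $\Gamma_{n-1}/\Gamma_n$, and this forces $c_\phi\notin\mathfrak l O_{\mathfrak l}$. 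We then apply Proposition~\ref{prop:first-averaged-congruence} with this $g_0$. It gives
\[
\Upsilon_1\left(\frac{g}{\varpi_{\mathfrak l}^n}\right)\cap\mathcal F_{c,1}\eta_{f,1}\equiv0\pmod{\mathfrak P}
\]
for every $g\in g_0\Gamma_{n_0}$. In other words, the reduction of $\mathcal F_{c,1}\eta_{f,1}$ pairs to zero against every generator of $M_n^1(g_0)$.

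Next we transfer this to the residue field. Since $\mathfrak q$ is prime to $\operatorname{Fitt}^0_A(\mathcal Q_{g_0})$, Theorem~\ref{prop:vertical-full-rank-finite-field} (at level $\mathfrak N'$, with $B$ as above) identifies $M_n^1(g_0)\otimes_A k'$ with $H_1^!(Y_1^1(\mathfrak N'),k')$. By naturality of the parabolic pairing under the coefficient change $B\to k'$, the image of $\mathcal F_{c,1}\eta_{f,1}$ in $H_!^1(Y_1^1(\mathfrak N'),k')$ therefore pairs trivially with all of $H_1^!(Y_1^1(\mathfrak N'),k')$. Lemma~\ref{dualmap:isom}, applied with $A=k'$ a field, says this pairing is perfect, so the class vanishes. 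This contradicts the choice of $c$, and hence some $L_f(\phi^\varepsilon\psi)$ is nonzero modulo $\mathfrak P$. The integrality of $\varepsilon_qL_f$ and the fact that $\mathfrak q$ is prime to $\varepsilon_q$ make this reduction meaningful.

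The main obstacle is the compatibility of the coefficient rings in this transfer step. The class $\mathcal F_{c,1}\eta_{f,1}$ lives over $B\supset A[\mu_{\ell^{n_0}}]$, while the symbols and the module $M_n^1$ are defined over $A$ or $\mathcal O_K$. The pairings in Proposition~\ref{prop:first-averaged-congruence} are congruences modulo $\mathfrak P$ in a larger ring containing the values of $\phi$. To handle this, we first check that these pairings actually lie in $B$, which follows because the symbols and $\mathcal F_{c,1}\eta_{f,1}$ are $B$-rational. Then a congruence modulo $\mathfrak P$ is the same as one modulo $\mathfrak q$. After that, we base change all modules to $B$ and invoke Lemma~\ref{lem:parabolic-homology-base-change} together with Theorem~\ref{prop:vertical-full-rank-finite-field}, using $B$ as the intermediate Dedekind domain.
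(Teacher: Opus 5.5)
Your proposal is correct and follows the paper's proof. You argue by contradiction through Proposition~\ref{prop:first-averaged-congruence}, then use residual full rank (Theorem~\ref{prop:vertical-full-rank-finite-field}) and perfect duality over $k'$ to kill the Fourier class, contradicting Proposition~\ref{prop:residual-nonvanishing-eta-f}. The only difference is that you fix $c$ first and set $g_0=c_\phi c^{-1}$, explicitly checking that $c_\phi$ is a unit; the paper leaves this step implicit when it passes from vanishing for all $g_0$ to the contradiction.
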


\begin{proof}
Fix $\phi\in X^\circ$. Then $\phi\in X_n$ for some $n\in\mathfrak V$.
Assume the contrary, i.e.,
\[
L_f(\phi^\varepsilon\psi)\equiv0\pmod{\mathfrak P}
\qquad(\varepsilon\in E_F,\ \psi\in\widehat\Delta).
\]
Fix $g_0\in O_{\mathfrak l}^\times$.
Proposition~\ref{prop:first-averaged-congruence} gives
\[
\Upsilon_1\left(\frac{g}{\varpi_{\mathfrak l}^n}\right)
\cap
\mathcal F_{c_\phi g_0^{-1},1}\eta_{f,1}\equiv 0\pmod{\mathfrak P}
\]
for any $g\in g_0\Gamma_{n_0}/\Gamma_n$. Thus the naturality of the pairing,
Lemma~\ref{dualmap:isom} for $A=k'$ and 
Theorem~\ref{prop:vertical-full-rank-finite-field} imply
\[
\mathcal F_{c_\phi g_0^{-1},1}\eta_{f,1}\equiv 0\pmod{\mathfrak P}
\]
for every $g_0\in O_{\mathfrak l}^\times$.
This contradicts Proposition~\ref{prop:residual-nonvanishing-eta-f}.
\end{proof}

\begin{remark}
Lemma~\ref{lem:galois-average-reduction}, which is essential to the
proof of Theorem~\ref{thm:mod-p-nonvanishing}, requires the $\mathfrak p$-dependent bound on $n_0$.
On the other hand, our main theorem needs the assumption $\mathfrak p\nmid\mathfrak J$,
where \(\mathfrak J\) depends on \(n_0\).
If \(n_0\) were chosen for each \(\mathfrak p\), then
\(\mathfrak J\) would also vary with \(\mathfrak p\), 
making the condition $\mathfrak p\nmid\mathfrak J$ in the statement of 
Theorem~\ref{thm:mod-p-nonvanishing} circular.
The set $P_h$ with cutoff \(h>h_0\) removes this circular dependence:
\(\mathcal P_h\) has positive density, and then fix \(n_0\geq h\). 
Thus Lemma~\ref{lem:galois-average-reduction} applies uniformly to every
\(\mathfrak p\in P_h\), while \(\mathfrak J\) is fixed independently
of \(\mathfrak p\). 
\end{remark}

\begin{remark}\label{rem:positive:density}
Let $\mathfrak p\in \mathcal P_h$.
Our main theorem is true for any $\mathfrak P$ lying above $\mathfrak p$
satisfying $B\cap\mathfrak P\nmid\mathfrak J$.
Note that the density of $\mathcal P_h$ can be arbitrarily close to $1$ as $h$ varies by Chebotarev's theorem.
\end{remark}

\end{document}